\documentclass[11pt]{amsart}

\usepackage{amsthm,amsmath,amsfonts,amssymb,amsthm,graphicx,epsf,epstopdf,
color,pifont,subfigure,flafter,bm,amscd}
\allowdisplaybreaks[4]

\def\epsilon{\varepsilon}

\topmargin=-1cm
\textheight=24cm
\textwidth=16cm
\oddsidemargin=0.5cm
\evensidemargin=0.5cm

\newtheorem {theorem} {Theorem}
\newtheorem {proposition} [theorem] {Proposition}

\newtheorem {lemma} [theorem] {Lemma}

\begin{document}

\title[A quintic $\mathbb{Z}_2$-equivariant Li\'enard system : (II)]
{	A quintic $\mathbb{Z}_2$-equivariant Li\'enard system arising from the complex Ginzburg-Landau equation: (II)}

\author[H. Chen et.al.]
{Hebai Chen$^1$, Xingwu Chen$^2$, Man Jia$^1$ and Yilei Tang$^3$  }

\address{$^1$ School of Mathematics and Statistics, HNP-LAMA, Central South University,
	Changsha, Hunan 410083,   China}
\email{chen\_hebai@csu.edu.cn  }
\email{mathmj@csu.edu.cn,  corresponding author}

\address{$^2$ School of Mathematics, Sichuan University,
	Chengdu, Sichuan 610064,  China}
\email{scuxchen@scu.edu.cn, xingwu.chen@hotmail.com, corresponding author}

\address{$^3$    School of Mathematical Sciences, CMA-Shanghai, Shanghai
	Jiao Tong University,
	Shanghai, 200240,   China}
\email{mathtyl@sjtu.edu.cn}

\begin{abstract}
We continue to study a quintic $\mathbb{Z}_2$-equivariant Li\'enard system $\dot x=y,
~ \dot y=-(a_0x+a_1x^3+a_2x^5)-(b_0+b_1x^2)y$ with $a_2b_1\ne 0$,
arising from the complex Ginzburg-Landau equation.
Global dynamics of the system have been studied in [{\it SIAM J. Math. Anal.}, {\bf 55}(2023) 5993-6038] when the sum of the indices of all equilibria is $-1$, i.e., $a_2<0$.
The aim of this paper is to study the global dynamics of this quintic Li\'enard system when the sum of the indices of all equilibria is $1$, i.e., $a_2>0$.
\end{abstract}

\subjclass[2010]{34C29, 34C25, 47H11}

\keywords{Ginzburg-Landau equation; Li\'enard system; limit cycle; bifurcation; global phase portrait}
\date{}
\dedicatory{}

\maketitle

\section{Introduction and  main results}
With the aid of appropriate traveling wave transformations, numerous nonlinear partial differential equations (PDEs) can be transformed into manageable nonlinear ordinary differential equations (ODEs). Among these transformed equations, Li\'enard equations  occupy a significant position. Named after  the  French mathematician   Li\'enard, these ODEs are characterized by the  form:
\[
\ddot{ x}+f(x)\dot{x}+g(x)=0
\]%
where
$x$ represents the dependent variable,
$\dot{x}$  denotes its derivative with respect to an independent variable (often time), and
$f(x) $ and
$g(x)$ are nonlinear functions of $x$.
Li\'enard equations are a significant class of ODEs, extensively utilized in various fields such as electrical mechanics, mechanical engineering, physics, finance systems and biomedical systems, providing crucial insights into system behaviors. These equations are particularly valuable because many complex mathematical models can be transformed into a Li\'enard-type system, allowing researchers to study their dynamic behaviors more effectively. This transformation facilitates a deeper understanding of the underlying principles governing these systems, making the Li\'enard equations a powerful tool in theoretical and applied research. For further insights and detailed studies on the applications and transformations involving Li\'enard equations, one can refer to sources such as \cite{CHLWP,CCJT, CJT, CLT,HCCW2012,LS, Ve, Zh} and the references therein.

In this paper, we continue our study  of  global dynamics for the quintic $\mathbb{Z}_2$-equivariant Li\'enard system
 \begin{eqnarray}
 \begin{array}{ll}
 \dot x=y,
 \\
 \dot y=-(a_0 x+a_1 x^3+ a_2 x^5)-(b_0+b_1 x^2)y
 \end{array}
 \label{initial0}
 \end{eqnarray}
 where $(a_0, a_1, a_2, b_0, b_1)\in \mathbb{R}^5$ and $a_2 b_1\ne 0$.
Feng \cite{Feng1, Feng2} proved that certain  uniformly translating solutions of the complex Ginzburg-Landau equation
\begin{eqnarray*}
\begin{aligned}
u_t=&\alpha u+(b_1+ic_1) u_{xx}-(b_2-ic_2)|u|^2u-(b_3-ic_3)|u|^4u
\\
&+(b_4+ic_4)(|u|^2u)_x+(b_5+ic_5)(|u|^2)_xu
\end{aligned}
\end{eqnarray*}
can  converted to solutions of  the  Li\'enard system \eqref{initial0}.
The  Ginzburg-Landau equation,   a  classic nonlinear  PDE,  has
   garnered significant attention from researchers, see \cite{CG, GR, GW, KKP, LW, LP,OY,Wang} and the references therein.
As demonstrated in \cite{CCJT},
system (\ref{initial0}) is also a versal unfolding of the following degenerate system
$\dot x=y,
\dot y=-a_2 x^5-b_1 x^2 y
$
within the $\mathbb{Z}_2$-equivariant class for sufficiently small $|a_0|, |a_1|$ and $|b_0|$.

For system \eqref{initial0}, we  have  completely     investigated    its  global dynamics     in \cite{CCJT} for the case $a_2<0$ (saddle case), i.e, the sum of the indices of all equilibria is $-1$.
  In this paper, our focus shifts to
  the case $a_2>0$ focus case), i.e., the sum of the indices of all equilibria is $+1$.
With a linear  transformation
$
(x,~y)\to ( a_2 ^{-1/4}x,~ a_2 ^{-1/4}y),
$
system \eqref{initial0} is rewritten as
\begin{eqnarray}
\begin{array}{ll}
\dot x=y,
\\
\dot y=-(\mu_1x+\mu_2x^3+x^5)-(\mu_3 +   bx^2)y=:-g(x)-f(x)y,
\end{array}
\label{initial}
\end{eqnarray}
where $(\mu_1 , \mu_2, \mu_3,    b)=(a_0,  a_1   a_2 ^{-1/2},  b_0,   b_1a_2 ^{-1/2})\in \mathbb{R}^4$ and $b\neq0$.
Since system \eqref{initial} is invariant under the transformation $(y,t,\mu_3,b)\to(-y,-t,-\mu_3,-b)$,
we only need to study the case $b>0$.

For system \eqref{initial},
Dangelmayor et al.  \cite{DAN}
gave some cross sections of local bifurcation diagram in a small neighborhood of $(\mu_1,\mu_2,\mu_3)=(0,0,0)$
and some local phase portraits near the origin,
without a detailed quantitative analysis.
Actually, one can find that the quantitative proof is indeed non-trivial
in a small neighborhood of $(\mu_1,\mu_2,\mu_3)=(0,0,0)$ as shown in this paper.
For the convenience to read, we present the main results of this paper here.

Clearly, system  \eqref{initial}  has a unique equilibrium as $(\mu_1,\mu_2)$ belongs to the region
$$
\mathcal{G}_1:=\{(\mu_1,\mu_2)\in\mathbb{R}^2:  {\mu_2}^2-4\mu_1<0, ~\mu_2<0 \}\cup\{(\mu_1,\mu_2)\in\mathbb{R}^2: \mu_1\geq0, ~\mu_2\geq0\} 
$$
and at least three equilibria as $(\mu_1,\mu_2)$ belongs to the region
$$
\mathcal{G}_2:=\{(\mu_1,\mu_2)\in\mathbb{R}^2: {\mu_2}^2-4\mu_1\geq0, ~\mu_2<0\}\cup\{(\mu_1,\mu_2)\in\mathbb{R}^2: \mu_1<0, ~\mu_2\geq0\}.
$$

\begin{theorem}

 System \eqref{initial} has a unique equilibrium if and only if $(\mu_1,\mu_2)\in\mathcal{G}_1$.
Furthermore,
 system \eqref{initial} has a unique limit cycle as $\mu_3<0$ and no limit cycles as $\mu_3\geq0$,
 and all global phase portraits  in  the Poincar\'e disc  are shown in {\rm Figure \ref{gpp1}}.
\label{Result1}
\end{theorem}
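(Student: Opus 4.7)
\textbf{Proof plan for Theorem \ref{Result1}.}
My plan is to split the statement into three independent claims: the equilibrium count, the non-existence/existence and uniqueness of limit cycles, and the enumeration of global phase portraits. For the first claim, note that equilibria satisfy $y=0$ and $g(x)=x(x^{4}+\mu_{2}x^{2}+\mu_{1})=0$. Substituting $u=x^{2}\ge 0$, extra equilibria correspond to nonnegative roots of $h(u)=u^{2}+\mu_{2}u+\mu_{1}$. A routine case analysis based on Vieta's formulas and the sign of the discriminant $\mu_{2}^{2}-4\mu_{1}$ shows that $h$ admits no nonnegative root precisely when $(\mu_{1},\mu_{2})\in\mathcal{G}_{1}$.

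For the limit-cycle part, first treat $\mu_{3}\ge 0$: the divergence of the vector field equals $-f(x)=-(\mu_{3}+bx^{2})\le 0$, vanishing at most on the line $\{x=0\}$ of zero planar measure, so Bendixson's criterion rules out any periodic orbit. For $\mu_{3}<0$, I would first classify the unique equilibrium at the origin through the Jacobian (trace $-\mu_{3}$, determinant $\mu_{1}$): on the interior of $\mathcal{G}_{1}$ one has $\mu_{1}>0$ and the origin is a hyperbolic source, while the boundary cases $\mu_{1}=0$ are nilpotent singularities to be resolved by quasi-homogeneous blow-up. Next I would carry out the Poincar\'e compactification; the quintic terms $\dot y\sim -x^{5}$ dominate at infinity and a standard chart-by-chart blow-up shows the equator is repelling inwards, yielding a trapping annulus. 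Poincar\'e--Bendixson together with the $\mathbb{Z}_{2}$-equivariance $(x,y)\mapsto(-x,-y)$ then produces at least one symmetric limit cycle.

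The step I expect to be the main obstacle is uniqueness of the limit cycle for $\mu_{3}<0$. The natural attempt is the Zhang Zhifen uniqueness theorem for Li\'enard systems, which requires the function $f(x)/g'(x)=(\mu_{3}+bx^{2})/(\mu_{1}+3\mu_{2}x^{2}+5x^{4})$ to be monotonic on each half-line. Writing $u=x^{2}$ reduces this to sign control of the quadratic $-5bu^{2}-10\mu_{3}u+(b\mu_{1}-3\mu_{2}\mu_{3})$, whose discriminant $100[\mu_{3}^{2}+b^{2}\mu_{1}-3b\mu_{2}\mu_{3}]$ may be positive for parameters inside $\mathcal{G}_{1}$, so the monotonicity hypothesis can fail. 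In that regime I would pass to the Li\'enard plane via $y\mapsto y-F(x)$ with $F(x)=\mu_{3}x+\tfrac{b}{3}x^{3}$ (which vanishes at $0$ and $\pm\sqrt{-3\mu_{3}/b}$) and apply a sharper annular-uniqueness criterion of Cherkas/Ye Yanqian type, exploiting the convexity of the orbit on each side of the zeros of $F$ and comparing the Dulac difference along two concentric cycles; the $\mathbb{Z}_{2}$-symmetry removes the need to separately rule out asymmetric pairs of cycles.

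Finally, once existence, uniqueness, and the hyperbolic/degenerate nature of the origin are established, I would close the proof by combining these local data with the global behavior at infinity obtained from the compactification. Since the number of separatrix connections is severely restricted (there is only one finite equilibrium and the symmetry pairs the sectors at infinity), the resulting topological classification yields exactly the phase portraits displayed in Figure~\ref{gpp1}; a short consistency check against Hopf bifurcation on $\mu_{3}=0$ and against the limiting behavior as $(\mu_{1},\mu_{2})$ approaches $\partial\mathcal{G}_{1}$ confirms continuity of the portraits across parameter space.
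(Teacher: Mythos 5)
Your decomposition (root count of $x(x^4+\mu_2x^2+\mu_1)$ via $u=x^2$; Bendixson for $\mu_3\ge 0$; existence plus uniqueness for $\mu_3<0$; assembly with the behaviour at infinity) is the same as the paper's, and the first two pieces match Lemma \ref{fe1}/Appendix B and Lemma \ref{con1} exactly. The problem is the step you yourself flag as the main obstacle. The paper's Lemma \ref{con2} does not need any monotonicity of $f/g$ (and note you have written $f(x)/g'(x)$, which is not the hypothesis of the Zhang Zhifen theorem in any case): after the Li\'enard change of variables it verifies the hypotheses of the classical Levinson--Smith/Zhang uniqueness theorem (\cite[Section 4]{LS}, \cite[Theorem 4.1]{Zh}), which are conditions on $F$ and $g$ themselves --- $g$ odd with $xg(x)>0$ for $x\neq 0$ (this is precisely what $(\mu_1,\mu_2)\in\mathcal{G}_1$ guarantees), $F(x)=\mu_3x+bx^3/3$ odd with a single positive zero $\sqrt{-3\mu_3/b}$, negative before it, positive and increasing after it, and $F(+\infty)=+\infty$. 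That theorem delivers existence \emph{and} uniqueness of a stable limit cycle in one stroke, so your fallback to an unspecified ``Cherkas/Ye Yanqian type'' annular comparison is both unnecessary and, as written, not a proof: no concrete comparison functional or sign computation is given, and the claim that $\mathbb{Z}_2$-symmetry ``removes the need to rule out asymmetric pairs'' is not right here --- with a single equilibrium any two limit cycles are nested, so symmetry does not reduce the uniqueness problem at all.

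A second, smaller gap concerns the global classification. Your blanket assertion that a chart-by-chart blow-up shows ``the equator is repelling inwards'' describes only the case $0<b<2\sqrt{3}$; for $b\ge 2\sqrt{3}$ the equator carries degenerate equilibria (Proposition \ref{infty} and Figure \ref{INF}), the equator is no longer a periodic orbit, and this dichotomy is exactly what separates portraits (a),(c) from (b),(d) in Figure \ref{gpp1}. Without treating the two ranges of $b$ separately you cannot recover all four phase portraits, and your Poincar\'e--Bendixson existence argument would also need an extra step to produce a trapping annulus when infinity contains equilibria --- another reason the paper routes existence through the classical Li\'enard theorem rather than through the compactification.
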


\begin{figure}[h!]
	\centering
	\subfigure[$\mu_3<0$, $0<b<2\sqrt{3}$ ]{
		\scalebox{0.31}[0.31]{
			\includegraphics{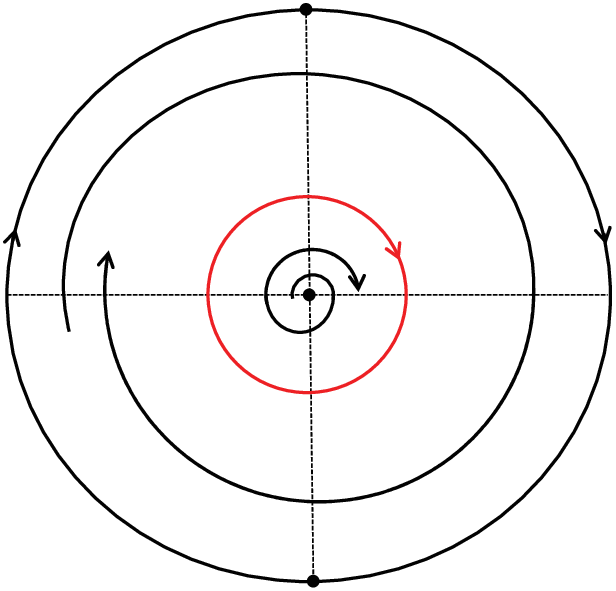}}}
	\subfigure[$\mu_3<0$, $b\geq2\sqrt{3}$ ]{
	\scalebox{0.31}[0.31]{
			\includegraphics{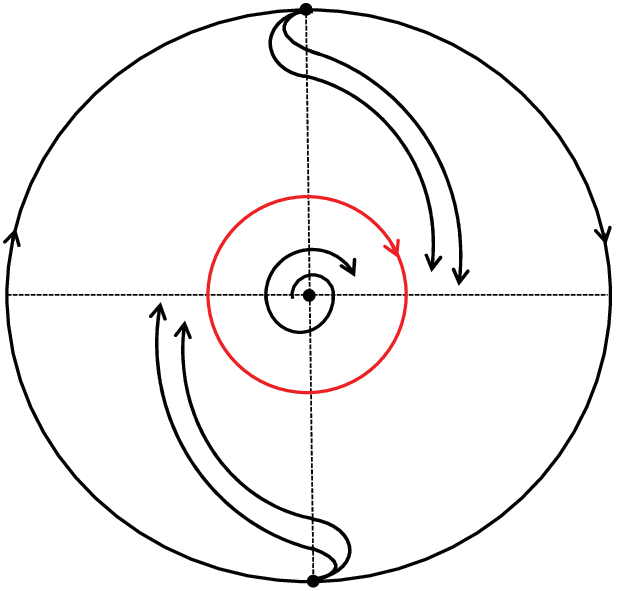}}}
			\subfigure[$\mu_3\geq0$,    $0<b<2\sqrt{3}$ ]{
		\scalebox{0.31}[0.31]{
				\includegraphics{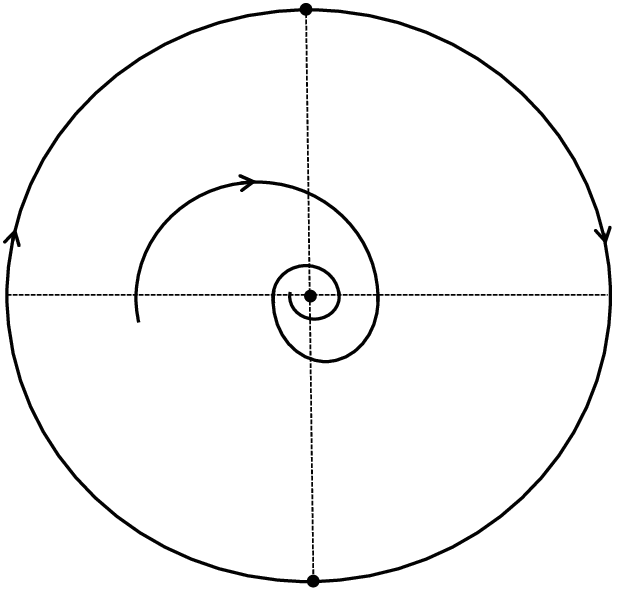}}}
		\subfigure[$\mu_3\geq0$, $b\geq2\sqrt{3}$ ]{
			\scalebox{0.31}[0.31]{
				\includegraphics{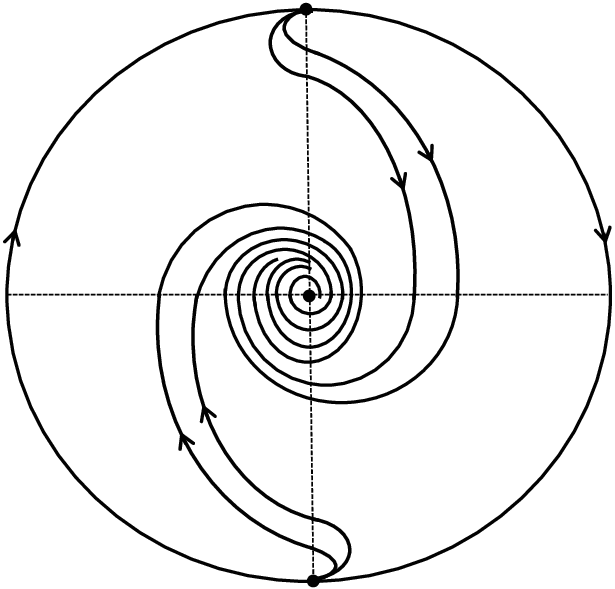}}}
	\caption{Global phase portraits    of    \eqref{initial} in $\mathcal{G}_1$.}
	\label{gpp1}
\end{figure}

Consider $(\mu_1,\mu_2)\in\mathcal{G}_2$.
By the following scaling transformation
\begin{eqnarray}
(x, y, t)\to \left(sx, s^3y,  \frac{1}{s^2} t\right),
\label{sctra}
\end{eqnarray}
system \eqref{initial}  is changed into
\begin{eqnarray}
\begin{array}{ll}
\dot x=y,
\\
\dot y=-x(a_1+x^2)(-1+x^2)-\delta(a_2+x^2)y=:-\hat g(x)-\hat f(x)y,
\end{array}
\label{initial1}
\end{eqnarray}
where
\[
(a_1,a_2,\delta):=\left(\frac{\mu_2}{s^2}+1, \frac{\mu_3}{bs^2},b\right)\in [-1,+\infty) \times(-\infty,+\infty)\times(0,+\infty)=:\Omega
\]
and
\[
s:=\sqrt{\frac{-\mu_2+\sqrt{{\mu_2}^2-4\mu_1}}{2}}\ne 0.
\]
 It is clear that system \eqref{initial1} has three equilibria
$\hat E_{l2}:=(-1,0)$,   $\hat E_0:=(0, 0)$,    $\hat E_{r2}:=(1,0)$,
and two additional equilibria $\hat E_{l1}:=(-\sqrt{-a_1},0)$,    $\hat E_{r1}:=(\sqrt{-a_1},0)$ if $-1<a_1<0$.

\begin{figure}[h!]
	\centering
	\subfigure[ when $0<\delta_0<2\sqrt{3}$   ]{
		\scalebox{0.72}[0.72]{
			\includegraphics{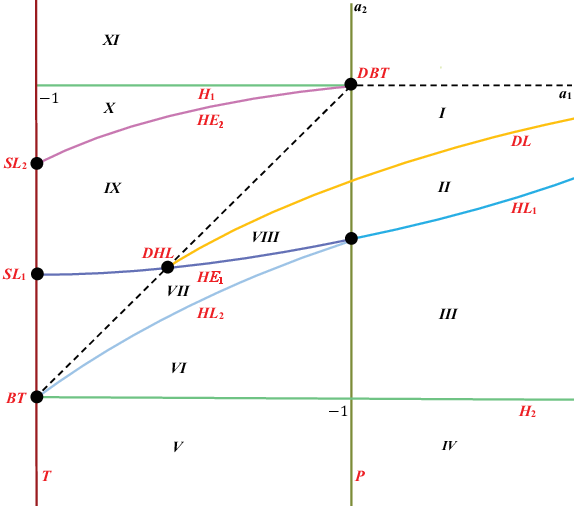}}}
	\subfigure[ when $\delta_0\geq2\sqrt{3}$  ]{
		\scalebox{0.72}[0.72]{
			\includegraphics{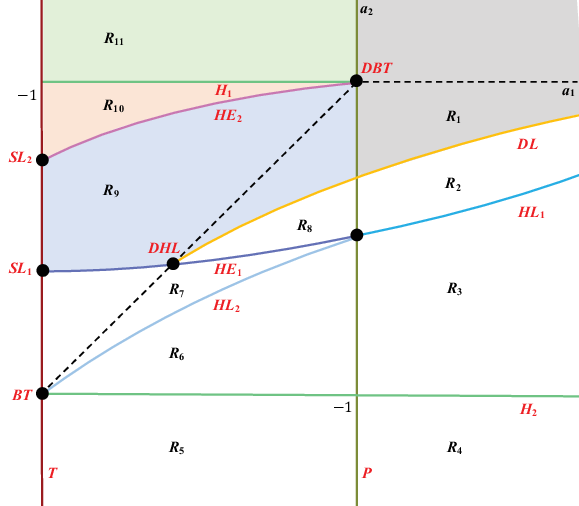}}}
	\caption{The slice $\delta=\delta_0$ of bifurcation diagram  of    \eqref{initial1}.  }
	\label{bf1}
\end{figure}

\begin{theorem}
Given  $\delta=\delta_0>0$,  the  cross-section of the global bifurcation diagram
	of  system  \eqref{initial1} is given in {\rm Figure~\ref{bf1}}.
	Bifurcation diagram of system \eqref{initial1} consists of the following bifurcation sets:
	\begin{description}
		\item[(i)] There is a pitchfork bifurcation surface  
		\[
		P:=\{(a_1,a_2,\delta)\in \Omega ~|~a_1=0\}.
		\]

		\item[(ii)] There is a transcritical bifurcation surface
		\[
		T:=\{(a_1,a_2,\delta)\in \Omega ~|~a_1=-1\}
		\]
		for $\hat E_{l2}$ and $\hat E_{r2}$.
		
		\item[(iii)] There are a supcritical Hopf bifurcation surface
		\[
		H_1 :=\{(a_1,a_2,\delta)\in \Omega ~|~-1\leq a_1<0~{\rm and}~a_2=0\}
		\]
		for $\hat E_0$
		and
		 a subcritical Hopf bifurcation surface
		\[
		H_2:=\{(a_1,a_2,\delta)\in \Omega ~|~a_1>-1~{\rm and}~a_2=-1\}
		\]
		for $\hat E_{l2}$ and $\hat E_{r2}$.

		\item[(iv)] There is a degenerate Bogdanov-Takens bifurcation curve
		\[
		DBT:=\{(a_1,a_2,\delta)\in \Omega ~|~a_1=a_2=0\}
		\]
		for $\hat E_0$, which is the intersection of $H_1$ and $P$.
		
		\item[(v)]
		There is a Bogdanov-Takens bifurcation curve
		\[
		BT:=\{(a_1,a_2,\delta)\in \Omega ~|~a_1=a_2=-1\}
		\]
		for $\hat E_{l2}$ and $\hat E_{r2}$, which is the intersection of $H_2$ and $T$.
		
				\item[(vi)]
		There is a degenerate equilibrium bifurcation surface
		\[
		DE:=\{(a_1,a_2,\delta)\in \Omega ~|~\delta=2\sqrt{3}\}
		\]
		for equilibria at infinity.
		
		\item[(vii)] There are two homoclinic bifurcation surfaces
		\begin{eqnarray*}
		HL_1&:=&\{(a_1,a_2,\delta)\in \Omega ~|~a_2=\varphi_1(a_1,\delta), ~ a_1\geq0\},
		\\
				HL_2&:=&\{(a_1,a_2,\delta)\in \Omega ~|~a_2=\varphi_2(a_1,\delta), ~ -1<a_1<0\},
		\end{eqnarray*}
		 where $\varphi_1,\varphi_2$ are continuous, $\varphi_1(a_1,\delta)\in(-1, -1/3)$ and $\varphi_2(a_1,\delta)\in(-1, \min\{-1/3, a_1\})$.
		
		\item[(viii)] 	 There are two $2$-saddle loop bifurcation surfaces
				\begin{eqnarray*}
								HE_1&:=&\{(a_1,a_2,\delta)\in \Omega ~|~a_2=\varphi_3(a_1,\delta), ~ -1<a_1<0\},
								\\
		HE_2&:=&\{(a_1,a_2,\delta)\in \Omega ~|~a_2=\varphi_4(a_1,\delta), ~-1< a_1<0\},
			\end{eqnarray*}
		where  $\varphi_3(a_1,\delta)\in(\varphi_2(a_1,\delta), 0)$ and
		$\varphi_4(a_1,\delta)\in (\max\{\varphi_3(a_1,\delta), -1/3\}, 0)$ are continuous.
		
		\item[(ix)] There is a double limit cycle bifurcation surface
		\[
		DL:=\{(a_1,a_2,\delta)\in \Omega ~|~a_2=\varphi_5(a_1,\delta), ~ a_1>a^* \},
		\]
		where $\varphi_5(a^*,\delta)=\varphi_3(a^*,\delta)=a^*<-1/3$,
		$\varphi_5(a_1,\delta)\in(\varphi_3(a_3,\delta),\min\{-1/3, a_1\})$ for $a^*<a_1<0$
		and $\varphi_5(a_1,\delta)\in(\varphi_1(a_3,\delta),-1/3)$ for $a_1\geq0$.
		
		\item[(x)] There are  infinitely many heteroclinic bifurcation surfaces 	$SC_{1,i}$, $SC_{2,i}$
		for $\hat E_0$ and equilibria at infinity, given by
		\[
		SC_{k,i}:=\{(a_1,a_2,\delta)\in \Omega ~|~a_2=\psi_{k,i}(a_1,\delta), ~ a_1\geq0, ~ \delta\geq2\sqrt{3}\},
		\]
	for $k=1,2$ and $i\in\mathbb{N}$ respectively, where $\psi_{1,i}, \psi_{2,i}$ {\rm(}$i\!\in\! \mathbb{N}${\rm)}
are continuous,
$\psi_{1, i}(a_1,\delta)>\psi_{2,i}(a_1,\delta)>\psi_{1,i+1}(a_1,\delta)>\psi_{2,i+1}(a_1,\delta)>\varphi_5(a_1,\delta)$.
		
		\item[(xi)] There are infinitely many heteroclinic bifurcation surfaces
		$SC_{3,i}$, $SC_{4,i}$, $SC_{5,i}$, $SC_{6,i}$ for the stable manifolds $\hat E_{r1}$, $\hat E_{l1}$
			and equilibria at infinity, given by
		\[
		SC_{k,i}:=\{(a_1,a_2,\delta)\in \Omega ~|~a_2=\psi_{k,i}(a_1,\delta),  ~ -1 \leq a_1<0, ~\delta\geq2\sqrt{3}\}
		\]
		for $k=3,4,5,6$ and $i\in\mathbb{N}$ respectively, where $\psi_{3,i}, \psi_{4,i}, \psi_{5,i}, \psi_{6,i}$ {\rm(}$i\!\in\! \mathbb{N}^*${\rm)}
		are continuous,
	$\psi_{3, i}(a_1,\delta)>\psi_{4,i}(a_1,\delta)>\psi_{5, i}(a_1,\delta)>\psi_{6,i}(a_1,\delta)>\psi_{3, i+1}(a_1,\delta)>\psi_{4,i+1}(a_1,\delta)>\psi_{5, i+1}(a_1,\delta)>\psi_{6,i+1}(a_1,\delta)$.
	\end{description}
	\label{mr1}
\end{theorem}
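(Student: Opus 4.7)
The proof plan is to split the analysis into local bifurcations at finite equilibria (items (i)--(v)), behavior at infinity (item (vi)), and global bifurcations organized by a rotated-family structure (items (vii)--(xi)), using Poincar\'e compactification throughout so that finite and infinite phenomena can be treated on the same footing.

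\emph{Local analysis at finite equilibria.} Linearizing \eqref{initial1} at $\hat E_0$ produces a Jacobian with trace $-\delta a_2$ and determinant $-a_1$, so $a_1=0$ is precisely where a single eigenvalue vanishes and the $\Z_2$-equivariance of $\hat g$ turns the bifurcation into a pitchfork $P$; the locus $a_2=0$ with $-1\le a_1<0$ gives trace zero and positive determinant, hence the Hopf surface $H_1$. At $\hat E_{r2},\hat E_{l2}$ the Jacobian has trace $-\delta(a_2+1)$ and determinant $2(a_1+1)$, yielding $T=\{a_1=-1\}$ (transcritical of $\hat E_{r2},\hat E_{l2}$ with $\hat E_{r1},\hat E_{l1}$) and $H_2=\{a_2=-1,\,a_1>-1\}$. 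To establish item (iii) I would evaluate the first focal value by the standard Li\'enard formula; the quintic-cubic structure of $\hat g$ combined with the sign of $\hat f$ at the equilibrium gives $H_1$ supercritical and $H_2$ subcritical. The points $BT$ and $DBT$ are the intersections of the Hopf surfaces with $T$ and $P$ respectively: at $BT$ the classical Bogdanov--Takens normal form applies directly, whereas at $DBT=\{a_1=a_2=0\}$ the quadratic term in the BT normal form is forced to vanish by $\Z_2$-equivariance, producing the codimension-three degenerate BT scenario.

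\emph{Analysis at infinity and the surface $DE$.} Applying Poincar\'e compactification reduces the behavior at infinity to a single equilibrium whose local dynamics are governed by $-x^5-\delta b x^2 y$. After directional blow-up the characteristic equation of the leading-order reduced system has discriminant proportional to $\delta^2-12$, so $\delta=2\sqrt 3$ is exactly the threshold where the sector structure at infinity changes from elementary (hyperbolic/parabolic) to elliptic-type with infinitely many spiralling turns; this yields $DE$ and is the seed for the infinite families of bifurcations in (x)--(xi).

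\emph{Global bifurcations as a rotated family.} For items (vii)--(ix) I would fix $(a_1,\delta)$ and use $a_2$ as bifurcation parameter. Since $\partial\hat f/\partial a_2=\delta$, the rotation determinant $\dot x\,\partial\dot y/\partial a_2-\dot y\,\partial\dot x/\partial a_2=-\delta y^2$ is sign-definite off the $x$-axis, so \eqref{initial1} is a generalized rotated family in $a_2$ in the sense of Duff--Perko. Monotone rotation then forces the unstable separatrices of the saddles $\hat E_0$ (when $a_1>0$) and $\hat E_{r1},\hat E_{l1}$ (when $-1<a_1<0$) to sweep monotonically across the phase plane, so for each fixed $(a_1,\delta)$ there is a unique $a_2$ producing a single homoclinic loop (giving $\varphi_1$ and $\varphi_2$) and unique values producing the symmetric two-saddle loops (giving $\varphi_3,\varphi_4$); continuity in $(a_1,\delta)$ follows from the implicit function theorem applied on a transverse section, and the bracketing inequalities follow by estimating the divergence integral along each loop and comparing with the saddle-case results of \cite{CCJT} on the boundary $a_1=-1$. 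The same rotated-family structure restricts the number of limit cycles between consecutive loops to at most two, so the semistable double cycle exists along a unique surface $DL$, giving the continuous function $\varphi_5$; the value $a^*$ is characterized as the unique $a_1$ at which the double cycle coalesces with the two-saddle loop, $\varphi_5(a^*,\delta)=\varphi_3(a^*,\delta)$.

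\emph{Infinite heteroclinic families---the main obstacle.} The hardest ingredients are (x) and (xi). For $\delta\ge 2\sqrt 3$ the analysis of item (vi) shows that trajectories spiral infinitely many times around the infinity equilibrium, so the unstable separatrix of $\hat E_0$ (respectively of $\hat E_{r1},\hat E_{l1}$) crosses any fixed transverse arc near infinity countably many times before escape. As $a_2$ varies, the rotated-family monotonicity makes each such crossing produce, at a distinct value of $a_2$, a heteroclinic connection to a prescribed direction at infinity; this yields the surfaces $SC_{k,i}$ and the alternation of stable and unstable sectors at infinity explains the ordering of the $\psi_{k,i}$. The key technical point, and the principal obstacle of the proof, is a uniform control of the spiralling rate in $(a_1,\delta)$ so that the countable family $\{\psi_{k,i}\}$ is genuinely defined for every $i\in\N$, continuous in parameters, and monotonically accumulating on $\varphi_5$. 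I would achieve this by passing to a rescaled blow-up chart at infinity, writing the return map to a small transverse arc as a contraction with explicit rate depending on $\delta$, and applying a Gronwall-type estimate to iterate; continuous dependence of this return map on parameters then yields continuity and the stated ordering of the $\psi_{k,i}$.
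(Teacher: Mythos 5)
Your architecture for items (i)--(vi) matches the paper's (Propositions \ref{localbi} and \ref{infty}): linearization and focal values for the pitchfork/transcritical/Hopf surfaces, normal forms at $BT$ and $DBT$, and the Poincar\'e-compactification threshold $\delta=2\sqrt3$ at infinity; your trace/determinant computations at $\hat E_0$ and $\hat E_{r2}$ are correct. The rotated-family observation $\dot x\,\partial\dot y/\partial a_2-\dot y\,\partial\dot x/\partial a_2=-\delta y^2$ is also exactly the engine the paper uses to obtain uniqueness, monotonicity and continuity of the loci $\varphi_1,\dots,\varphi_4$ (via the separatrix-offset functions $x_A-x_B$, $x_P-x_Q$, etc.). For (x)--(xi) your return-map-near-infinity scheme is, if anything, more explicit than the paper's own rather terse justification.

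The genuine gap is in your treatment of (ix) and of the bracketing inequalities in (vii)--(ix): you assert that ``the same rotated-family structure restricts the number of limit cycles between consecutive loops to at most two.'' Rotated vector field theory does not do this. It tells you that limit cycles move monotonically with $a_2$ and can only appear or disappear through semistable cycles, separatrix loops, or equilibria, but it gives no bound on how many coexist at a fixed parameter value; in particular it cannot by itself produce a single-valued function $\varphi_5$ defining $DL$, nor the orderings $\varphi_1\in(-1,-1/3)$ and $\varphi_5\in(\varphi_1,-1/3)$. In the paper these counting results are the technical core of Section~3: non-existence of limit cycles for $-1/3\le a_2<0$ and the location of the bracket endpoints come from energy-function estimates with $E(x,y)=\int_0^x\hat g(s)\,ds+y^2/2$ and the zero $\sqrt{-3a_2}$ of $\hat F$ (Lemma \ref{nlc}); ``at most one small'' and ``at most two large'' come from the generalized Filippov change of variable $w=\hat F(x)$ and a comparison of $\oint\hat f\,dt$ along nested cycles, which in turn needs the monotonicity of $(\hat F(x)-\hat F(1))\hat f(x)/\hat g(x)$ (Lemma \ref{ulc}, Proposition \ref{c5c6}); and the bound in the remaining parameter region requires the Melnikov/Abelian-integral analysis of Appendix~C for small $\delta$ combined with the parameter-deformation argument of Lemma \ref{lac73}. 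Without a substitute for this machinery your construction of $DL$ and the stated inequalities among $\varphi_1,\dots,\varphi_5$ do not follow. A smaller related point: the endpoints $-1$ and $-1/3$ of the brackets are not obtained by ``estimating the divergence integral along each loop''; they are the values at which the energy argument (respectively the small-limit-cycle exclusion via Proposition 9 of \cite{CLT}) forces the sign of the separatrix offset, after which monotone rotation pins the bifurcation value strictly between them.
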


\begin{figure}[h!]
	\centering
		\subfigure[in $T_1$]{
			\scalebox{0.31}[0.31]{
				\includegraphics{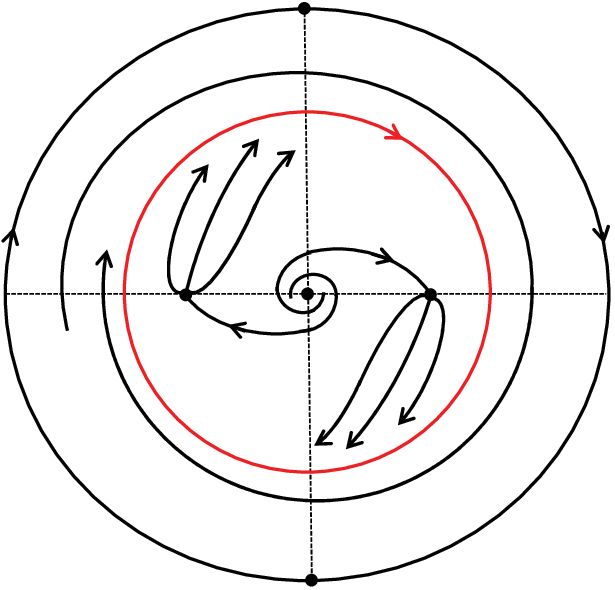}}}
\subfigure[in $BT$ ]{
			\scalebox{0.31}[0.31]{
				\includegraphics{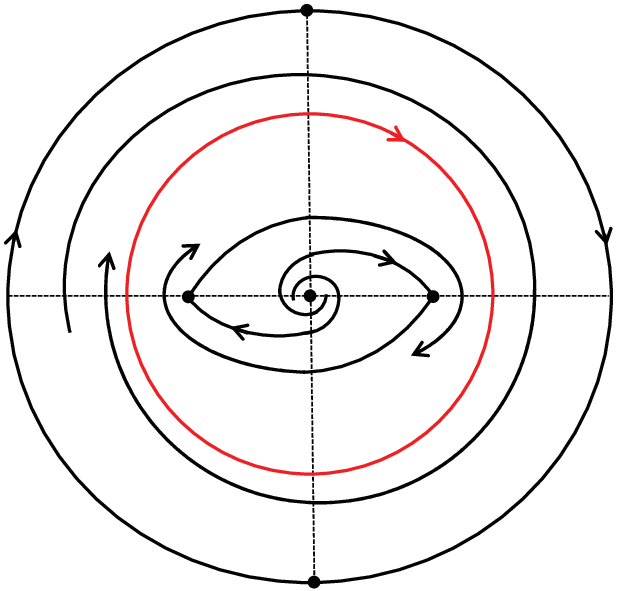}}}
\subfigure[in $T_2$]{
			\scalebox{0.31}[0.31]{
				\includegraphics{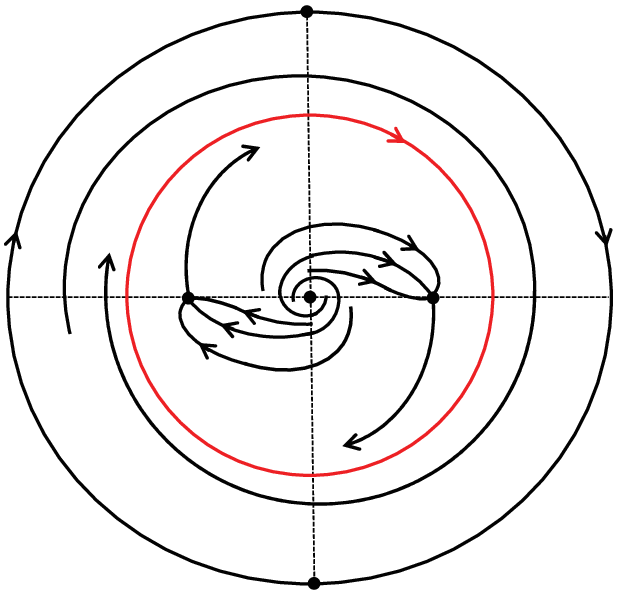}}}
			\subfigure[ in $SL_1$]{
			\scalebox{0.31}[0.31]{
				\includegraphics{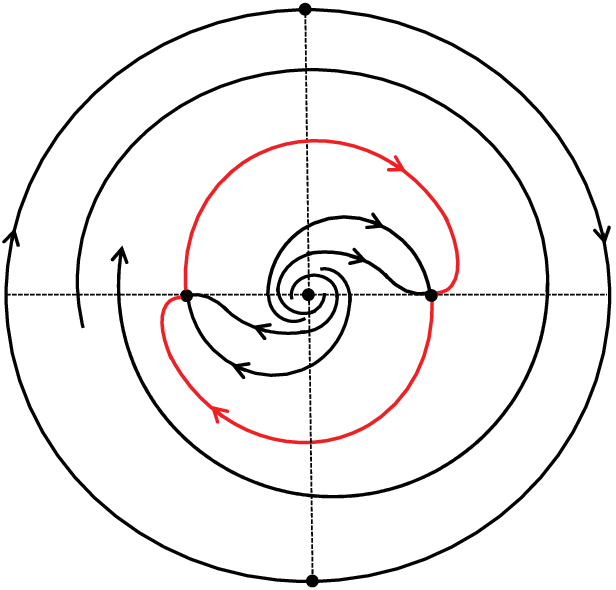}}}
\subfigure[ in $T_3$]{
			\scalebox{0.31}[0.31]{
			\includegraphics{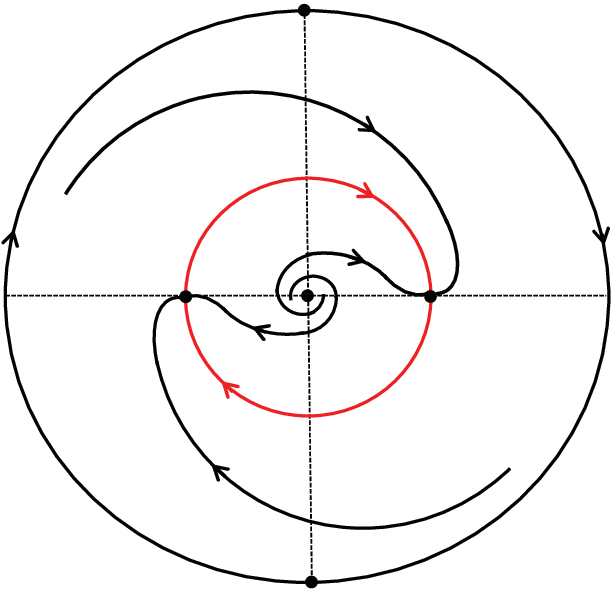}}}
\subfigure[ in $SL_2$]{
			\scalebox{0.31}[0.31]{
			\includegraphics{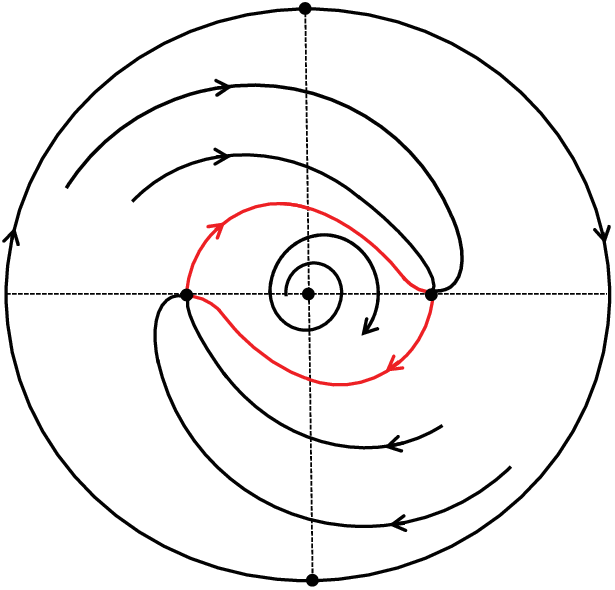}}}
\subfigure[ in $T_4$]{
			\scalebox{0.31}[0.31]{
			\includegraphics{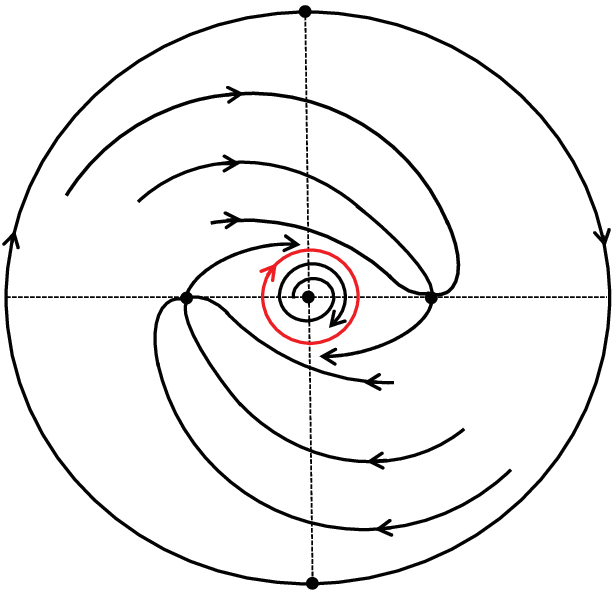}}}
	\subfigure[ in $T_5$]{
			\scalebox{0.31}[0.31]{
			\includegraphics{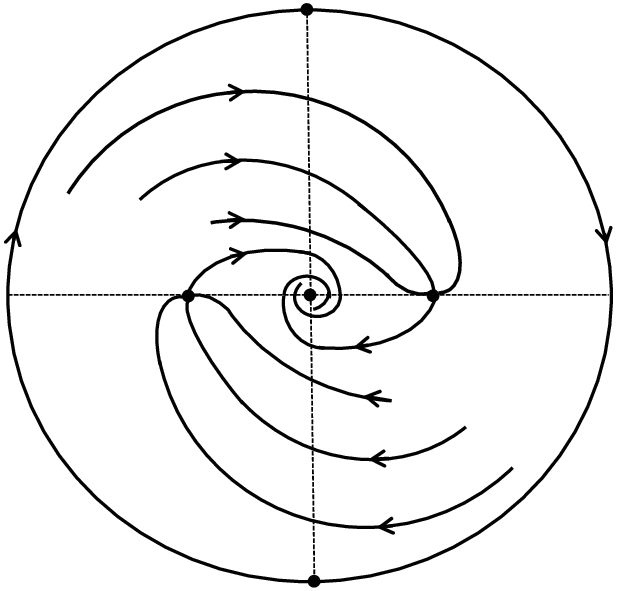}}}

	\caption{Global phase portraits    of    \eqref{initial1} for
 $0<\delta_0<2\sqrt{3}$ and   $a_1=-1$.}
	\label{gpp2}
\end{figure}
\begin{figure}
	\centering
	\subfigure[ in $T_1$]{
				\scalebox{0.31}[0.31]{
			\includegraphics{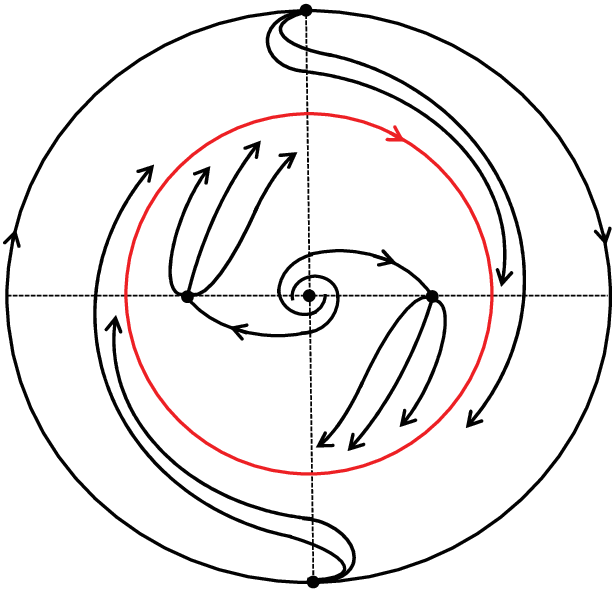}}}
	\subfigure[ in $BT$]{
				\scalebox{0.31}[0.31]{
			\includegraphics{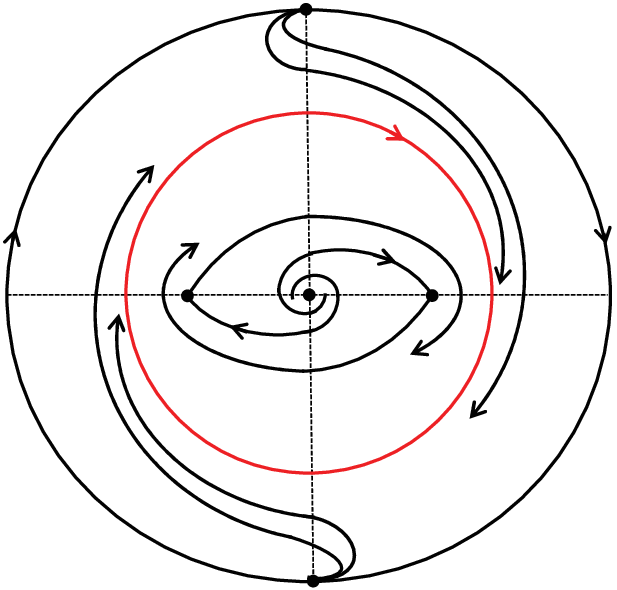}}}
	\subfigure[ in $T_2$ ]{
				\scalebox{0.31}[0.31]{
			\includegraphics{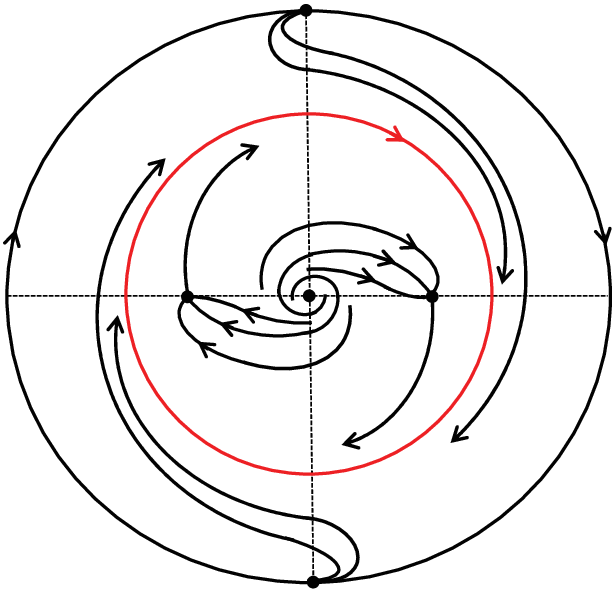}}}
	\subfigure[ in $SL_1$]{
				\scalebox{0.31}[0.31]{
			\includegraphics{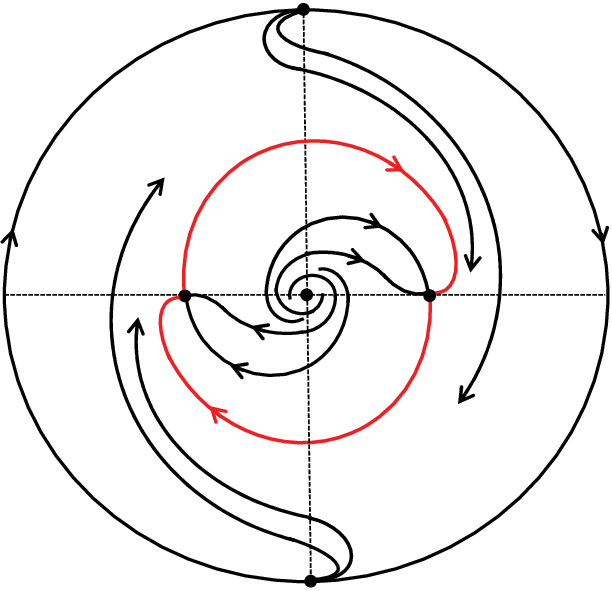}}}
		\subfigure[  in $T_{31i}$]{
			\scalebox{0.31}[0.31]{
				\includegraphics{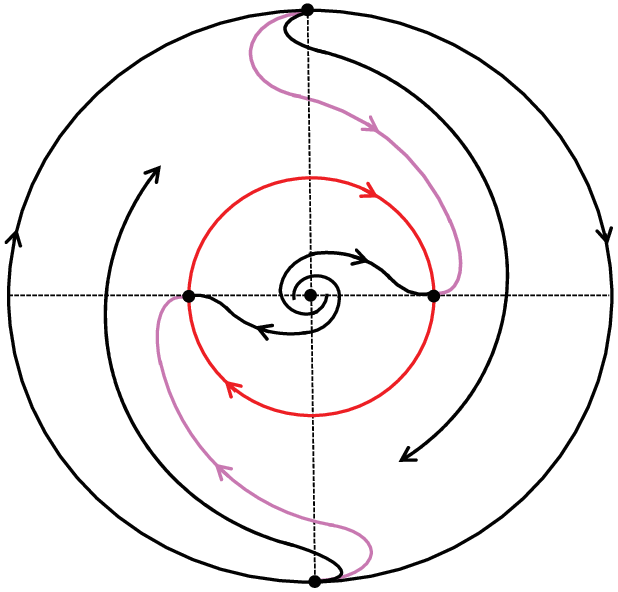}}}
		\subfigure[  in $T_{32i}$]{
			\scalebox{0.31}[0.31]{
				\includegraphics{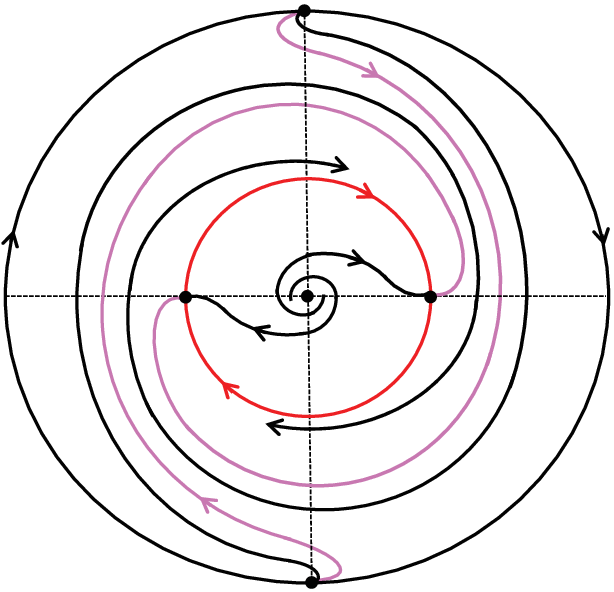}}}
		\subfigure[  in $T_{33i}$]{
			\scalebox{0.31}[0.31]{
				\includegraphics{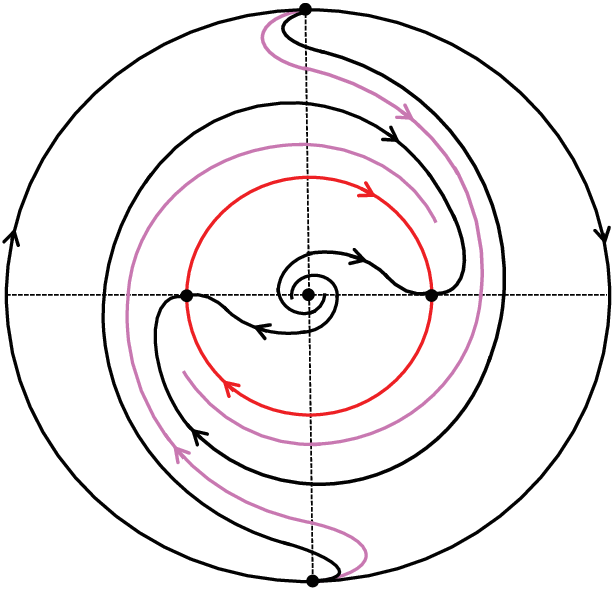}}}
	\subfigure[  in $T_{34i}$]{
			\scalebox{0.31}[0.31]{
				\includegraphics{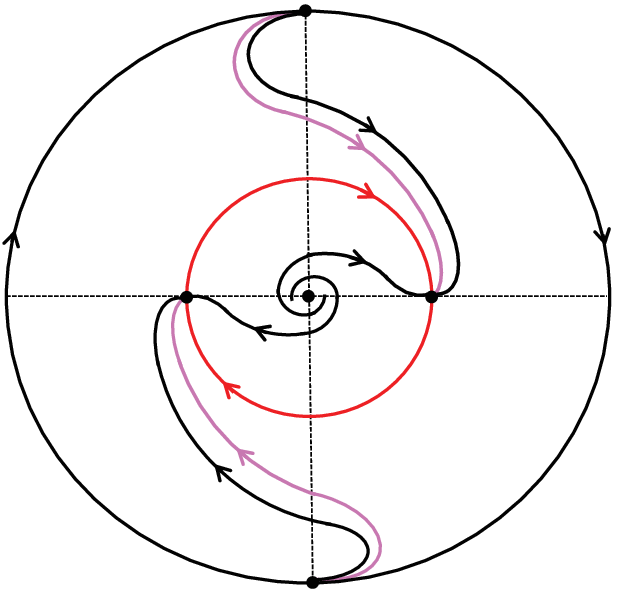}}}
\subfigure[ in $SL_{21i}$]{
			\scalebox{0.31}[0.31]{
			\includegraphics{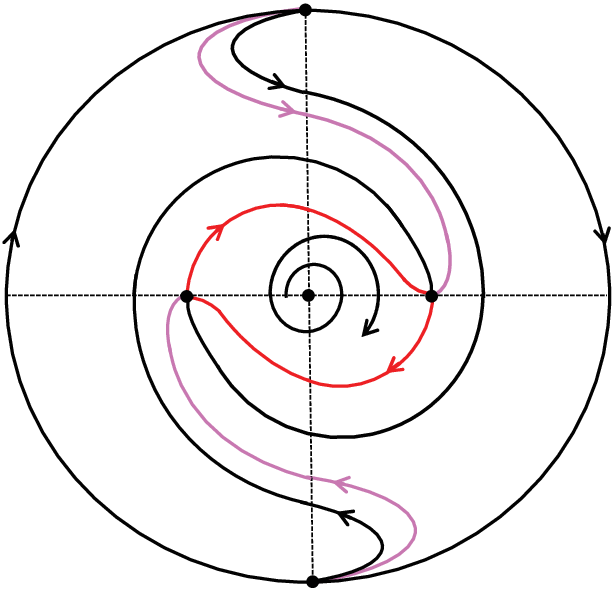}}}
\subfigure[ in $SL_{22i}$]{
			\scalebox{0.31}[0.31]{
			\includegraphics{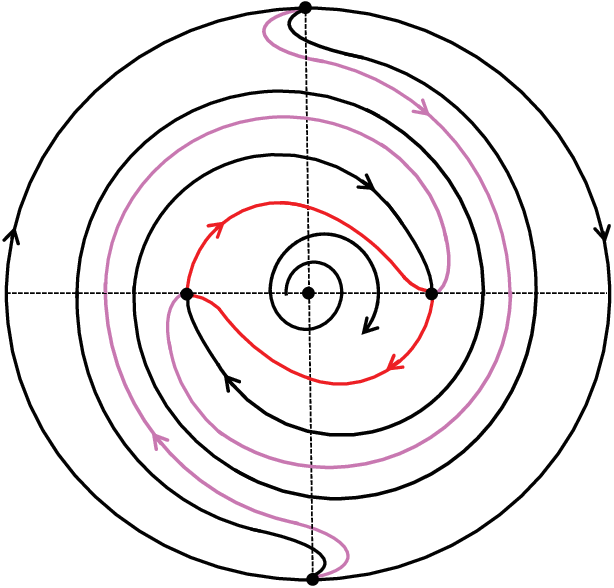}}}
	\subfigure[  in $SL_{23i}$]{
			\scalebox{0.31}[0.31]{
				\includegraphics{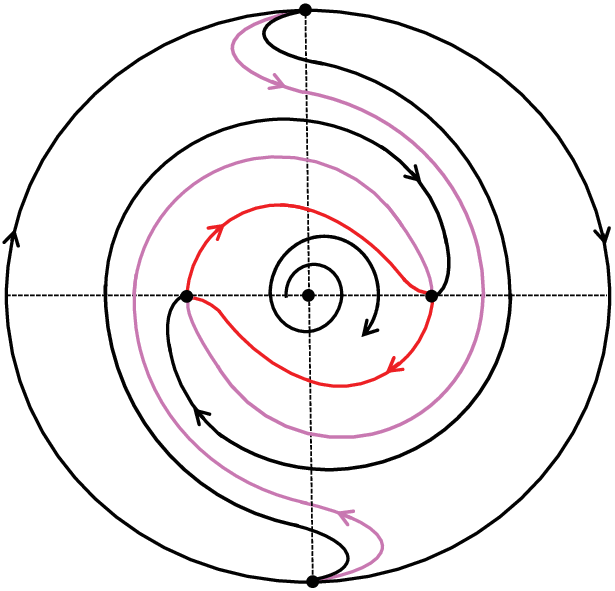}}}
\subfigure[  in $SL_{24i}$]{
			\scalebox{0.31}[0.31]{
			\includegraphics{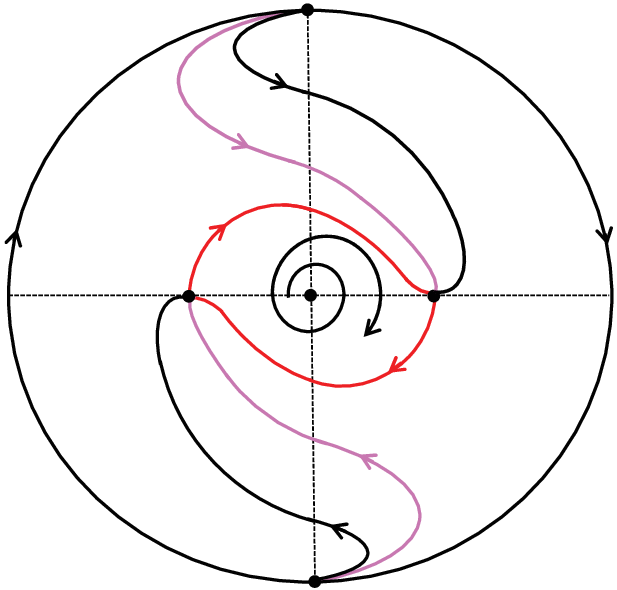}}}
\subfigure[ in $T_{41i}$]{
			\scalebox{0.31}[0.31]{
			\includegraphics{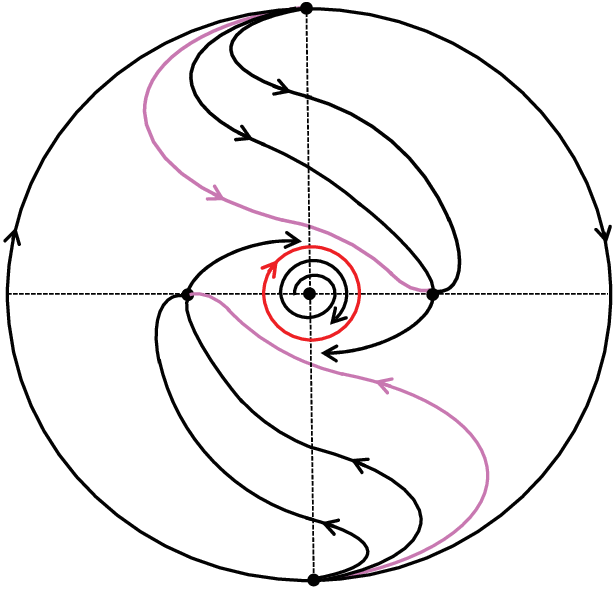}}}
\subfigure[ in $T_{42i}$]{
			\scalebox{0.31}[0.31]{
						\includegraphics{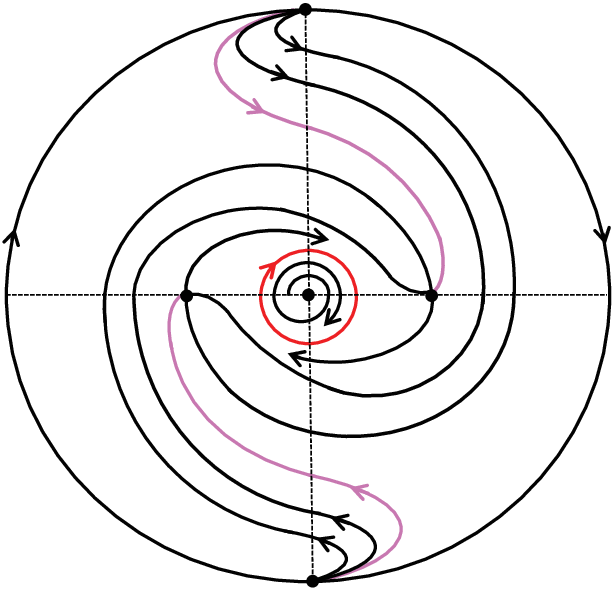}}}
						\subfigure[ in $T_{43i}$]{
				\scalebox{0.31}[0.31]{
							\includegraphics{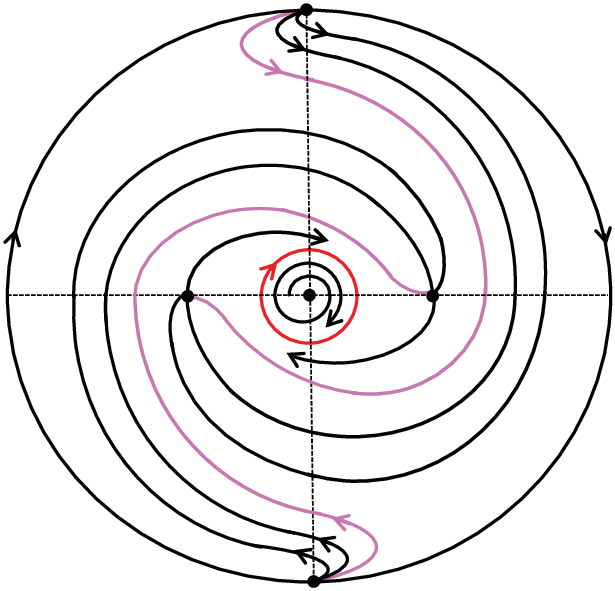}}}
		\subfigure[ in $T_{44i}$]{
				\scalebox{0.31}[0.31]{
					\includegraphics{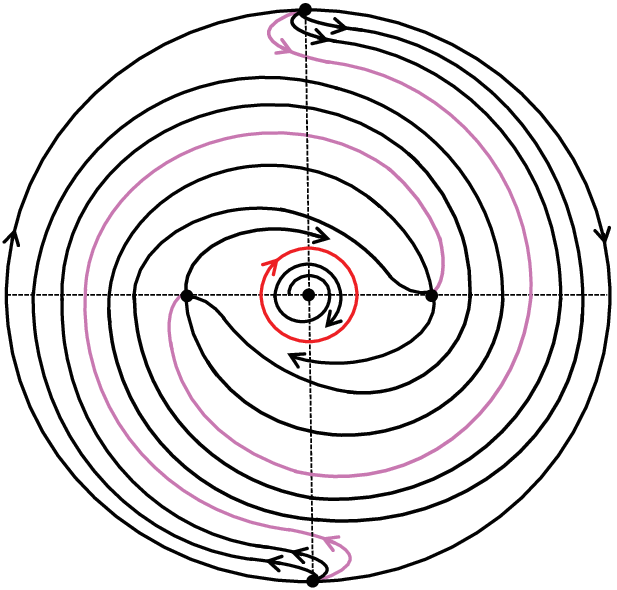}}}
			\subfigure[ in $T_{45i}$]{
			\scalebox{0.31}[0.31]{
					\includegraphics{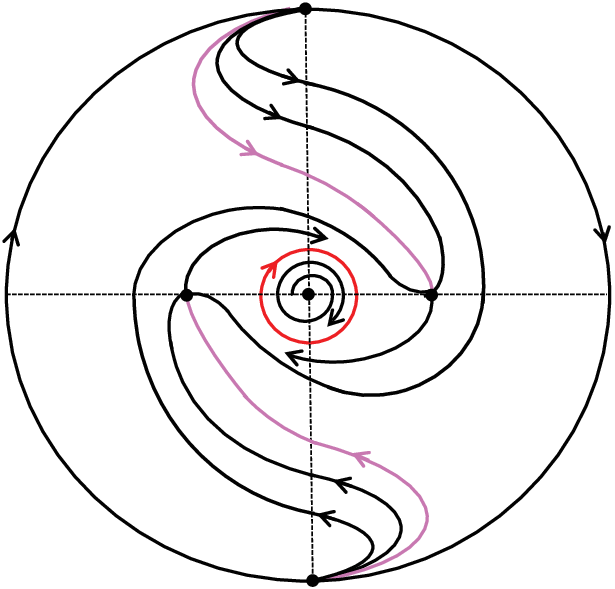}}}
							\subfigure[ in $T_{46i}$]{
			\scalebox{0.31}[0.31]{
								\includegraphics{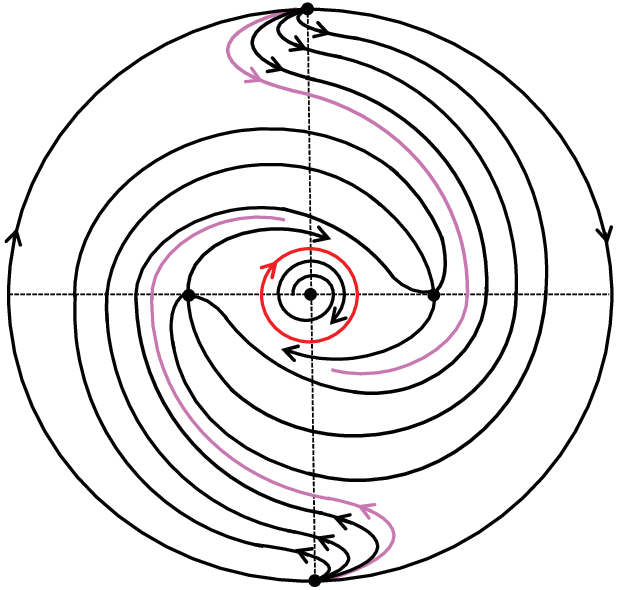}}}
				\subfigure[ in $T_{47i}$]{
			\scalebox{0.31}[0.31]{
					\includegraphics{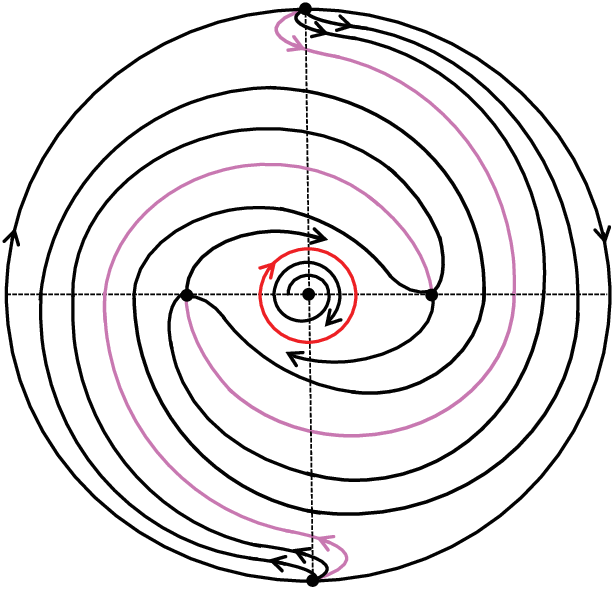}}}
		\subfigure[in $T_{48i}$]{
			\scalebox{0.31}[0.31]{
				\includegraphics{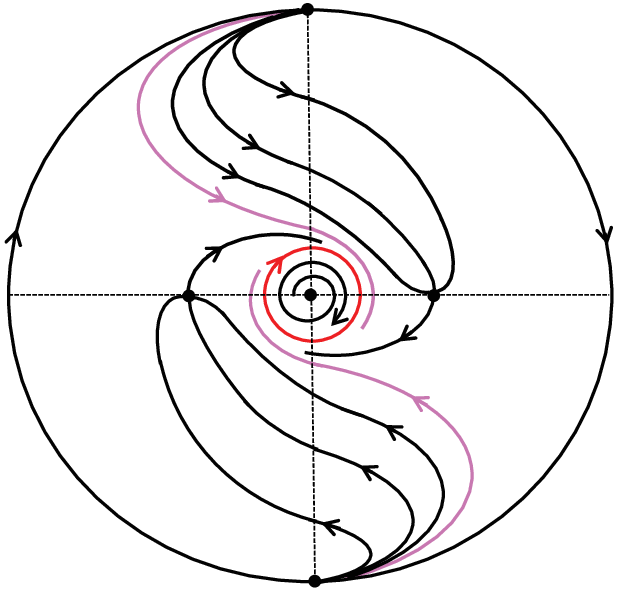}}}
			\subfigure[ in $T_{51i}$]{
			\scalebox{0.31}[0.31]{
				\includegraphics{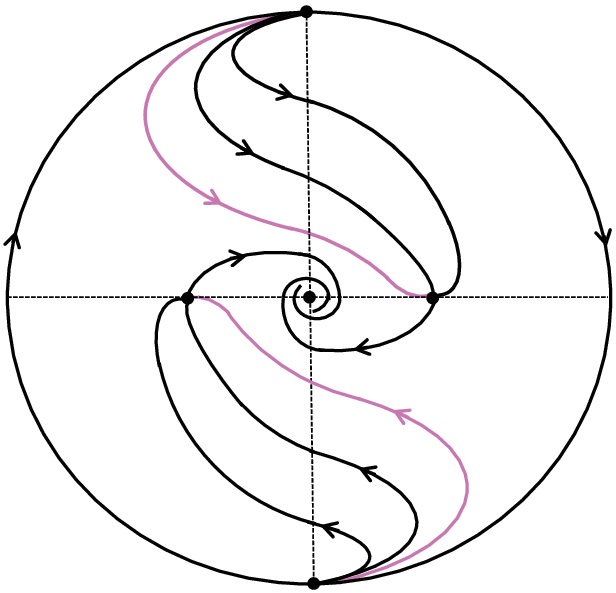}}}
					\subfigure[ in $T_{52i}$]{
			\scalebox{0.31}[0.31]{
						\includegraphics{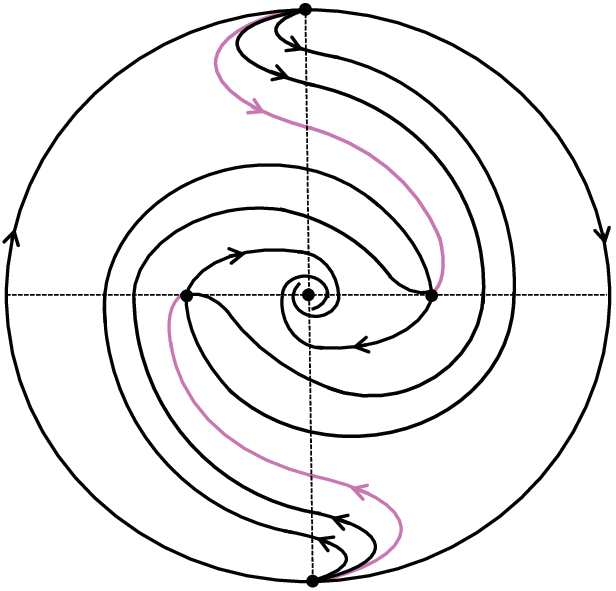}}}
						\subfigure[ in $T_{53i}$]{
			\scalebox{0.31}[0.31]{
							\includegraphics{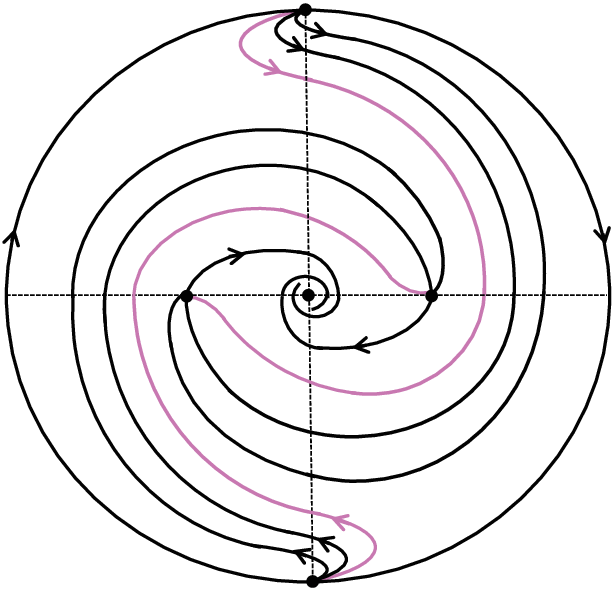}}}
\subfigure[  in  $T_{54i}$]{
			\scalebox{0.31}[0.31]{
				\includegraphics{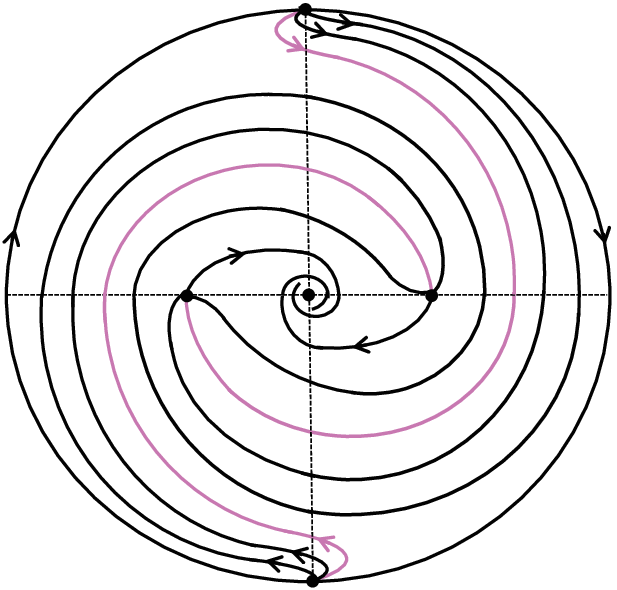}}}
	\caption{Global phase portraits    of    \eqref{initial1}  for $\delta_0\geq2\sqrt{3}$  and  $a_1=-1$.  }
	\label{gpp3}
\end{figure}

\addtocounter{figure}{-1}
\begin{figure}
	\centering
\subfigure[ in $T_{55i}$]{
			\scalebox{0.31}[0.31]{
					\includegraphics{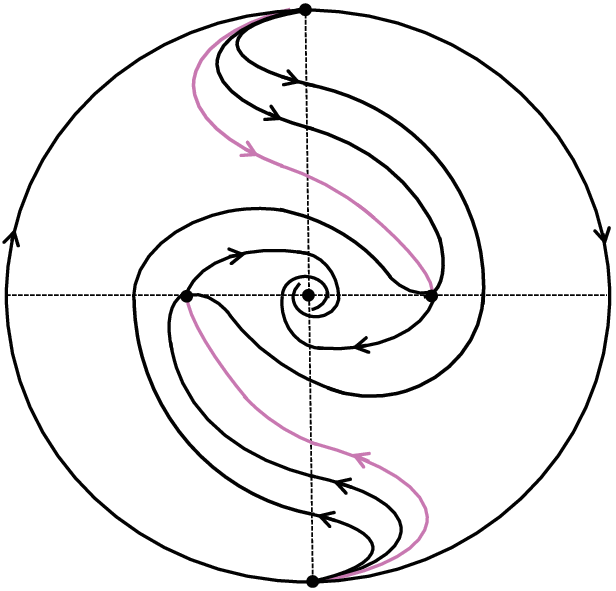}}}
							\subfigure[ in $T_{56i}$]{
			\scalebox{0.31}[0.31]{
								\includegraphics{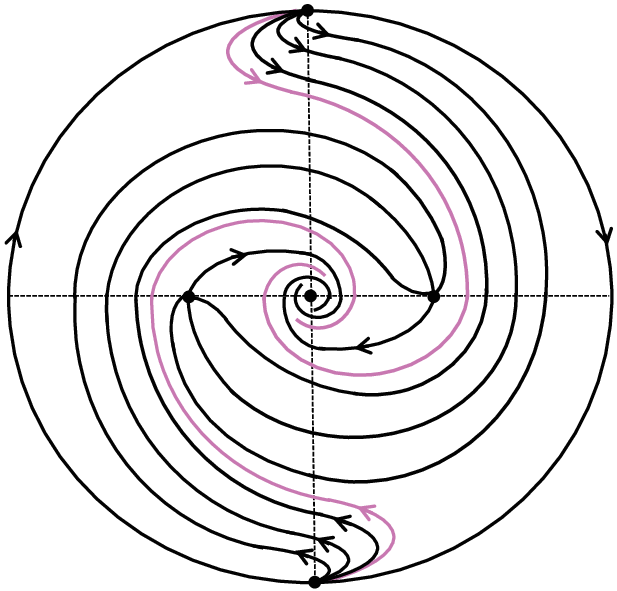}}}
		\subfigure[  in  $T_{57i}$]{
			\scalebox{0.31}[0.31]{
				\includegraphics{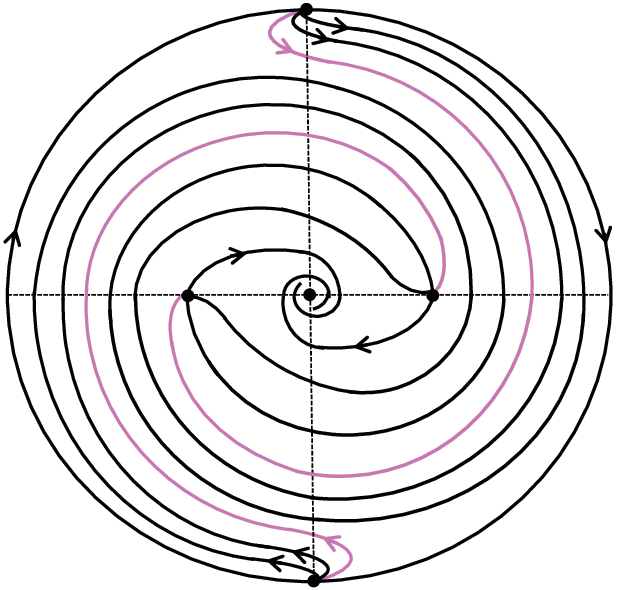}}}
		\subfigure[ in $T_{58i}$]{
			\scalebox{0.31}[0.31]{
			\includegraphics{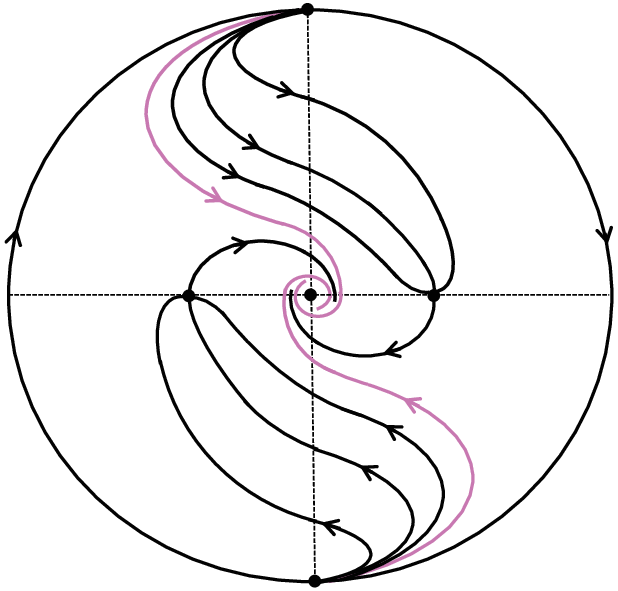}}}
	\caption{Continued.}
	\label{gpp71}
\end{figure}

\begin{figure}[h!]
	\centering
	\subfigure[ in $I$]{
			\scalebox{0.31}[0.31]{
			\includegraphics{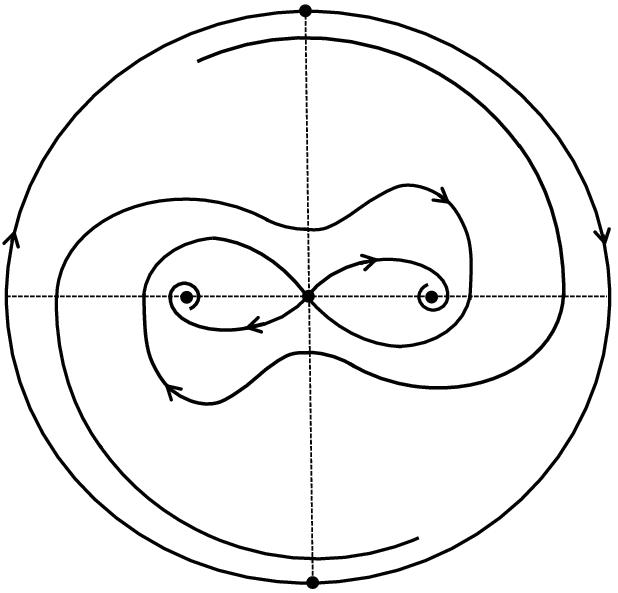}}}
		\subfigure[ in $DL_1$]{
			\scalebox{0.31}[0.31]{
				\includegraphics{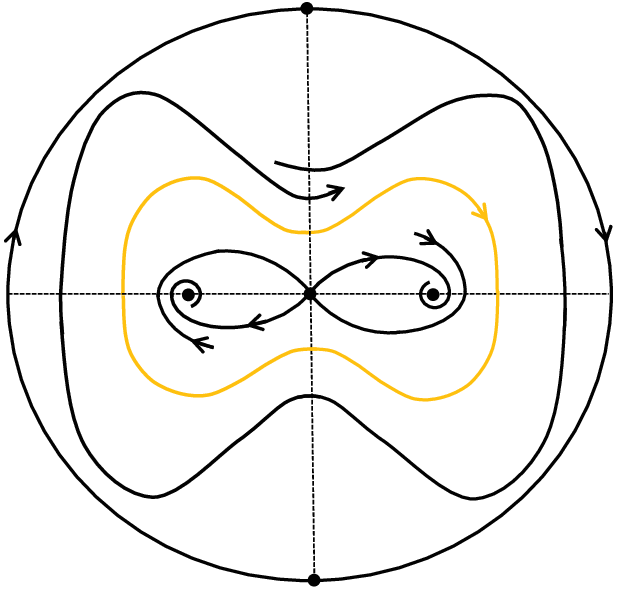}}}
	\subfigure[ in $II$ ]{
			\scalebox{0.31}[0.31]{
			\includegraphics{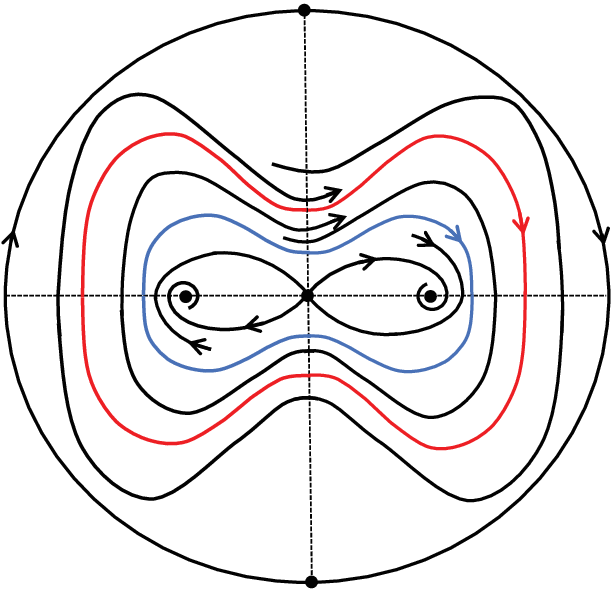}}}
					\subfigure[ in $HL_1$]{
			\scalebox{0.31}[0.31]{
					\includegraphics{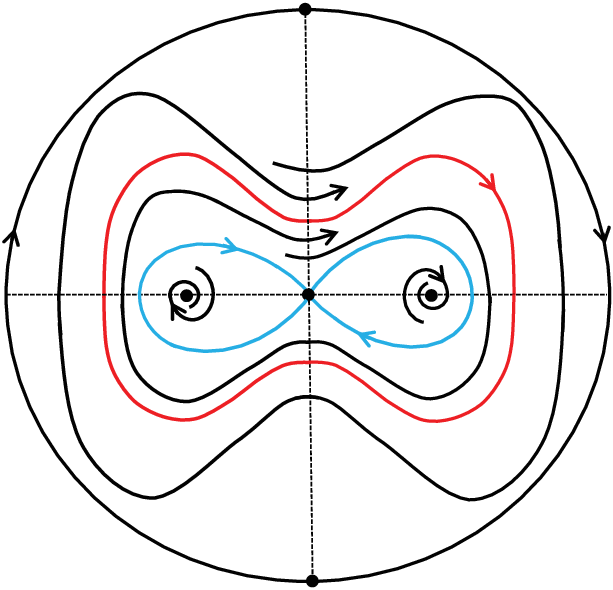}}}
			\subfigure[in $III$]{
			\scalebox{0.31}[0.31]{
				\includegraphics{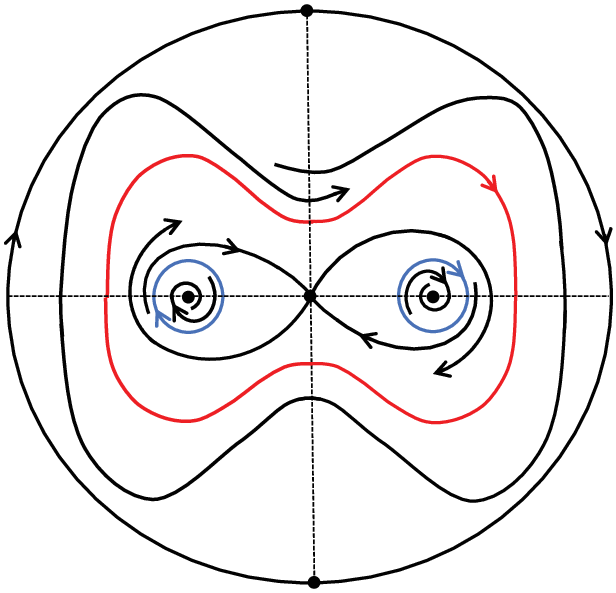}}}
		\subfigure[ in $IV$]{
			\scalebox{0.31}[0.31]{
				\includegraphics{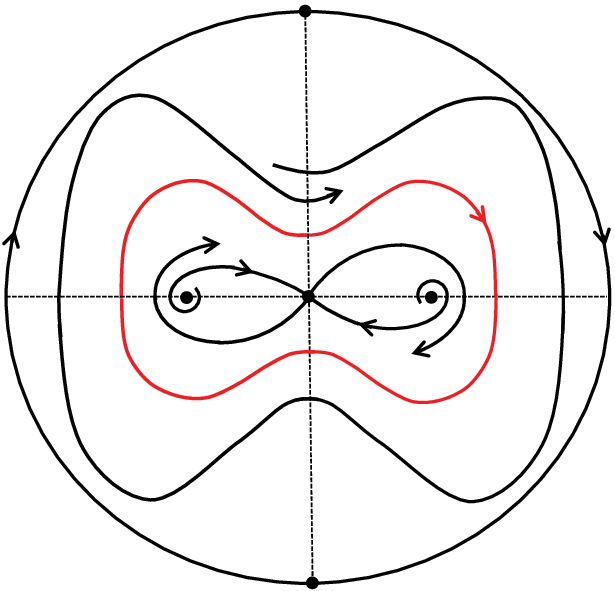}}}
	\caption{Global phase portraits   of    \eqref{initial1}  for $0< \delta_0<2\sqrt{3}$    and $a_1\geq0$.}
	\label{gpp4}
\end{figure}
\begin{figure}[h!]
	\centering
	\subfigure[ in $R_{11i}$]{
			\scalebox{0.31}[0.31]{
			\includegraphics{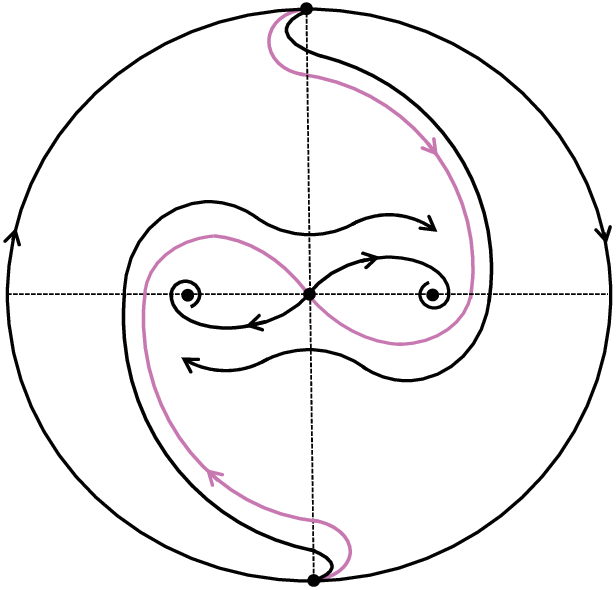}}}
	\subfigure[ in $R_{12i}$]{
			\scalebox{0.31}[0.31]{
			\includegraphics{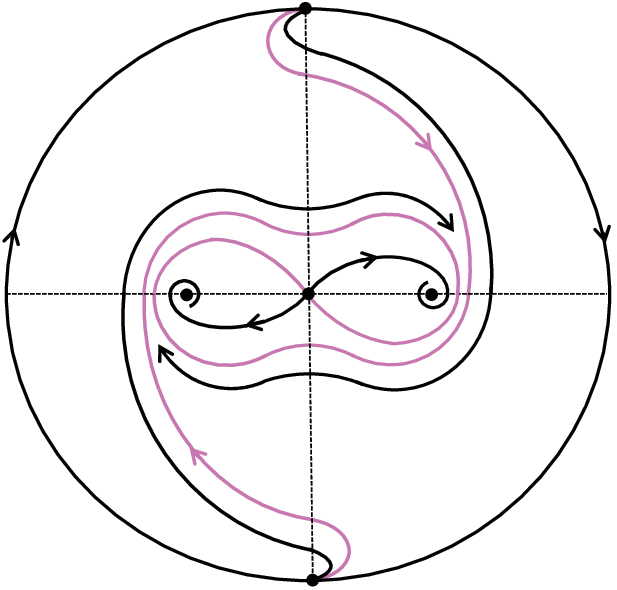}}}
	\subfigure[ in $R_{13i}$ ]{
			\scalebox{0.31}[0.31]{
			\includegraphics{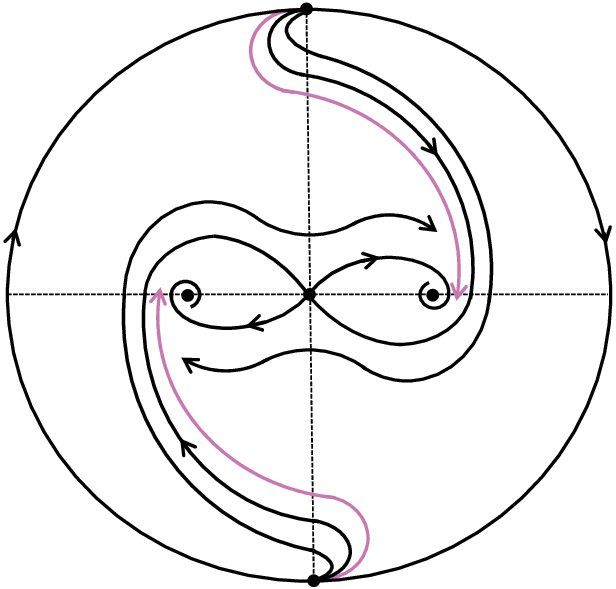}}}
	\subfigure[ in $R_{14i}$]{
			\scalebox{0.31}[0.31]{
			\includegraphics{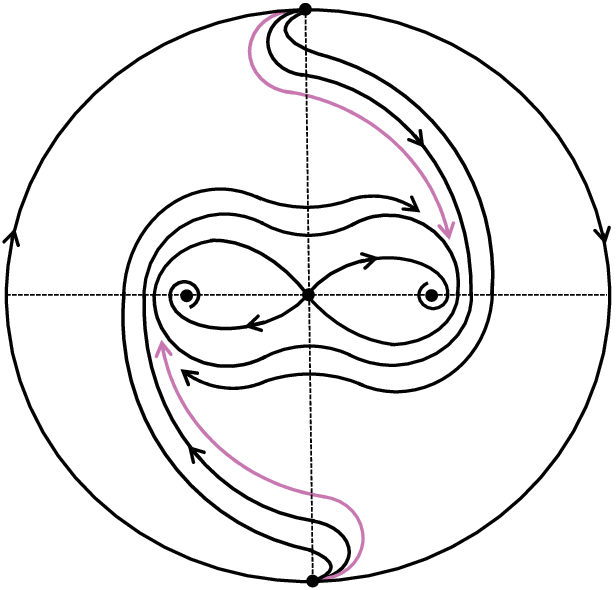}}}
	\subfigure[ in \textcolor{black}{$\widehat{DL_1}$}]{
			\scalebox{0.31}[0.31]{
			\includegraphics{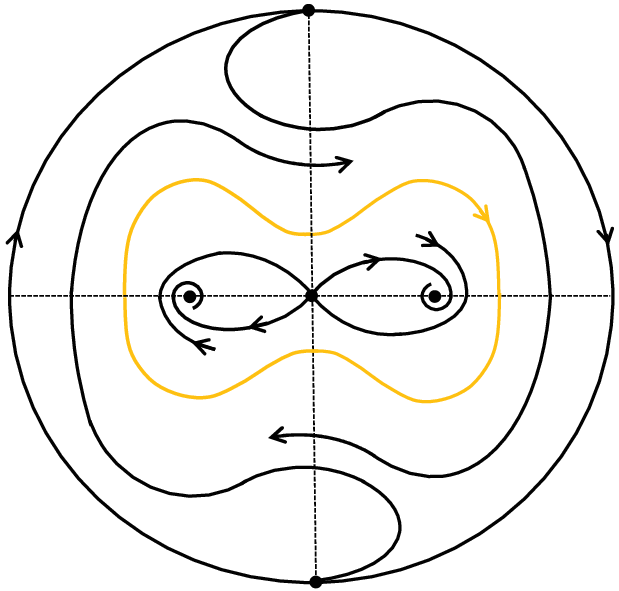}}}
	\subfigure[ in $R_2$]{
			\scalebox{0.31}[0.31]{
			\includegraphics{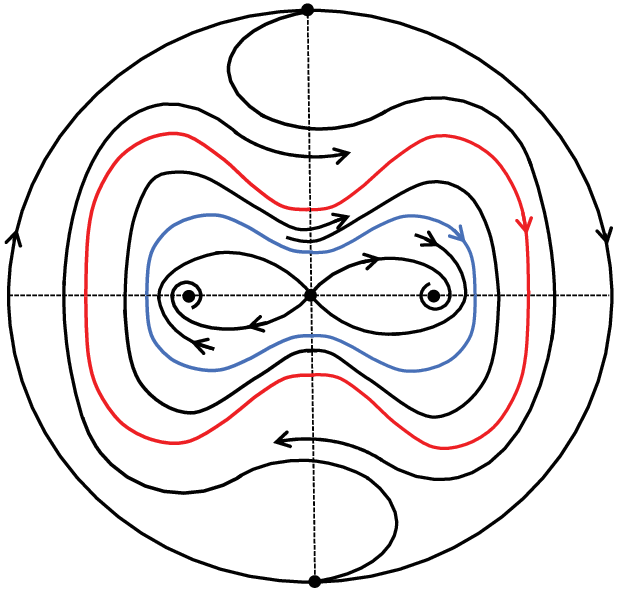}}}
	\subfigure[in \textcolor{black}{$HL_1$}]{
			\scalebox{0.31}[0.31]{
			\includegraphics{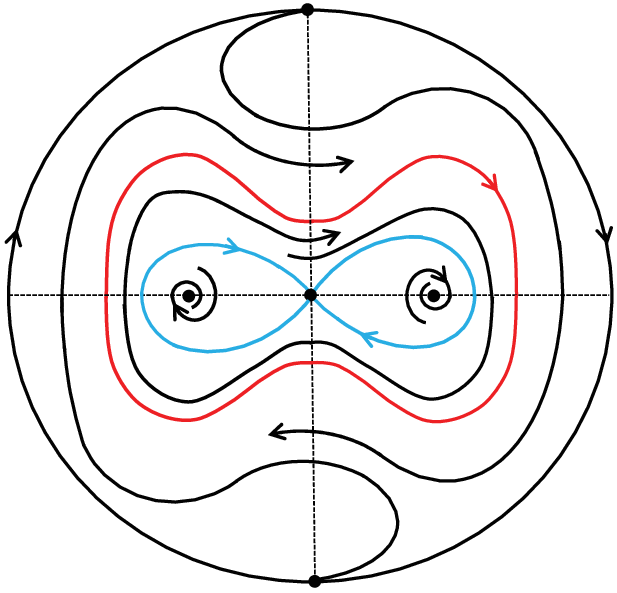}}}
	\subfigure[ in $R_3$]{
			\scalebox{0.31}[0.31]{
			\includegraphics{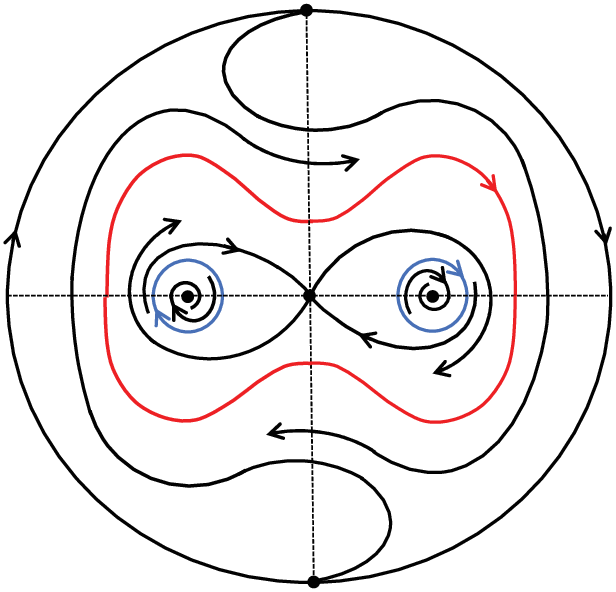}}}
	\subfigure[ in $R_4$]{
			\scalebox{0.31}[0.31]{
			\includegraphics{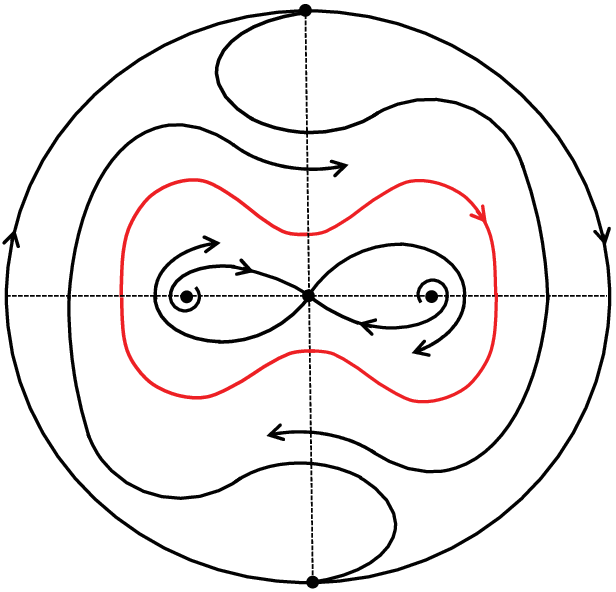}}}

	\caption{Global phase portraits    of    \eqref{initial1}  for $\delta_0\geq2\sqrt{3}$   and $a_1\geq0$.}
	\label{gpp5}
\end{figure}
\begin{figure}[h!]
	\centering
	\subfigure[ in $V$]{
			\scalebox{0.31}[0.31]{
			\includegraphics{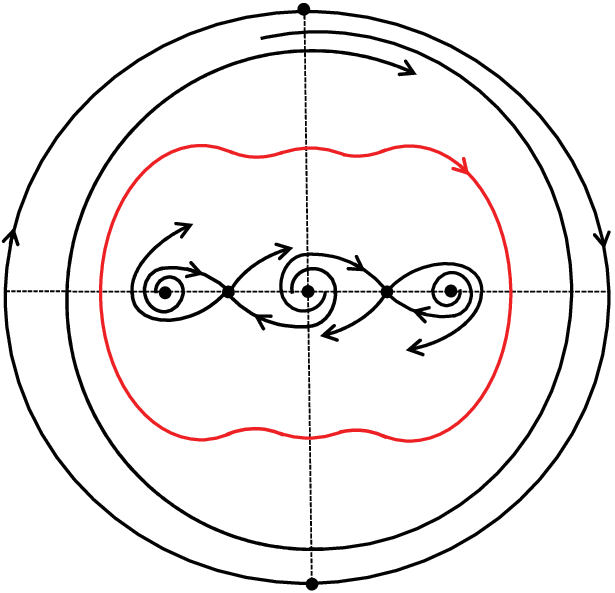}}}
	\subfigure[ in $VI$]{
			\scalebox{0.31}[0.31]{
			\includegraphics{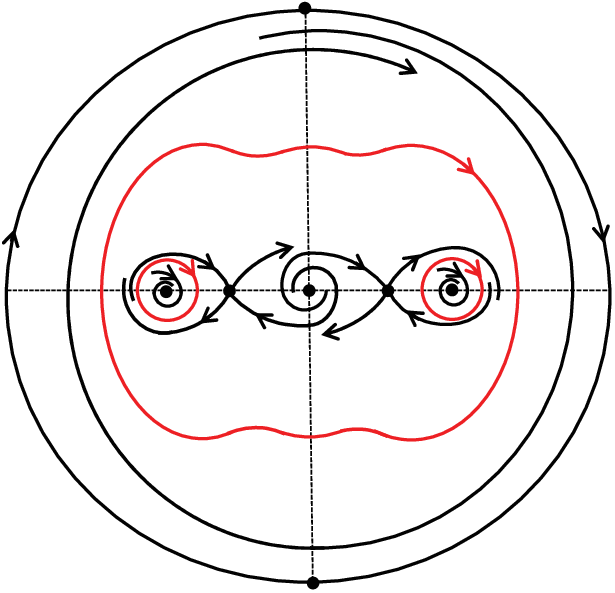}}}
	\subfigure[ in $HL_2$]{
			\scalebox{0.31}[0.31]{
			\includegraphics{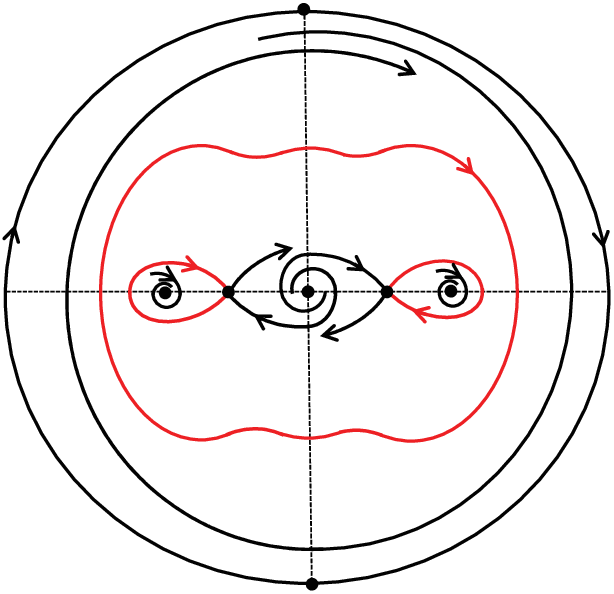}}}
			\subfigure[ in $VII$]{
			\scalebox{0.31}[0.31]{
				\includegraphics{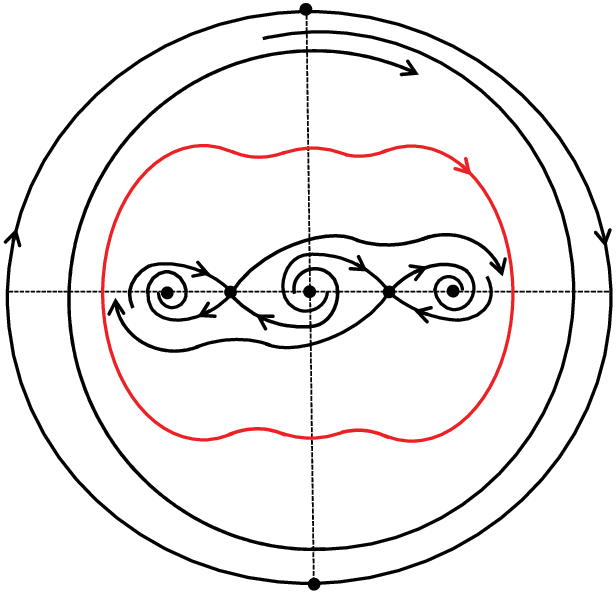}}}
		\subfigure[in $HE_{11}$]{
			\scalebox{0.31}[0.31]{
				\includegraphics{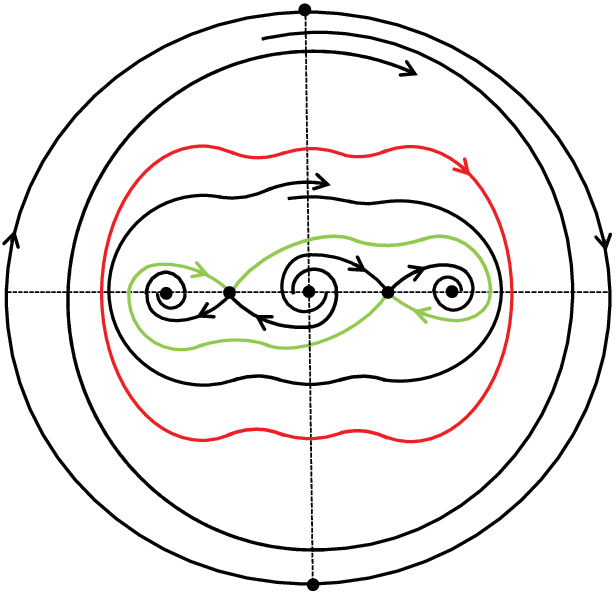}}}
		\subfigure[in $HE_{12}$]{
			\scalebox{0.31}[0.31]{
				\includegraphics{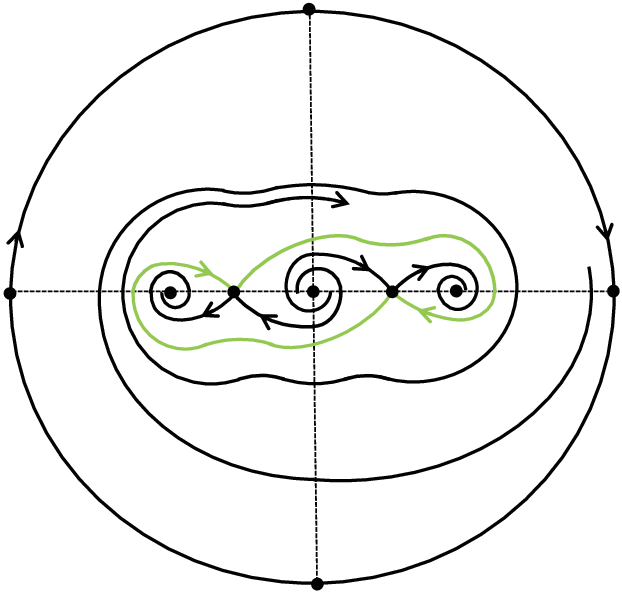}}}
\subfigure[ in $VIII$]{
			\scalebox{0.31}[0.31]{
					\includegraphics{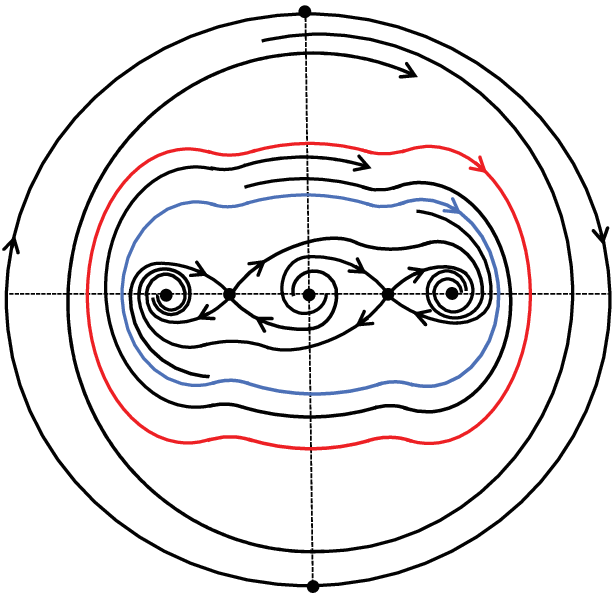}}}
			\subfigure[in $DL_2$]{
			\scalebox{0.31}[0.31]{
					\includegraphics{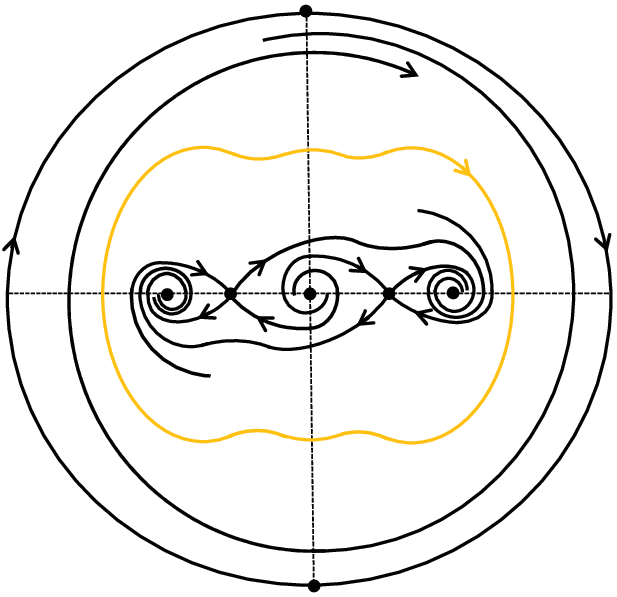}}}
\subfigure[ in $IX$]{
			\scalebox{0.31}[0.31]{
					\includegraphics{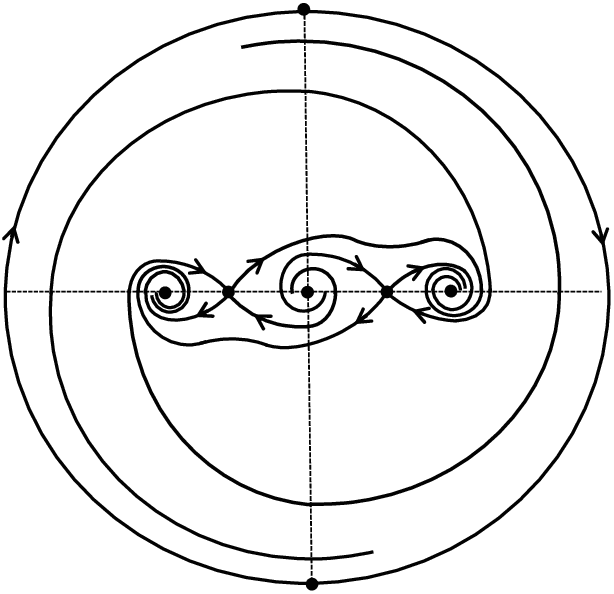}}}
					\subfigure[ in $HE_2$]{
			\scalebox{0.31}[0.31]{
						\includegraphics{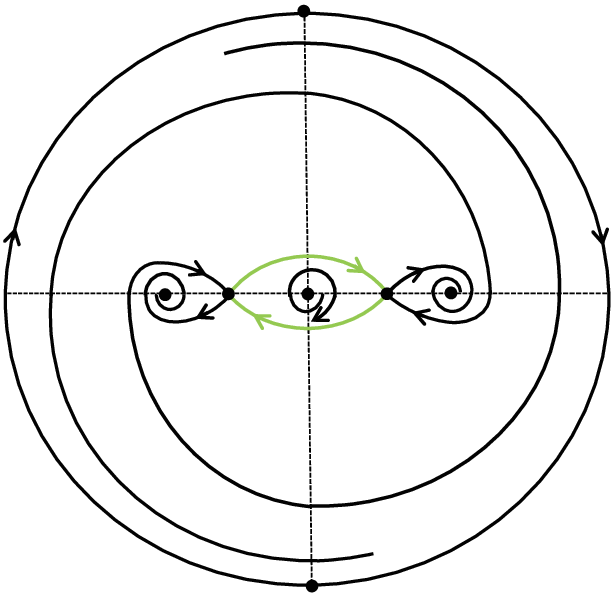}}}
				\subfigure[ in $X$]{
			\scalebox{0.31}[0.31]{
						\includegraphics{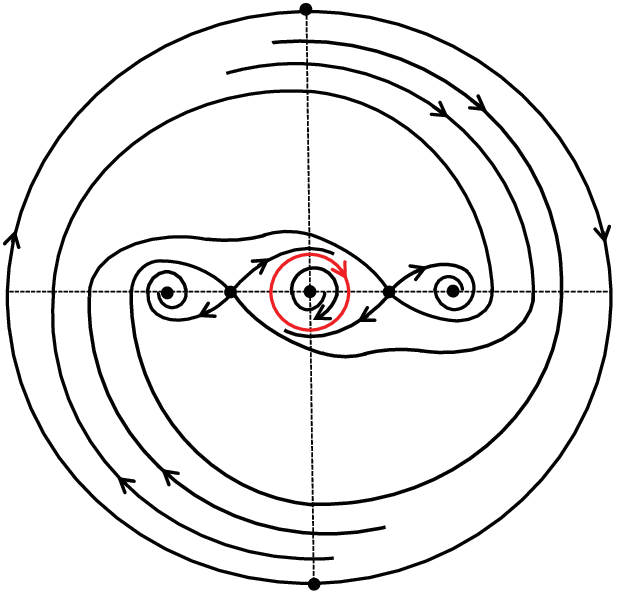}}}
									\subfigure[ in $XI$]{
			\scalebox{0.31}[0.31]{
							\includegraphics{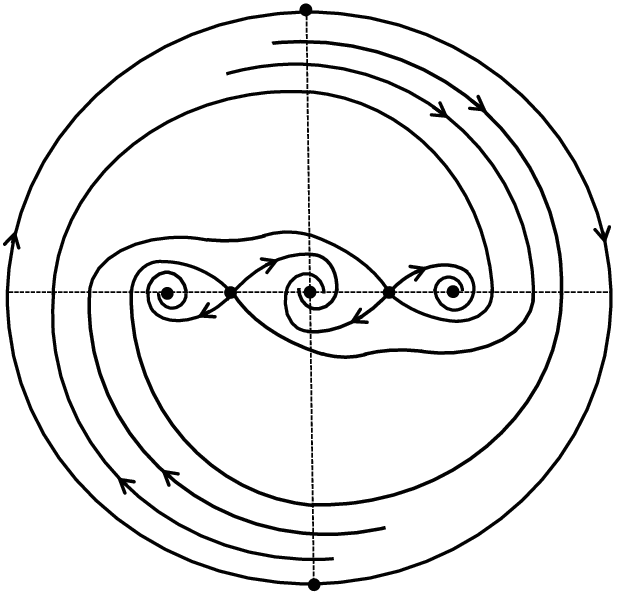}}}
	\caption{Global phase portraits    of    \eqref{initial1}  for $0<\delta_0<2\sqrt{3}$ and $-1<a_1<0$.}
	\label{gpp6}
\end{figure}
\begin{figure}
	\centering
	\subfigure[ in $R_5$]{
			\scalebox{0.31}[0.31]{
			\includegraphics{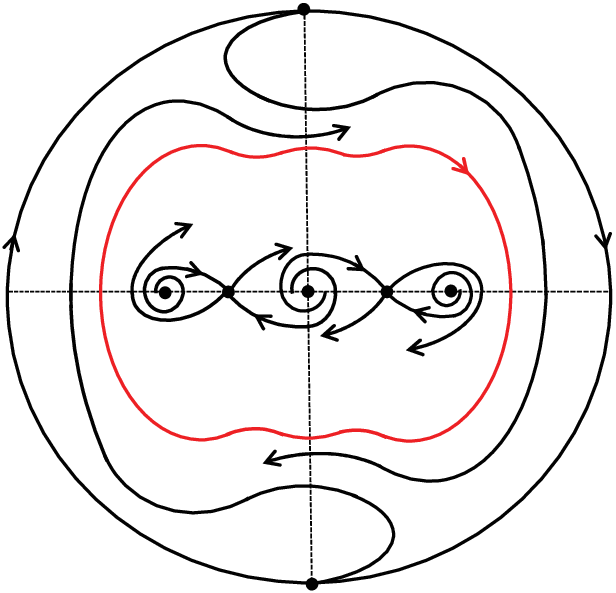}}}
	\subfigure[ in $R_6$]{
			\scalebox{0.31}[0.31]{
			\includegraphics{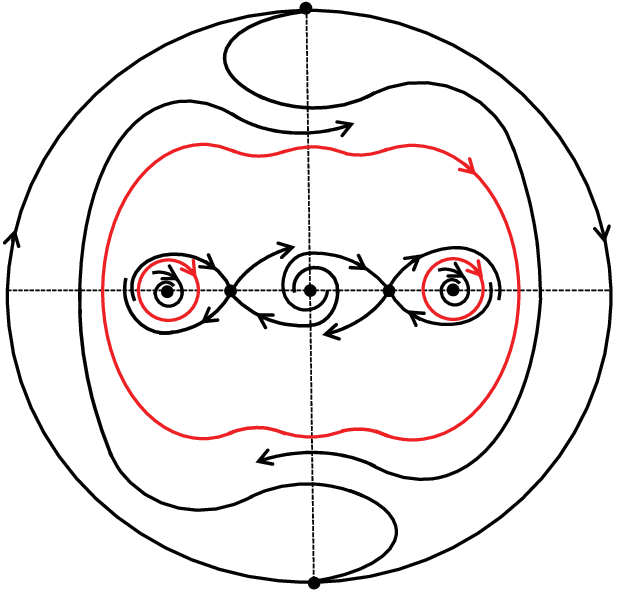}}}
	\subfigure[ in $HL_2$]{
			\scalebox{0.31}[0.31]{
			\includegraphics{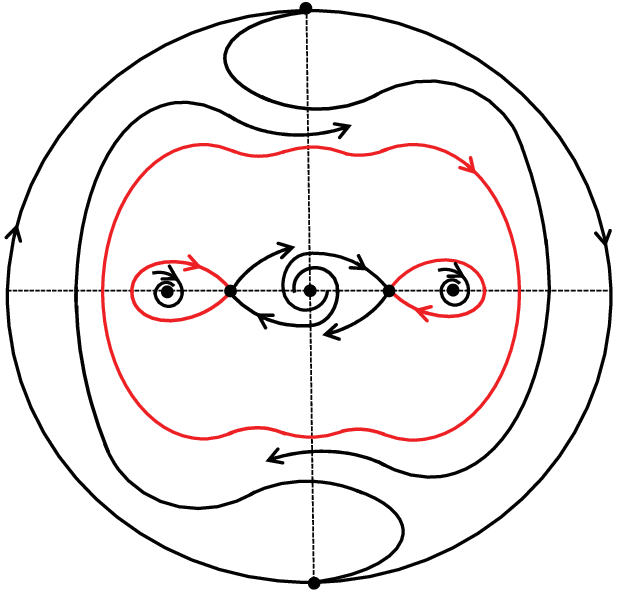}}}
	\subfigure[ in $R_7$]{
			\scalebox{0.31}[0.31]{
			\includegraphics{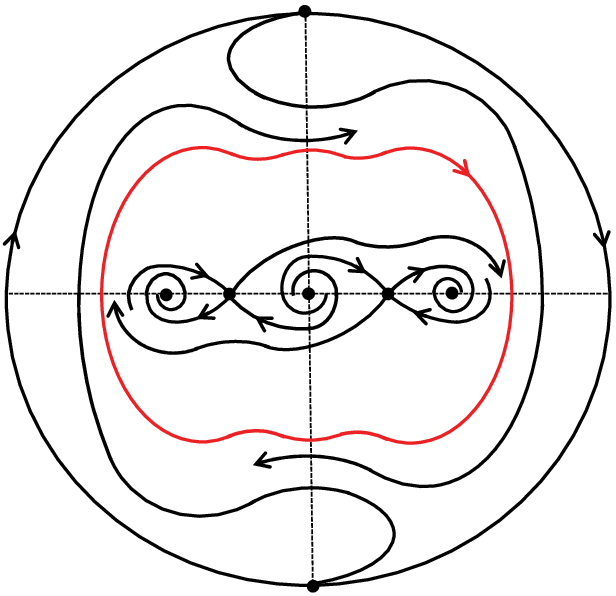}}}
	\subfigure[ in $\widehat {HE}_{11}$]{
			\scalebox{0.31}[0.31]{
			\includegraphics{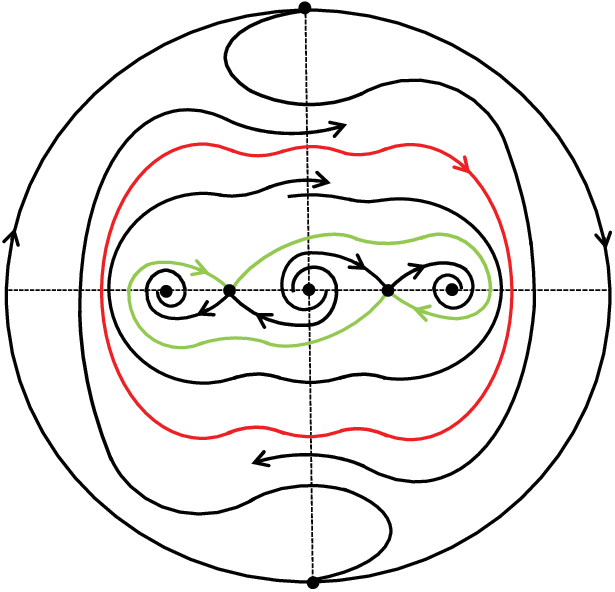}}}
	\subfigure[ in $\widehat {HE}_{12}$]{
			\scalebox{0.31}[0.31]{
			\includegraphics{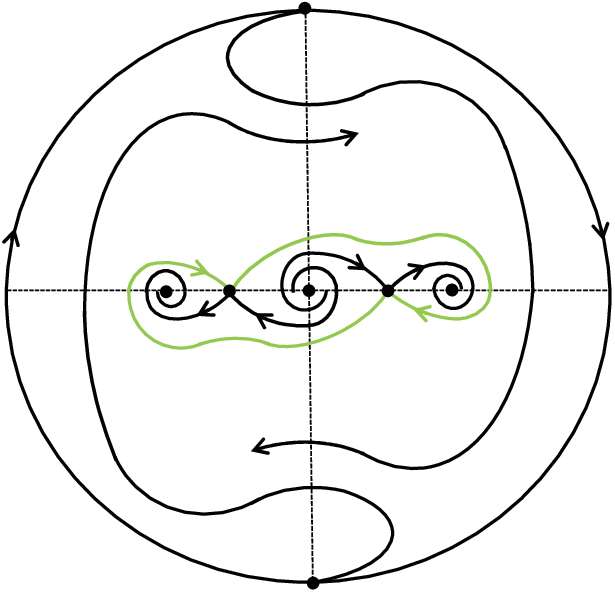}}}
	\subfigure[ in $R_8$]{
			\scalebox{0.31}[0.31]{
			\includegraphics{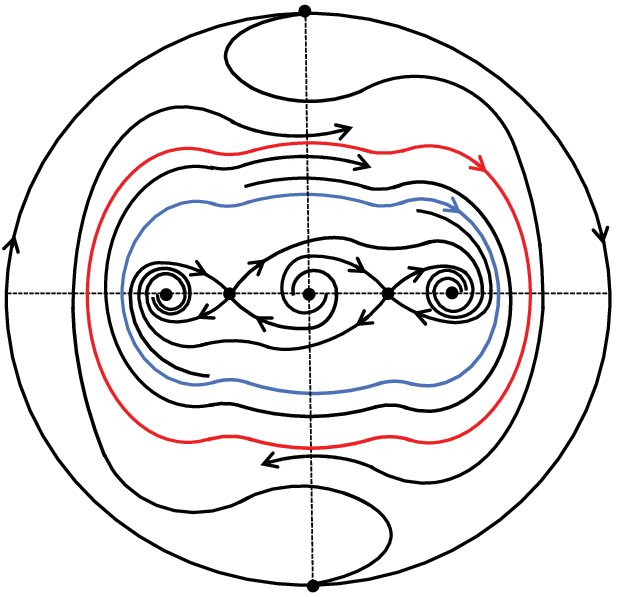}}}
	\subfigure[in $	\widehat{DL_2}$]{
			\scalebox{0.31}[0.31]{
			\includegraphics{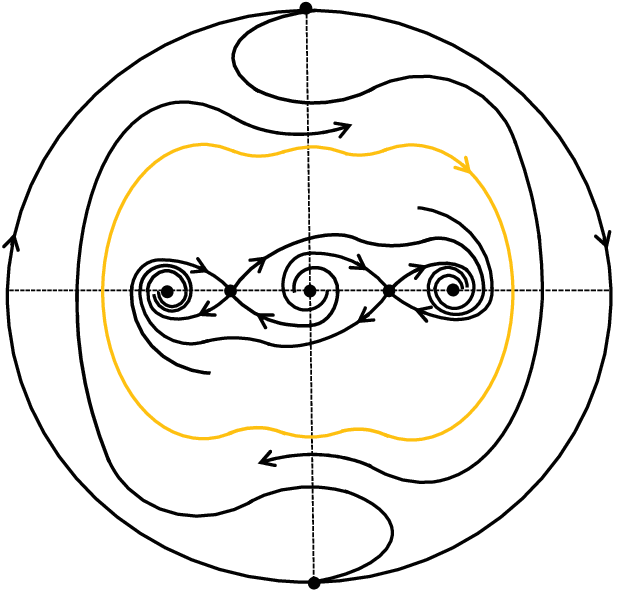}}}
	\subfigure[ in $R_{91i}$]{
			\scalebox{0.31}[0.31]{
			\includegraphics{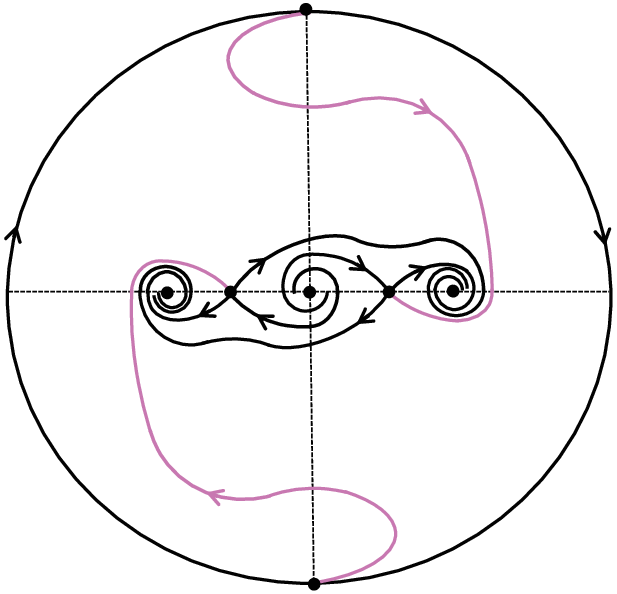}}}
\subfigure[ in $R_{92i}$]{
			\scalebox{0.31}[0.31]{
			\includegraphics{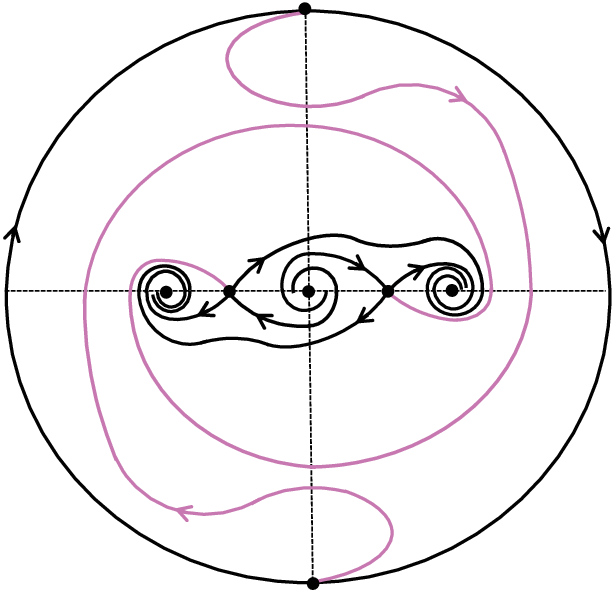}}}
			\subfigure[ in $R_{93i}$]{
			\scalebox{0.31}[0.31]{
				\includegraphics{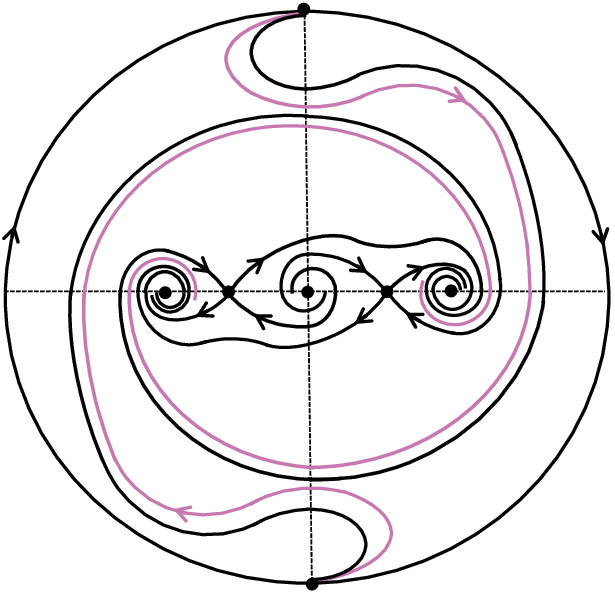}}}
	\subfigure[ in $R_{94i}$]{
			\scalebox{0.31}[0.31]{
			\includegraphics{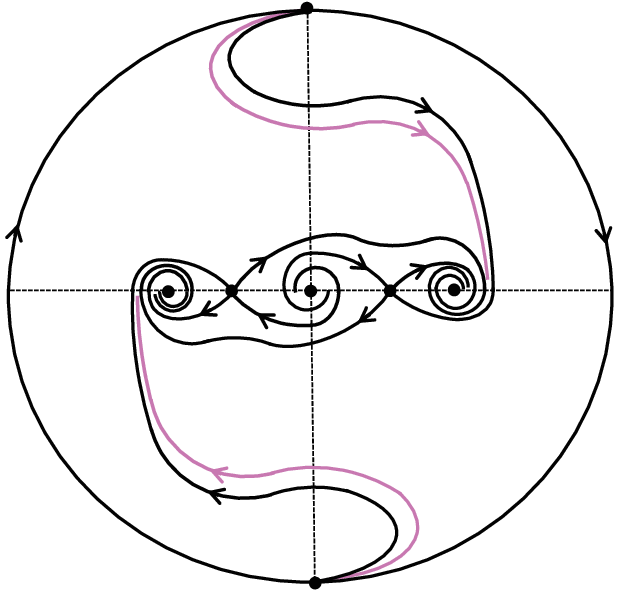}}}
		\subfigure[  in $HE_{21i}$]{
			\scalebox{0.31}[0.31]{
				\includegraphics{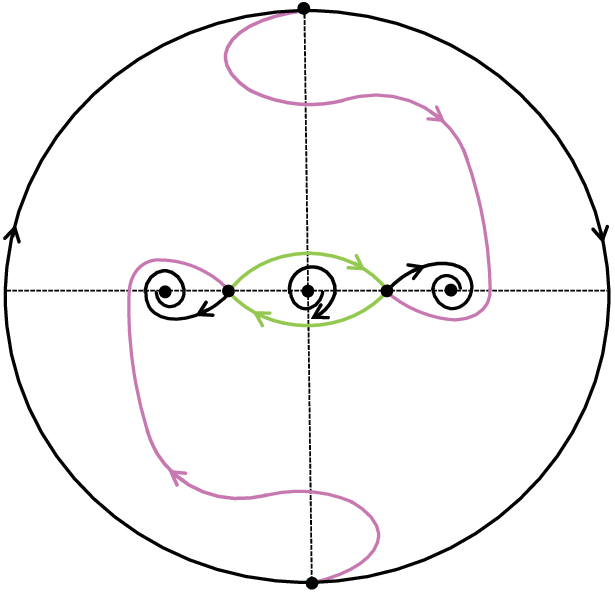}}}
\subfigure[ in $HE_{22i}$]{
			\scalebox{0.31}[0.31]{
				\includegraphics{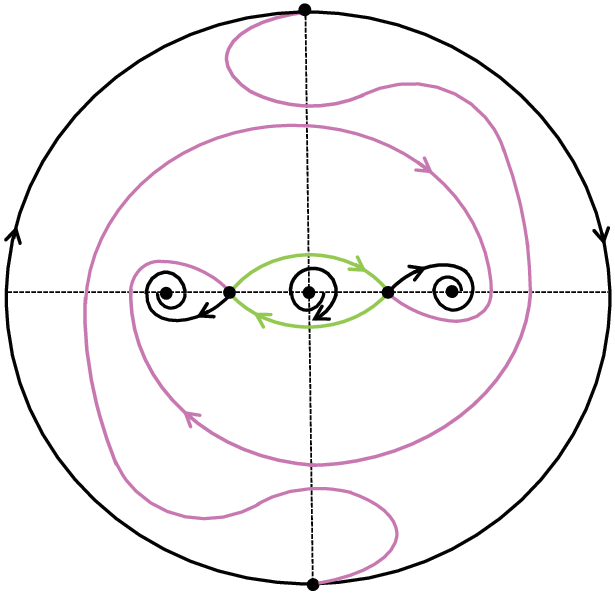}}}
		\subfigure[  in $HE_{23i}$]{
			\scalebox{0.31}[0.31]{
				\includegraphics{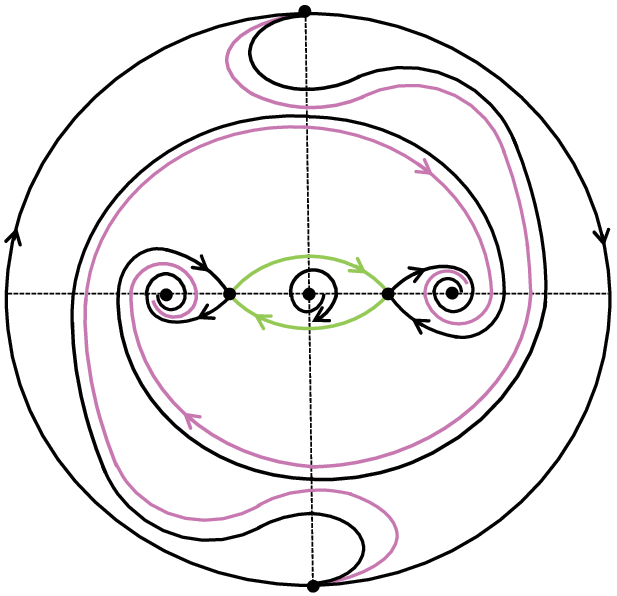}}}
		\subfigure[  in $HE_{24i}$]{
			\scalebox{0.31}[0.31]{
				\includegraphics{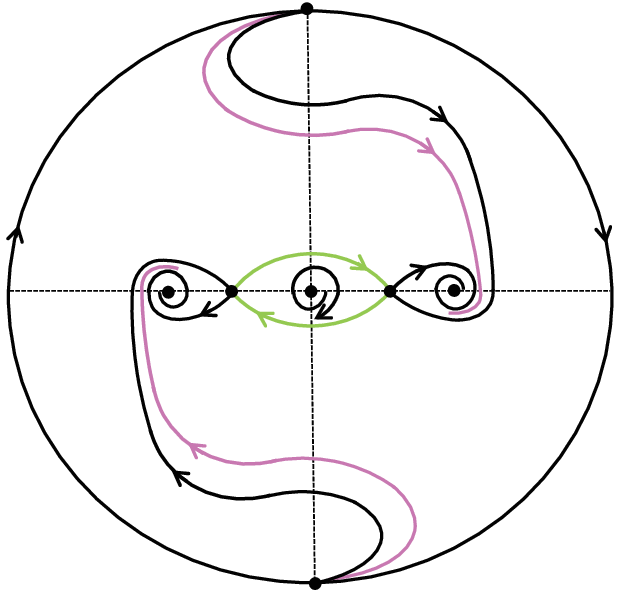}}}
		\subfigure[ in $R_{101i}$]{
			\scalebox{0.31}[0.31]{
				\includegraphics{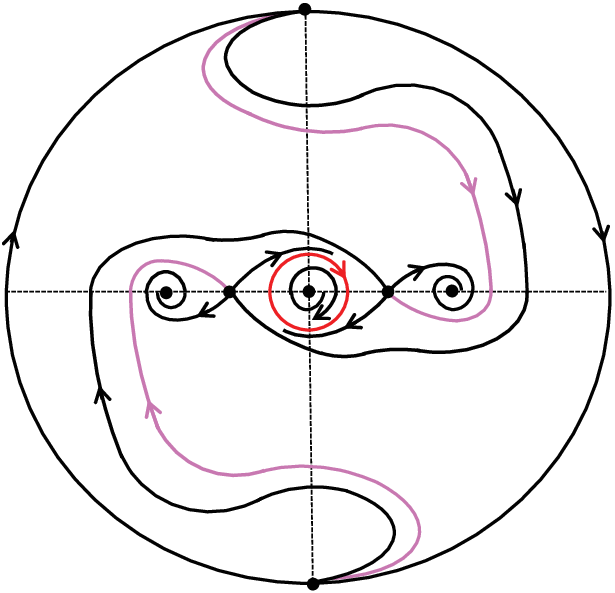}}}
		\subfigure[  in $R_{102i}$]{
			\scalebox{0.31}[0.31]{
				\includegraphics{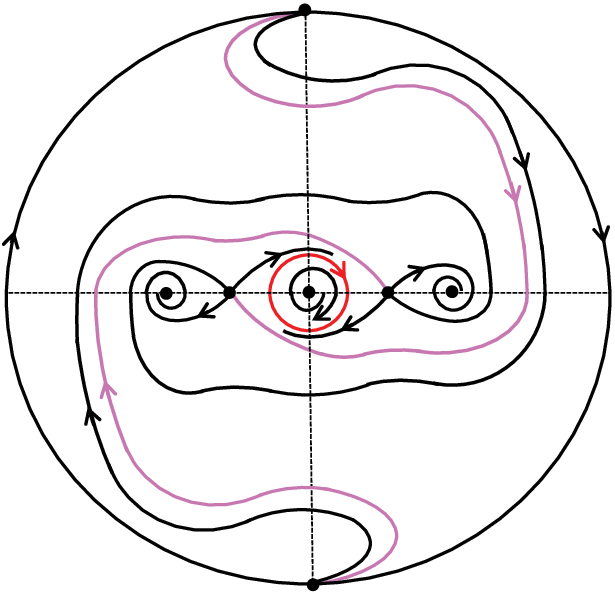}}}
\subfigure[  in $R_{103i}$]{
			\scalebox{0.31}[0.31]{
				\includegraphics{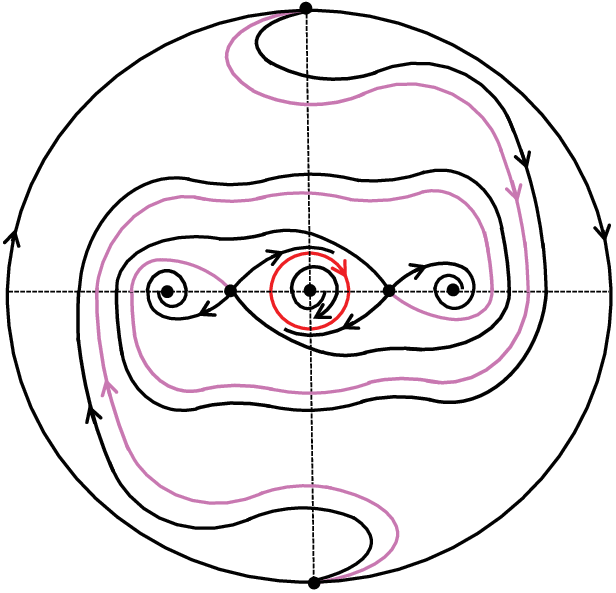}}}
		\subfigure[  in $R_{104i}$]{
			\scalebox{0.31}[0.31]{
				\includegraphics{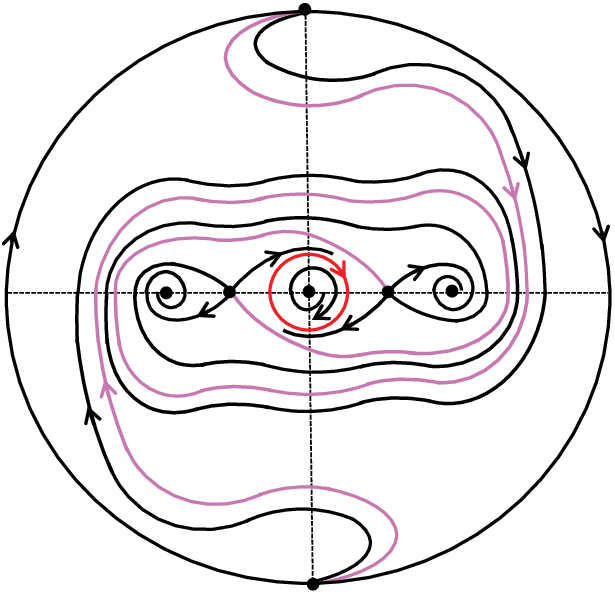}}}
\subfigure[  in $R_{105i}$]{
			\scalebox{0.31}[0.31]{
				\includegraphics{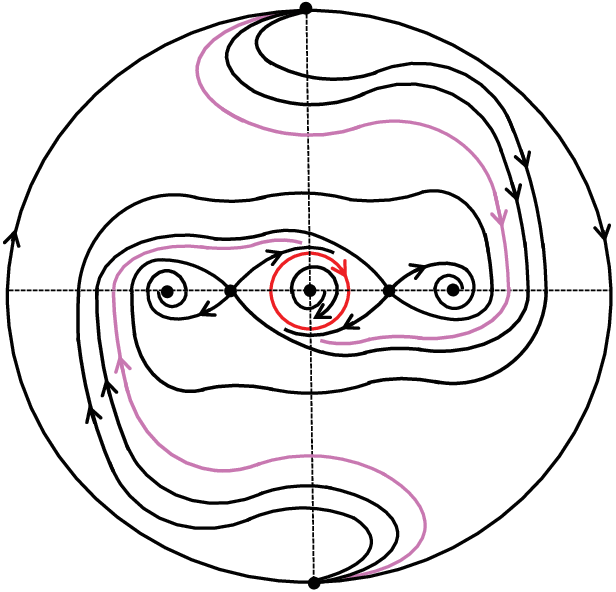}}}
		\subfigure[ in $R_{106i}$]{
			\scalebox{0.31}[0.31]{
					\includegraphics{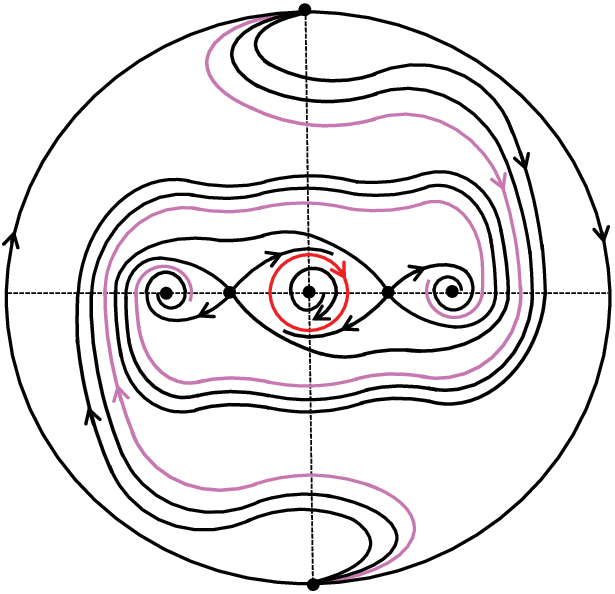}}}
			\subfigure[  in $R_{107i}$]{
			\scalebox{0.31}[0.31]{
			\includegraphics{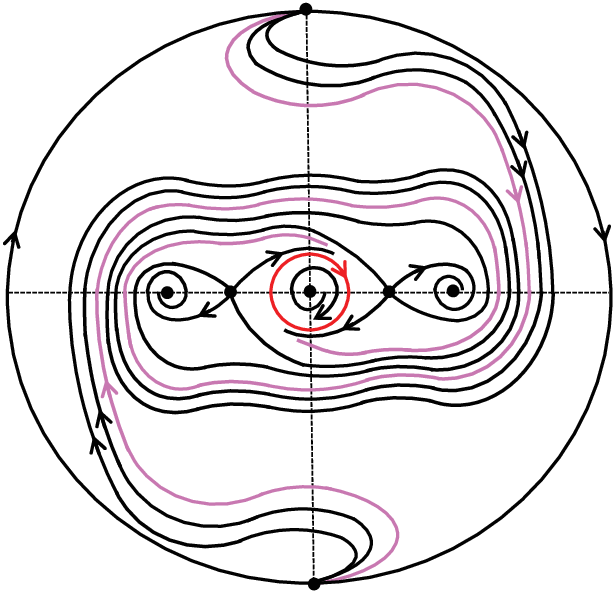}}}
		\subfigure[  in $R_{108i}$]{
			\scalebox{0.31}[0.31]{
				\includegraphics{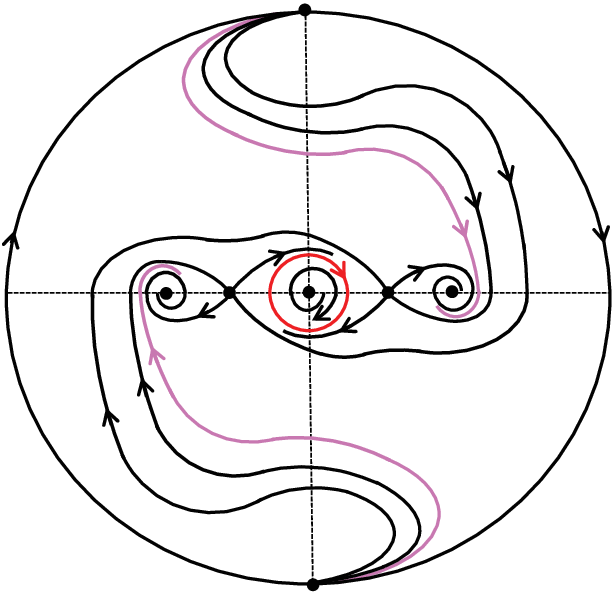}}}
	\caption{Global phase portraits   of    \eqref{initial1}  for $\delta_0\geq2\sqrt{3}$ and $-1<a_1<0$.}
	\label{gpp7}
\end{figure}
\addtocounter{figure}{-1}
\begin{figure}
	\centering
	\subfigure[  in $R_{111i}$]{
			\scalebox{0.31}[0.31]{
			\includegraphics{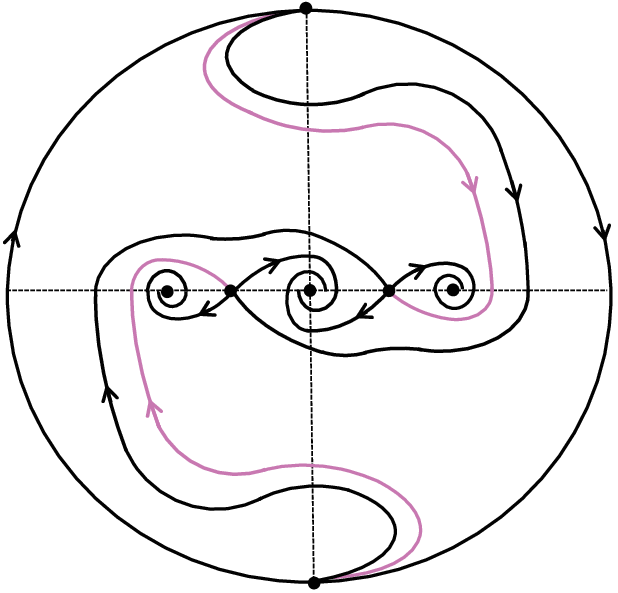}}}
	\subfigure[  in $R_{112i}$]{
			\scalebox{0.31}[0.31]{
			\includegraphics{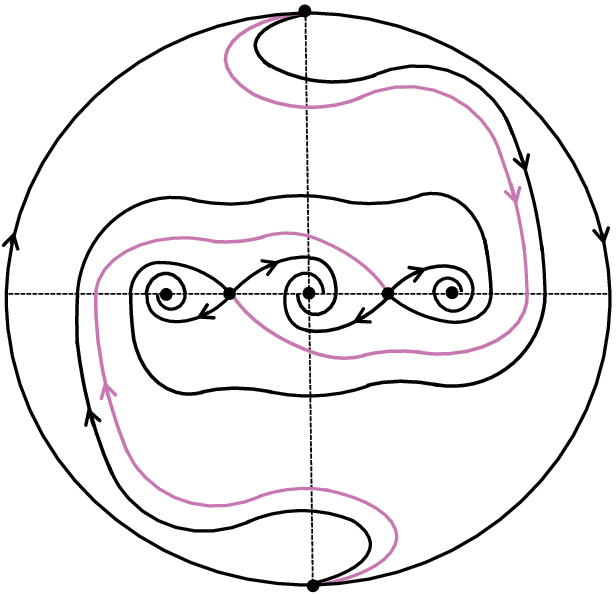}}}
	\subfigure[  in $R_{113i}$]{
			\scalebox{0.31}[0.31]{
			\includegraphics{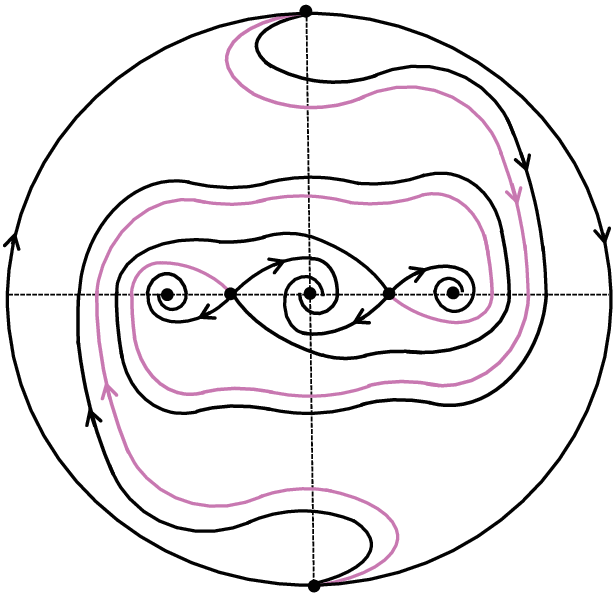}}}
	\subfigure[  in $R_{114i}$]{
			\scalebox{0.31}[0.31]{
			\includegraphics{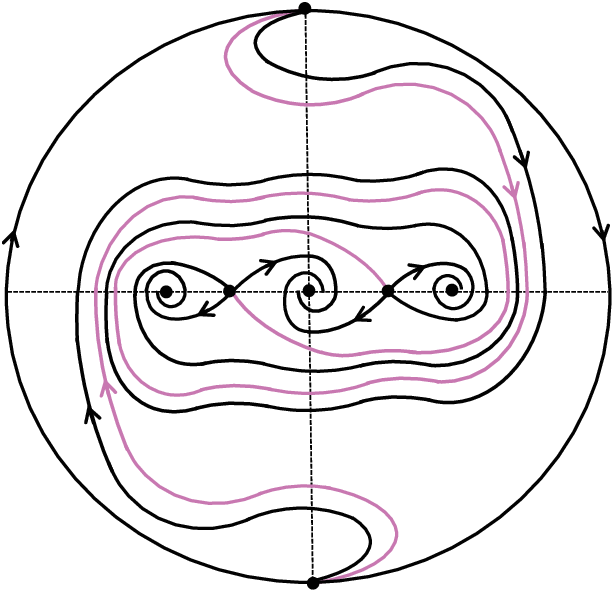}}}
	\subfigure[  in $R_{115i}$]{
			\scalebox{0.31}[0.31]{
			\includegraphics{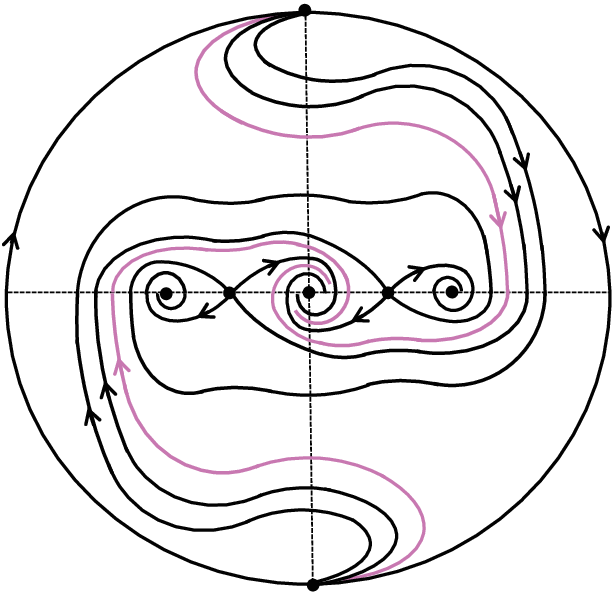}}}
		\subfigure[  in $R_{116i}$]{
			\scalebox{0.31}[0.31]{
				\includegraphics{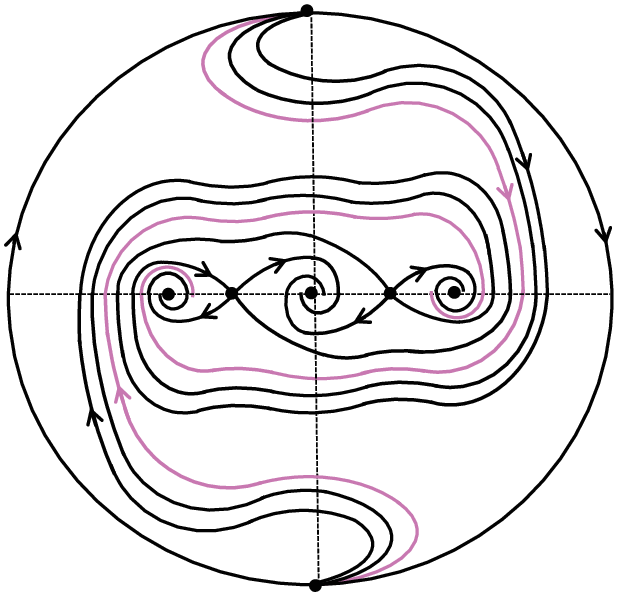}}}
	\subfigure[  in $R_{117i}$]{
			\scalebox{0.31}[0.31]{
			\includegraphics{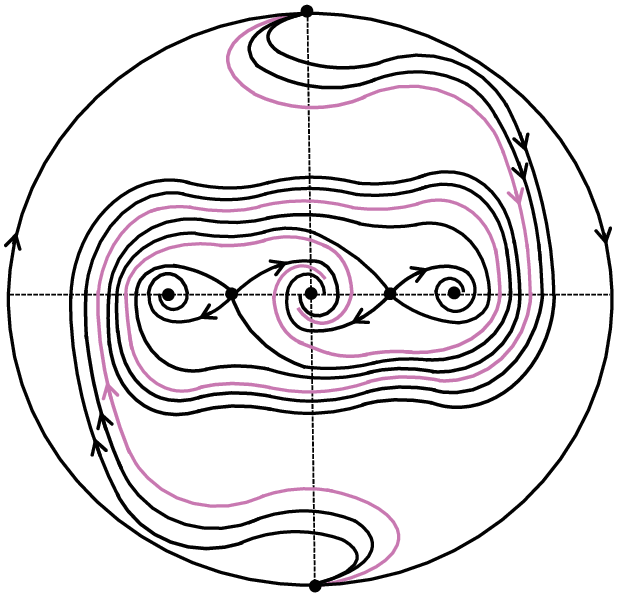}}}
					\subfigure[  in $R_{118i}$]{
			\scalebox{0.31}[0.31]{
			\includegraphics{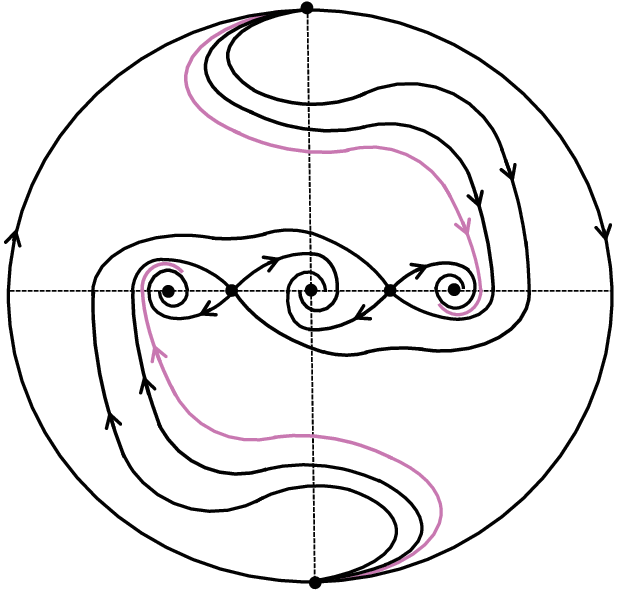}}}
	\caption{Continued. }
	\label{gpp71}
\end{figure}


\begin{theorem}Given  $\delta=\delta_0>0$,
 all global phase portraits   in the Poincar\'e disc  of  system  \eqref{initial1}
	are given in {\rm Figures \ref{gpp2}--\ref{gpp71},
}
	where the parameter regions for characterizing different global phase portraits are presented in Appendix A.
 Moreover,  system  \eqref{initial1} has at most four limit cycles.
	\label{mainresult2}
\end{theorem}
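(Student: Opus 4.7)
The plan is to combine the bifurcation structure already established in Theorem~\ref{mr1} with a region-by-region analysis of local dynamics (finite and infinite), a bound on the number of limit cycles, and a monotone separatrix tracking argument that matches each parameter cell to exactly one picture in Figs.~\ref{gpp2}--\ref{gpp71}.

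\textbf{Step 1: local dynamics.} Fix $\delta=\delta_0>0$ and an open cell of the slice in Fig.~\ref{bf1}. From the Jacobian at each of the (at most five) finite equilibria $\hat E_0,\hat E_{r1},\hat E_{l1},\hat E_{r2},\hat E_{l2}$ one reads the hyperbolic type directly; on the non-hyperbolic surfaces, the first Lyapunov quantity on $H_1,H_2$ (already pinned down in Theorem~\ref{mr1}(iii)) and the Bogdanov--Takens normal form on $DBT,BT$ determine the local bifurcation picture, while the pitchfork/transcritical transversality on $P,T$ is routine. For the equilibria at infinity one applies the Poincar\'e compactification: the leading term $-x^5$ makes the unique pair on the vertical infinity degenerate, and a quasi-homogeneous blow-up resolves them. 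The qualitative structure of this infinite local picture changes precisely at $\delta=2\sqrt{3}$ (the surface $DE$), which explains the split into $0<\delta_0<2\sqrt{3}$ and $\delta_0\ge 2\sqrt{3}$.

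\textbf{Step 2: at most four limit cycles.} This is the main technical hurdle. Pass to the Li\'enard plane via $Y=y+F(x)$ with $F(x)=\delta\bigl(a_2 x+\tfrac{1}{3}x^3\bigr)$ and $g(x)=x(a_1+x^2)(x^2-1)$. Since $g$ has at most five simple zeros and $F'$ changes sign at most twice, a generalised Zhang--Zhifen / Han-type multiplicity theorem applied to any nest of closed orbits around a single antisaddle gives at most two cycles per nest. The $\mathbb{Z}_2$-symmetry $(x,y)\mapsto(-x,-y)$ forces asymmetric small cycles around $\hat E_{r1},\hat E_{l1}$ to appear in symmetric pairs, and integrating the divergence $-\delta(a_2+x^2)$ along any symmetric closed curve enclosing all equilibria caps the number of big symmetric cycles at two. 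Summing yields the global bound of four, and also justifies the double limit cycle surface $DL$ of Theorem~\ref{mr1}(ix).

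\textbf{Step 3: separatrix tracking and assembly.} For fixed $(a_1,\delta_0)$ one has $\partial \dot y/\partial a_2=-\delta y$, of one sign on $\{y\ne 0\}$, so \eqref{initial1} is a complete family of rotated vector fields in $a_2$. Every saddle separatrix therefore moves monotonically with $a_2$, and each collision of a separatrix with an equilibrium or a symmetric partner corresponds to crossing exactly one of the surfaces $HL_{1,2}$, $HE_{1,2}$, $SC_{k,i}$ listed in Theorem~\ref{mr1}. Starting from $a_2$ very negative and sweeping upwards, one records the resulting connections together with the birth/death of cycles from Hopf ($H_1,H_2$), from homoclinic/heteroclinic loops and from the double limit cycle surface $DL$; Poincar\'e--Bendixson combined with the cycle bound of Step~2 and the local types of Step~1 then uniquely determines the global portrait on every open cell, matching it with one picture of Figs.~\ref{gpp2}--\ref{gpp71}.

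\textbf{Main obstacle.} The truly delicate regime is $\delta_0\ge 2\sqrt{3}$, where the resolved infinity is rich enough to produce infinitely many heteroclinic bifurcation surfaces $SC_{k,i}$. The countable families $T_{3ji},T_{4ji},T_{5ji},R_{1ji},R_{9ji},R_{10ji},R_{11ji}$ are parametrised by how many times a saddle separatrix winds before being absorbed at infinity, and controlling this winding number uniformly in $(a_1,a_2)$---via the rotated-vector-field monotonicity in $a_2$ together with sharp estimates of the passage time near the infinite equilibria---is the combinatorial heart of the argument.
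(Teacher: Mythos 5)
Your Steps 1 and 3 follow the same general strategy as the paper (local analysis plus rotated--vector--field separatrix tracking in $a_2$, using $\partial\dot y/\partial a_2=-\delta y$), but Step 2 --- the quantitative heart of the theorem --- has a genuine gap. First, a bookkeeping problem: $\hat E_{r1},\hat E_{l1}$ are saddles, so the symmetric pairs of small cycles live around $\hat E_{r2},\hat E_{l2}$; and even granting ``at most two per nest'' around each of the three antisaddles plus ``at most two large ones,'' your summation gives $8$, not $4$. The paper instead proves at most \emph{one} small cycle per nest (Lemma~\ref{smc7}, via energy functions and the Dumortier--Rousseau comparison through the Filippov change of variable $w=\hat F(x)$), and then shows that small and large cycles cannot coexist in the numbers your count would allow (Proposition~\ref{p12}).

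Second, and more seriously, your claim that ``integrating the divergence $-\delta(a_2+x^2)$ along any symmetric closed curve enclosing all equilibria caps the number of big symmetric cycles at two'' is exactly the step that fails in the hardest regime. The comparison of $\oint\hat f(x)\,dt$ between nested large cycles has a definite sign only when the zero $x=\sqrt{-a_2}$ of $\hat f$ sits favourably relative to the equilibria; the paper carries this out in Lemma~\ref{lac7} and Proposition~\ref{c5c6} precisely for $a_2\le(a_1-1-\sqrt{-a_1})/3$ or $a_2\ge a_1/3$, etc. In the remaining window $(a_1-1-\sqrt{-a_1})/3<a_2<\min\{a_1,-1/3\}$ with $-1<a_1<0$ no such monotonicity is available, and this is where the bound of four large cycles actually comes from: the paper perturbs from the Hamiltonian case $\delta=0$, derives a Picard--Fuchs system for $I_0,I_2,I_4$, reduces the zero--counting of the Melnikov function $M(e)=a_2I_0(e)+I_2(e)$ to intersections of a Riccati orbit $w(e)$ with an explicit rational curve, and obtains at most two intersections (hence at most four zeros of $M$) for small $\delta$ (Appendix C); it then transfers this to all $\delta$ by an iterative deformation in $(\beta,\delta)=(\delta a_2,\delta)$ using the rotated--vector--field structure in both parameters (Lemma~\ref{lac73}). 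None of this is recoverable from a Zhang--Zhifen--type uniqueness theorem or a single divergence integral, so your Step 2 as written does not yield the stated bound.
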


 In Theorem~\ref{mainresult2}, we get an upper bound $4$ of the number of limit cycles for system~\eqref{initial1}.
 However, by the numerical simulations we obtain a unique large limit cycle when there are two small limit cycles,
 and two large limit cycles when there is no small ones. Associated with symmetry of system~\eqref{initial1},
 we conjecture that the maximum number of limit cycles is exactly $3$.

The remainder of this paper is organized as follows.
We study the qualitative properties of equilibria and local bifurcation of system \eqref{initial} in Section 2.
Section 3 is devoted to the research of limit cycles, heteroclinic loops and homoclinic loops of system  \eqref{initial}.
The proofs of our main Theorems \ref{Result1}-\ref{mainresult2} are presented in Section 4 as well
as some numerical examples.


\section{Local bifurcation }

We firstly give the qualitative properties of equilibria of system \eqref{initial} and     please refer to Appendix B for a comprehensive proof.

\begin{lemma}
For any  $(\mu_1,\mu_2,\mu_3,b)\in\mathbb{R}^3\times\mathbb{R}^+$,
 equilibria of system \eqref{initial}  and their properties are
  given in {\rm Table~\ref{lmtable1}}, where $E_0:=(0, 0)$ and
\begin{eqnarray*}
\begin{aligned}
&E_{l2}:=\left(-\sqrt{\frac{-\mu_2+\sqrt{\mu_2^2-4\mu_1}}{2}}, ~0\right),&&~
~E_{l1}:=\left(-\sqrt{\frac{-\mu_2-\sqrt{\mu_2^2-4\mu_1}}{2}}, ~0\right),
\\
&E_{r1}:=\left(\sqrt{\frac{-\mu_2-\sqrt{\mu_2^2-4\mu_1}}{2}}, ~0\right),&&~
~E_{r2}:=\left(\sqrt{\frac{-\mu_2+\sqrt{\mu_2^2-4\mu_1}}{2}}, ~ 0\right).
\end{aligned}
\end{eqnarray*}
\label{fe1}
\end{lemma}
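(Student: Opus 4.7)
The plan is to enumerate all equilibria and then classify each one via the linear part, supplementing with center-manifold, Poincar\'e-Lyapunov and Bogdanov-Takens techniques on the degenerate strata.

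Any equilibrium satisfies $y=0$ together with $g(x)=x(x^4+\mu_2 x^2+\mu_1)=0$, so the origin $E_0=(0,0)$ is always an equilibrium and the remaining ones arise from the biquadratic $x^4+\mu_2 x^2+\mu_1=0$. Its roots in $x^2$ are $(-\mu_2\pm\sqrt{\mu_2^2-4\mu_1})/2$, and by the $\mathbb{Z}_2$-symmetry each positive one yields a $\pm$-pair, producing the candidates $E_{l1},E_{r1},E_{l2},E_{r2}$. Their existence is controlled by the sign of $\mu_2^2-4\mu_1$ together with that of $-\mu_2\pm\sqrt{\mu_2^2-4\mu_1}$, which is exactly the split into $\mathcal G_1$ and $\mathcal G_2$ from the introduction.

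At a generic $(x_0,0)$ the Jacobian is
\[
J(x_0,0)=\begin{pmatrix}0 & 1\\ -g'(x_0) & -f(x_0)\end{pmatrix},
\]
so $\det J=g'(x_0)=\mu_1+3\mu_2 x_0^2+5x_0^4$ and $\mathrm{tr}\,J=-(\mu_3+b x_0^2)$. In the hyperbolic regime $\det J\ne 0$, $\mathrm{tr}\,J\ne 0$ the standard linear classification applies: saddle when $\det J<0$, otherwise (un)stable focus or node according to the signs of $\mathrm{tr}\,J$ and of $(\mathrm{tr}\,J)^2-4\det J$. This settles the generic rows of Table~\ref{lmtable1} immediately.

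The rest of the work, and the subtle part, is the degenerate classification. I distinguish three cases. \emph{Case 1:} $\det J=0$, $\mathrm{tr}\,J\ne 0$ (coalescence of two equilibria, happening on $\mu_1=0$ at $E_0$ and on $\mu_2^2=4\mu_1$ where $E_{l1}\!=\!E_{l2}$ or $E_{r1}\!=\!E_{r2}$). Here I would reduce to a one-dimensional center manifold and read the leading nonvanishing coefficient to distinguish saddle-node from cusp. \emph{Case 2:} $\mathrm{tr}\,J=0$, $\det J>0$ (the Hopf locus $\mu_3=0$ at $E_0$, or $\mu_3+bx_0^2=0$ at $E_{l2},E_{r2}$). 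Here I would compute the first Lyapunov quantity via the Poincar\'e-Lyapunov recursion adapted to Li\'enard form; at the origin $\mathbb{Z}_2$-symmetry kills all even focal values, so $V_3$, and $V_5$ on its zero set, suffice to decide weak-focus stability. \emph{Case 3:} both vanish simultaneously, yielding a nilpotent linear part. I would apply the Bogdanov-Takens normal form and identify an ordinary cusp (codimension two) or a degenerate cusp (codimension three) from the first nonvanishing pair of normal-form coefficients; this produces the $BT$ and $DBT$ loci of Theorem~\ref{mr1}.

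The main obstacle will be the nilpotent analysis at $E_{l2},E_{r2}$: before normal-forming one must translate $g$ and $f$ so that the equilibrium sits at the origin, after which the resulting polynomials are no longer even in $x$ and their coefficients depend intricately on $\mu_1,\mu_2,b$ through $x_0^2=(-\mu_2+\sqrt{\mu_2^2-4\mu_1})/2$. Tracking which coefficients are forced to vanish on the BT locus, and which genuinely govern the codimension, is the core bookkeeping. Once this and the focal values at the origin are settled, assembling the hyperbolic rows with the degenerate ones yields Table~\ref{lmtable1}; the full computations are deferred to Appendix~B.
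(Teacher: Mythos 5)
Your proposal follows essentially the same route as the paper's Appendix~B: enumerate the roots of $x(x^4+\mu_2x^2+\mu_1)=0$, classify the hyperbolic equilibria from $\det J=g'(x_0)$ and $\operatorname{tr}J=-(\mu_3+bx_0^2)$, compute the first focal value on the Hopf loci, and handle the semi-hyperbolic and nilpotent strata by reducing to the center direction and reading the leading nonvanishing coefficient (the paper does this via the Implicit Function Theorem and Theorems 7.1--7.3 of Zhang et al.\ rather than an explicit center-manifold or normal-form computation, but these are the same device). One caution on your Case~3: at the origin on the locus $\mu_1=\mu_3=0$ the $\mathbb{Z}_2$-symmetry forces $g$ to be odd, so the nilpotent singularity has leading term $x^3$ in $g$ and is never a cusp --- it is a degenerate saddle for $\mu_2<0$ and a center-focus or degenerate node for $\mu_2\geq 0$ (the case $\mu_2=0$ further splitting at $b=2\sqrt{3}$); applying the standard quadratic Bogdanov--Takens cusp normal form there would be wrong. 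Cusps occur only at $E_{l2},E_{r2}$ on $\Delta=0$, $\mu_3=b\mu_2/2$, where the translated $g$ does acquire a quadratic leading term, exactly as you anticipate in your final paragraph.
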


\begin{table}
	\centering \tiny \doublerulesep 0.5pt
	{\renewcommand\baselinestretch{1.8}\selectfont
		\begin{tabular}{||c|c|c|c|l||}
			\hline\hline
			\multicolumn{3}{|c|} {possibilities of $(\mu_1,\mu_2,\mu_3,b)$}& location of equilibria
			& ~~~~~~~~~~~types and stability
			\\
			\cline{1-5}
			& &$\mu_3>0$&  $E_0$ & $E_0$ sink
			\\
			\cline{3-5}
			& $\mu_2>-2\sqrt{\mu_1}$&$\mu_3=0$&  $E_0$ & $E_0$ stable   weak focus of order one
			\\
			\cline{3-5}
			& &$\mu_3<0$&  $E_0$ & $E_0$ source
			\\
			\cline{2-5}
			& & $\mu_3>0$&  $E_{l2}$, $E_0$,  $E_{r2}$ & $E_0$ sink;
			\\& & & &   $E_{l2}$,   $E_{r2}$ saddle-nodes with
			stable nodal sector
			\\
			\cline{3-5}
			&&$\mu_3=0$& $E_{l2}$, $E_0$,  $E_{r2}$ &  $E_0$ stable   weak focus of order one;
			\\& & & &   $E_{l2}$,   $E_{r2}$  saddle-nodes with
			stable nodal sector
			\\
			\cline{3-5}
			$\mu_1>0$& $\mu_2=-2\sqrt{\mu_1}$& $b\mu_2/2<\mu_3<0$&   $E_{l2}$, $E_0$,  $E_{r2}$  & $E_0$ source;  \\& & & &   $E_{l2}$,   $E_{r2}$   saddle-nodes with
			stable nodal sector
			\\
			\cline{3-5}
			&& $\mu_3=b\mu_2/2$&    $E_{l2}$, $E_0$,  $E_{r2}$  & $E_0$ source;     $E_{l2}$,  $E_{r2}$ cusps
			\\
			\cline{3-5}
			&& $\mu_3<b\mu_2/2$&   $E_{l2}$, $E_0$,  $E_{r2}$  & $E_0$ source;
			\\
			& & & &   $E_{l2}$,  $E_{r2}$ saddle-nodes with
			unstable nodal sector
			\\
			\cline{2-5}
			& & $\mu_3>0$&   $E_{l2}$,  $E_{l1}$, $E_0$, $E_{r1}$, $E_{r2}$  & $E_0$, $E_{l2}$, $E_{r2}$ sinks;    $E_{l1}$, $E_{r1}$ saddles
			\\
			\cline{3-5}
			&&$\mu_3=0$&     $E_{l2}$,  $E_{l1}$, $E_0$, $E_{r1}$, $E_{r2}$ & $E_0$ stable   weak focus of order one;
			\\
			&&&&     $E_{l2}$, $E_{r2}$ sinks;  $E_{l1}$, $E_{r1}$ saddles
			\\[3mm]
			\cline{3-5}
			&  $\mu_2<-2\sqrt{\mu_1}$& $\frac{b(\mu_2-\sqrt{\mu_2^2-4\mu_1})}{2}<\mu_3<0$&     $E_{l2}$,  $E_{l1}$, $E_0$, $E_{r1}$, $E_{r2}$  & $E_0$
			source;   $E_{l2}$, $E_{r2}$ sinks; $E_{l1}$, $E_{r1}$ saddles
			\\
			\cline{3-5}
			&& $\mu_3=\frac{b(\mu_2-\sqrt{\mu_2^2-4\mu_1})}{2}$&     $E_{l2}$,  $E_{l1}$, $E_0$, $E_{r1}$, $E_{r2}$ &   $E_0$ source;   $E_{l1}$, $E_{r1}$ saddles;
			\\
			&&&&
			$E_{l2}$,  $E_{r2}$   unstable weak foci of order one
			\\
			\cline{3-5}
			&& $\mu_3<\frac{b(\mu_2-\sqrt{\mu_2^2-4\mu_1})}{2}$&     $E_{l2}$,  $E_{l1}$, $E_0$, $E_{r1}$, $E_{r2}$  &  $E_0$, $E_{l2}$, $E_{r2}$ sources;  $E_{l1}$, $E_{r1}$ saddles
			\\
			\cline{1-5}
			&   &$\mu_3>0$&  $E_0$ & $E_0$ stable degenerate node
			\\
			\cline{3-5}
			&  $\mu_2>0$  &$\mu_3=0$&  $E_0$ & $E_0$   stable focus
			\\
			\cline{3-5}
			$\mu_1=0$&  &$\mu_3<0$&  $E_0$ & $E_0$ unstable degenerate node
			\\
			\cline{2-5}
			&   &$\mu_3>0$&  $E_0$ & $E_0$    stable degenerate node
			\\
			\cline{3-5}
			& $\mu_2=0$  &$\mu_3=0$&  $E_0$ &    stable focus  for  $b\in (0, 2\sqrt{3})$
			\\
			\cline{5-5}
			&&&&  degenerate   node for   $b\in [2\sqrt{3}, +\infty)$
			\\
			\cline{3-5}
			&   &$\mu_3<0$&  $E_0$ & $E_0$    unstable degenerate node
			\\
			\cline{2-5}
			&   &$\mu_3>0$&   $E_{l2}$, $E_0$,  $E_{r2}$  & $E_0$ degenerate  saddle;  $E_{l2}$, $E_{r2}$ sinks
			\\
			\cline{3-5}
			& $\mu_2<0$ &$\mu_3=0$&  $E_{l2}$, $E_0$,  $E_{r2}$   & $E_0$ degenerate saddle;
			\\&&&&
			$E_{l2}$, $E_{r2}$  unstable weak foci of order one
			\\
			\cline{3-5}
			&   &$\mu_3<0$&    $E_{l2}$, $E_0$,  $E_{r2}$   & $E_0$ degenerate saddle;    $E_{l2}$, $E_{r2}$   sources
			\\
			\cline{1-5}
			&&$\mu_3> \frac{b(\mu_2-\sqrt{\mu_2^2-4\mu_1})}{2}$&  $E_{l2}$, $E_0$,  $E_{r2}$  &
			$E_0$ saddle;   $E_{l2}$, $E_{r2}$ sinks
			\\
			\cline{3-5}
			$\mu_1<0$&$\mu_2\in\mathbb{R}$&&  $E_{l2}$, $E_0$,  $E_{r2}$  &$E_0$ saddle;
			\\
			&&
			$\mu_3=\frac{b(\mu_2-\sqrt{\mu_2^2-4\mu_1})}{2}$&&
			$E_{l2}$, $E_{r2}$   unstable weak foci of order one
			\\
			\cline{3-5}
			&&$\mu_3<\frac{b(\mu_2-\sqrt{\mu_2^2-4\mu_1})}{2}$&  $E_{l2}$, $E_0$,  $E_{r2}$  &
			$E_0$ saddle;  $E_{ l2}$, $E_{r2}$  sources
			\\
			\hline\hline
	\end{tabular}}
	\caption{\small Equilibria in finite planes of  \eqref{initial}.}
	\label{lmtable1}
\end{table}

Note that the qualitative properties of equilibria of system \eqref{initial1}   are  easily obtained by  the   scaling transformation  \eqref{sctra}.
We find the following local bifurcations in  system  \eqref{initial}.

\begin{proposition}
 For any  $(\mu_1,\mu_2,\mu_3,b)\in\mathbb{R}^3\times\mathbb{R}^+$, system  \eqref{initial}  includes the following bifurcation surfaces.
\begin{description}
\item[(i)] There are two pitchfork bifurcation surfaces $P_1$ and $P_2$, given by
 \[
 \mu_1=0 ~{\rm and}~\mu_2>0
 \]
 and
  \[
 \mu_1=0 ~{\rm and}~\mu_2<0,
 \]
 respectively.
For $\mu_1<0$, there are three equilibria $ E_{l2}, E_0$ and $E_{r2}$, $E_0$ is a saddle, while $E_{l2}$ and $E_{r2}$
are antisaddles.
For $0<\mu_1<\epsilon$ and $\mu_2>0$ ($\epsilon>0$ is small),  there is a unique equilibrium $E_0$ which is an antisaddle.
For $0<\mu_1<\epsilon$ and $\mu_2<0$, there are five equilibria $ E_{l1}, E_{l2}, E_0, E_{r1}$ and $E_{r2}$,
 $E_0$, $E_{l2}$ and $E_{r2}$ are antisaddles,  while $E_{l1}$ and $E_{r1}$
are saddles.

 \item[(ii)] There is a saddle-node bifurcation surface $SN$, given by
 \[
 \mu_2=-2\sqrt{\mu_1}>0.
 \]
  \item[(iii)] There is  a Hopf bifurcation surface $H_1$ about $E_0$, given by
 \[
 \mu_1>0~{\rm and}~\mu_3=0
 \]
 and
 a Hopf bifurcation surface $H_2$ about $E_{l2}$ and $E_{r2}$, given by
 \[
  \mu_1<{\mu_2^2}/{4} ~{\rm and}~\mu_3={b(\mu_2-\sqrt{\mu_2^2-4\mu_1})}/{2}.
 \]
 \item[(iv)] The intersection of $H_1$ with $P_1$ and $P_2$, respectively, defines with two
 $dBT_1$ and $dBT_2$ of degenerate Bogdanov-Takens bifurcation surfaces, given by
 \[
 \mu_1=\mu_3=0 ~{\rm and}~\mu_2>0
 \]
 and
  \[
 \mu_1=\mu_3=0 ~{\rm and}~\mu_2<0.
 \]
  \item[(v)] The intersection of $H_2$ with $SN$, respectively, defines with one
 $BT$ of  Bogdanov-Takens bifurcation surface, given by
 \[
 \mu_1={{\mu_2}^2}/{4} ~{\rm and}~\mu_2<0,
 \]
 and
 \[
 \mu_3={b\mu_2}/{2}.
 \]
\end{description}
\label{localbi}
\end{proposition}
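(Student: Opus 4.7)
The plan is to verify each of the five bifurcation surfaces claimed in Proposition~\ref{localbi} by (a) computing the Jacobian at the appropriate equilibrium and identifying the linear degeneracy, (b) checking transversality of the parameters across the surface, and (c) applying the relevant non-degeneracy criterion (pitchfork, saddle-node, Hopf first Lyapunov coefficient, Bogdanov--Takens normal form). Much of the bookkeeping is already contained in Lemma~\ref{fe1}/Table~\ref{lmtable1}, which supplies the stability types on either side of each surface and in many cases directly certifies weakness order.

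First I would write down the Jacobian $J(x^*) = \begin{pmatrix} 0 & 1 \\ -g'(x^*) & -f(x^*) \end{pmatrix}$ with $g'(x) = \mu_1 + 3\mu_2 x^2 + 5x^4$ and $f(x) = \mu_3 + bx^2$. For (i), on $P_1, P_2$ one has $\mu_1=0$, hence $g'(0)=0$ so $E_0$ is linearly degenerate; the nontrivial roots of $g(x)/x = \mu_1+\mu_2x^2+x^4$ yield $x^2 = (-\mu_2 \pm \sqrt{\mu_2^2-4\mu_1})/2$, which together with the $\mathbb{Z}_2$-symmetry $x\mapsto -x$ gives the prescribed pitchfork: a supercritical branching of $E_{l2},E_{r2}$ as $\mu_1$ decreases through $0$ on $P_1$ and a subcritical branching on $P_2$, confirmed by matching the index change to the number of equilibria in Table~\ref{lmtable1}. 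For (ii), on $SN$ the discriminant $\mu_2^2-4\mu_1$ vanishes, so $E_{l1},E_{l2}$ (resp.\ $E_{r1},E_{r2}$) coalesce; a one-variable reduction gives $\dot\xi = \alpha + \beta\xi^2 + O(\xi^3)$ with $\alpha\beta\neq 0$ transverse to the surface, yielding the standard saddle-node.

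For (iii), on $H_1$ we have $\mu_1>0$, $\mu_3=0$, so $\mathrm{tr}\,J(0)=-\mu_3=0$ and $\det J(0)=\mu_1>0$: pure imaginary eigenvalues. The transversality $\partial(\mathrm{tr})/\partial\mu_3=-1\neq 0$ is immediate, and Table~\ref{lmtable1} already records that $E_0$ is a \emph{weak focus of order one}, i.e.\ the first Lyapunov coefficient is nonzero (its sign distinguishes sub- vs.\ supercritical Hopf). The same verification at $E_{l2}, E_{r2}$ on $H_2$ uses $f(x_{l2})=\mu_3 + bx_{l2}^2 = \mu_3 - b(\mu_2-\sqrt{\mu_2^2-4\mu_1})/2$ vanishing exactly on the surface, together with $g'(x_{l2})>0$ in the antisaddle regime and the order-one weakness again from Table~\ref{lmtable1}. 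For (iv), at the intersection $\mu_1=\mu_3=0$ both eigenvalues vanish; $J(0)$ is nilpotent with $1$-dimensional range. Since $g'(0)=0$ and also $g''(0)=0$ (the cubic term is controlled by $\mu_2$, not the quadratic), this is a \emph{degenerate} BT of codimension higher than two: the generic $x^2$ term in the reduced normal form is absent by $\mathbb{Z}_2$-symmetry, leaving the $x^3$ term with coefficient $\mu_2\neq 0$, which is the characteristic signature of $dBT$. For (v), at $\mu_1 = \mu_2^2/4$, $\mu_3 = b\mu_2/2$ (with $\mu_2<0$), one simultaneously has $g'(x_{l2})=0$ (equilibrium collision from $SN$) and $f(x_{l2})=0$ (trace zero from $H_2$); I would cast the system in Jordan form and compute the quadratic normal-form coefficients $a = \frac12 g''(x_{l2})$ and $b_{\mathrm{BT}} = f'(x_{l2}) = 2bx_{l2}$, both nonzero since $x_{l2}\neq 0$ and $b\neq 0$, giving the nondegenerate Bogdanov--Takens normal form.

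The main obstacle will be the degenerate Bogdanov--Takens case in (iv): because $\mathbb{Z}_2$-symmetry forces all even terms in $g$ to vanish at $E_0$, the standard second-order BT reduction is trivial and one must carry the normal-form computation to higher order, tracking the cubic term from $\mu_2 x^3$ and the parameter-dependent linear terms to identify the correct versal unfolding (this is why the authors call it \emph{degenerate}). The pitchfork, saddle-node, Hopf, and ordinary BT verifications are routine linearization-plus-transversality arguments once the Jacobian and Table~\ref{lmtable1} are in hand; I would defer the detailed Lyapunov-coefficient and BT normal-form computations to an appendix, citing the weakness-order statements from Table~\ref{lmtable1} to shortcut the sign determinations needed to classify $H_1, H_2$ as sub- or supercritical.
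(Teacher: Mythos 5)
Your proposal is correct and follows essentially the same route as the paper: locate the degeneracies via the quintic $g$ and the divergence $-f$, read off the weak-focus orders from Lemma~\ref{fe1}/Table~\ref{lmtable1}, and reduce the (degenerate) Bogdanov--Takens points to the symmetric cubic (resp.\ quadratic) normal forms of \cite[p.259]{CLW}. You merely make explicit some transversality and normal-form coefficient checks that the paper leaves implicit or delegates to the cited reference.
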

\begin{proof}
{\bf (i)} It is obvious that the equilibria of system  \eqref{initial} are given by $y=0$
and $\mu_1x+\mu_2x^3+x^5=0$. Then the result can be easily proven.

{\bf (ii)} The equation  $\mu_1x+\mu_2x^3+x^5=0$ has the discriminant $\Delta:=\mu_2^2-4\mu_1$,
and then  the result follows.

{\bf (iii)} The necessary condition that a Hopf bifurcation  occurs at an antisaddle is
the divergence equaling zero.
The divergence at $E_0$ is
\[
{\rm div}(y, -\mu_1x-\mu_2x^3-x^5-\mu_3y-bx^2y)=
-\mu_3-bx^2=-\mu_3
\]
and it is zero for $\mu_3=0$.
Moreover, $E_0$ is an antisaddle for $\mu_1>0$.
Thus,  the Hopf bifurcation  $H_1$ is given. It follows from   Lemma \ref{fe1} that $E_0$  is a stable weak focus of order one when  $\mu_1>0$ and $\mu_3=0$.
Therefore, the Hopf bifurcation $H_1$  is of order one.
The divergence at both $E_{l2}$ and $E_{r2}$ is
\[
{\rm div}(y, -\mu_1x-\mu_2x^3-x^5-\mu_3y-bx^2y)=
-\mu_3+{b(\mu_2-\sqrt{\mu_2^2-4\mu_1})}/{2}
\]
and it is zero for $\mu_3= {b(\mu_2-\sqrt{\mu_2^2-4\mu_1})}/{2}$.
Moreover, both $E_{l2}$ and $E_{r2}$ are  antisaddles for $\mu_1< {\mu_2^2}/{4}$.
Thus, the Hopf bifurcation  $H_2$ is given.
By Lemma \ref{fe1}  again, we obtain that $E_{l2}$ and $E_{r2}$ are unstable weak foci of order one.
Thus, the Hopf bifurcation $H_2$  is of order one.

 {\bf (iv)} Degenerate Bogdanov-Takens bifurcation with symmetry occur
 when the divergence vanishes at a cubic point, that is,
 $\mu_1=\mu_3=0 $ and $\mu_2\neq0$ hold simultaneously.
 When $\mu_1=\mu_3=0 $ and $\mu_2<0$, by \cite[p.259]{CLW},
 the normal form of \eqref{initial} is
$\dot x=y,
\dot y=-\epsilon_1x-\epsilon_2y+x^3-x^2y,
$
where $\epsilon_1$ and $\epsilon_2$ are small real parameters.
 When $\mu_1=\mu_3=0 $ and $\mu_2>0$, by \cite[p.259]{CLW},
 the normal form of \eqref{initial} is
$\dot x=y,
\dot y=-\epsilon_1x-\epsilon_2y-x^3-x^2y,
$
where $\epsilon_1$ and $\epsilon_2$ are small real parameters.

 {\bf (v)}  Bogdanov-Takens bifurcation   occurs
when the divergence vanishes at a quadratic point, that is,
$\mu_1={\mu_2}^2/4, ~\mu_3=b\mu_2/2 $ and $\mu_2<0$ hold simultaneously.
Further,
the normal form of \eqref{initial} is
$\dot x=y,
\dot y=-\epsilon_1-\epsilon_2y+x^2-xy$
by \cite[p.259]{CLW},
where $\epsilon_1$ and $\epsilon_2$ are small real parameters.
\end{proof}

 In order to study the global dynamics of  system  \eqref{initial}, we perform a study of equilibria at infinity.

\begin{proposition}
For any $(\mu_1,\mu_2,\mu_3,b)\in\mathbb{R}^3\times\mathbb{R}^+$,
the dynamics of system \eqref{initial} near infinity  in the Poincar\'e disc  is as sketched
in {\rm Figure \ref{INF}}.
In particular,  the periodic orbit of  system  \eqref{initial}  at infinity  is   repulsive when $0<b<2\sqrt{3}  $.
\label{infty}
\end{proposition}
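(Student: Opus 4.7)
The plan is to apply a weighted Poincar\'e--Lyapunov compactification adapted to the quasi-homogeneous principal part $\dot x=y$, $\dot y=-x^5-bx^2 y$ of \eqref{initial}, which carries weights $(1,3)$ on $(x,y)$. This extends \eqref{initial} to a smooth flow on a closed disc whose boundary circle (the equator) is invariant; the dynamics on and near this circle encodes the behaviour of \eqref{initial} near infinity.

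I would introduce two overlapping charts covering the equator, the other two being obtained by the $\mathbb{Z}_2$-symmetry $(x,y)\mapsto(-x,-y)$ of \eqref{initial}. The chart $C_1$, with $u=1/x$, $v=y/x^3$, valid for $x>0$, yields after the time rescaling $d\tau=dt/u^2$ the polynomial system $u'=-uv$, $v'=-(3v^2+bv+1)+O(u^2)$, whose restriction to $u=0$ is driven by $P(v):=3v^2+bv+1$. A companion chart $C_2$, with $x=s/w$, $y=1/w^3$, valid for $y>0$, yields after $d\tau=dt/w^2$ the equatorial equation $s'=(s^6+bs^3+3)/3$, whose real zeros are, via $t=s^3$, governed by the same discriminant $b^2-12$ as $P$. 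Thus the equilibria on the equator split according to the trichotomy: (a) for $0<b<2\sqrt3$ there are none, so the equator is a single periodic orbit; (b) for $b=2\sqrt3$ there is a pair of coalescing (nilpotent) equilibria; (c) for $b>2\sqrt3$ there are four distinct (semi-)hyperbolic equilibria.

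For case (a), the transverse linearization along the equator in $C_1$ is $\delta u'=-v\,\delta u$, so the characteristic exponent contributed by $C_1$ equals $\int_{C_1}(-v)\,d\tau = \int_{+\infty}^{-\infty}v/P(v)\,dv$ after using $d\tau=-dv/P(v)$. Decomposing $v/P(v)=\tfrac{1}{6}(\ln P)'-(b/6)/P(v)$, the logarithmic boundary term vanishes as a principal value (because $P(v)/P(-v)\to 1$ at infinity), while completing the square gives $\int_{-\infty}^{+\infty}dv/P(v)=2\pi/\sqrt{12-b^2}$; hence the $C_1$-contribution equals $b\pi/(3\sqrt{12-b^2})$. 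By the $\mathbb{Z}_2$-symmetry, the chart $C_3$ (for $x<0$) contributes the same quantity, so the total characteristic exponent over one period is $2b\pi/(3\sqrt{12-b^2})>0$, proving that the equator is a hyperbolic \emph{repelling} periodic orbit. For cases (b) and (c), each equilibrium at infinity is analysed locally by the Hartman--Grobman theorem (hyperbolic), the standard reduction for semi-hyperbolic equilibria, or a directional blow-up of the coalescing point (nilpotent case $b=2\sqrt3$); combined with the monotone flow direction on the regular arcs of the equator, these local pictures assemble into Figure~\ref{INF}.

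The main obstacle is the conditional convergence of the characteristic-exponent integral: the integrand $v/P(v)$ decays only like $1/v$, so it must be handled as a principal value, with the logarithmic tails in $C_1$ cancelling consistently against the chart transitions to $C_2$ and $C_4$. A secondary technical step is the directional blow-up for the nilpotent case $b=2\sqrt3$, where the coalescing equilibrium requires a nontrivial resolution to determine its local phase portrait.
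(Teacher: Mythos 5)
Your proposal is correct, but it proves both halves of the statement by a genuinely different route than the paper. For the structure at infinity, you carry out the weighted $(1,3)$ Poincar\'e--Lyapunov compactification explicitly and read the trichotomy at $b=2\sqrt{3}$ off the discriminant of $3v^2+bv+1$ (resp.\ $s^6+bs^3+3$), analyzing the equilibria at infinity by hand; the paper instead normalizes by $(y,t)\to(by,t/b)$ and simply cites the Dumortier--Herssens classification \cite{DH} of polynomial Li\'enard equations near infinity, so your route is more self-contained but must still carry out the blow-up at $b=2\sqrt3$ that the citation absorbs. For the repulsiveness when $0<b<2\sqrt{3}$, the paper passes to Li\'enard form, applies the generalized Filippov transformation $z=z(x)$, and invokes the energy-comparison criterion of Proposition 3.3 of \cite{CJT} (from $F_1(z)>F_2(z)$ for large $z$), which yields repulsiveness softly; you instead compute the characteristic exponent of the equator as a periodic orbit of the compactified flow and obtain the explicit value $2b\pi/(3\sqrt{12-b^2})>0$, which is strictly stronger (it gives hyperbolicity of the orbit at infinity and degenerates correctly as $b\to 2\sqrt{3}^-$, consistent with the bifurcation there). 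I checked that your constant is right to first order in $b$: in generalized polar coordinates $x=r\,\mathrm{Cs}(\theta)$, $y=r^3\mathrm{Sn}(\theta)$ adapted to $\dot X=Y$, $\dot Y=-X^5$ one gets $d\ln r/d\theta=-b\,\mathrm{Cs}^2\mathrm{Sn}^2/(1+b\,\mathrm{Cs}^3\mathrm{Sn})$, whose period integral is $b\pi/(3\sqrt3)+O(b^2)$, matching your formula. The one place your argument needs genuine care --- and you correctly flag it --- is that the arc integral $\int(-v)\,d\tau$ in each directional chart diverges logarithmically at the poles, and the finite total is obtained only after adding the Jacobian terms $-\tfrac13\ln v$ from the transverse-coordinate changes $w=uv^{-1/3}$ together with the pole-chart contributions $\int (s^5+bs^2)\,ds/(s^6+bs^3+3)$; a symmetric-cutoff principal value alone is not a proof until this matching is written out. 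The paper's softer Filippov argument avoids exactly this bookkeeping, at the cost of not yielding hyperbolicity or a rate.
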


\begin{figure}[h!]
	\centering
	 \subfigure[ when $0<b<2\sqrt{3}  $  ]{
	\scalebox{0.31}[0.31]{
			\includegraphics{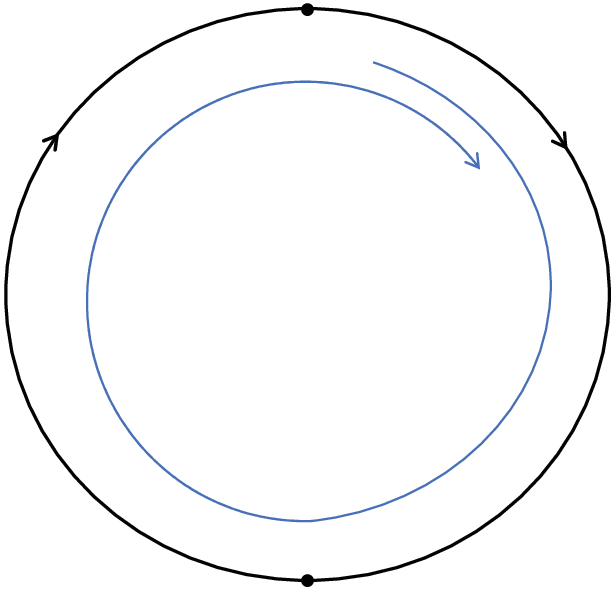}}}
	\subfigure[ when $b\geq2\sqrt{3}  $]{
	\scalebox{0.31}[0.31]{
			\includegraphics{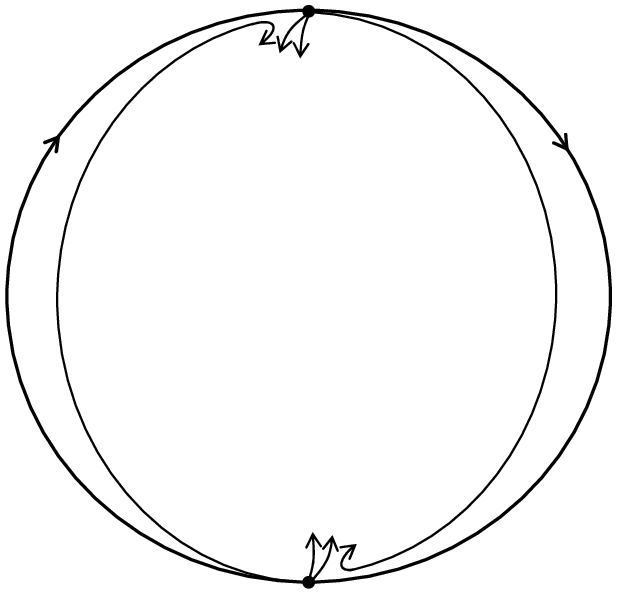}}}
	\caption{Dynamics near infinity  in the Poincar\'e disc  of    \eqref{initial}. }
	\label{INF}
\end{figure}

\begin{proof}
  With a scaling transformation
$(y,~t)\to(by,  t/b),
$
system \eqref{initial} becomes
\begin{eqnarray*}
\begin{array}{ll}
\dot x=y,
\\
\dot y=-\frac{  1}{b^2}(\mu_1x+\mu_2x^3+x^5)-(\frac{ \mu_3}{b} +    x^2)y.
\end{array}
\end{eqnarray*}
By \cite{DH}, we directly obtain the dynamics near infinity of  system  \eqref{initial}, as shown
in {\rm Figure \ref{INF}}.

Besides,  we claim that  the periodic orbit of  system  \eqref{initial}  at infinity  is   repulsive when $0<b<2\sqrt{3}  $.
Using  the transformation
$
(x, y)\rightarrow(x, y -F(x)),
$
system \eqref{initial} can be  written as
\begin{eqnarray}
\begin{array}{ll}
\dot x=y-\mu_3x -  \frac{b}{3}x^3=:y -F(x),
\\
\dot y=-\mu_1x- \mu_2x^3-x^5  =:-g(x).
\end{array}
\label{ini1101}
\end{eqnarray}
Set  a generalized Filippov transformation
$
z(x):=\int_{0}^{x}  h(s)ds.
$
Then, from system \eqref{ini1101}  we get
$
z(x)=
\mu_1 x^2/2- \mu_2 x^4/4+ x^6/6.
$
Denote  $x_1(z)$  and $x_2(z)$  as  the  branches of the inverse of $z(x)$  for $x\geq0$  and  $x<0$, respectively.
Transformation $z = z(x)$
changes  system \eqref{ini1101} into
\begin{eqnarray*}
\frac{dz}{dy}=\frac{{z}'(x)dx}{dy}=\frac{  h(x)dx}{dy} =  F_1(z)-y, ~~~~
\frac{dz}{dy}=\frac{{z}'(x)dx}{dy}=\frac{  h(x)dx}{dy} =F_2(z)-y
\end{eqnarray*}
for  $x\geq0$ and $x<0$ separately,
where
  $  F_1(z):=  F(x_1(z))$ and  $  F_2(z):= F(x_2(z))$.
 By the monotonicity,   it  is clear that  there exists a value $z^*$ such that $F_1(z)> F_2(z)$    for  every  $z\in(z^*,+\infty)$.
Although       system \eqref{initial}  has equilibria at infinity when   $0<b<2\sqrt{3}  $
  and    Proposition 3.3     of \cite{CJT} holds for  a general Li\'enard system    system  satisfying  that   there are no equilibria at infinity,  we can easily show that   Proposition 3.3    of \cite{CJT}   holds for  system \eqref{initial}.
 Therefore,
 the assertion is  proved
 by   Proposition 3.3     of \cite{CJT}.
 The proof is completed.
\end{proof}


\bigskip

\section{Limit cycles, homoclinic loops and heteroclinic loops}

In this section we study the existences of limit cycles,  homoclinic loops and heteroclinic loops of system \eqref{initial}. Moreover, the exact number of limit cycles is obtained
if they exist.
For simplicity, let {\it a large limit cycle} be a limit cycle surrounding more than one  equilibrium
and {\it a small limit cycle} be a limit cycle surrounding a single equilibrium.

\begin{lemma}
 For any  $(\mu_1,\mu_2,\mu_3,b)\in\mathbb{R}^3\times\mathbb{R}^+$,   system \eqref{initial} has no limit cycles  when $
\mu_3\geq0$.
\label{con1}
\end{lemma}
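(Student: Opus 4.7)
The plan is to invoke Bendixson's criterion on the divergence of the vector field. Writing the system as $\dot x = P(x,y)$, $\dot y = Q(x,y)$ with $P = y$ and $Q = -g(x) - (\mu_3 + bx^2)y$, a direct computation gives
\[
\mathrm{div}(P,Q) = \frac{\partial P}{\partial x} + \frac{\partial Q}{\partial y} = -\mu_3 - bx^2.
\]
Under the hypotheses $\mu_3 \geq 0$ and $b > 0$, this quantity is non-positive on the whole plane and vanishes only on the line $\{x = 0\}$, which has two-dimensional Lebesgue measure zero.

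Next I would use the standard Green-theorem argument. Suppose for contradiction that $\Gamma$ is a closed orbit of \eqref{initial} bounding the simply connected region $D$. Along $\Gamma$ one has $P\,dy - Q\,dx = (P Q - Q P)\,dt \equiv 0$, so
\[
0 = \oint_{\Gamma} (P\,dy - Q\,dx) = \iint_D \mathrm{div}(P,Q)\,dx\,dy = -\iint_D \bigl(\mu_3 + bx^2\bigr)\,dx\,dy.
\]
Since $D$ is open, it contains points with $x \neq 0$ on which the integrand $\mu_3 + bx^2$ is strictly positive, while the integrand is non-negative throughout $D$. Hence the right-hand side is strictly negative, contradicting the equality to $0$. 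Therefore no closed orbit can exist, and in particular system \eqref{initial} admits no limit cycle.

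There is essentially no obstacle here: the only thing to check is that the vanishing locus $\{x=0\}$ of the divergence does not by itself obstruct the strict inequality. This is immediate because $\{x=0\}$ has zero planar measure, and moreover it cannot itself be a periodic orbit (on $x=0$ the flow is $\dot x = y$, so trajectories leave the $y$-axis except at the equilibrium $E_0 = (0,0)$). The conclusion is uniform over all admissible $(\mu_1,\mu_2)$ because the divergence formula does not depend on $\mu_1$ or $\mu_2$, so the argument covers every case in Table~\ref{lmtable1} with $\mu_3 \geq 0$ simultaneously.
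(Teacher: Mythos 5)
Your proof is correct and follows essentially the same route as the paper, which simply computes the divergence $-\mu_3-bx^2<0$ for $x\neq0$ and invokes the Bendixson--Dulac criterion. You have merely written out the standard Green-theorem contradiction and the measure-zero caveat that the paper leaves implicit; no substantive difference.
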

\begin{proof}
When $
\mu_3\geq0$, it is clear that
\[
{\rm div}(y,-\mu_1x-\mu_2x^3-x^5-\mu_3y-bx^2y)=-\mu_3-bx^2<0
\]
for $x\neq0$.
By the Bendixson-Dulac Criterion, system \eqref{initial} has no limit cycles.
\end{proof}

As follows, we study the number of limit cycles of system \eqref{initial} for $\mu_3<0$. For simplicity,
based on the number of equilibria of system \eqref{initial},
we give the following subsections.


\medskip

\subsection{System \eqref{initial} with only one equilibrium}

By Lemma \ref{fe1}, system \eqref{initial} has exactly one equilibrium if and only if
  $(\mu_1,\mu_2)\in\{(\mu_1,\mu_2)\in\mathbb{R}^2:  {\mu_2}^2-4\mu_1<0, ~\mu_2<0 \}\cup\{(\mu_1,\mu_2)\in\mathbb{R}^2: \mu_1\geq0, ~\mu_2\geq0\}=:\mathcal{G}_1$.

\begin{lemma}
When $(\mu_1,\mu_2)\in\mathcal{G}_1$,  $\mu_3<0$ and $b>0$, system \eqref{initial} has a unique limit cycle,
which is stable.
\label{con2}
\end{lemma}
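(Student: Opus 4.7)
The plan is to combine a Poincar\'e--Bendixson existence argument with a classical uniqueness theorem for $\mathbb{Z}_2$-symmetric Li\'enard systems.

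First I would establish existence. By Lemma~\ref{fe1}, when $(\mu_1,\mu_2)\in\mathcal{G}_1$ the origin $E_0$ is the unique finite equilibrium, and for $\mu_3<0$ the relevant rows of Table~\ref{lmtable1} show that $E_0$ is repelling (a source, or, when $\mu_1=0$, an unstable degenerate node). Hence a small punctured disc around $E_0$ lies in the outward flow. By Proposition~\ref{infty}, the flow near the boundary of the Poincar\'e disc is pushed inward: when $0<b<2\sqrt{3}$ the equator is a repulsive periodic orbit, and when $b\geq 2\sqrt{3}$ the infinite equilibria form a configuration through which every nearby trajectory enters the disc. In either case one extracts a positively invariant annular region containing no equilibria, and the Poincar\'e--Bendixson theorem yields at least one limit cycle surrounding $E_0$.

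Next, to obtain uniqueness and stability, I would switch to the Li\'enard plane via $(x,y)\mapsto(x,y-F(x))$ with $F(x)=\mu_3 x+\tfrac{b}{3}x^3$, so that system~\eqref{initial} becomes
\begin{equation*}
\dot x=y-F(x),\qquad \dot y=-g(x),\qquad g(x)=\mu_1 x+\mu_2 x^3+x^5.
\end{equation*}
Both $F$ and $g$ are odd and the system is $\mathbb{Z}_2$-equivariant, so the classical Zhang Zhifen (or Sansone) uniqueness theorem for symmetric Li\'enard equations applies provided:
\begin{description}
\item[(a)] $xg(x)>0$ for $x\neq0$ and $G(\pm\infty)=+\infty$, where $G(x)=\int_0^x g(s)\,ds$;
\item[(b)] $F$ has a unique positive zero $a$, with $F<0$ on $(0,a)$, $F>0$ and strictly increasing on $(a,\infty)$, and $F(x)\to+\infty$.
\end{description}
Condition (b) is immediate from the explicit cubic: its positive zero is $a=\sqrt{-3\mu_3/b}$, while $F'(x)=\mu_3+bx^2$ vanishes only at $\pm\sqrt{-\mu_3/b}$, strictly inside $(-a,a)$. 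For (a), write $g(x)=x\cdot h(x^2)$ with $h(u)=\mu_1+\mu_2 u+u^2$; in the subregion $\{\mu_2^2-4\mu_1<0,\ \mu_2<0\}$ the quadratic $h$ is strictly positive on $[0,\infty)$, while in $\{\mu_1\geq 0,\ \mu_2\geq 0\}$ all coefficients of $h$ are nonnegative, and $h(u)=0$ on $[0,\infty)$ only at $u=0$ when $\mu_1=0$, so $xg(x)=x^2 h(x^2)>0$ for $x\neq 0$ throughout $\mathcal{G}_1$; the divergence of $G$ at $\pm\infty$ is clear. The theorem then yields a unique limit cycle, and its orbital asymptotic stability follows from $F(x)\to+\infty$ combined with the fact that it must absorb all orbits emanating from the repelling $E_0$.

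The main obstacle I anticipate is the existence step when $b\geq 2\sqrt{3}$: infinity is no longer a hyperbolic closed orbit but carries equilibria, so producing an explicit trapping boundary requires a careful Poincar\'e compactification rather than a one-line energy estimate. A secondary subtlety is the degenerate subcase $\mu_1=0$ in $\mathcal{G}_1$, where $E_0$ is non-hyperbolic; the ``small outward neighbourhood of $E_0$'' must then be justified by the normal-form computation underlying the corresponding entry of Table~\ref{lmtable1}, and one has to check that the uniqueness theorem still applies despite $g'(0)=0$, which it does because $xg(x)>0$ for $x\neq 0$ is retained.
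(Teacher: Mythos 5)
Your proposal is correct and takes essentially the same route as the paper: the paper likewise passes to the Li\'enard plane via $(x,y)\mapsto(x,y-F(x))$ with $F(x)=\mu_3x+bx^3/3$, verifies exactly your conditions (oddness of $F$ and $g$, $xg(x)>0$ for $x\neq0$, the sign change of $F$ at $\sqrt{-3\mu_3/b}$, $F(+\infty)=+\infty$), and invokes the classical Levinson--Smith / Zhang theorem. The only difference is that your separate Poincar\'e--Bendixson existence step is redundant, since that theorem already delivers existence together with uniqueness and stability.
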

\begin{proof}
With a Li\'enard transformation $(x,y)\to(x, y-F(x))$, system \eqref{initial} can be changed into
\begin{eqnarray}
\begin{array}{ll}
\dot x=y-F(x),
\\
\dot y= -g(x),
\end{array}
\label{Lie}
\end{eqnarray}
where $F(x):=\int_0^xf(s)ds=\mu_3 x+bx^3/3$.
And  $E_0$ of system \eqref{initial}  becomes $\overline{E}_0$  of  system \eqref{Lie}.
When $(\mu_1,\mu_2)\in\mathcal{G}_1$, system \eqref{Lie} has the following properties:
\begin{description}
\item[(i)] $g(x)$ is odd and $xg(x)>0$ for $x\neq0$,
 \item[(ii)] $F(x)$ is odd, $F(x)<0$ for $0<x<\sqrt{-3\mu_3/b}$ and $F(x)>0$ for $x>\sqrt{-3\mu_3/b}$,
  \item[(iii)] $F(+\infty)=\int_0^{+\infty}f(s)ds=+\infty$,
 \item[(iv)] $f$ and $g$ are $C^{\infty}$.
\end{description}
Therefore, all conditions of  \cite[section 4]{LS} or \cite[Theorem 4.1]{Zh} hold, implying
that system \eqref{Lie} or its equivalent system \eqref{initial} has a unique limit cycle,
which is stable.
\end{proof}


\medskip

\subsection{System \eqref{initial} with three equilibria}

By Lemma \ref{fe1}, system \eqref{initial} has exactly three equilibria if and only if either $a_1=-1$ or $a_1\ge 0$ for its equivalent system \eqref{initial1}.
  In the following, we only need to study limit cycles for simplified system \eqref{initial1}
and we  discuss in two subcases: $a_1=-1$ and $a_1 \ge 0$. Moreover,  limit cycles exist only if $a_2<0$ by Lemma \ref{con1}.



\subsubsection{The case: $a_1=-1$}
\begin{lemma}
When $a_1=-1$,   system \eqref{initial1} has a unique  generalized
limit cycle (including singular closed orbit) for $a_2<0$,
which is stable.
There are two continuous functions $p_1(\delta)$ and $p_2(\delta)$
satisfying $-1/3<p_2(\delta)<p_1(\delta)<0$ such that
\begin{description}
\item[(i)]  system \eqref{initial1} has  a unique  limit cycle that is small when $p_1(\delta)<a_2<0${\rm{;}}
 \item[(ii)] system \eqref{initial1} has
 one saddle-node loop when $p_2(\delta_1)\leq a_2\leq p_1(\delta)${\rm{;}}
  \item[(iii)] system \eqref{initial1} has   a unique  limit cycle that is large  when $a_2<p_2(\delta_1)${\rm{.}}
\end{description}
\label{con4}
\end{lemma}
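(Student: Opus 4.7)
The plan is to work in Li\'enard coordinates and exploit a rotated-vector-field structure in $a_2$. With $a_1=-1$, $\hat g(x)=x(x^2-1)^2$ has a simple zero at $0$ and double zeros at $\pm 1$, making $(\pm 1,0)$ nilpotent saddle-nodes whose local flow I would describe by computing the center manifold: setting $u=x-1$ one finds
\[
y=h(u)=-\frac{4u^2}{\delta(1+a_2)}+O(u^3),
\]
so the attracting parabolic sector of $(1,0)$ lies in $u>0$ and the repelling one in $u<0$ (symmetric picture at $(-1,0)$). After the Li\'enard transformation $(x,y)\mapsto(x,y-\hat F(x))$ with $\hat F(x)=\delta a_2 x+\delta x^3/3$, system \eqref{initial1} becomes the odd Li\'enard system $\dot x=y-\hat F(x),\ \dot y=-\hat g(x)$. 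A short calculation gives $\dot x\,\partial_{a_2}\dot y-\dot y\,\partial_{a_2}\dot x=-\delta y^2\le 0$, so $\{X_{a_2}\}$ is a rotated family in $a_2$; this yields monotone dependence of any isolated (generalized) cycle on $a_2$ and prevents distinct cycles from colliding across parameter values.

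Next I would handle existence and uniqueness of the cycles for fixed $a_2<0$. At $a_2=0$ the origin is a stable weak focus of order one by Lemma~\ref{fe1}, so the Hopf bifurcation is supercritical and produces a unique symmetric stable small limit cycle around $\hat E_0$ for $a_2$ slightly negative. For fixed $a_2<0$, uniqueness of small cycles (necessarily contained in $|x|<1$, where $x\hat g(x)>0$) follows from Zhang Zhifen's uniqueness theorem after a generalized Filippov transformation, with the required monotonicity controlled by the single positive zero $\sqrt{-3a_2}$ of $\hat F$. Uniqueness of large cycles (encircling $\{\hat E_{l2},\hat E_0,\hat E_{r2}\}$) follows from \cite[Theorem~4.1]{Zh} or a Lins--de Melo--Pugh-type argument on $\mathbb{R}$, using oddness of $\hat F,\hat g$ and $\hat F(\pm\infty)=\pm\infty$. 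Existence of a large cycle for sufficiently negative $a_2$ comes from Poincar\'e--Bendixson by combining the unstable origin with the trapping supplied by Proposition~\ref{infty}. The rotated-field property forbids coexistence of a small and a large cycle, producing a unique generalized limit cycle for every $a_2<0$.

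Finally, define $p_1(\delta):=\inf\{a_2<0:\text{a small cycle exists}\}$ and $p_2(\delta):=\sup\{a_2:\text{a large cycle exists}\}$. By monotonicity in $a_2$, as $a_2\searrow p_1(\delta)$ the small cycle expands until it simultaneously touches the parabolic sectors of $(\pm 1,0)$ (simultaneity forced by $\mathbb{Z}_2$-symmetry), producing a singular closed orbit made of a heteroclinic connection between the two saddle-nodes; symmetrically, for $a_2<p_2(\delta)$ the analogous connection through the other hyperbolic separatrices opens up a large cycle. The main obstacle, which distinguishes this lemma from a standard hyperbolic-homoclinic event, is to show rigorously that $[p_2,p_1]$ is a genuine interval: because the equilibria $(\pm 1,0)$ are nilpotent with both attracting and repelling parabolic sectors, the singular loop persists across a range of parameters rather than at an isolated codimension-one value. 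Establishing this requires a Dulac return-map analysis near each saddle-node based on the center-manifold expansion displayed above, showing that for every $a_2\in[p_2,p_1]$ the composition of the flow across the parabolic sectors admits exactly one singular closed orbit and no extra cycle. The bound $-1/3<p_2$ follows by requiring the zero $\sqrt{-3a_2}$ of $\hat F$ to lie in $(0,1)$, and continuity of $p_1,p_2$ in $\delta$ is inherited from continuous dependence of the center and hyperbolic manifolds on parameters.
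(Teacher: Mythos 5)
Your overall architecture (Li\'enard form, rotated family in $a_2$, thresholds $p_1,p_2$ attached to connections with the nilpotent equilibria) matches the paper's, but two load-bearing steps do not go through as stated. First, uniqueness of the large limit cycle cannot be obtained by citing \cite[Theorem~4.1]{Zh} or a Lins--de Melo--Pugh argument: with $a_1=-1$ one has $\hat g(x)=x(x^2-1)^2$, which vanishes at $x=\pm1$ \emph{inside} the annulus swept by large cycles, so the standing hypotheses of those theorems ($xg(x)>0$ for $x\neq0$, monotonicity of $f/g$, or $g(x)=x$) all fail. The paper replaces this with a bespoke argument: it first shows via the energy $E(x,y)=\int_0^x\hat g+y^2/2$ and $dE/dt=-\hat g\hat F$ that no closed orbit can stay in the strip $|x|\le\sqrt{-3a_2}$, hence every cycle surrounds $(\pm\sqrt{-3a_2},0)$ and any two cycles are nested; it then compares $\oint dE$ along two nested cycles arc by arc, using only that $\hat F$ is increasing and positive for $x>\sqrt{-3a_2}$, to reach $\oint_{\gamma_1}dE>\oint_{\gamma_2}dE$, contradicting that both vanish. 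This single comparison simultaneously gives at most one cycle of \emph{any} kind, which is also what rules out coexistence of a small and a large cycle. Your substitute for that last point --- ``the rotated-field property forbids coexistence'' --- is not a valid inference: rotation in $a_2$ controls how cycles move with the parameter, not how many can coexist at a fixed $a_2$.

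Second, for the interval $[p_2(\delta),p_1(\delta)]$ of saddle-node loops you correctly identify the difficulty but only announce a ``Dulac return-map analysis'' without carrying it out; note also that a saddle-node has one parabolic (nodal) sector and two hyperbolic sectors, not an attracting and a repelling parabolic sector. The paper's mechanism is more elementary: since the nodal sector of $\hat E_{l2}$ is bounded by two distinct stable separatrices, it tracks the intersections $A$ (unstable separatrix) and $\underline B,\bar B$ (left-most and right-most stable separatrices) with the $y$-axis, shows $y_A+y_B$ varies monotonically and continuously in the parameter (as in Lemma~3.3 of \cite{CCX}), checks the signs at $a_2=-1/3$ and at $\mu_3=0$, and applies the Intermediate Value Theorem to produce the two distinct thresholds $p_1$ and $p_2$; the connection persists for all $a_2\in[p_2,p_1]$ precisely because the incoming orbit may land anywhere in the nodal sector between $\underline B$ and $\bar B$. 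To complete your proof you would need either to execute the return-map analysis near the nilpotent points or to adopt this separatrix-tracking argument, and you would need to replace the inapplicable uniqueness citations for the large cycle with a direct comparison of the type the paper uses.
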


\begin{proof}
By Lemma \ref{fe1}, we know that   $\hat{E}_0$ is an anti-saddle,  and  both $\hat{E}_{l2}$ and $\hat{E}_{r2}$ are saddle-nodes or cusps for system \eqref{initial1}.
Thus,  the index of $\hat{E}_0$ is $+1$ and  the   indices  of $\hat{E}_{l2}$ and $\hat{E}_{r2}$ are $0$   by \cite[Chapter 3]{Zh}.
Then, any limit cycle of  system \eqref{initial1} must   surround $\hat{E}_0$ if it exists.
By the symmetry of system \eqref{initial1} about the origin, $\hat{E}_{l2}$  must lie in the interior of a limit cycle if  $\hat{E}_{r2}$ lies in it.

With a Li\'enard transformation
$
(x, y)\to(x, y-\hat F(x)),
$
system  \eqref{initial1} is changed into
\begin{eqnarray}
\begin{array}{ll}
\dot x=y-\delta(a_2x+\frac{x^3}{3})=:y-\hat F(x),
\\ [2mm]
\dot y=-x(-1+x^2)(a_1+x^2).
\end{array}
\label{initial2}
\end{eqnarray}

We firstly prove that   the two points $(-\sqrt{-3a_2},0)$
 and $(\sqrt{-3a_2},0)$  lie in the interior region surrounded by the limit cycle of   system \eqref{initial2} if it exists for $a_2<0$.
Let
\begin{eqnarray}
\label{Exy}
  E(x,y):=\int_0^x\hat g(s)ds+\frac{y^2}{2}.
\end{eqnarray}
It is obvious that
\begin{eqnarray}
\frac{dE}{dt}\mid_{\eqref{initial2}}=-\hat g(x)\hat F(x).
\label{dEdt}
\end{eqnarray}
When $|x|\leq\sqrt{-3a_2}$, we can obtain
$
\hat g(x)\hat F(x) \leq 0.
$
 Assume that system \eqref{initial2} has a  limit cycle $\gamma$   in the strip  $x\in[-\sqrt{-3a_2},\sqrt{-3a_2}]$.
 Then, we have
 \[
0= \oint_{\gamma}dE= \oint_{\gamma}-\hat g(x)\hat F(x)dt>0.
 \]
 This is a contradiction.
Thus, there is no limit cycles  in the strip $x\in[-\sqrt{-3a_2},\sqrt{-3a_2}]$.
 In other words, if  system \eqref{initial2}  has a  limit cycle, it has to surround the two points $(-\sqrt{-3a_2},0)$ and $(\sqrt{-3a_2},0)$.

 \begin{figure}[h]
\centering
\includegraphics[width=3.5in]{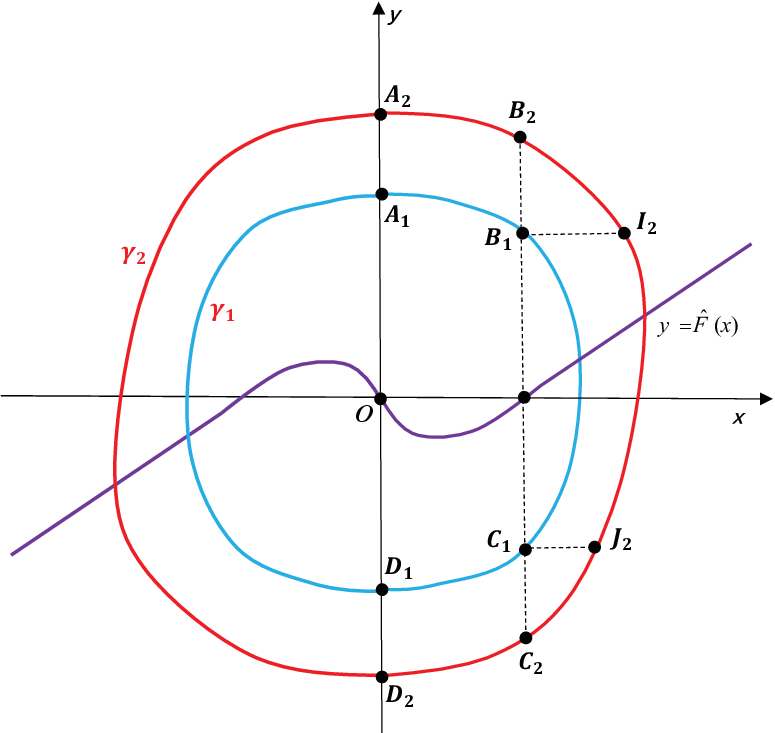}
\caption{Two  closed orbits  $\gamma_1$ and $\gamma_2$ of  \eqref{initial2}.}
\label{2lcs}
\end{figure}

We secondly show that   system \eqref{initial2} has at most one limit cycle.
 Assume that system \eqref{initial2} exhibits at least two closed orbits $\gamma_1$ and $\gamma_2$,
 where $\gamma_1$ lies in the interior region surrounded by $\gamma_2$, as shown in Figure \ref{2lcs}.
 Note that
 \begin{eqnarray}
\oint_{\gamma_1}dE=\oint_{\gamma_2}dE=0.
   \label{=2lc}
 \end{eqnarray}
 By the symmetry of  system \eqref{initial2} about the origin, it follows that
  \begin{eqnarray}
\oint_{\gamma_1}dE=\frac{1}{2}\int_{\widehat{A_1B_1D_1}}dE~{\rm and}~\oint_{\gamma_2}dE=\frac{1}{2}\int_{\widehat{A_2B_2D_2}}dE.
   \label{=2lc1}
 \end{eqnarray}
 Let $y=y_1(x)$ and $y=y_2(x)$ be
  $\widehat{A_1B_1}$ and $\widehat{A_2B_2}$, respectively.
  Then, we have
   \begin{eqnarray}
 \int_{\widehat{A_1B_1}}dE- \int_{\widehat{A_2B_2}}dE&=&-\int_0^{\sqrt{-3a_2}}\frac{\hat g(x)\hat F(x)}{y_1-\hat F(x)}dx
 +\int_0^{\sqrt{-3a_2}}\frac{\hat g(x)\hat F(x)}{y_2-\hat F(x)}dx
 \nonumber\\
 &=&\int_0^{\sqrt{-3a_2}}\frac{\hat g(x)\hat F(x)(y_1-y_2)}{(y_1-\hat F(x))(y_2-\hat F(x))}dx
 \nonumber\\
 &>&0.
   \label{=2lc2}
 \end{eqnarray}
 Similarly, we obtain
    \begin{eqnarray}
 \int_{\widehat{C_1D_1}}dE- \int_{\widehat{C_2D_2}}dE>0.
   \label{=2lc3}
 \end{eqnarray}
  Let $x=x_1(y)$ and $x=x_2(y)$ be
  $\widehat{B_1C_1}$ and $\widehat{I_2J_2}$, respectively.
  It is obvious that $x_1(y)<x_2(y)$ for $y_{C_1}<y<y_{B_1}$.
On the one hand, the function $y=\hat{F}(x)$ is increasing for $x>\sqrt{-3a_2}$.
  Then, we have
   \begin{eqnarray}
 \int_{\widehat{B_1C_1}}dE- \int_{\widehat{I_2J_2}}dE=\int_{y_{B_1}}^{y_{C_1}}\big(\hat F(x_1)-\hat F(x_2) \big)dy>0.
   \label{=2lc4}
 \end{eqnarray}
 On the other hand, the function  $\hat F(x)>0$   for $x>\sqrt{-3a_2}$.
 Then, we obtain
   \begin{eqnarray}
 \int_{y_{B_2}}^{y_{I_2}} \hat F(x)dy<0~{\rm and}~ \int_{y_{J_2}}^{y_{C_2}} \hat F(x)dy<0.
   \label{=2lc15}
 \end{eqnarray}
It follows from (\ref{=2lc1})--(\ref{=2lc15}) that
  \begin{eqnarray*}
\oint_{\gamma_1}dE>\oint_{\gamma_2}dE,
 \end{eqnarray*}
which contradicts  \eqref{=2lc}.
Therefore, either system \eqref{initial2} or its equivalent system \eqref{initial1} has at most one limit cycle.

\begin{figure}
\centering
 \subfigure[for $-1<a_2\leq-1/3$]
{
\scalebox{0.55}[0.55]{
\includegraphics{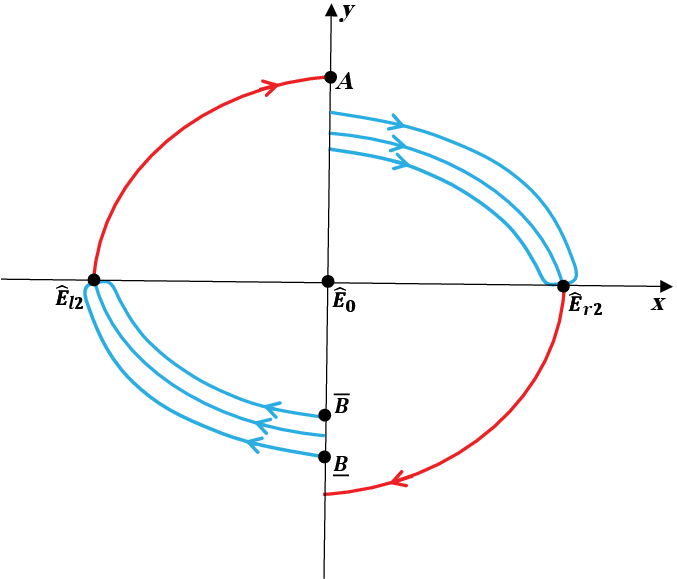}}}

 \subfigure[for $a_2=-1$]
{
\scalebox{0.55}[0.55]{
\includegraphics{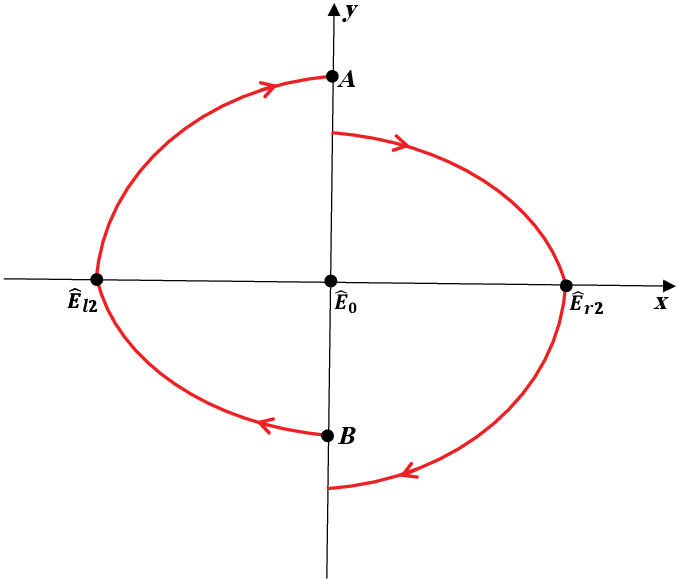}}}
 \subfigure[for $a_2<-1$]
{
\scalebox{0.55}[0.55]{
\includegraphics{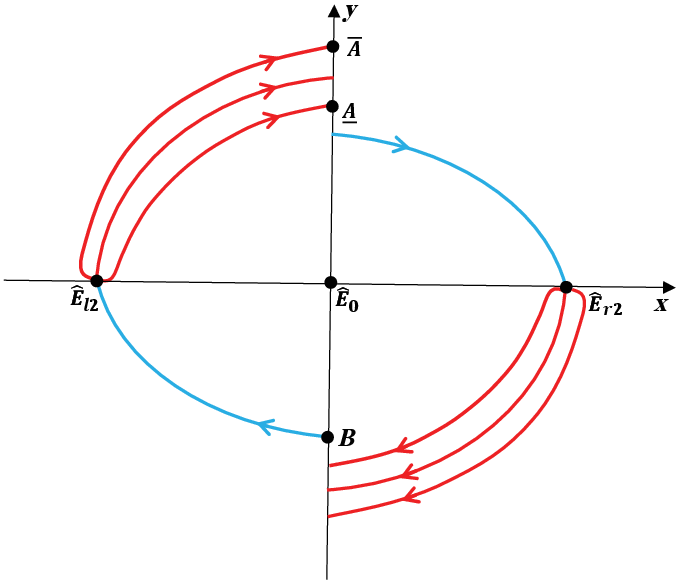}}}~~~
\caption{Relative location of stable and unstable manifolds of  $\hat{E}_{l2}$ and $\hat{E}_{r2}$ of \eqref{initial2}. }
\label{loma}
\end{figure}

We thirdly prove that  system \eqref{initial1} has a unique  limit cycle  that is large  when $a_2\leq-1/3$.
It follows from Lemma \ref{fe1} that
  $\hat{E}_{l2}$ and $\hat{E}_{r2}$ are saddle-nodes with  one  stable nodal  part  
  when $a_2>-1$,
  cusps when $a_2=-1$ and
  saddle-nodes with  one  unstable nodal part when $a_2<-1$.
If   $a_2\leq-1/3$,  then   $\hat{E}_{l2}$ and $\hat{E}_{r2}$ lie in the strip $[-\sqrt{-3a_2},\sqrt{-3a_2}]$. By the aforementioned analysis, we know that
 system \eqref{initial1} has no limit cycles lying in the strip $[-\sqrt{-3a_2},\sqrt{-3a_2}]$.
 Evidently, system \eqref{initial1} has no small limit cycles when  $a_2\leq-1/3$.
We claim that
 the  relative  location of stable and unstable manifolds of  $\hat{E}_{l2}$ and $\hat{E}_{r2}$ is shown in Figure \ref{loma} when  $a_2\leq-1/3$.
 If  the relative   location of stable and unstable manifolds of  $\hat{E}_{l2}$ and $\hat{E}_{r2}$ is not shown in Figure \ref{loma}, we will obtain that system \eqref{initial1} has a heteroclinic loop or a small limit cycle
 by the Poincar\'e-Bendixson Theorem.
  This is a contradiction. The assertion is proven.
 Therefore, system \eqref{initial1} has at least one limit cycle that is large  when   $a_2\leq-1/3$
 by  Proposition \ref{infty} and Poincar\'e-Bendixson Theorem.
 Then,   the uniqueness of  limit cycles  of  system \eqref{initial1} is proven  and   it  is large when $a_2\leq-1/3$.

 We finally    discuss the remainder  case  $-1/3<a_2<0$.
 	Since $\hat{E}_{l2}$ is  a   saddle-node  with one   stable nodal  part    when $a_2=-1/3$,
denote the intersection point of the unstable (resp. stable) manifold of $\hat{E}_{l2}$ and the positive (resp. negative) $y$-axis by $A$ (resp. $B$),
 as shown in Figure \ref{loma}(a).
Let the coordinate of a general point $P$ be $(x_P, y_P)$.
 As proved in Lemma 3.3 of \cite{CCX}, we can similarly prove that  $y_A+y_B$
 is increasing as $\mu_3$ decreases. On the one hand, $y_A+y_{B}>0$ when $a_2=-1/3$.
 	Since $\hat{E}_{l2}$ is a saddle-node with one stable nodal part when $-1<a_2\leq-1/3$, 
 	denote the intersection point of the unstable (resp.the left-most stable;
 	the right-most stable) manifold of $\hat{E}_{l2}$ and the positive (resp. negative) $y$-axis by $A$ (resp. $\underline{B}$; $\bar{B}$).
 On the other hand, we claim that $y_A+y_{\bar{B}}<0$ when $\mu_3=0$.
 By Hopf bifurcation, system \eqref{initial1} occurs a small limit cycle when $-\epsilon<a_2<0$,
 where $\epsilon>0$ is small. It is clear that  $y_A+y_{\bar{B}}=0$ is impossible.
 If  $y_A+y_{\bar{B}}>0$, system \eqref{initial1} has at least one large limit cycle
 by  the Poincar\'e-Bendixson Theorem, which contradicts the uniqueness of closed orbit.
 By the  Intermediate Value Theorem, there exist respectively two values $a_2=p_1(\delta)$ and
 $a_2=p_2(\delta)$ such that
 $y_A+y_{\underline{B}}=0$ and $y_A+y_{\bar{B}}=0$, where  $-\sqrt{\delta}/3<p_2(\delta)<p_1(\delta)<0$.
 Here, system \eqref{initial} has a unique small limit cycle when $p_1(\delta)<a_2<0$,
 one saddle-node heteroclinic loop when $p_2(\delta)\le a_2\le p_1(\delta)$,
 and one large limit cycle when $-1/3<a_2<p_2(\delta)$.
 Then, the proof is finished.
\end{proof}

\subsubsection{The case: $a_1\geq0$.}
 The existence of small limit cycles is  only meaningful  for a small limit cycle surrounding $\hat{E}_{r2}$ or $\hat{E}_{l2}$, because $\hat{E}_{0}$ is a degenerate saddle  for  $a_1=0$ or  a saddle  for  $a_1>0$ by Lemma \ref{fe1}.
The existence of large  limit cycles is  only meaningful  for a large  limit cycle surrounding $E_{0}$,  $\hat{E}_{r2}$ and  $\hat{E}_{l2}$, because  the vector field of    system \eqref{initial1} is symmetric about the origin.

\begin{lemma}
	System \eqref{initial1} has no limit cycles when
	$-1/3\leq a_2<0$.
	\label{nlc}
\end{lemma}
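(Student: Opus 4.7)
The plan is to combine two identities that follow immediately from the Li\'enard form \eqref{initial2}, and to exploit an arithmetic miracle at the endpoint $a_2=-1/3$. Setting $E(x,y)=\int_0^x\hat g(s)\,ds+y^2/2$ as in \eqref{Exy}, one has $dE/dt|_{\eqref{initial2}}=-\hat g(x)\hat F(x)$ from \eqref{dEdt}, together with the explicit expansion
\[
\hat g(x)\hat F(x)=\frac{\delta}{3}\,x^{2}(a_1+x^2)(x^2-1)(x^2+3a_2).
\]
At $a_2=-1/3$ the last two factors collapse to the perfect square $(x^2-1)^2$, so $\hat g(x)\hat F(x)\geq 0$ pointwise with equality only on the vertical lines $x=0,\pm 1$. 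Any closed orbit $\gamma$ would then give $\oint_\gamma\dot E\,dt<0$, contradicting $\oint_\gamma dE=0$.

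For the open range $-1/3<a_2<0$, I first restrict the possible cycles via an index count. By Lemma~\ref{fe1}, $\hat E_0$ is a saddle of index $-1$ and $\hat E_{l2},\hat E_{r2}$ are antisaddles of index $+1$, so a limit cycle must enclose either exactly one of $\hat E_{l2},\hat E_{r2}$ (small cycle) or all three equilibria (large cycle). The symmetry $(x,y)\mapsto(-x,-y)$ of \eqref{initial1} identifies the two small-cycle possibilities, so it is enough to exclude a small cycle around $\hat E_{r2}$ lying in $\{x>0\}$ and a large cycle.

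The principal tool is the flux identity: Green's theorem applied to $\oint \hat F(x)\,dy=0$ on any closed orbit gives $\iint_\Omega(a_2+x^2)\,dx\,dy=0$, i.e.\ the mean of $x^2$ on the enclosed region $\Omega$ equals $-a_2\in(0,1/3]$. Writing $\Omega=\{x_-(y)\leq x\leq x_+(y):\ y\in[y_{\min},y_{\max}]\}$ (trajectory-enclosed regions in this Li\'enard setting are vertically convex), this becomes
\[
\int_{y_{\min}}^{y_{\max}}(x_+-x_-)\bigl(x_+^{2}+x_+x_-+x_-^{2}+3a_2\bigr)\,dy=0.
\]
For a large cycle the interior $\Omega$ contains $[-1,1]\times\{0\}$ in its interior, forcing $x_+(0)>1$ and $x_-(0)<-1$ and hence $x_+^{2}+x_+x_-+x_-^{2}=(x_+-x_-)^2+3x_+x_->1>-3a_2$ near $y=0$ (using $3a_2>-1$); for a small cycle around $\hat E_{r2}=(1,0)$, $0\leq x_-(0)<1<x_+(0)$ yields the same strict positivity of the bracket near $y=0$.

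The main obstacle, and the delicate step, is propagating positivity of the bracket to all $y$ in the cycle range, particularly near the top and bottom of the cycle where $x_+=x_-$ and $3x_+^{2}+3a_2$ could in principle become negative. To overcome this I plan to combine the flux identity with the Lyapunov estimate $\dot V=-\delta(a_2+x^2)y^2\leq 0$ on $\{|x|\geq\sqrt{-a_2}\}$ (with $V=\int_1^x\hat g(s)\,ds+y^2/2$ centred at $\hat E_{r2}$), using a Poincar\'e-section comparison at $x=\sqrt{-a_2}\leq 1/\sqrt{3}$: any excursion of the orbit into $\{|x|<\sqrt{-a_2}\}$ strictly increases $V$, and a budget argument on the return map then shows the orbit cannot close up compatibly with the vanishing flux. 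A one-shot Dulac function cannot replace this combined analysis, because the divergence $-\delta(a_2+x^2)$ changes sign on the right half-plane throughout this $a_2$ range and no simple weight restores a definite sign globally.
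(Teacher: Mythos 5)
Your treatment of the endpoint $a_2=-1/3$ is correct and coincides with the paper's: $\hat g(x)\hat F(x)=\tfrac{\delta}{3}x^2(a_1+x^2)(x^2-1)(x^2+3a_2)$ collapses to a nonnegative expression with the square $(x^2-1)^2$ (using $a_1\ge 0$ in this case), so $\oint dE<0$ on any closed orbit. Your ``flux identity'' for $-1/3<a_2<0$ is also a correct reformulation of $\oint dE=0$ via Green's theorem. The problem is that the argument stops exactly where the difficulty begins: you yourself note that the bracket $x_+^2+x_+x_-+x_-^2+3a_2$ can become negative near the top and bottom of the cycle (indeed $a_2+x^2$ changes sign at $x=\sqrt{-a_2}<1$, which lies strictly inside any small cycle around $\hat E_{r2}$ and inside any large cycle), and the ``Poincar\'e-section comparison'' and ``budget argument on the return map'' that are supposed to close this gap are only announced, never carried out. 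As written, the proof is incomplete on the entire open range $-1/3<a_2<0$.

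The missing idea for the small-cycle case is to shift the base point of the energy to the enclosed equilibrium: with $\hat E(x,y)=\int_0^x\hat g(s)\,ds+(y-\hat F(1))^2/2$ one gets $d\hat E/dt=-\hat g(x)(\hat F(x)-\hat F(1))=-\tfrac{\delta}{3}(x+1)(x-1)^2(x^2+a_1)(x^2+x+1+3a_2)\,x$, which is strictly negative on $(0,1)\cup(1,+\infty)$ because $x^2+x+1+3a_2>1+3a_2>0$ there; the integrand is sign-definite precisely because the factor $\hat F(x)-\hat F(1)$ vanishes together with $\hat g$ at $x=1$, producing the square $(x-1)^2$. No such one-line fix is available for your unshifted flux functional. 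For the large cycle the paper does not attempt a pointwise sign either: it cuts the cycle into arcs by the lines $x=0$, $x=\sqrt{-3a_2}$ and $x=1$, halves the integral by the central symmetry, controls the arcs in $x>\sqrt{-3a_2}$ via the monotonicity of $\hat F$ there, and sums the signed contributions to conclude $\oint dE<0$. You would need to supply an argument of comparable substance (and also justify the asserted horizontal convexity of the enclosed region, which you use without proof) before the lemma is established on $(-1/3,0)$.
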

\begin{proof}	
Assume that system \eqref{initial2} exhibits  a limit cycle $\Gamma_0$ 	when $ a_2=-1/3$.
Consider energy function $E(x,y)$ again, as shown in \eqref{Exy}.
		Clearly, $\oint_{\Gamma_0}dE=0$.
		However, from \eqref{dEdt}
	we can show   that    $\oint_{\Gamma_0}dE=\oint_{\Gamma_0}-\delta x^2(a_1+x^2)(-1+x^2)^2dx/3<0$ for $a_2=-1/3$.
	This is a contradiction.
 Therefore, both
	system \eqref{initial2} and system \eqref{initial1} have no limit cycles  when $a_2=-1/3$.

 \begin{figure}[h]
	\centering
	\includegraphics[width=3.5in]{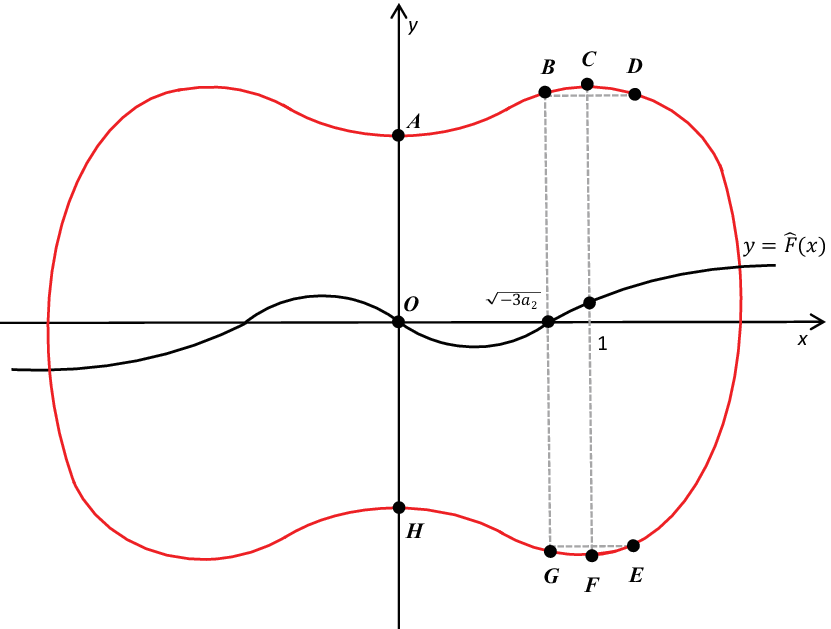}
	\caption{Large limit cycle  of  \eqref{initial2} for $-1/3\leq a_2<0$. }
	\label{nonlic}
\end{figure}	
Assume that system \eqref{initial2} exhibits  a small limit cycle $\Gamma$ surrounding the equilibrium $(1, \hat F(1))$ 	for $a_2\in(-1/3,0)$.
Let
\[
\hat E(x,y)=\int_0^x\hat  g(s)ds+\frac{(y-\hat F(1))^2}{2}.
\]
		Then, we have
\[
\frac{d\hat E(x,y)}{dt}|_{\eqref{initial2}}=-\hat g(x)(\hat F(x)-\hat F(1))=-\frac{\delta}{3}(x+1)(x-1)^2 (x^2+a_1)(x^2+x+1+3a_2)x<0
\]
for $x\in(0,1)\cup(1,+\infty)$.
However,
we can show   that    $\oint_{\Gamma}d\hat E=\oint_{\Gamma}-\hat g(x)(\hat F(x)-\hat F(1))dx=0$ for $a_2=-1/3$.
	This is a contradiction.
Therefore,
system \eqref{initial2} has no small limit cycles  when $-1/3<a_2<0$.  So does system \eqref{initial1}.
Assume that system \eqref{initial2} exhibits  a large limit cycle $\Gamma$ surrounding the point $(1, \hat F(1))$ 	for $a_2=c\in(-1/3,0)$, where  $c$ is a constant.  See Figure \ref{nonlic}, where $A,H$ (resp., $B,G$; $C,F$; $D$; $E$) are intersection points
between $\Gamma$ and the $y$-axis (resp., $x=\sqrt{-3a_2}$; $x=1$; $y=y_B$; $y=y_G$) and $y_B,y_G$ are respectively ordinates of $B,G$.
By the symmetry of 	system \eqref{initial2},  we have $2\int_{\widehat{ABH}}dE=\oint_{\Gamma}dE$ .
Then, we obtain
$$
\int_{\widehat{AB}}dE=\int_{\widehat{AB}}-\hat g(x)\hat F(x)dt<0.
$$
It is similar to prove that
$$
\int_{\widehat{DE}}dE<0  ~~~{\rm and} ~~~\int_{\widehat{GH}}dE<0.
$$
Let $x=x_1(y)$ and $x=x_2(y)$ represent the segment orbits $\widehat{BC}$ and $\widehat{CD}$, respectively.
Since $\hat{F}(x)$ is strictly increasing for $x>\sqrt{-3a_2}$, we have $\hat F(x_1(y))-\hat F(x_2(y))<0$. Further, we obtain
$$
\int_{\widehat{BCD}}dE=\int_{y_B}^{y_C}d(\hat F(x_1(y))-\hat F(x_2(y)))y<0.
$$
Similarly, we can prove
$
\int_{\widehat{EFG}}dE <0.
$
Thus, we have $\oint_{\Gamma}dE<0$, which contradicts $\oint_{\Gamma}dE=0$.
Therefore, neither
system \eqref{initial2} nor system \eqref{initial1} can have  large limit cycles  when $-1/3\le a_2<0$.
\end{proof}


\begin{lemma}
System \eqref{initial1} has exactly  one   limit cycle when
$a_2= -1$, which is stable, hyperbolic and large.
	\label{ulc}
\end{lemma}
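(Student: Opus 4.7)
My plan is to mimic the energy-plus-symmetry strategy of Lemma~\ref{con4}, adapted to the configuration $a_1\ge 0$, $a_2=-1$. First, I would establish existence by Poincar\'e--Bendixson. At $a_2=-1$, Lemma~\ref{fe1} together with Proposition~\ref{localbi}(iii) place $\hat E_{l2}$ and $\hat E_{r2}$ on the Hopf surface $H_2$ and identify them as unstable weak foci of order one, while $\hat E_0$ is a saddle when $a_1>0$ and a degenerate saddle when $a_1=0$. Every finite equilibrium is therefore locally repelling. Combining this with the behaviour near infinity described in Proposition~\ref{infty} (and, in the case $\delta\ge 2\sqrt 3$, the nature of the equilibria at infinity there), I would exhibit an annular trapping region whose outer boundary is crossed inward and whose inner boundary is crossed outward, forcing at least one periodic orbit.

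Second, to rule out small cycles around $\hat E_{r2}$, I would pass to the Li\'enard form \eqref{initial2} and consider the shifted energy
\[
\hat E(x,y)=\int_1^x\hat g(s)\,ds+\tfrac12\bigl(y-\hat F(1)\bigr)^2,
\]
for which $d\hat E/dt=-\hat g(x)\bigl(\hat F(x)-\hat F(1)\bigr)$ along \eqref{initial2}. At $a_2=-1$ one computes $\hat F(x)-\hat F(1)=\tfrac{\delta}{3}(x-1)^2(x+2)$. A putative small cycle projects onto an interval $[x_{\min},x_{\max}]$ with $0<x_{\min}<1<x_{\max}$, and combining the signs of this factorisation with those of $\hat g(x)=x(a_1+x^2)(x^2-1)$, together with the nonvanishing first Lyapunov coefficient of the subcritical Hopf at $H_2$, should contradict $\oint d\hat E=0$. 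Symmetry disposes of $\hat E_{l2}$.

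Third, I would prove uniqueness of the large cycle via the comparison scheme of Lemma~\ref{con4}. Since $\hat E_0$ is a (possibly degenerate) saddle, any large cycle must enclose it, and the $\mathbb Z_2$-symmetry of \eqref{initial1} forces such a cycle to be symmetric about the origin. If two symmetric cycles $\Gamma_1\subset\operatorname{int}\Gamma_2$ coexisted, then using the global energy $E(x,y)=\int_0^x\hat g(s)\,ds+y^2/2$ with $dE/dt=-\hat g(x)\hat F(x)$, splitting each loop at the $x$-axis, and parametrising the right arcs as $y=y_i(x)$, I would derive the strict inequality $\oint_{\Gamma_1}dE>\oint_{\Gamma_2}dE$ by sign-and-monotonicity manipulations analogous to \eqref{=2lc1}--\eqref{=2lc15}, using that $\hat F$ is odd and strictly increasing for $|x|>\sqrt 3$. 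This contradicts $\oint_{\Gamma_i}dE=0$.

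Finally, stability of the unique cycle $\Gamma$ is automatic because every finite equilibrium is a source and $\Gamma$ is the $\omega$-limit of nearby outgoing trajectories; hyperbolicity would follow by showing the characteristic exponent $\oint_\Gamma-\hat f(x)\,dt=\oint_\Gamma\delta(1-x^2)\,dt$ is strictly negative, leveraging the already-established location of $\Gamma$ well outside the strip $|x|\le 1$. The main obstacle I anticipate is Step~2: because the integrand $-\hat g(x)(\hat F(x)-\hat F(1))$ changes sign at $x=1$, a pure monotonicity argument will not close the loop, and one must do a careful quantitative bookkeeping of the contributions on the two sides of $x=1$ (possibly localising near $\hat E_{r2}$ and invoking the subcriticality of $H_2$ to kill the remainder) in order to exclude a medium-sized small cycle surrounding a single weak focus.
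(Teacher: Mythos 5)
Your proposal has genuine gaps at exactly the three places where the real difficulty of this lemma lives. First, the exclusion of small cycles: as you yourself note, at $a_2=-1$ the shifted energy gives $d\hat E/dt=-\hat g(x)\bigl(\hat F(x)-\hat F(1)\bigr)=-\tfrac{\delta}{3}x(a_1+x^2)(x+1)(x+2)(x-1)^3$, which changes sign precisely at $x=1$, the abscissa of $\hat E_{r2}$; any small cycle straddles this line, so $\oint d\hat E$ receives contributions of both signs and no contradiction follows. ``Careful bookkeeping'' plus the subcriticality of $H_2$ is not a proof, and the paper does something entirely different here: it translates $\hat E_{r2}$ to the origin (system \eqref{Lie2}), verifies the hypotheses of Proposition 9 of \cite{CLT}, and shows that the two equations \eqref{FFF11} force $a_1=-3$, contradicting $a_1\ge 0$; Corollary 10 of \cite{CLT} then excludes all cycles in the zone $x>-1$. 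Second, your uniqueness argument for the large cycle imports the energy comparison \eqref{=2lc1}--\eqref{=2lc15} from Lemma \ref{con4}, but that comparison needs $\hat g(x)\hat F(x)$ to have a fixed sign on $(0,\sqrt{-3a_2})$. For $a_1=-1$ this holds because $\hat g(x)=x(x^2-1)^2\ge0$ there; for $a_1\ge 0$ one has $\hat g<0$ on $(0,1)$ and $\hat g>0$ on $(1,\sqrt3)$, so the key inequality \eqref{=2lc2} is no longer sign-definite and the argument collapses.

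Third, hyperbolicity and stability. A large limit cycle surrounds $\hat E_0=(0,0)$ and therefore crosses the $y$-axis; it does not lie ``well outside the strip $|x|\le 1$,'' so the sign of $\oint_\Gamma\delta(1-x^2)\,dt$ is genuinely unclear a priori --- establishing $\oint_{\gamma}\hat f(x)\,dt>0$ for every large cycle is in fact the bulk of the paper's proof. It requires: locating the invariant manifolds of the saddle $\hat E_0$ (the inequality $x_A<x_B$, which itself uses the small-cycle exclusion) to get existence by Poincar\'e--Bendixson; passing to the variable $w=\hat F(x)$ with its two branches; a Green's-formula argument showing the top point of the cycle satisfies $y_{H_1}>0$; the monotonicity of $(\hat F(x)-\hat F(1))\hat f(x)/\hat g(x)$ for $x>1$ via $\kappa(x)=(2a_1+6)x^3-6x^2-2a_1\ge 0$; and finally the comparison of \cite{DR}. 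Once $\oint_{\gamma}\hat f\,dt>0$ holds for every large cycle, each is hyperbolic and stable, hence there is at most one, and uniqueness, stability and hyperbolicity all follow from this single inequality rather than from the separate soft arguments you sketch. Your existence step also needs repair: $\hat E_0$ is a saddle, not a repeller, so the inner boundary of your annulus must be built from its separatrices, which again presupposes the small-cycle exclusion you have not secured.
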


\begin{proof}
	Firstly,  we discuss small limit cycles of system \eqref{initial2}, which is equivalent to system \eqref{initial1}.
		When  $a_2=-1$, with transformation
	\begin{eqnarray}
	(x,y)\to(x+1,y+\hat F(1)),
	\label{trans}
\end{eqnarray}
	we move  $(1, \hat  F(1))$ of system  \eqref{initial2} to the origin of the following system
	\begin{eqnarray}
		\begin{array}{ll}
	\dot x=y-\hat F(x+1)+\hat F(1)=:y-\tilde F(x),
	\\
	\dot y= -\hat g(x+1) =:-\tilde g(x),
	\end{array}
		\label{Lie2}
	\end{eqnarray}
 where $\tilde F(x)=\delta (x^3/3 +   x^2)$,
		$\tilde g(x)=x^5+5x^4+(9+a_1)x^3+(7+3a_1)x^2+2(a_1+1)x$
		and $\tilde f(x):=\tilde F'(x)=\delta(x+x^2)$.
	It is clear that $\tilde F(0)=0$, $x\tilde g(x)>0$ for $x\in(-1,0)\cup(0,+\infty)$, $\tilde f(x)<0$ for $-1<x<0$
	and $\tilde f(x)>0$ for $x>0$.
	 Thus, the conditions  {\bf(i)-(iii)} of Proposition 9 of \cite{CLT} hold.
Assume that there exist $x_1$ and $x_2$ such that
	\begin{eqnarray}
\hat{F}(x_1)=\hat{F}(x_2) ~~~{\rm and}~~~\frac{\hat{f}(x_1)}{\hat{g}(x_1)}=\frac{\hat{f}(x_2)}{\hat{g}(x_2)},
	\label{FFF11}
	\end{eqnarray}
	where $0<x_1<1<x_2$.
	By the first equality of \eqref{FFF11}, we have that
	\begin{eqnarray}
	-3+x_1^2+x_1x_2+x_2^2=0.
	\label{FFF2}
	\end{eqnarray}
	It follows from the second equality of \eqref{FFF11}  that
	\begin{eqnarray}
	a_1+x_1^2+x_1x_2+x_2^2=0.
	\label{FFF3}
	\end{eqnarray}
	According to \eqref{FFF2} and \eqref{FFF3}, we obtain $a_1=-3$,
which contradicts to $a_1\geq0$.
	It means that there are no    solutions
	for equations \eqref{FFF11} with $\hat F=\tilde F$, $\hat f=\tilde f$ and $\hat g=\tilde g$,
	where $-1<   x_1<0<x_2$.
 It follows from Corollary 10 of \cite{CLT} that
	 system \eqref{Lie2}  has no limit cycles in the  zone $x>-1$, i.e.,
	system \eqref{initial1} has no small limit cycles.

 Secondly,  we discuss large limit cycles of system \eqref{initial2}.
		Denote   $A := (x_A, 0)$ and $B :=(x_B, 0)$ be respectively the first intersection points of
		the  stable  and  unstable   manifold   of the right-hand side of  $\hat{E}_0$  and  the $x$-axis.
		Let   $D:= (x_D, 0)$ be  the first intersection point   of  an   orbit crossing $C:= (x_C, 0)$   and the $x$-axis, where $x_C\in (x_{\hat{E}_{r2}}, x_{\hat{E}_{r2}}+\varepsilon)$ and $\varepsilon>0$ is sufficiently small.
		According to the stability of   $\hat{E}_{r2}$   and the nonexistence of   small limit cycles,
	it is clear that  $x_A<x_B$   and $x_C<x_D$  when $a_2=-1$, 
see Figure \ref{u3=h1}(a).
	By $x_A<x_B$, Proposition \ref{infty} and   Poincar\'e-Bendixson Theorem,
	system \eqref{initial} has at least one large limit cycle when  $a_2=-1$.
	Assume that $\gamma_1:=\widehat{A_1B_1H_1C_1D_1A_1}$ is a large limit cycle of system  \eqref{Lie},
where $x_{A_1}=x_{D_1}=0$, $x_{B_1}=x_{C_1}=x_{\hat{E}_{r_2}}$ and $y_{H_1}=\hat{F}(x_{H_1})$,  as shown in Figure \ref{u3=h1}(b).
	Next, we will show that
	\begin{eqnarray}
	\oint_{\gamma_1}\hat{f}(x)dt>0.
	\label{ffq00}
	\end{eqnarray}
	By the symmetry, we can obtain
	\begin{eqnarray}
	\int_{\widehat{A_1B_1D_1}} \hat{f}(x)dt=\frac{1}{2}\oint_{\gamma_1} \hat{f}(x)dt.
	\label{ffq0}
	\end{eqnarray}%
	\begin{figure}
		\centering
		\subfigure[Stable and unstable manifolds in system \eqref{initial1}  ]
		{
			\scalebox{0.55}[0.55]{
				\includegraphics{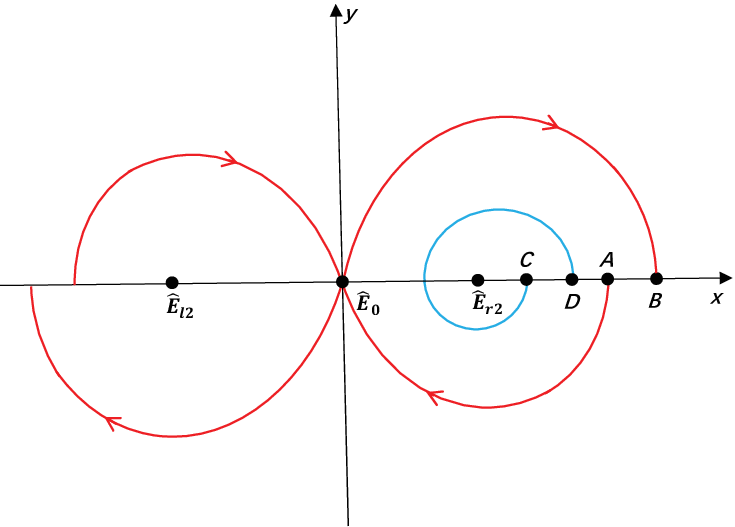}}}
		\subfigure[A large limit cycle  $\gamma_1$  of system \eqref{initial2} ]
		{
			\scalebox{0.57}[0.57]{
				\includegraphics{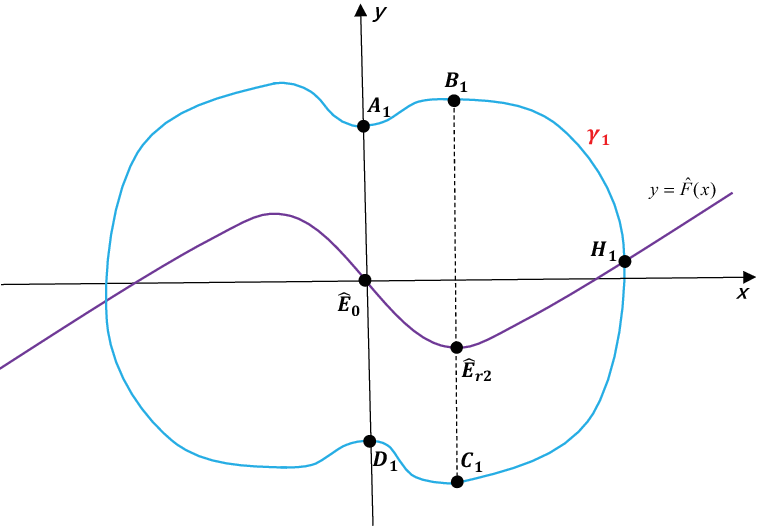}}}
		\subfigure[$w$-$y$ plane]
		{
			\scalebox{0.55}[0.55]{
				\includegraphics{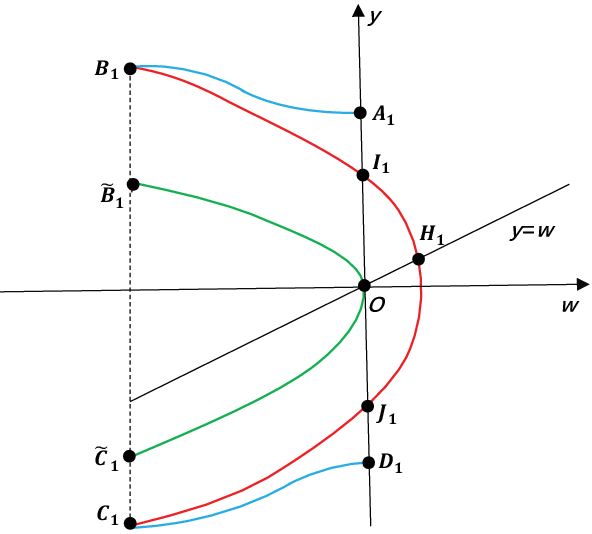}}}
		\caption{Orbits for the case $a_2=-1$. } 
		\label{u3=h1}
	\end{figure}%
	From $\hat{F}(x)=\delta(-x+x^3/3)$, we know that $y=\hat{F}(x)$ has two inverse functions $x_1(w)$ and $x_2(w)$ for $x>0$,
	where $x_1(w)\in(0, 1)$
	and $x_2(w)\in(1, \infty)$ when  $a_2=-1$.
	By changing variable $x$   of system  \eqref{initial2}  to the variable $w=\hat{F}(x)$, we obtain two equations
	\begin{eqnarray}
	\frac{dy}{dw}=\frac{\lambda_i(w)}{w-y},
	\label{tsy1}
	\end{eqnarray}
	where $\lambda_i(w)=\hat{g}(x_i(w))/\hat{f}(x_i(w))$ and $i=1,2$.
	 By \eqref{FFF2} and \eqref{FFF3}, we have $\lambda_2(w)>\lambda_1(w)$.
	Let $y=y_1(w)$, $y=y_2(w)$, $y=z_1(w)$ and $y=z_2(w)$ represent $\widehat{B_1H_1}$,
	$\widehat{B_1A_1}$, $\widehat{C_1H_1}$  and $\widehat{C_1D_1}$, respectively.
	By the Comparison Theorem, it follows that $y_2>y_1$ and $z_2<z_1$.
	We claim that $y_{H_1}>0$. Otherwise, assume that $y_{H_1}\leq0$.
	Let $\mathcal{D}$ be the interior region of $\gamma_1$,   see   Figure \ref{u3=h1}(b).  Set $\mathcal{D}_1$
	be the interior region surrounding by $\widehat{B_1H_1C_1}$ and $w=\hat F(1)$,
	$\mathcal{D}_2$
	be the interior region surrounding by $\widehat{A_1B_1}$, $\widehat{C_1D_1}$, $w=0$ and $w=\hat F(1)$,
 see   Figure \ref{u3=h1}(c).
	It is clear that  $\mathcal{D}_1\subset \mathcal{D}_2$.
	By Green's Formula, it follows that
	\begin{eqnarray*}
		\oint_{\gamma_1}[-\hat{g}(x)dx+(\hat{F}(x)-y)]dy&=&\iint_{ \mathcal{D}}\hat{f}(x)dxdy
		\\
		&=&2\left(\iint_{ \mathcal{D}_1}dwdy-\iint_{ \mathcal{D}_2}dwdy\right)
		\\
		&<&0,
	\end{eqnarray*}
	\textcolor{black}{which contradicts $\oint_{\gamma_1}[-\hat g(x)dx+(\hat F(x)-y) ]dy=0$.}
	This proves the assertion $y_{H_1}>0$.

	When $a_2=-1$, we obtain
	\[
	\frac{(\hat F(x)-\hat F(1))\hat f(x)}{\hat g(x)}=\frac{\delta^2(x^3/3 -x+2/3)}{(x^2+a_1)x}.
	\]
	Then,
	\begin{eqnarray*}
	\frac{d}{dx}\left(\frac{(\hat F(x)-\hat F(1))f(x)}{g(x)}\right)
	=\frac{\delta^2\kappa(x) }{3x^2(x^2+a_1)^2},
	\end{eqnarray*}
where $\kappa(x)= (2a_1+6) x^3-6x^2 -2a_1$.
	Since
 $\kappa'(x)=6(a_1+3) x^2-12x>\kappa'(1)=6+6a_1\geq 0$,
	we have
	$
	\min\kappa(x)=\kappa(1)=0
	$
	for $x>1$.
	Consequently,  $(\hat F(x)-\hat F(1))\hat f(x)/\hat g(x)$ is increasing.
	Let  $y=\tilde y_2(w)$ and $z=\tilde z_2(w)$ represent respectively  $\widehat{\tilde B_1O}$
		and $\widehat{\tilde C_1O}$.
	By the proof of Theorem 2.1 of \cite{DR} or Lemma 4.5 of \cite[Chapter 4]{Zh},
	it follows that
	\begin{eqnarray}
	\int_{\widehat{\tilde B_1O\tilde C_1}} \hat{f}(x)dt-\int_{\widehat{B_1H_1C_1}} \hat{f}(x)dt
	<0.
	\label{ffq1}
	\end{eqnarray}
	On the other hand, we have
	\begin{eqnarray}
	\int_{\widehat{\tilde B_1O}}\hat f(x)dt-\int_{\widehat{B_1A_1}}\hat f(x)dt&=&
	\int_{\hat F(1)}^0\frac{dw}{\tilde y_2-w}-\int_{\hat F(1)}^0\frac{dw}{ y_2-w}
	\nonumber\\
	&=& \int_{\hat F(1)}^0\frac{(y_2-\tilde y_2)}{(y_2-w)(\tilde y_2-w)}dw
	\nonumber\\
	&>&0.
	\label{ffq2}
	\end{eqnarray}
	Similarly, we have
	\begin{eqnarray}
	\int_{\widehat{O\tilde C_1}} \hat{f}(x)dt-\int_{\widehat{D_1C_1}} \hat{f}(x)dt
	>0.
	\label{ffq3}
	\end{eqnarray}
	By   \eqref{ffq0}, \eqref{ffq1},  \eqref{ffq2} and  \eqref{ffq3}, it follows that  $\int_{\widehat{A_1B_1D_1}}\hat f(x)dt>0$ and then
	\eqref{ffq00} holds.
	Therefore, both  system  \eqref{initial2} and system \eqref{initial1} has a unique   large limit cycle  when $a_2=-1$,
	which is hyperbolic.    Combining  the nonexistence of   small limit cycles, we obtain that system \eqref{initial1}  has  a unique    limit cycle when
		$a_2=-1$, where the limit cycle is stable,  large and hyperbolic.
\end{proof}

\begin{proposition}
When  $a_1\geq0$,   there exist two continuous functions  $\varphi_1$ and  $\varphi_5$  satisfying  $-1<\varphi_1(a_1,\delta)<\varphi_5(a_1,\delta)<-1/3$ and  
\begin{description}
\item[(i)] system \eqref{initial1} has no limit cycles when
$\varphi_5(a_1, \delta)< a_2<0${\rm {;}}
 \item[(ii)] system \eqref{initial1} has a unique limit cycle when
$a_2=\varphi_5(a_1,\delta)$, which is semi-stable and large{\rm {;}}
  \item[(iii)] system \eqref{initial1} has two limit cycles when
$\varphi_1(a_1,\delta)<a_2<\varphi_5(a_1,\delta)$, where they are large, the inner one is unstable
and the outer one is stable{\rm {;}}
 \item[(iv)] system \eqref{initial1} has one figure-eight loop and one limit cycle when
$a_2=\varphi_1(a_1,\delta)$, where the figure-eight loop is unstable, and the limit cycle
is stable and large{\rm {;}}
  \item[(v)] system \eqref{initial1} has two small limit cycles and one large limit cycle when
$-1<a_2<\varphi_1(a_1,\delta)$, where the small ones are unstable and the large one
is stable{\rm {;}}
 \item[(vi)] system \eqref{initial1} has  one   limit cycle when
$a_2\leq -1$, where the limit cycle is stable and large{\rm {.}}
\end{description}
\label{c5c6}
\end{proposition}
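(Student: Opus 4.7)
The plan is to organize the six cases around the cycles that are born or destroyed as $a_2$ varies: the pair of small cycles born at the subcritical Hopf $a_2=-1$, and the large stable cycle established at $a_2=-1$ by Lemma \ref{ulc}. The two new global bifurcation values $\varphi_1(a_1,\delta)$ and $\varphi_5(a_1,\delta)$ will be defined respectively as the parameter at which a symmetric figure-eight homoclinic loop through $\hat E_0$ forms, and the parameter at which two large limit cycles coalesce into a semi-stable one.

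I would first handle case (vi), $a_2 \leq -1$. Lemma \ref{ulc} gives a unique stable hyperbolic large cycle $\Gamma_0$ at $a_2 = -1$, which persists under small perturbations by hyperbolicity. To extend globally into $a_2 < -1$: the equilibria $\hat E_{l2}, \hat E_{r2}$ are sources and $\hat E_0$ is a (possibly degenerate) saddle throughout this range by Lemma \ref{fe1}, so no small cycle can surround either source (an energy comparison using $\hat E(x,y) = \int_0^x \hat g(s)\,ds + y^2/2$ along the lines of the proofs of Lemmas \ref{con4} and \ref{nlc} rules them out), and no large cycle can escape to infinity since infinity is repulsive (Proposition \ref{infty}); a ``comparison of $\oint dE$'' argument analogous to the uniqueness part of Lemma \ref{con4} rules out a second large cycle. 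Thus $\Gamma_0$ is the unique cycle for all $a_2 \leq -1$.

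Next, for $a_2$ just above $-1$, the Hopf bifurcation $H_2$ is subcritical (Proposition \ref{localbi}, Table \ref{lmtable1}), so two symmetric small unstable cycles appear near $\hat E_{l2}$ and $\hat E_{r2}$; together with the continued $\Gamma_0$ this is case (v). To detect $\varphi_1$, I would define a splitting function $\sigma(a_2;a_1,\delta)$ equal to the signed ordinate difference on the positive $y$-axis between the right unstable and stable separatrices of $\hat E_0$ (one scalar suffices by $\mathbb{Z}_2$-symmetry). As $a_2$ increases from $-1$, the small unstable cycles grow and their enclosing half-loops approach the saddle $\hat E_0$; a Melnikov-type computation with $\partial_{a_2}\hat F = \delta x$ of fixed sign on each half-loop should show $\sigma$ is strictly monotone in $a_2$. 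Combined with the sign of $\sigma$ near the Hopf value and with Lemma \ref{nlc} ruling out loops for $-1/3 \leq a_2 < 0$, this yields a unique zero $\varphi_1(a_1,\delta) \in (-1, -1/3)$ depending continuously on $(a_1,\delta)$ via the implicit function theorem; the resulting figure-eight is unstable because the saddle quantity $-\hat f(0) = -\delta a_2 > 0$ is positive, giving case (iv).

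For $a_2 \in (\varphi_1, \varphi_5)$, the figure-eight opens outward and, by the Andronov--Leontovich theorem applied to the unstable double-homoclinic configuration, a single symmetric unstable large limit cycle bifurcates surrounding all three finite equilibria; together with the outer stable cycle this is case (iii). As $a_2$ grows further the inner unstable cycle expands and the outer stable cycle contracts; they must coalesce at a unique value $\varphi_5(a_1,\delta) \in (\varphi_1, -1/3)$ where the return map has a double zero, giving the semi-stable cycle of case (ii). For $a_2 > \varphi_5$ no cycle can exist (case (i)), which is consistent with Lemma \ref{nlc}; continuity of $\varphi_5$ again follows from the implicit function theorem applied to the saddle-node-of-cycles defining equation. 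Uniqueness of $\varphi_5$ follows from the at-most-four-cycles bound in Theorem \ref{mainresult2} together with $\mathbb{Z}_2$-symmetry, since any additional cycle would necessarily be symmetric and large, producing a fifth cycle and contradicting the bound.

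The hard part will be establishing the two monotonicities that underlie uniqueness of $\varphi_1$ and $\varphi_5$: the definite sign of $\partial_{a_2}\sigma$ on each half-loop near the homoclinic bifurcation, and the non-degeneracy of the double cycle at $\varphi_5$. Both reduce to signing Abelian-type integrals over cycles that straddle the strip $|x| \leq \sqrt{-3a_2}$ on which $\hat g(x)\hat F(x)$ changes sign; the $\mathbb{Z}_2$-symmetry $x \mapsto -x$, together with the single-root structure of $\hat F(x) = \delta(a_2 x + x^3/3)$ on $x > 0$, should provide enough rigidity for a direct sign computation in the spirit of the energy and Green's formula manipulations already deployed in Lemmas \ref{con4}--\ref{nlc}.
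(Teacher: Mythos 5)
Your overall architecture (Hopf at $a_2=-1$, monotone separatrix splitting defining $\varphi_1$, semi-stable coalescence defining $\varphi_5$, rotated-vector-field monotonicity) matches the paper's, but there are two genuine gaps at the quantitative core. First, your uniqueness of $\varphi_5$ invokes ``the at-most-four-cycles bound in Theorem \ref{mainresult2}'' --- that is circular, since Theorem \ref{mainresult2} is proved \emph{from} this proposition; for $a_1\geq 0$ the needed input is precisely the bound of at most two \emph{large} limit cycles, which the paper establishes inside this very proof by showing $\oint_{\Gamma_1}\hat f(x)\,dt<\oint_{\Gamma_2}\hat f(x)\,dt$ for any two nested large cycles (a delicate decomposition of the characteristic-exponent integral over arcs $\widehat{A_iB_i}$, $\widehat{B_iC_i}$, $\widehat{C_iD_i}$, using that $(\hat F(x)-\hat F(1))\hat f(x)/\hat g(x)$ is increasing on $(1,+\infty)$ and the Dumortier--Rousseau comparison lemma). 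Without this inequality, statements (ii), (iii) and the uniqueness in (vi) are not established. Second, your plan to rule out a second large cycle for $a_2\le -1$ by ``a comparison of $\oint dE$ analogous to the uniqueness part of Lemma \ref{con4}'' fails to transfer: that argument relies on $x\hat g(x)\ge 0$ for all $x$ (true for $a_1=-1$, where $\hat g(x)=x(x^2-1)^2$), whereas for $a_1\ge 0$ one has $\hat g(x)=x(a_1+x^2)(x^2-1)<0$ on $(0,1)$, so the sign of $\hat g\hat F$ on the strip flips and the key inequality \eqref{=2lc2} breaks. This is exactly why the paper abandons the energy comparison in this regime and works with $\oint\hat f\,dt$ instead.

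Two smaller remarks. The monotonicity of the splitting function that you flag as ``the hard part'' is actually the easy part: since $\partial_{a_2}\dot y=-\delta y$, the family is a rotated vector field in $a_2$, and the monotone motion of $x_A$, $x_B$ (and of stable versus unstable cycles) follows from the standard theory cited from Lemma 3.3 of \cite{CC18}; no Melnikov integral is needed. Also, in (v) you should say explicitly why the two small unstable cycles born at $a_2=-1^+$ remain unique on all of $(-1,\varphi_1)$ and why no small cycle can sit inside the figure-eight at $a_2=\varphi_1$; the paper handles this by combining the nonexistence of small cycles at $a_2=-1$ (via Proposition 9 and Corollary 10 of \cite{CLT} applied to \eqref{Lie2}) with the rotated-field annulus argument, and you would need some substitute for that step.
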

\begin{proof}
By Lemmas \ref{con1}, \ref{nlc} and \ref{ulc},  we only need to study  the number of  limit cycles of    system \eqref{initial1} for $a_2\in (-\infty, -1)\cup (-1, -1/3)$.

\begin{figure}
\centering
\subfigure[for $a_2<-1$  ]{
\scalebox{0.55}[0.55]{
\includegraphics{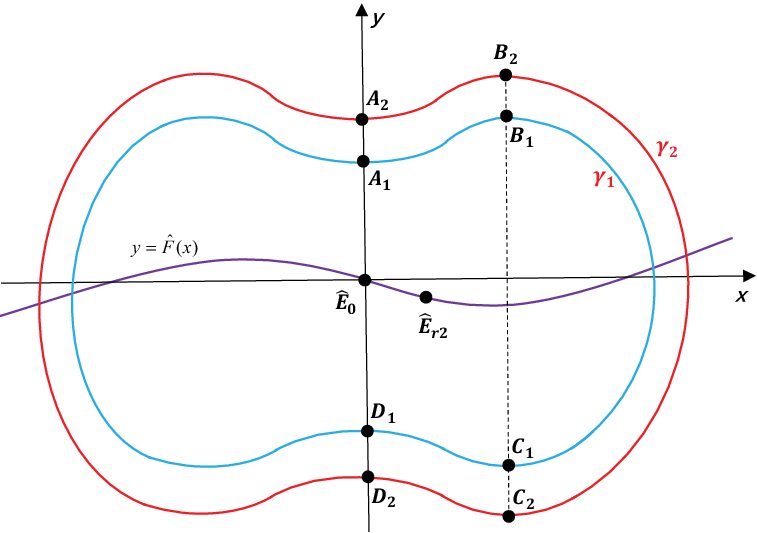}} }
\subfigure[for $-1<a_2<-1/3$]{
\scalebox{0.55}[0.55]{
\includegraphics{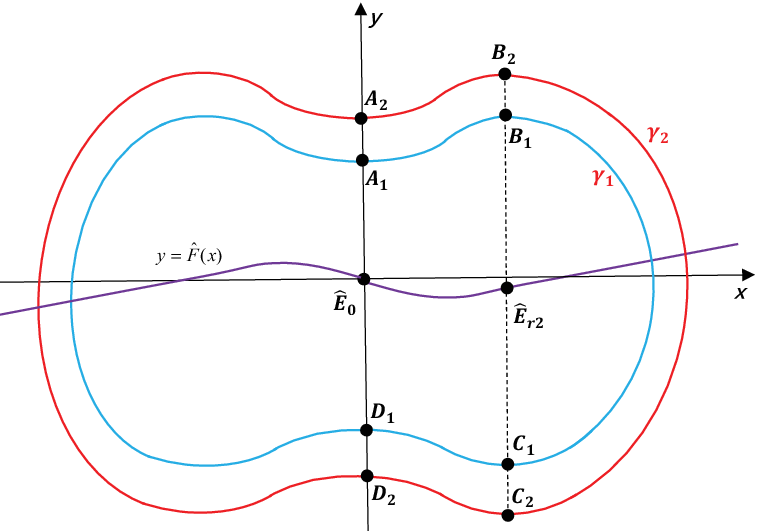}} }
\caption{Two large limit cycles  of  \eqref{initial2}.   }
\label{3eq2lc}
\end{figure}

  For simplicity, we  firstly  discuss large  limit cycles.
Assume that system \eqref{initial2} exhibits at least two large limit cycles $\Gamma_1$ and $\Gamma_2$
for $a_2<-1$ or $-1<a_2<-1/3$,
where  $\Gamma_1$ lies in the interior region surrounded by $\Gamma_2$, as shown in Figure \ref{3eq2lc}.
We aim to  prove that
\begin{eqnarray}
\oint_{\Gamma_1} \hat{f}(x)dt<\oint_{\Gamma_2} \hat{f}(x)dt,
\label{compare}
\end{eqnarray}
 which implies that  system  \eqref{initial2} has at most two      large limit cycles.
 Firstly,  we prove that
 \begin{eqnarray}
\int_{ \widehat{A_1B_1}}\hat f(x)dt<\int_{\widehat{A_2B_2}}\hat f(x)dt
\label{compare2}
\end{eqnarray}
and \begin{eqnarray}
\int_{ \widehat{C_1D_1}}\hat f(x)dt<\int_{\widehat{C_2D_2}}\hat f(x)dt.
\label{compare21}
\end{eqnarray}
 For  each $i=1, 2$ let $y_i$ represent $\widehat{A_iB_i}$, we can calculate that
	\begin{eqnarray*}
		&&\int_{\widehat{A_iB_i}}\hat f(x)dt\\&=&-\int_{0}^{1}\frac{\hat{F}'(x)}{\hat{F}(x)-y_i(x)}dx
       \\&=&  -
		\int_{0}^{1}\frac{  \hat{F}'(x)-y_i'(x)+ y_i'(x)}{\hat F(x)-y_i(x)}dx
		\\
		&=&-
		\int_{0}^{1}\frac{1}{\hat F(x)-y_i(x)}d(\hat F(x)-y_i(x))- \int_{0}^{1}\frac{y_i'(x)}{\hat F(x)-y_i(x)}dx
		\\
		&=&-\ln\left|\frac{\hat F(1)
-y_i(1)}{\hat F(0)-y_i(0)}\right|-\int_{0}^{1}\frac{y_i'(x)}{\hat F(x)-y_i(x)}dx
		\\
		&=&-\ln\left|\frac{\hat F(0)-y_i(1)}{\hat F(0)-y_i(0)}\right|
-\ln\left|\frac{\hat F(1)-y_i(1)}{\hat F(0)-y_i(1)}\right|
		-\int_{0}^{1}\frac{\hat g(x)}{(\hat F(x)-y_i(x))^2}dx
		\\
		&=&-\ln\left|\frac{\hat F(1)-y_i(1)}{\hat F(0)-y_i(1)}\right|+\int_{0}^{1}\frac{y_i'(x)}{\hat F(0)-y_i(x)}dx
		-\int_{0}^{1}\frac{\hat g(x)}{(\hat F(x)-y_i(x))^2}dx
		\\
		&=& -\ln\left|\frac{\hat F(1)-y_i(1)}{\hat F(0)-y_i(1)}\right|
+\int_{0}^{1}\frac{\hat g(x)}{(\hat F(0)-y_i(x))(\hat F(x)-y_i(x))}dx
 -\int_{0}^{1}\frac{\hat g(x)}{(\hat F(x)-y_i(x))^2}dx
		\\	&=&-\ln\left|\frac{y_i(1)-\hat F(1)}{y_i(1)}\right|-
		\int_{0}^{1}\frac{\hat g(x)\hat F(x)}{y_i(x)(\hat F(x)-y_i(x))^2}dx.
\end{eqnarray*}
 Further, we have
	\begin{eqnarray*}
		\int_{\widehat{A_2B_2}}\hat f(x)dt-\int_{\widehat{A_1B_1}}\hat f(x)dt
		=\ln\left|\frac{y_1(1)-\hat F(1)}{y_1(1)}\right|
	-\ln\left|\frac{y_2(1)-\hat F(1)}{y_2(1)}\right|
		+
		\int_{0}^{1}
		H(x)dx,
\end{eqnarray*}%
where
$$
H(x)=\frac{\hat g(x)\hat F(x)}{y_1(x)(\hat F(x)-y_1(x))^2}-\frac{\hat g(x)\hat F(x)}{y_2(x)(\hat F(x)-y_2(x))^2}.
$$
Evidently,
$y_2(x)-\hat F(x)>y_1(x)-\hat F(x)>0, y_2(x)>y_1(x)>0$, $\hat g(x)<0$ and $\hat F(x)<0$ for every $x\in(0, 1)$  because of $a_2<-1$ or $-1<a_2<-1/3$.   It follows that
$$
\frac{y_1(1)
-\hat F(1)}{y_1(1)}>\frac{y_2(1)
-\hat F(1)}{y_2(1)}>1 ~~~{\rm and}~~~ H(x)>0,
$$
which implies that  \eqref{compare2} holds.
 It is similar to prove that
 \eqref{compare21}  holds.
 On the other hand, we can prove that
\begin{eqnarray}
 \int_{\widehat{B_1 C_1}}\hat f(x)dt<\int_{\widehat{B_2C_2}}\hat f(x)dt.
\label{compare6}
\end{eqnarray}
 It is not difficult to compute  that
\begin{eqnarray}\label{compare3}
\frac{[\hat F(x)-\hat F(1)]\hat f(x)}{\hat g(x)}
= \frac{\delta^2}{3}\cdot\frac{ (x^2+x+1+3a_2)}{x^2+x} \cdot\frac{(x^2+a_2) }{ (x^2+a_1)}. \nonumber
\end{eqnarray}%
 When  $a_2<-1$ or $-1<a_2<-1/3$, 
  \begin{eqnarray}
\frac{d[ (x^2+x+1+3a_2)/(x^2+x)]}{dx}= -\frac{(1+3a_2)(1+2x)}{ (x^2+x)^2}>0
\label{compare4}
\end{eqnarray}
and
\begin{eqnarray}
\frac{d[(x^2 +a_2)/(a_1+x^2)]}{dx}= \frac{2(a_1-a_2)x }{(a_1+x^2)^2}>0
\label{compare5}
\end{eqnarray}
for $x>0$.
It follows from (\ref{compare4}) and (\ref{compare5})
 that $[\hat F(x)-\hat F(1)]\hat f(x)/\hat g(x)$ is increasing
for $x\in(1, +\infty)$.
Therefore, according to
  the proof of Theorem 2.1 of \cite{DR} or Lemma 4.5 of \cite[Chapter 4]{Zh},
we obtain  \eqref{compare6}. In conclusion, \eqref{compare} holds by \eqref{compare2}, \eqref{compare21} and \eqref{compare6}.
 Therefore, both system  \eqref{initial2} and system \eqref{initial1} have at most two  large limit cycles  when $a_2<-1$ or $-1<a_2<-1/3$.

 We secondly   discuss small   limit cycles for the case $a_2\in (-\infty, -1)\cup (-1, -1/3)$.   As proved in Lemma 3.3 of \cite{CC18}, $x_A$ decreases continuously and $x_B $ increases continuously as $\beta:=\delta a_2$ decreases.
 So,   we still get   $x_A<x_B$  when $a_2<-1$, which is same as the case  $a_2=-1$  (see Figure \ref{u3=h1}(a)).
 Assume that system \eqref{initial1}  exhibits   small limit cycles surrounding $\hat{E}_{r2}$ when $a_2=\alpha< -1$,  where $\alpha$
is fixed.  Since the vector field of system \eqref{initial1}  is rotated on $\beta$,
there is an annulus region of Poincar\'e-Bendixson for $a_2=-1$,
which contradicts the nonexistence of small limit cycles  surrounding $\hat{E}_{r2}$ in this case.
Therefore, system \eqref{initial1} has  no small limit cycles when $a_2<-1$. 
Combining   the nonexistence of small limit cycles   for
$a_2<-1$
and   the  statement  {\bf (iii)} of  Proposition  \ref{localbi},   system \eqref{initial1} has a unique   small limit cycle surrounding $\hat{E}_{r2}$  for arbitrarily fixed $\delta$   when $-1<a_2<-1+\varepsilon$ and $\varepsilon>0$ is sufficiently small.

 With the help of  the above  analysis results, in the following  we  give the proof    of  the  statements  {\bf (i)-(vi)} of  Proposition  \ref{c5c6}.

 Firstly, we prove statement   {\bf (vi)}.
When $a_2< -1$,
system \eqref{initial1} has at least one large limit cycle  by  $x_A<x_B$, Proposition \ref{infty} and   Poincar\'e-Bendixson Theorem.
We know that system  \eqref{initial1}  has at most two      large limit cycles
and  has  no small limit cycles when $a_2<-1$ by above analysis.  
 We claim that system  \eqref{initial1}  has a   unique limit cycle  in the case  $a_2<-1$,  that  is stable and large.  Otherwise,  this contradicts \eqref{compare}.
 When  $a_2=-1$, system \eqref{initial1}  has also  a unique  limit cycle  that is stable and  large by Lemma \ref{ulc}.  This proves statement   {\bf (vi)}.

Secondly, we prove statement   {\bf (iv)}. System \eqref{initial1} has no small  limit cycles   when $a_2\leq -1$,  see the  statement   {\bf (vi)}. It implies that $x_A<x_B$. Besides, by above analysis system \eqref{initial1} has no limit cycles   as $-1/3\leq a_2<0$,
which means  $x_A>x_B$.  Moreover, as proved in Lemma 3.3 of \cite{CC18}, $x_A$ decreases continuously and $x_B  $ increases continuously as $a_2$ decreases for arbitrarily fixed $\delta$.
Therefore,
 there is a unique    function $\varphi_1(a_1,\delta)$  such that      $x_A-x_B=0$  if  and only if  $a_2=\varphi_1(a_1,\delta)$   for system \eqref{initial1}, which implies the existence of a figure-eight homoclinic loop.
 Regarding the saddle $\hat{E}_0$ of system    \eqref{initial1}, its eigenvalues are denoted by $\lambda_-$ and $\lambda_+$,  where $\lambda_-+\lambda_+=-\beta$.  Since $a_2<0$, the  homoclinic loop of system  \eqref{initial1}   is unstable  by    Theorem 3.3 of \cite{CLW}.
We claim that there exist  no small  limit cycles   in the interior of  the homoclinic loop. Otherwise,
this contradicts the uniqueness    of  small limit cycles for $-1<a_2<-1+\varepsilon$.
 By the stability of the homoclinic loop,  Proposition \ref{infty} and   Poincar\'e-Bendixson Theorem, we  can  prove that
 there is a unique  large limit cycle surrounding the  figure-eight homoclinic  loop, which is stable.
  This proves statement   {\bf (iv)}.

 Thirdly,  statement   {\bf (v)} directly holds from
 statements {\bf (vi)} and   {\bf (iv)} by the rotated properties of the vector field.

 Then, we prove statement   {\bf (ii)}. Combining  the   statements {\bf (v)}   and {\bf (iv)}, we obtain that  system \eqref{initial1} has two       limit cycles that are large,      when $\varphi_1(a_1,\delta)<a_2<\varphi_1(a_1,\delta)+\varepsilon$ and $\varepsilon>0$ is sufficiently small.
We have known that system \eqref{initial1} has no limit cycles   when $-1/3\leq a_2<0$ by above proof.
Since the vector field of system \eqref{initial1} is rotated on $\beta$, there is a function $\varphi_5(a_1,\delta)$  such that  system \eqref{initial1} has a unique limit cycle if and only if
$a_2=\varphi_5(a_1,\delta)$, which is semi-stable and large.    This proves statement   {\bf (ii)}.

Statement   {\bf (iii)} can be obtained by   statements   {\bf (v)},  {\bf (iv)} and {\bf (ii)}.

 At last,  statement   {\bf (i)} follows  from  statement  {\bf (ii)} and  the  nonexistence of   limit cycles   for   $-1/3\leq a_2<0$
 by Lemma \ref{nlc}. The proof   is completed.
\end{proof}

\medskip

\subsection{System \eqref{initial} with five equilibria}

By Lemma \ref{fe1}, system \eqref{initial1} has exactly five equilibria if and only if
$-1<a_1<0$.
 ${E}_{l2}$, ${E}_{l1}$,   ${E}_0$, ${E}_{r1}$ and  ${E}_{r2}$ of system \eqref{initial}      become  ${\hat{E}}_{l2}:=(-1,0)$, ${\hat{E}}_{l1}:=(-\sqrt{-a_1},0)$, ${\hat{E}}_0:=(0, 0)$, ${\hat{E}}_{r1}:=(\sqrt{-a_1},0)$ and  ${\hat{E}}_{r2}:=(1,0)$   of  system \eqref{initial1} respectively.

 By Lemma \ref{fe1},    the  equilibria   ${\hat{E}}_{l1}$  and  ${\hat{E}}_{r1}$   of   system \eqref{initial1} are saddles.  Moreover,   the vector field of    system \eqref{initial1}  is symmetric about the origin.  Therefore,  it    suffices     to give the existence of  small limit cycles of    system \eqref{initial1}   surrounding   ${\hat{E}}_0$  or ${\hat{E}}_{r2}$.
\begin{lemma}
System \eqref{initial1} has no small limit cycles surrounding ${\hat{E}}_0$ when $a_2\leq a_1/3$,
at most one
small limit cycle surrounding ${\hat{E}}_0$ when $a_2>a_1/3$,
no small limit cycles surrounding ${\hat{E}}_{r2}$  when $a_2\leq -1$ or $a_2\geq (a_1-1-\sqrt{-a_1})/3$,
and
at most one
small limit cycle surrounding ${\hat{E}}_{r2}$ when $-1<a_2<(a_1-1-\sqrt{-a_1})/3$.
\label{smc7}
\end{lemma}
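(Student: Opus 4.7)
The plan is to prove the four claims separately --- two for each enclosed equilibrium, $\hat{E}_0$ or $\hat{E}_{r2}$ --- using an energy-function argument for the nonexistence parts and the Li\'enard uniqueness theorem of \cite{CLT} (already invoked in the proof of Proposition~\ref{c5c6}) for the ``at most one'' parts. Since $\hat{E}_{l1}$ and $\hat{E}_{r1}$ are saddles by Lemma~\ref{fe1}, any small limit cycle around $\hat{E}_0$ lies in the strip $S_0:=\{|x|<\sqrt{-a_1}\}$, on which $x\hat g(x)>0$ for $x\ne 0$. Applying $E(x,y):=\int_0^x\hat g(s)\,ds+y^2/2$ to system~\eqref{initial2} yields $dE/dt=-\hat g(x)\hat F(x)$ with $\hat F(x)=\delta x(3a_2+x^2)/3$. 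When $a_2\le a_1/3$, we have $3a_2+x^2<3a_2-a_1\le 0$ throughout $S_0$, so $\hat F$ has the opposite sign to $x$ and $\hat g(x)\hat F(x)<0$ off the $y$-axis; then $\oint_\Gamma dE>0$ for any putative closed orbit $\Gamma\subset S_0$, contradicting $\oint_\Gamma dE=0$. When $a_2>a_1/3$, at most one limit cycle follows from Proposition~9 of \cite{CLT} applied to \eqref{initial2} on $S_0$; the nontrivial check is, exactly as in Proposition~\ref{c5c6}, that the auxiliary system $\hat F(x_1)=\hat F(x_2)$, $\hat f(x_1)/\hat g(x_1)=\hat f(x_2)/\hat g(x_2)$ has no solution with $x_1,x_2\in S_0$ straddling the sign change of $\hat F$ at $\pm\sqrt{-3a_2}$.

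For small limit cycles surrounding $\hat{E}_{r2}$, translate via $(x,y)\to(x+1,y+\hat F(1))$ to get the Li\'enard system
\[
\dot x=y-\tilde F(x),\qquad \dot y=-\tilde g(x),
\]
where $\tilde g(x)=x(x+1)(x+2)\bigl((x+1)^2+a_1\bigr)$ and $\tilde F(x)=\delta x(3a_2+x^2+3x+3)/3$. The relevant strip is $S_r:=\{x>-1+\sqrt{-a_1}\}$, on which $x\tilde g(x)>0$ for $x\ne 0$. The quadratic $3a_2+x^2+3x+3$ is strictly increasing on $S_r$ (since $2x+3>0$ there), with minimum $3a_2-a_1+\sqrt{-a_1}+1$ at the left endpoint $x=-1+\sqrt{-a_1}$; this minimum is $\ge 0$ exactly when $a_2\ge(a_1-1-\sqrt{-a_1})/3$. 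Under that hypothesis $\tilde F$ has the sign of $x$ on $S_r$, so the shifted energy $\tilde E(x,y):=\int_0^x\tilde g(s)\,ds+y^2/2$ satisfies $d\tilde E/dt=-\tilde g(x)\tilde F(x)<0$ off $\{x=0\}$, ruling out closed orbits. When $-1<a_2<(a_1-1-\sqrt{-a_1})/3$, ``at most one'' follows from Proposition~9 of \cite{CLT} applied to $(\tilde F,\tilde g)$ on $S_r$.

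The subtle case $a_2\le -1$ proceeds in three steps. At the Hopf value $a_2=-1$, the auxiliary equations of \cite{CLT} reduce, by the same cancellation used in the proof of Proposition~\ref{c5c6}, to $x_1^2+x_1x_2+x_2^2=3$ and simultaneously $x_1^2+x_1x_2+x_2^2=-a_1$; these are incompatible since $-1<a_1<0$ forces $-a_1\ne 3$, so no small limit cycle exists at $a_2=-1$. By Lemma~\ref{fe1}, $\hat{E}_{r2}$ is an unstable weak focus of order one at $a_2=-1$, a stable focus for $a_2>-1$, and a source for $a_2<-1$; hence the Hopf bifurcation at $a_2=-1$ is subcritical, producing its local unstable limit cycle only on the side $a_2>-1$. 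Finally, to rule out small limit cycles for every $a_2<-1$, observe that system~\eqref{initial1} is a (weakly) rotated family in $a_2$, since $X\wedge\partial_{a_2}X=-\delta y^2\le 0$ with vanishing set only the $x$-axis; a hypothetical small limit cycle at any $a_2=a^*<-1$ would therefore continue in a monotone family as $a_2$ increased to $-1$, contradicting the nonexistence already established.

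The principal obstacle is this last step: the nonexistence for $a_2\le -1$ around $\hat{E}_{r2}$ braids together the \cite{CLT} cancellation at the bifurcation value, the sign of the first Lyapunov quantity from Lemma~\ref{fe1}, and the monotone continuation in a weakly rotated family, and requires care because the rotation degenerates on $y=0$ --- so the continuation must be justified in the line-integral form $\oint_{\Gamma(a_2)} X\wedge\partial_{a_2}X\,dt<0$ rather than pointwise. A secondary technical burden is verifying the non-solvability of the auxiliary \cite{CLT} system in the two uniqueness regimes, which, given the high polynomial degree of $\hat g$ and $\tilde g$, is routine but laborious.
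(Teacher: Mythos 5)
Your three nonexistence arguments reproduce the paper's proof: the energy $E$ with $dE/dt=-\hat g(x)\hat F(x)\ge 0$ on $|x|<\sqrt{-a_1}$ for $a_2\le a_1/3$; the shifted energy on $x>\sqrt{-a_1}$ for $a_2\ge(a_1-1-\sqrt{-a_1})/3$ (your $\tilde F$ is exactly $\hat F(x+1)-\hat F(1)$ and your sign computation agrees with the paper's); the \cite{CLT} auxiliary-system computation at $a_2=-1$ forcing $a_1=-3$; and the rotated-family continuation back to $a_2=-1$ to exclude $a_2<-1$. These parts are fine and essentially identical to the paper's.

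The two ``at most one'' claims, however, rest on a tool that does not apply. Proposition~9/Corollary~10 of \cite{CLT}, with the hypotheses as the paper itself verifies them, require the damping of the translated system to satisfy $xf(x)>0$ off the equilibrium (condition \textbf{(iii)}). Around $\hat E_0$ the damping $\hat f(x)=\delta(a_2+x^2)$ is even and negative on the symmetric interval $|x|<\sqrt{-a_2}$, so \textbf{(iii)} fails for every $a_1/3<a_2<0$; around $\hat E_{r2}$ the translated damping $\tilde f(x)=\delta\bigl((x+1)^2+a_2\bigr)$ changes sign at $x=-1+\sqrt{-a_2}\ne 0$ whenever $a_2\ne-1$, so \textbf{(iii)} fails throughout $-1<a_2<(a_1-1-\sqrt{-a_1})/3$ --- which is precisely why the paper invokes \cite{CLT} only at the single value $a_2=-1$. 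Worse, the computation you defer (non-solvability of the auxiliary system) would, via Corollary~10, yield \emph{nonexistence} of limit cycles, which is false in both regimes: the supercritical Hopf bifurcation $H_1$ at $a_2=0$ produces a small cycle around $\hat E_0$ with $a_2>a_1/3$, and Proposition~\ref{p12}\textbf{(iii)} gives small cycles around $\hat E_{r2}$ for $-1<a_2<\varphi_2(a_1,\delta)$; so the auxiliary system must be solvable there and the check cannot come out the way you need. The paper instead proves ``at most one'' by the nested-closed-orbit comparison of Lemma~\ref{con4}: assuming two nested small limit cycles, it splits $\oint dE$ (respectively $\oint d\tilde E$) into arcs, uses the sign and monotonicity of $\hat F$ (resp.\ $\hat F-\hat F(1)$) to show the two line integrals cannot both vanish. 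You need that argument --- or, for the symmetric cycle around $\hat E_0$, a uniqueness theorem whose hypotheses actually hold on the strip, such as \cite[Theorem~4.1]{Zh} --- to close these two cases.
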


\begin{proof}
It suffices to study  small limit cycles    of  system  \eqref{initial2}.
When  $ a_2\leq a_1/3$,  assume that system \eqref{initial2} exhibits a small limit cycle $\Gamma_0$  surrounding
the origin, i.e.,  $\Gamma_0$ lies in the zone $|x|<\sqrt{-a_1}$.
	Let $E(x,y)$ be defined in \eqref{Exy}.
For   $|x|< \sqrt{-a_1}$ and $ a_2\leq a_1/3$, it is clear that
$
\hat g(x)\hat F(x)\le 0.
$
Associated with \eqref{dEdt}, we get $\oint_{\Gamma_0}dE=\oint_{\Gamma_0}-\hat g(x)\hat F(x)dt>0$.
This  contradicts $\oint_{\Gamma_0}d E=0$.
Thus, system \eqref{initial2} has no small limit cycles surrounding the origin when  $ a_2\leq a_1/3$.

When $a_2>a_1/3$,  as proven in Lemma \ref{con4},
 we can obtain that system \eqref{initial2} has at most one
small limit cycle surrounding the origin.

 In the following  paragraphs, we  distinguish four cases  to discuss  the existence of   small limit cycles of    system \eqref{initial1}  surrounding ${\hat{E}}_{r2}$.

Consider $a_2=-1$.
With transformation \eqref{trans},
we move $(1,\hat F(1))$ of system \eqref{initial2} to the origin of system
\eqref{Lie2}.
It is clear that $\tilde F(0)=0$, $x\tilde g(x)>0$ for $x\in(\sqrt{-a_1}-1,0)\cup(0,+\infty)$, $\tilde f(x)<0$ for $\sqrt{-a_1}-1<x<0$
and $\tilde f(x)>0$ for $x>0$.
Therefore, the conditions {\bf(i-iii)} of Proposition 9 of \cite{CLT} hold.
Assume that there exist $x_1$ and $x_2$ such that \eqref{FFF11} holds,
where $ \sqrt{-a_1}-1 <x_1 <0<x_2$.
It follows from the first equality of \eqref{FFF11} that
\begin{eqnarray}
x_1^2+x_2^2+x_1x_2+3x_1+3x_2=0.
\label{FFF21}
\end{eqnarray}%
By the second equality of \eqref{FFF11},
 \begin{eqnarray}
a_1+(x_1+1)^2+(x_1+1)(x_2+1)+(x_2+1)^2=0.
\label{FFF31}
\end{eqnarray}%
By \eqref{FFF21} and \eqref{FFF31}, it follows that $a_1=-3$,
which contradicts $-1<a_1<0$.
By Corollary 10 of \cite{CLT},
system \eqref{Lie2} has no limit cycles in the zone  $x>\sqrt{-a_1}-1$.
In other words, system \eqref{initial1} has no small limit cycles surrounding  ${\hat{E}}_{r2}$  when $a_2=-1$.

 Consider $a_2<-1$.  By Lemma \ref{fe1}, we know that ${\hat{E}}_{r2}$   of system \eqref{initial1} is  a  source when $a_2<-1$ or  an unstable weak foci of order one    when $a_2=-1$.
Assume that system \eqref{initial1} has at least one small limit cycle surrounding ${\hat{E}}_{r2}$   when $a_2=\lambda<-1$ and $\Gamma_1$ is the innermost small limit cycle  surrounding ${\hat{E}}_{r2}$, where $\lambda$
is fixed.  Since the vector field of system \eqref{initial1} is rotated on $a_2$,
there is an annulus region of Poincar\'e-Bendixson for $a_2\in(\lambda,-1]$,
which contradicts the nonexistence of small limit cycles  surrounding ${\hat{E}}_{r2}$  when $a_2=-1$.
 Thus, system \eqref{initial1} has no small limit cycles surrounding ${\hat{E}}_{r2}$ when $a_2<-1$.

 Consider  $a_2\geq(a_1-1-\sqrt{-a_1})/3$.
	Let
\[
\tilde	E(x,y):=\int_1^x\hat g(s)ds+\frac{(y-\hat F(1))^2}{2}.
\]
Then
$$
\frac{ d\tilde E}{dt}|_{ \eqref{initial2}}=-\hat g(x)(\hat F(x)-\hat F(1))\leq0
$$
for $x>\sqrt{-a_1}$. Assume that system \eqref{initial2} exhibits a small limit cycle $\Gamma_2$  surrounding
$(1, \hat F(1))$,
  i.e., $\Gamma_2$ lies in the zone  $ x> \sqrt{-a_1}$.
Then, we have $\oint_{\Gamma_2}d\tilde E=\oint_{\Gamma_2}-\hat g(x)(\hat F(x)-\hat F(1))dt<0$,
which contradicts $\oint_{\Gamma_2}dE=0$.
Thus, system \eqref{initial2} has no small limit cycles surrounding  $(1, \hat F(1))$ when  $a_2\geq   (a_1-1-\sqrt{-a_1})/3$.

 Consider    $-1<a_2< (a_1-1-\sqrt{-a_1})/3$.
Assume that system \eqref{initial1} has  at most two
small limit cycles surrounding ${\hat{E}}_{r2}$
when   $-1<a_2<(a_1-1-\sqrt{-a_1})/3$,
where $\hat\Gamma_1$, $\hat\Gamma_2$ are the such innermost two  limit cycles and
$\hat\Gamma_1$ lies in the interior of $\hat\Gamma_2$.
 As proven in Lemma \ref{con4}, we have
$
 \oint_{\hat\Gamma_1} d\tilde	E<\oint_{\hat\Gamma_2}d\tilde	E,
$
 which contradicts
$\oint_{\hat\Gamma_1}d\tilde	E=\oint_{\hat\Gamma_2}d\tilde	E=0$.
 Thus,
system \eqref{initial1}  has at most one
small limit cycle surrounding ${\hat{E}}_{r2}$  when $-1<a_2<  (a_1-1-\sqrt{-a_1})/3$.
The proof   is completed.
\end{proof}

In the following  three  lemmas, in  order to give the exact number of limit cycles for      system \eqref{initial1},
 we    study   the existence of   large  limit cycles.
 Since the vector field of    system \eqref{initial1}  is symmetric about the origin,
  the existence of large  limit cycles is  only meaningful  for a large  limit cycle surrounding all equilibria.
\begin{lemma}
System \eqref{initial1}  has a unique  limit cycle  when $a_2\leq  -1$,
which is stable and large.
\label{lac72}
\end{lemma}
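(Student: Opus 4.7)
The plan is to split the claim into three steps: (i) no limit cycle is small in the sense of enclosing a single antisaddle; (ii) a large limit cycle enclosing all five equilibria exists; (iii) such a limit cycle is unique, stable, and hyperbolic.

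For step (i) I would directly invoke Lemma~\ref{smc7}. The standing hypothesis $-1<a_1<0$ gives $a_1/3>-1/3$, and together with $a_2\le-1$ this forces $a_2\le-1<-1/3<a_1/3$, so Lemma~\ref{smc7} excludes every small limit cycle around $\hat E_0$. The same lemma also rules out small cycles around $\hat E_{r2}$ whenever $a_2\le-1$, and the $(x,y)\mapsto(-x,-y)$ symmetry of \eqref{initial1} transports this conclusion to $\hat E_{l2}$. The only remaining configurations compatible with index $+1$ are either self-symmetric (and a symmetric-subset count of the five equilibria then forces the cycle to enclose either only $\hat E_0$ or all five) or come as reflected pairs enclosing two sources and one saddle; such a pair would have to be nested Jordan curves with one of $\hat E_{l1},\hat E_{r1}$ inside each and yet share enclosed equilibria, which is impossible. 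Hence every surviving limit cycle is large.

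For step (ii), Lemma~\ref{fe1} gives that all three antisaddles $\hat E_0,\hat E_{l2},\hat E_{r2}$ are sources when $a_2<-1$ and that $\hat E_{l2},\hat E_{r2}$ are unstable weak foci of order one when $a_2=-1$ (while $\hat E_0$ remains a source). Proposition~\ref{infty} supplies the compensating behaviour at infinity, so orbits from large $\|(x,y)\|$ return into a bounded disc. Taking the complement inside that disc of small repelling neighbourhoods of the three antisaddles, while respecting the invariant separatrices of the two saddles $\hat E_{l1},\hat E_{r1}$, yields a positively invariant annular region. The Poincar\'e--Bendixson theorem together with step~(i) then produces at least one limit cycle in that annulus, and the index constraint forces it to be large.

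For step (iii) I would mimic the monotonicity argument used in the proof of Proposition~\ref{c5c6}. Assume for contradiction two nested large cycles $\Gamma_1\subset\Gamma_2$ of the Li\'enard form \eqref{initial2}. Using the origin-symmetry, reduce $\oint_{\Gamma_i}\hat f(x)\,dt$ to an integral along the right half of each cycle and split this half into the arc $0\le x\le 1$, the return across $x=\sqrt{-3a_2}$, and the arc $x\ge\sqrt{-3a_2}$. On the inner portion I would rewrite the integrals as in the proof of Proposition~\ref{c5c6} using $\hat F(x)<0$ there, while on the outer portion I would exploit the monotonicity of $(\hat F(x)-\hat F(1))\hat f(x)/\hat g(x)$ together with the comparison lemma of \cite{DR} or Lemma~4.5 of \cite[Chap.~4]{Zh}. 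These combine to give $\oint_{\Gamma_1}\hat f\,dt<\oint_{\Gamma_2}\hat f\,dt$, which is incompatible with the alternating sign pattern of the characteristic exponents of nested cycles, proving uniqueness. Stability and hyperbolicity then come from the unique cycle being outermost: orbits from just inside must leave the repelling equilibria and, having no closed orbit to approach within $\Gamma$, must tend to $\Gamma$, so $\oint_\Gamma\hat f\,dt>0$. The main obstacle lies in step (iii): in the five-equilibria regime $\hat g$ changes sign twice on $(0,\sqrt{-3a_2})$, so the clean energy-function argument used in Lemma~\ref{con4} breaks down, and the arc-by-arc bookkeeping must control the sign of $\hat g\hat F$ on the subintervals $(0,\sqrt{-a_1})$, $(\sqrt{-a_1},1)$, $(1,\sqrt{-3a_2})$ and $(\sqrt{-3a_2},\infty)$ separately while still producing the strict inequality.
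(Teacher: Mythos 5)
Your three-step skeleton --- no small limit cycles via Lemma~\ref{smc7}, existence of a large cycle from the repelling equilibria together with Proposition~\ref{infty} and Poincar\'e--Bendixson, and uniqueness by comparing $\oint\hat f(x)\,dt$ over nested cycles --- is exactly the route the paper takes, so the overall plan is sound. The one place where your proposal is not yet a proof is precisely the step you flag: on the inner arcs the obstruction cannot be removed by ``controlling the sign of $\hat g\hat F$ on the subintervals,'' because on $(0,\sqrt{-a_1})$ one has $\hat g>0$ while $\hat F<0$, so the $H(x)$-type integrand inherited from the proof of Proposition~\ref{c5c6} genuinely changes sign and the term-by-term comparison you describe does not close. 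The paper sidesteps this (see the five-equilibria comparison in Lemma~\ref{lac7}) by writing the difference of the two inner-arc integrals directly as $\int \hat f(x)(y_2-y_1)/\bigl[(y_1-\hat F(x))(y_2-\hat F(x))\bigr]\,dx$ and using only that $\hat f(x)=\delta(a_2+x^2)<0$ on $(0,\sqrt{-a_2})$, which for $a_2\le-1$ contains the whole inner range; no information about $\hat g\hat F$ is needed there. A second, smaller divergence: for $a_2=-1$ the paper does not compare two nested cycles at all, but shows directly that $\oint_{\gamma}\hat f(x)\,dt>0$ for \emph{any} large limit cycle (the argument of Lemma~\ref{ulc}, reused in Proposition~\ref{c5c6}), so every large cycle is hyperbolic and stable and hence unique; your ``innermost cycle is internally stable'' observation then only needs to be combined with the strict monotonicity of $\oint\hat f\,dt$ from inner to outer cycle to exclude a second one for $a_2<-1$. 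With the inner-arc bookkeeping replaced as above, your outline matches the paper's proof.
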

\begin{proof}
\begin{figure}
\centering
\centering
\subfigure[Invariant manifolds of   \eqref{initial1}  for $a_2\leq -1$]{
\scalebox{0.54}[0.54]{
\includegraphics{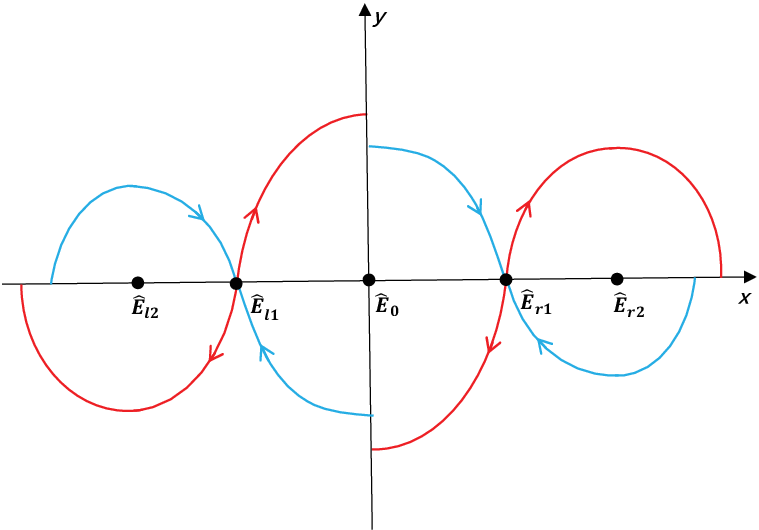}} }
\subfigure[A  large limit cycle    $\gamma_1$ of     \eqref{initial2}      for $a_2=-1$ ]{
\scalebox{0.55}[0.55]{
\includegraphics{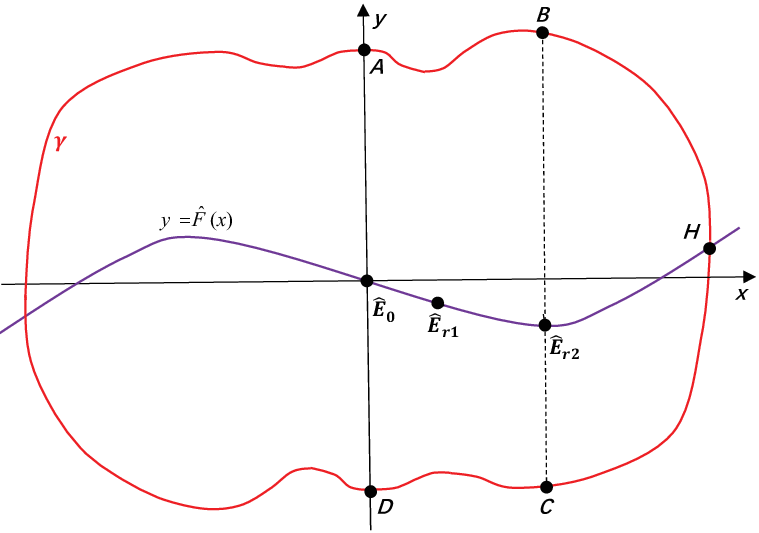}} }
\subfigure[Two  large limit cycle    $\Gamma_1$ and $\Gamma_2$ of       \eqref{initial2}      for $a_2<-1$]{
\scalebox{0.55}[0.55]{
\includegraphics{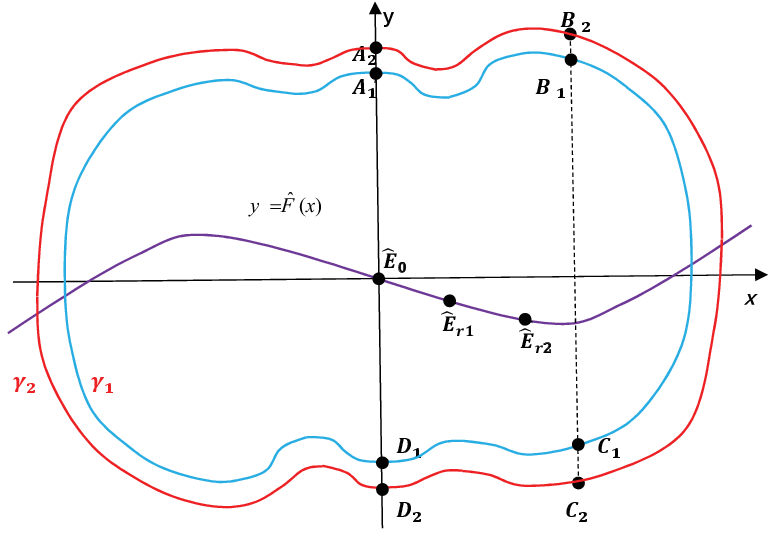}} }
\caption{The discussion of   large limit cycles of     \eqref{initial1}  for $a_2\leq -1$.  }
\label{atllc1}
\end{figure}
By Lemma \ref{fe1},
${\hat{E}}_0$, ${\hat{E}}_{l2}$, ${\hat{E}}_{r2}$  of  system \eqref{initial1} are unstable and ${\hat{E}}_{l1}$, ${\hat{E}}_{r1}$  of  system \eqref{initial1} are saddles when  $a_2\leq-1$.  From Lemma \ref{smc7},
 system \eqref{initial1}  has no small limit cycles when  $a_2\leq-1$.
Therefore, the stable and unstable manifolds of ${\hat{E}}_{l1}$ and ${\hat{E}}_{r1}$ of   system \eqref{initial1} are shown in Figure \ref{atllc1}(a).
Moreover,
since equilibria at infinity  of   system \eqref{initial1} are repelling by Proposition \ref{infty},    system \eqref{initial1}   has at least one large limit cycle
by the Poincar\'e-Bendixson Theorem.  So does  system \eqref{initial2}.

When $a_2=-1$,
assume that system   \eqref{initial2} exhibits a large limit cycle $\gamma_1$,
as shown in Figure \ref{atllc1}(b).
For  the statement   $\oint_{ \gamma_1 } \hat f(x) dt>0	$,
the proof is the same
	as the proof of Proposition~\ref{c5c6}. The details are omitted.  It means that   system \eqref{initial2} has at most one large limit cycle,
which is stable.   Combining the    existence of   large  limit cycles and the nonexistence of  small   limit cycles,    we directly  obtain that   system \eqref{initial2} has a unique  limit cycle  when $a_2= -1$,
which is stable and large.

When $a_2<-1$, suppose that system   \eqref{initial2} exhibits at least two large limit cycles, where $\Gamma_1$,
  $\Gamma_2$ are the two innermost limit cycles, and $\Gamma_1$ lies in the interior of $\Gamma_2$,
 see Figure \ref{atllc1}(c).
  As proven in  Proposition
\ref{c5c6}, we get
\[
\oint_{\Gamma_1}\hat f(x)dt>\oint_{\Gamma_2}\hat f(x)dt.
\]
Moreover,
since the outmost limit cycle is externally stable and the innermost one is internally stable,
system   \eqref{initial2} has also at most one large limit cycle, which is stable.
Based on  the    existence of   large  limit cycles and the nonexistence of  small   limit cycles,  system \eqref{initial2} has a unique  limit cycle  when $a_2<-1$,
which is stable and large. The proof   is completed.
\end{proof}

\begin{lemma}
System \eqref{initial1} has at most two large limit cycles  when $-1<a_2 \leq  {(a_1-1 -\sqrt{-a_1})}/{3}$, and
at most one
large limit cycle   when  $a_1/3 \leq a_2<0$.
\label{lac7}
\end{lemma}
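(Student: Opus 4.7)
The plan is to follow the comparison strategy used in the proof of Proposition~\ref{c5c6}, adapted to the five-equilibrium case. Work in the Li\'enard form \eqref{initial2}; by the $\mathbb{Z}_2$-symmetry of \eqref{initial1} and Lemma~\ref{smc7}, any large limit cycle must surround all five equilibria. The overarching idea is that if one can show the characteristic-exponent integral $I(\Gamma):=\oint_\Gamma \hat f(x)\,dt$ varies strictly monotonically on a family of nested large limit cycles (respectively, has a definite sign on every such cycle), then the alternation of the signs of characteristic exponents of nested hyperbolic limit cycles forces the stated upper bound on their number.

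For the first assertion, suppose, for contradiction, that three nested large limit cycles $\Gamma_1\subset\Gamma_2\subset\Gamma_3$ exist. Mark $A_i,B_i,C_i,D_i$ as the intersections of $\Gamma_i$ with $\{x=0\}$ and $\{x=1\}$ (upper, then lower), exactly as in Proposition~\ref{c5c6}, so that by symmetry $I(\Gamma_i)=2\int_{\widehat{A_iB_iC_iD_i}}\hat f\,dt$. The inner arcs $\widehat{A_iB_i},\widehat{C_iD_i}$ (where $0\le x\le 1$) are treated via the integration-by-parts identity from Proposition~\ref{c5c6}; the hypothesis $a_2\le (a_1-1-\sqrt{-a_1})/3<-1/3$ ensures $\sqrt{-3a_2}>1$, whence $\hat F<0$ on $(0,1)$, and a careful split of the resulting integrand at $x=\sqrt{-a_1}$, together with $y_1(x)<y_2(x)<y_3(x)$ on the upper arcs, yields the desired monotone comparison. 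For the outer arcs $\widehat{B_iC_i}$ (where $x>1$), I would verify that
\begin{equation*}
\Psi(x):=\frac{[\hat F(x)-\hat F(1)]\hat f(x)}{\hat g(x)}=\frac{\delta^2}{3}\cdot\frac{x^2+x+1+3a_2}{x^2+x}\cdot\frac{x^2+a_2}{x^2+a_1}
\end{equation*}
is strictly increasing on $(1,\infty)$: the first factor because $a_2<-1/3$, and the second because $a_1-a_2\ge (2a_1+1+\sqrt{-a_1})/3>0$ for $-1<a_1<0$. The standard argument from \cite[Thm.~2.1]{DR} (or Lemma~4.5 of \cite[Ch.~4]{Zh}) then gives strict monotonicity of the outer-arc contributions. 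Summing the inner and outer comparisons gives $I(\Gamma_1)<I(\Gamma_2)<I(\Gamma_3)$, contradicting the sign-alternation of characteristic exponents of three nested hyperbolic limit cycles.

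For the second assertion, it suffices to show that every large limit cycle $\Gamma$ satisfies $I(\Gamma)>0$, so that $\Gamma$ is hyperbolic and stable and no two nested large limit cycles can coexist. The hypothesis $a_2\ge a_1/3$ is equivalent to $\hat F(\sqrt{-a_1})\ge 0$, so the shifted energy $\tilde E(x,y):=\int_{\sqrt{-a_1}}^{x}\hat g(s)\,ds+\tfrac{1}{2}(y-\hat F(\sqrt{-a_1}))^2$, whose derivative along \eqref{initial2} is $-\hat g(x)[\hat F(x)-\hat F(\sqrt{-a_1})]$, has a controllable sign pattern on the subintervals separated by $\pm\sqrt{-a_1}$ and $\pm 1$. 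Combining $\oint_\Gamma d\tilde E=0$ with this sign information and the $\mathbb{Z}_2$-symmetry forces $I(\Gamma)>0$.

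The main obstacle will be the sign analysis in the integration-by-parts identity on the inner arcs $\widehat{A_iB_i},\widehat{C_iD_i}$ of the first assertion. In Proposition~\ref{c5c6} the assumption $a_1\ge 0$ kept $\hat g$ of one sign on $(0,1)$, so the relevant integrand $H(x)=\hat g(x)\hat F(x)/[y_i(x)(\hat F(x)-y_i(x))^2]$ was of constant sign; here $\hat g$ flips at $x=\sqrt{-a_1}\in(0,1)$, so $H$ changes sign and a direct termwise comparison fails. Each inner arc must therefore be split at $x=\sqrt{-a_1}$ and the pieces recombined, with the precise parameter bound $a_2\le (a_1-1-\sqrt{-a_1})/3$ (respectively $a_2\ge a_1/3$ in the second assertion) providing exactly the cancellation needed. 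Verifying that these bounds are tight enough to close the algebra, including the boundary cases of equality, is the technical heart of the proof.
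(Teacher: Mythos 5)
Your overall architecture for the first assertion (comparing $\oint_\Gamma\hat f(x)\,dt$ arc by arc over nested large cycles, with the Dumortier--Rousseau/Zhang monotonicity of $[\hat F(x)-\hat F(1)]\hat f(x)/\hat g(x)$ handling the outer arcs) matches the paper's, and your verification that this ratio is increasing on $(1,+\infty)$ is sound. But the step you yourself flag as ``the technical heart'' --- the comparison on the inner arcs $0\le x\le1$ --- is a genuine gap, not a deferred routine verification. Cutting at $x=0$ and $x=1$ and reusing the integration-by-parts identity of Proposition~\ref{c5c6} produces the integrand $H(x)=\hat g(x)\hat F(x)/[y_i(x)(\hat F(x)-y_i(x))^2]$, and since $\hat g(x)=x(a_1+x^2)(x^2-1)$ changes sign at $x=\sqrt{-a_1}\in(0,1)$ while $\hat F<0$ throughout $(0,1)$, the product $\hat g\hat F$ is negative on $(0,\sqrt{-a_1})$ and positive on $(\sqrt{-a_1},1)$. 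Your assertion that the bound $a_2\le(a_1-1-\sqrt{-a_1})/3$ ``provides exactly the cancellation needed'' is never substantiated, and no cancellation mechanism is visible: the two pieces of the integral are weighted by different values of $y_i$ and $\hat F-y_i$, so splitting and ``recombining'' does not obviously close. The paper sidesteps the obstruction by choosing a different decomposition: it cuts the cycle at $x=\sqrt{-a_1}$ (where $\hat g$ vanishes) and on $(0,\sqrt{-a_1})$ uses the direct formula $\int_0^{\sqrt{-a_1}}\hat f(x)\,dx/(y_i-\hat F(x))$, whose comparison needs only the fixed sign of $\hat f(x)=\delta(a_2+x^2)$ there --- negative because $a_2\le(a_1-1-\sqrt{-a_1})/3\le a_1$ forces $x^2<-a_1\le-a_2$ --- and then disposes of the remaining middle arcs by invoking Lemma~3.4 of \cite{CC18}. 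The correct fix is to change the cut points, not to push harder on the sign analysis of $H$.

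The second assertion has a similar problem. The identity $\oint_\Gamma d\tilde E=0$ only tells you that $\oint_\Gamma\hat g(x)[\hat F(x)-\hat F(\sqrt{-a_1})]\,dt=0$; this constrains an integral of a different function from $\hat f$, and no implication toward $\oint_\Gamma\hat f(x)\,dt>0$ follows from sign information on $-\hat g\cdot[\hat F-\hat F(\sqrt{-a_1})]$ without further argument, which you do not give. The paper instead passes to the $(w,y)$-plane via $w=\hat F(x)$, cuts the right half of the cycle at the minimum of $\hat F$ and at $w=0$, shows each matched pair of arcs satisfies a positive difference of $\int\hat f\,dt$ by the comparison theorem applied to the two branches $\hat y_1>\hat y_2$ of \eqref{tsy1}, and observes that the outermost arc contributes positively because $\hat f>0$ there; that computation, not an energy identity, is what yields $\oint_\gamma\hat f(x)\,dt>0$ and hence uniqueness.
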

\begin{proof}
\begin{figure}
\centering
\subfigure[Two  large limit cycle    $\Gamma_1$ and $\Gamma_2$ of       \eqref{initial2}   for $-1<a_2\leq  \frac{a_1-1-\sqrt{-a_1}}{3}$]{
\scalebox{0.55}[0.55]{
\includegraphics{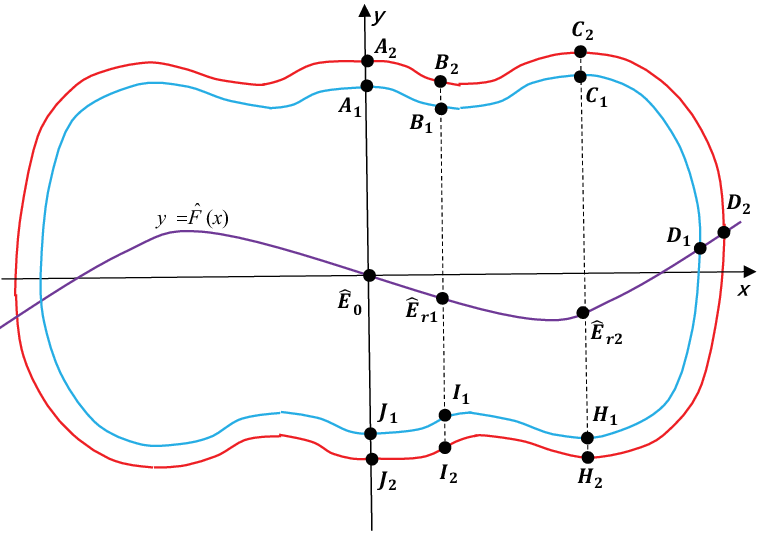}} }
\subfigure[A large limit cycle    $\gamma$  of       \eqref{initial2} for $\frac{a_1}{3}\leq a_2 <0$]{
\scalebox{0.58}[0.58]{
\includegraphics{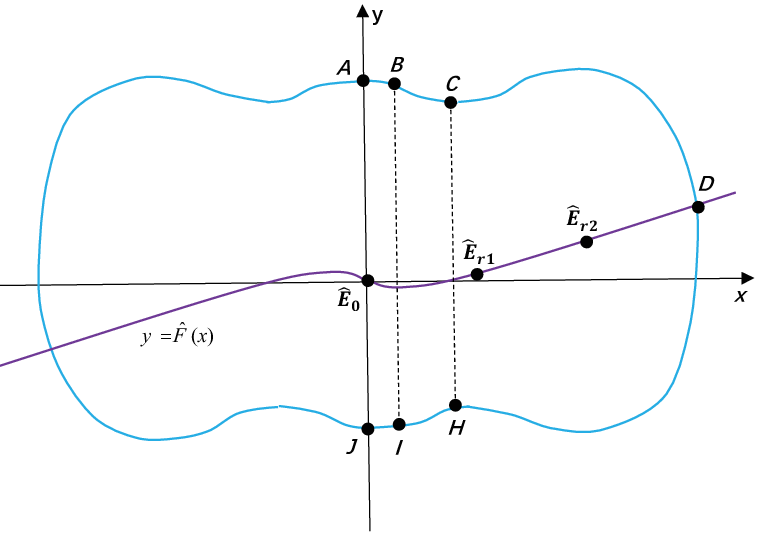}} }
\subfigure[The  large limit cycle    $\gamma$  in   \eqref{tsy1}]{
\scalebox{0.55}[0.55]{
\includegraphics{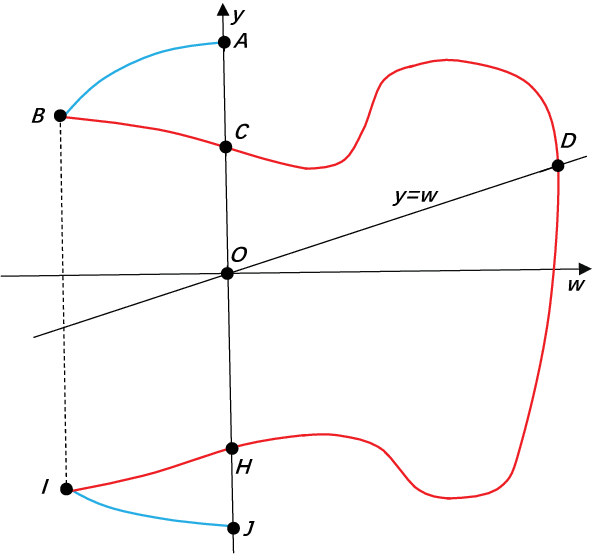}} }
\caption{Discussion of   large limit cycles of      \eqref{initial2}  for $a_2 \in(-1,\frac{a_1-1-\sqrt{-a_1}}{3}]\cup [\frac{a_1}{3}, 0)$.}
\label{atllc}
\end{figure}
When $-1<a_2 \leq  {(a_1-1 -\sqrt{-a_1})}/{3}$,
suppose that system \eqref{initial2}    exhibits at least two large limit cycles $\Gamma_1$ and $\Gamma_2$,
where $\Gamma_1$ lies in the interior of $\Gamma_2$, as shown in Figure \ref{atllc}(a).
Our task now  is to  prove
\begin{eqnarray}
\oint_{\Gamma_1}\hat f(x)dt<\oint_{\Gamma_2}\hat f(x)dt,
\label{compare1}
\end{eqnarray}
which implies that  system \eqref{initial2} has at most two      large limit cycles.
Let $y=y_1(x)$ and $y=y_2(x)$ be respectively $\widehat{A_1B_1}$
and $\widehat{A_2B_2}$.
Then,
  we have
\begin{eqnarray}
\begin{aligned}
&\int_{ \widehat{A_1B_1}} \hat f(x)dt-\int_{\widehat{A_2B_2}}\hat f(x)dt
&&=\int_0^{\sqrt{-a_1}}\frac{\hat f(x)}{y_1-\hat F(x)}dx-\int_0^{\sqrt{-a_1}}\frac{\hat f(x)}{y_2-\hat F(x)}dx
\\
&&&=\int_0^{\sqrt{-a_1}}\frac{ \hat f(x)(y_2-y_1)}{(y_1- \hat F(x))(y_2- \hat F(x))}dx
\\
&&&<0,
\end{aligned}
\label{compare1a}
\end{eqnarray}
 because   $\hat f(x)<0$ for $x\in(0,\sqrt{-\tilde h})$ when $-1<a_2 \leq  {(a_1-1 -\sqrt{-a_1})}/{3}$.
Similarly, we obtain
\begin{eqnarray}
\int_{ \widehat{I_1J_1}}\hat f(x) dt-\int_{\widehat{I_2J_2}} \hat f(x) dt<0.
\label{compare1b}
\end{eqnarray}
As proven in Lemma 3.4 of \cite{CC18}, we get
\begin{eqnarray}
\int_{ \widehat{B_1C_1}} \hat f(x) dt<\int_{\widehat{B_2C_2}}\hat f(x) dt
~{\rm and}~
\int_{ \widehat{H_1I_1}} \hat f(x) dt<\int_{\widehat{H_2I_2}} \hat f(x) dt.
\label{comp31}
\end{eqnarray}
As proven in  Proposition~\ref{c5c6},
$[\hat F(x)-\hat F(\sqrt{-h})]\hat f(x) /\hat g(x)$ is increasing
for $x\in(\sqrt{-h}, +\infty)$.
Then, by the proof of Theorem 2.1 of \cite{DR} or Lemma 4.5 of \cite[Chapter 4]{Zh},
\begin{eqnarray}
 \int_{\widehat{C_1 D_1H_1}}\hat f(x)dt-\int_{\widehat{C_2D_2H_2}}\hat f(x)dt
<0.
\label{comp32}
\end{eqnarray}
By (\ref{compare1a}-\ref{comp32}),
it follows that \eqref{compare1} holds. Therefore,   system \eqref{initial2} has at most two large limit cycles when $-1<a_2 \leq  {(a_1-1 -\sqrt{-a_1})}/{3}$.  So does system  \eqref{initial1}.

When $ a_1/3 \leq a_2<0$,
    assume that system \eqref{initial2}  exhibits a  large limit cycle $\gamma$,
as shown in Figure \ref{atllc}(b).
Similarly,
 $\gamma$ of  system   \eqref{initial2}  in the positive half-plane
is changed into the orbit segments $\widehat{BA}\cup\widehat{BDI}\cup\widehat{IJ}$ of equations
\eqref{tsy1}, where $\lambda_i(w)=\hat g(x_i(w))/\hat f(x_i(w))$ and $i=1,2$, as shown in Figure \ref{atllc}(c).
Letting $y=\hat y_1(w)$ and $y=\hat y_2(w)$ represent respectively the orbit segments
$\widehat{BA}$ and $\widehat{BC}$ in the $wy$-plane, we get $\hat y_1(w)>\hat y_2(w)$.
 On the one hand,
 \begin{eqnarray}
 \begin{aligned}
& \int_{\widehat{BC}}\hat f(x)dt-\int_{\widehat{BA}}\hat f(x)dt&&=
 \int_{\frac{2\delta a_2\sqrt{-a_2}}{3}}^0\frac{dw}{  y_2-w}-\int_{\frac{2\delta a_2\sqrt{-a_2}}{3}}^0\frac{dw}{ y_1-w}
 \\
 &&&= \int_{\frac{2\delta a_2\sqrt{-a_2}}{3}}^0\frac{(\hat y_1- \hat y_2)}{(\hat y_1-w)(\hat  y_2-w)}dw
\\
&&&>0.
\end{aligned}
\label{ic1}
\end{eqnarray}
Similarly, we can obtain
\begin{eqnarray}
 \int_{\widehat{HI}}\hat f(x)dt-\int_{\widehat{JI}}\hat f(x)dt>0.
 \label{ic2}
\end{eqnarray}
On the other hand, it is clear that
\begin{eqnarray}
 \int_{\widehat{CDH}}\hat f(x)dt>0.
 \label{ic3}
\end{eqnarray}
By (\ref{ic1}-\ref{ic3}), $\oint_{\gamma}\hat f(x)dt>0$.
Thus,  system \eqref{initial2}  has at most one  large limit cycle  for  $ {a_1/3} \leq a_2<0$.
So does system  \eqref{initial1}.   The proof is completed.
\end{proof}

\begin{lemma}
	System \eqref{initial1} has
no
	large limit cycles   when  $-1/3\leq a_1<0$ and $-1/3\leq a_2<0$,
 at most one large limit cycle  when $a_1\le a_2< a_1/3$ and $-1<a_1<-1/3$.
	\label{lac8}
\end{lemma}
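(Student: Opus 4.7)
The strategy is to treat the two assertions separately, extending the energy and integral-comparison techniques developed earlier in Section~3.

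\textbf{Part 1 (no large limit cycles when $-1/3\le a_1,a_2<0$).} I would work with system \eqref{initial2} in Li\'enard form and the energy $E(x,y)=\int_0^x\hat g(s)\,ds+y^2/2$, so that
\[
\frac{dE}{dt}\Big|_{\eqref{initial2}}=-\hat g(x)\hat F(x)=-\frac{\delta}{3}x^2(x^2-1)(x^2+a_1)(x^2+3a_2).
\]
Under the hypotheses every zero of this product lies in $[-1,1]$. A large limit cycle $\Gamma$ surrounds all five equilibria, is symmetric about the origin by the $\mathbb{Z}_2$-equivariance, and extends to some $x_{\max}>1$. Parameterizing the half $\Gamma\cap\{x\ge 0\}$ by upper and lower arcs $y=y_\pm(x)$ and using $\oint_\Gamma dE=0$, the problem reduces to showing
\[
\int_0^{x_{\max}}\hat g(x)\hat F(x)\,W(x)\,dx>0,\qquad W(x)=\frac{y_+(x)-y_-(x)}{(y_+(x)-\hat F(x))(\hat F(x)-y_-(x))}>0.
\]
I would establish this by dominating the negative contribution from the interior strip $|x|<\sqrt{-a_1}$ with the positive contribution from the exterior $|x|\in[1,x_{\max}]$, where $|\hat g\hat F|$ grows like $(\delta/3)x^{10}$ and $W$ admits a uniform lower bound. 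Alternatively, since system \eqref{initial1} forms a rotated vector field in $a_2$ (one checks that $\dot x\,\partial_{a_2}\dot y-\dot y\,\partial_{a_2}\dot x=-\delta y^2\le 0$), the problem reduces to the boundary case $a_2=-1/3$, at which $\hat g\hat F=(\delta/3)x^2(x^2-1)^2(x^2+a_1)$ has second-order vanishing at $x=\pm1$, making the sign analysis tractable. The main obstacle is the genuine interior sign change of $\hat g\hat F$, which is absent in the related case $a_1\ge 0$ of Lemma~\ref{nlc}; I expect the cleanest route is to use a modified energy $\tilde E=E+\eta$ for a correction $\eta(x)$ that absorbs the interior negative contribution and yields $d\tilde E/dt$ of a definite sign on any candidate orbit.

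\textbf{Part 2 (at most one large limit cycle when $a_1\le a_2<a_1/3$ and $-1<a_1<-1/3$).} I would argue by contradiction, mirroring Proposition~\ref{c5c6}, Lemma~\ref{lac72} and Lemma~\ref{lac7}. Suppose two large limit cycles $\Gamma_1\subset\Gamma_2$ exist; consecutive limit cycles alternate in stability, so $\oint_{\Gamma_1}\hat f\,dt$ and $\oint_{\Gamma_2}\hat f\,dt$ must have opposite signs. Under $a_2<a_1/3$ one has $\sqrt{-3a_2}>\sqrt{-a_1}$, hence $\hat f(x)=\delta(a_2+x^2)<0$ on $(0,\sqrt{-a_1})$, which makes the inner-arc estimate identical to that in Lemma~\ref{lac7}. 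I would split each $\Gamma_j$ by the fences $x=\pm\sqrt{-a_1}$ and $x=\pm 1$ into inner arcs $\widehat{A_jB_j}$, $\widehat{I_jJ_j}$, middle arcs $\widehat{B_jC_j}$, $\widehat{H_jI_j}$, and outer arcs $\widehat{C_jD_jH_j}$, together with their symmetric copies. The inner-arc inequalities follow from $\hat f<0$; the middle-arc inequalities from the rotated vector field argument of \cite[Lemma~3.4]{CC18}; the outer-arc comparison reduces to the monotonicity of
\[
\Phi(x)=\frac{[\hat F(x)-\hat F(1)]\hat f(x)}{\hat g(x)}\qquad\text{for }x>1,
\]
so that \cite[Th.~2.1]{DR} (or \cite[Ch.~4, Lemma~4.5]{Zh}) applies. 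A direct computation expresses $\Phi'(x)$ as a rational function whose numerator is a polynomial in $x^2$ with coefficients depending on $a_1$ and $a_2$; under $a_1\le a_2<a_1/3$ and $-1<a_1<-1/3$, I expect this polynomial to be nonnegative on $x\ge 1$, and the verification should reduce to a short discriminant or Sturm analysis. The hardest step will be this uniform sign verification over the two-parameter region, since the interval $\sqrt{-3a_2}\in(\sqrt{-a_1},\sqrt{3})$ can straddle the value $1$ and one must split into the subcases $\sqrt{-3a_2}\lessgtr 1$. Once the arc-by-arc bounds are in place, summing them yields $\oint_{\Gamma_1}\hat f\,dt<\oint_{\Gamma_2}\hat f\,dt$, contradicting the stability alternation and proving the at-most-one bound.
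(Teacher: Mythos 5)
Both halves of your proposal contain genuine gaps, and in each case the missing ingredient is exactly the mechanism the paper uses instead of the energy/comparison arguments you extrapolate from Lemma~\ref{nlc} and Lemma~\ref{lac7}.

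\textbf{Part 1.} Your reduction to the boundary case $a_2=-1/3$ via the rotated vector field is sound (and is indeed how the paper passes from the boundary to the open region), but the boundary case itself is where the work lies, and your proposal does not close it. At $a_2=-1/3$ one has $dE/dt=-(\delta/3)x^2(x^2-1)^2(x^2+a_1)$, which is \emph{positive} on $|x|<\sqrt{-a_1}$ and \emph{negative} on $|x|>\sqrt{-a_1}$; the sign is still indefinite along a large limit cycle, so $\oint_\Gamma dE=0$ yields no contradiction, and there is no control on how far the cycle extends past $|x|=1$, so the ``domination'' of the interior contribution by the exterior one cannot be made uniform. The ``modified energy $\tilde E=E+\eta$'' is not specified and it is unclear any such $\eta$ exists. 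The paper's actual device is different: it passes to the generalized Filippov variable $w=\hat F(x)$ and observes that at $a_1=a_2=-1/3$ the relation $\hat F(x_1)=\hat F(x_2)$ forces $x_1^2+x_1x_2+x_2^2=1$, which is \emph{the same} relation as $\hat f(x_1)/\hat g(x_1)=\hat f(x_2)/\hat g(x_2)$; hence $\lambda_1\equiv\lambda_2$ in \eqref{tsy1}, the inner and outer branches of the cycle coincide in the $wy$-plane, and Green's formula turns $\oint_\Gamma(y-\hat F)dy+\hat g\,dx$ into a strictly positive difference of areas. For $-1/3<a_1<0$ one gets $\lambda_1<\lambda_2$, the Comparison Theorem nests the two regions, and the same area identity gives the contradiction. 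Without this (or an equivalent) identity, your Part~1 does not go through.

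\textbf{Part 2.} The logic here is broken. The two-cycle comparison $\oint_{\Gamma_1}\hat f\,dt<\oint_{\Gamma_2}\hat f\,dt$ for nested large cycles is perfectly \emph{consistent} with the configuration ``inner unstable, outer stable'' (i.e.\ $\oint_{\Gamma_1}\hat f\,dt\le 0\le\oint_{\Gamma_2}\hat f\,dt$), which is precisely the configuration one expects here since infinity is repelling by Proposition~\ref{infty}. So the monotonicity of the integrals only rules out a third nested cycle and yields ``at most two'', exactly as in Lemma~\ref{lac7}, not ``at most one''. To get uniqueness the paper instead proves the \emph{single-cycle} estimate $\oint_\gamma\hat f(x)\,dt>0$ for every large limit cycle $\gamma$ (following the $a_1/3\le a_2<0$ case of Lemma~\ref{lac7}): one shows $\lambda_1(w)>\lambda_2(w)$ via the algebraic analysis of $\kappa=x_1+x_2$ and the quadratic $h(\sigma)$, deduces $\hat y_1>\hat y_2$ and $y_D>0$, and then the arc-by-arc estimates of type \eqref{ic1}--\eqref{ic3} force the characteristic exponent to be negative. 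Every large limit cycle is then hyperbolic and stable, so two nested ones cannot coexist. You would need to replace your alternation argument by this positivity argument (and supply the $\lambda_1>\lambda_2$ verification, which is the nontrivial algebraic step in this parameter range) for Part~2 to be correct.
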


\begin{figure}
	\centering
	\subfigure[in $xy$-plane ]{
		\scalebox{0.55}[0.55]{
			\includegraphics{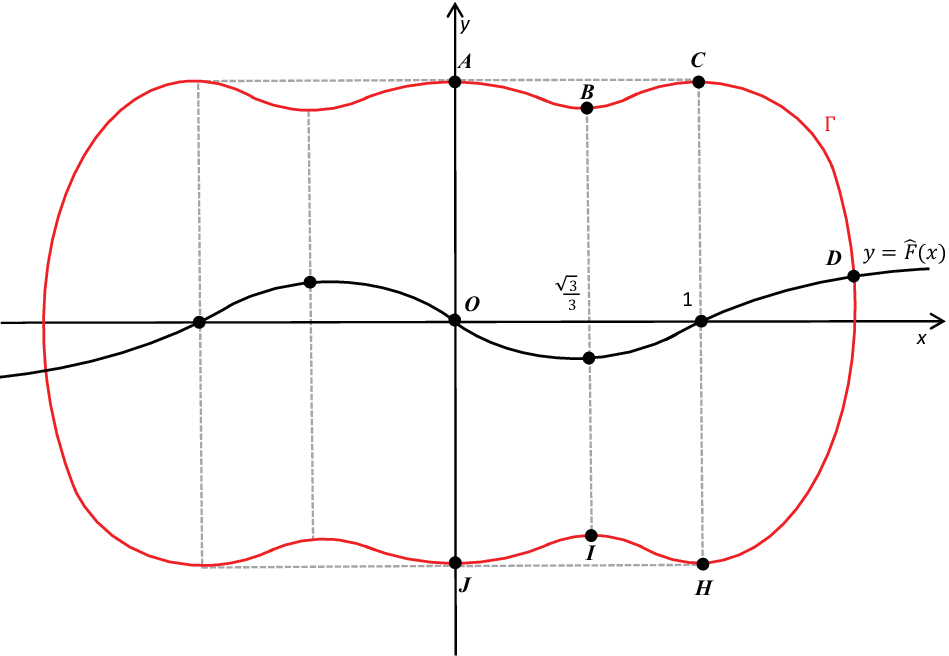}} }
	\subfigure[in $wy$-plane]{
		\scalebox{0.55}[0.55]{
			\includegraphics{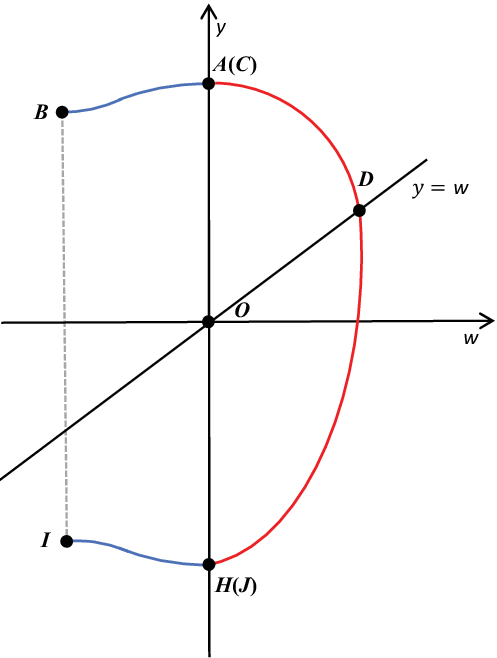}} }
	\caption{A
		large limit cycle $\Gamma$ of   \eqref{initial2}  for $a_1=a_2=-1/3$.}
	\label{atllc11}
\end{figure}
\begin{proof}
	Firstly,
	assume that 	system \eqref{initial1} has
	a
	large limit cycle $\Gamma$
	for $a_1=a_2=-1/3$, as shown in Figure \ref{atllc11}(a).
We claim that $\lambda_1(w)\equiv\lambda_2(w)$ in \eqref{tsy1}, where $w=F(x)$ for $x>0$.
	On the one hand,
when	 $\hat F(x_1)=\hat F(x_2)$,
we have ${x_1}^2+x_1x_2+{x_2}^2=1$,
where $0<x_1<\sqrt{3}/3<x_2<1$.
On the other hand,
when the second equation of \eqref{FFF11} holds, we also have ${x_1}^2+x_1x_2+{x_2}^2=1$.
 Then, the assertion is proven.
 Thus, the two equations  in \eqref{tsy1} are same.
 Letting $y=\hat y_1(w)$ and $y=\hat y_2(w)$ represent respectively the orbit segments
 $\widehat{BA}$ and $\widehat{BC}$ in the $wy$-plane, it follows from \eqref{tsy1}  that $\hat y_1(w)\equiv\hat y_2(w)$.
 In the $wy$-plane, the two orbit segments
 $\widehat{BA}$ and  $\widehat{BC}$ coincide,
 and the two orbit segments
 $\widehat{IH}$ and  $\widehat{IJ}$ coincide, as shown in Figure \ref{atllc11}(b).
 By the Green's formula,
 \begin{eqnarray*}
 	\oint_{\Gamma}(y-\hat F(x))dy+g(x)dx&=&2\iint_{\mathcal{S}} \hat f(x)dxdy
 	\\
 &=&	2\left(\iint_{\mathcal{S}_1} dwdy- 	\iint_{\mathcal{S}_2} dwdy\right)
  	\\
 &=& 2(\Omega(\mathcal{S}_1)-\Omega(\mathcal{S}_2))=2\Omega(\mathcal{S}_3)>0,
 	\end{eqnarray*}
 where $\mathcal{S}$ is the domain between the $y$-axis and the orbit segment $\widehat{ADJ}$ in the $xy$-plane,
 $\mathcal{S}_1$ is the domain between $w=\hat F(\sqrt{3}/3)$ and the orbit segment
 $\widehat{BDI}$ in the $wy$-plane,
$\mathcal{S}_2$ is the domain between the $y$-axis, $w=\hat F(\sqrt{3}/3)$ and the orbit segments $\widehat{BA}$,
$\widehat{IH}$ in the $wy$-plane,
$\mathcal{S}_3$ is the domain between the $y$-axis and the orbit segment  $\widehat{CDH}$ in the $wy$-plane.
This contradicts $\oint_{\Gamma}(y-\hat F(x))dy+g(x)dx=0$.
Thus, system \eqref{initial1} has no large limit cycles
for $a_1=a_2=-1/3$.

\begin{figure}
	\centering
	\subfigure[in $xy$-plane ]{
		\scalebox{0.55}[0.55]{
			\includegraphics{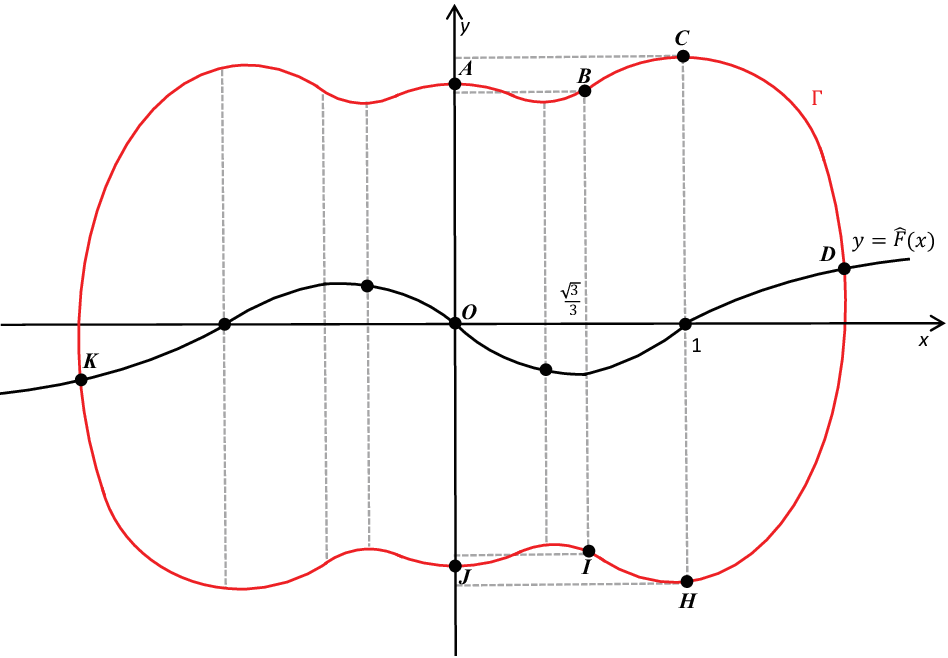}} }
	\subfigure[in $wy$-plane]{
		\scalebox{0.55}[0.55]{
			\includegraphics{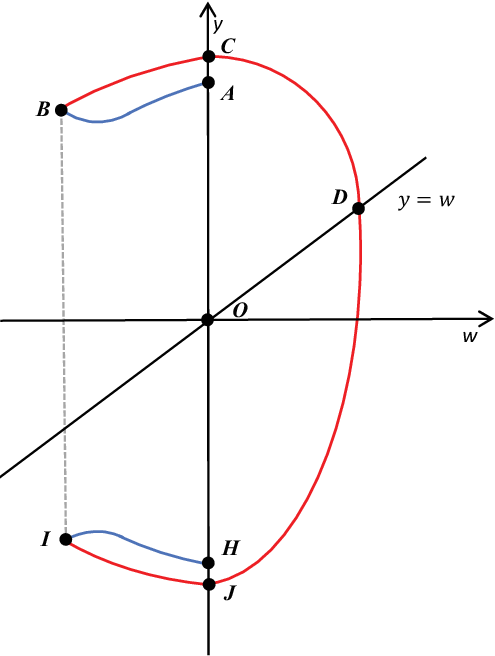}} }
	\caption{A
		large limit cycle $\Gamma$ of   \eqref{initial2}  for $-1/3<a_1<0$ and $a_2=-1/3$.}
	\label{atllc12}
\end{figure}
Secondly,
assume that 	system \eqref{initial1} has
a
large limit cycle $\Gamma$
for $-1/3<a_1<0$ and $a_2=-1/3$, as shown in Figure \ref{atllc12}(a).
When	 $\hat F(x_1)=\hat F(x_2)$,
we have ${x_1}^2+x_1x_2+{x_2}^2=1$, where $0<x_1<\sqrt{3}/3<x_2<1$.
Further, when the second equality of \eqref{FFF11} holds, by ${x_1}^2+x_1x_2+{x_2}^2=1$
we also have $x_1x_2(x_1x_2+1)=0$, which contradicts 	$0<x_1<\sqrt{3}/3<x_2<1$.
Then, we can obtain that $\lambda_1(w)<\lambda_2(w)$ in \eqref{tsy1}.
Letting $y=\hat y_1(w)$ and $y=\hat y_2(w)$ represent respectively the orbit segments
$\widehat{BA}$ and $\widehat{BC}$ in the $wy$-plane, it follows from \eqref{tsy1} and the Comparison Theorem that $\hat y_1(w)<\hat y_2(w)$.
See Figure \ref{atllc12}(b).
Then, $\mathcal{S}_2$ is a subset of $\mathcal{S}_1$.
 By the Green's formula,
\begin{eqnarray*}
	\oint_{\Gamma}(y-\hat F(x))dy+g(x)dx&=&2\iint_{\mathcal{S}} \hat f(x)dxdy
	\\
	&=&	2\left(\iint_{\mathcal{S}_1} dwdy- 	\iint_{\mathcal{S}_2} dwdy\right)
	\\
	&=& 2(\Omega(\mathcal{S}_1)-\Omega(\mathcal{S}_2))>0,
\end{eqnarray*}
which contradicts  $\oint_{\Gamma}(y-\hat F(x))dy+g(x)dx=0$.
Thus, system \eqref{initial1} has
no
large limit cycles
for $-1/3<a_1<0$ and $a_2=-1/3$.
\begin{figure}
	\centering
		\scalebox{0.55}[0.55]{
			\includegraphics{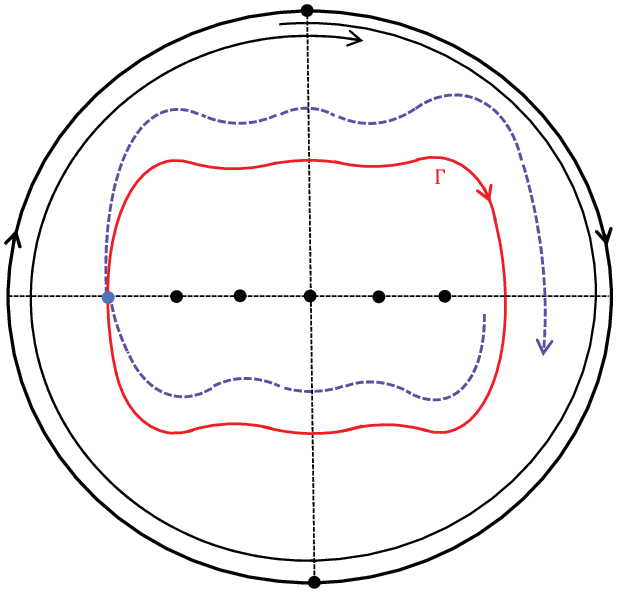}}
	\caption{Discussion on large limit cycles
		  of   \eqref{initial2}  for   $-1/3<a_2<a_1<0$.}
	\label{PB11}
\end{figure}

  Thirdly, assume that 	system \eqref{initial1} has
a
large limit cycle
for $(a_1,a_2)=(a^*,b^*)$,
where $-1/3\le a^*<0$, $-1/3<b^*<0$, and $\Gamma$  is the outermost large limit cycle.
Since the vector field of system \eqref{initial1} is rotated on $a_2$,
 $\Gamma$ is broken when $(a_1,a_2)=(a^*,-1/3)$,
as shown Figure \ref{PB11}.
By the  annulus region of Poincar\'e-Bendixson Theorem, there is a stable limit cycle surrounding $\gamma$ when $(a_1,a_2)=(a^*,-1/3)$,
which contradicts the nonexistence of large limit cycles.

\begin{figure}
	\centering
	\subfigure[in $xy$-plane ]{
		\scalebox{0.55}[0.55]{
			\includegraphics{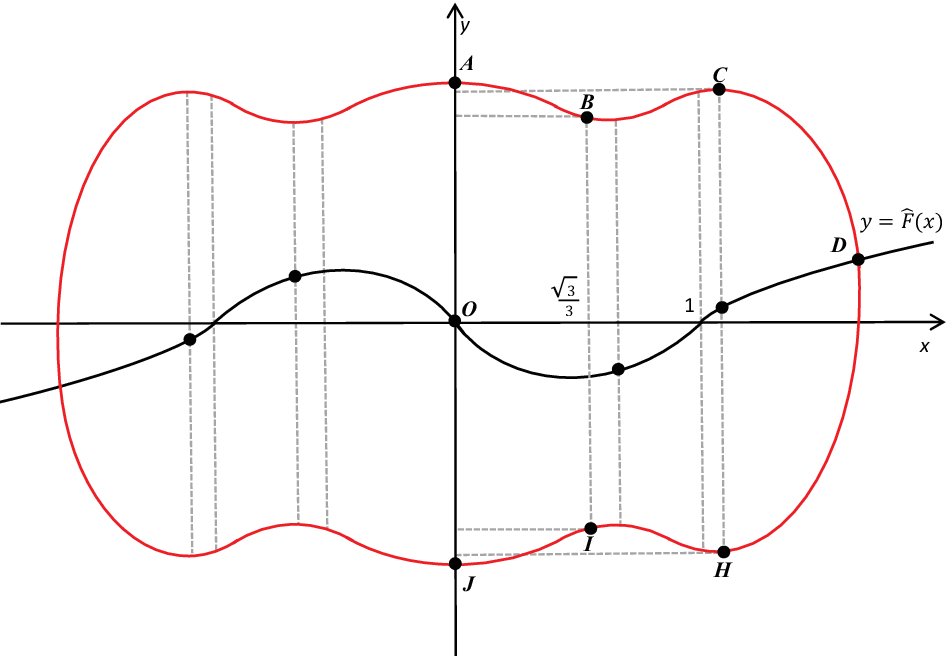}} }
	\subfigure[in $wy$-plane]{
		\scalebox{0.55}[0.55]{
			\includegraphics{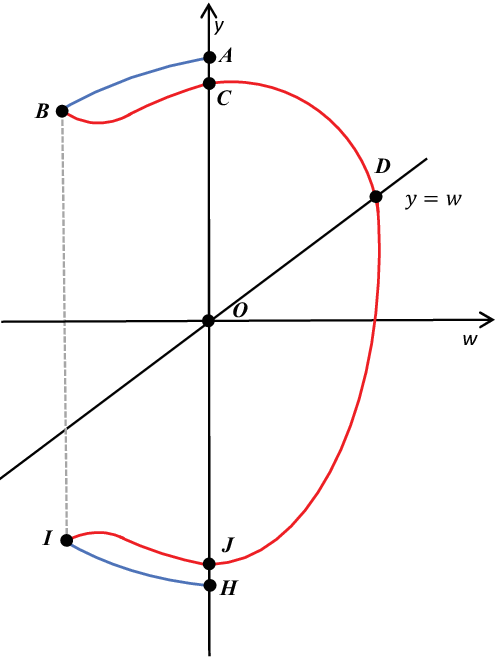}} }
	\caption{A
		large limit cycle $\Gamma$ of   \eqref{initial2}  for $a_1\leq a_2<a_1/3$ and $-1<a_1<-1/3$.}
	\label{atllc13}
\end{figure}

Finally,
assume that 	system \eqref{initial1} has
a
large limit cycle $\Gamma$
for $a_1\leq a_2<a_1/3$ and $-1<a_1<-1/3$, as shown in Figure \ref{atllc13}(a).
We claim that $\lambda_1(w)>\lambda_2(w)$ in \eqref{tsy1}, where $w=F(x)$ for $x>0$.
When \eqref{FFF11} holds,
we have
$${x_1}^2+x_1x_2+{x_2}^2=-3a_2,~~~~~~~~\frac{{x_1}^2{x_2}^2+x_1x_2-a_2(3a_2+1)}{({x_1}^2+a_2)({x_2}^2+a_2)}+\frac{3a_2+1}{a_2-a_1}=0,$$
where $0<x_1<\sqrt{-a_1}<x_2<1$.
 Let
$ \kappa:=x_1+x_2$.
Then,
$ x_1x_2=\kappa^2+3a_2>0$ and $({x_1}^2+a_2)({x_2}^2+a_2)<0$,
 implying that $\kappa\in(\sqrt{-3a_2}, 2\sqrt{-a_2})$.
Define
$h(\sigma):=
(2a_2+1-a_1)\sigma^2+((a_2-a_1)(6a_2+1)+
5a_2(3a_2+1))\sigma+2a_2(3a_2+1)(3a_2-a_1).$
  Notice that
  \[
  -\frac{(a_2-a_1)(6a_2+1)+5a_2(3a_2+1)}{2(2a_2+1-a_1)}+4a_2=\frac{(a_2-a_1)(2a_2-11)-7{a_2}^2-a_2}{2(2a_2+1-a_1)}<0.
  \]
We check that
$
    h(-4a_2)=2a_2(1+a_2)(a_1-a_2)>0,
$
    implying $h(\kappa^2)>0$ for $\kappa\in(\sqrt{-3a_2}, 2\sqrt{-a_2})$.
Then, the assertion is proven.
Letting $y=\hat y_1(w)$ and $y=\hat y_2(w)$ represent respectively the orbit segments
$\widehat{BA}$ and $\widehat{BC}$ in the $wy$-plane, it follows from \eqref{tsy1} and the Comparison Theorem that $\hat y_1(w)>\hat y_2(w)$.
We also claim that $y_D>0$, where $y_D$ is the ordinate of $D$. See Figure \ref{atllc13}(b).
Otherwise, $\mathcal{S}_1$ is a subset of $\mathcal{S}_2$, implying similarly the nonexistence of large limit cycles.
As proven in the case of $a_1/3\leq a_2<0$ of Lemma \ref{lac7}, we can similarly obtain that
  system \eqref{initial1} has
at most one
large limit cycle
for $a_1\le a_2< a_1/3$ and $-1<a_1<-1/3$. Moreover, the limit cycle is stable and hyperbolic if it exists.
\end{proof}

\begin{lemma}
System \eqref{initial1}  has at most four large limit cycles  when  $(a_1-1-\sqrt{-a_1})/3<a_2< \min\{a_1,-1/3\}$ and $-1<a_1<0$.
\label{lac73}
\end{lemma}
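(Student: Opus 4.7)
The plan is to bound the number of nested large limit cycles by four via an iterative comparison of $\oint \hat{f}(x)\,dt$ across a nested family, combined with the Filippov--type transformation $w=\hat{F}(x)$ used already in Lemma~\ref{ulc}, Proposition~\ref{c5c6}, and Lemma~\ref{lac7}. Suppose for contradiction that system~\eqref{initial1} has five nested large limit cycles $\Gamma_1\subset\Gamma_2\subset\Gamma_3\subset\Gamma_4\subset\Gamma_5$. By symmetry and Proposition~\ref{infty}, their stabilities alternate, and since the leading Floquet exponent of $\Gamma_i$ is controlled by $\oint_{\Gamma_i}\hat f(x)\,dt$, the sequence of these integrals must change sign at least four times as $i$ runs from $1$ to $5$. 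The goal of the proof is to show that this is impossible in the parameter window $(a_1-1-\sqrt{-a_1})/3<a_2<\min\{a_1,-1/3\}$ with $-1<a_1<0$.

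First I would partition the right half-line by the critical abscissae of $\hat f$, $\hat g$ and $\hat F$. Since $\hat f(x)=\delta(a_2+x^2)$, $\hat g(x)=x(a_1+x^2)(x^2-1)$, and $a_2<a_1<0$ with $-1<a_2$, the relevant values satisfy $0<\sqrt{-a_1}<\sqrt{-a_2}<1$, so $\hat F$ is increasing on $(0,\sqrt{-a_2})$, decreasing on $(\sqrt{-a_2},1)$ (note $\hat F$ has a local max at $-\sqrt{-a_2}$ and a local min at $\sqrt{-a_2}$, and here we need the branch structure over the positive $x$-axis), and increasing on $(1,\infty)$. Each large limit cycle therefore decomposes in the right half-plane into graph arcs over $x$ on these intervals together with arcs that become graphs over $w=\hat F(x)$ after the Filippov substitution. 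On each such arc one has a uniform sign for $\hat f$ and $\hat g$, so the difference $\oint_{\Gamma_{i+1}}\hat f\,dt-\oint_{\Gamma_i}\hat f\,dt$ splits into a finite sum of pieces whose signs can be controlled individually by the Comparison Theorem exactly as in \eqref{compare2}--\eqref{compare6} and \eqref{compare1a}--\eqref{comp32}.

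Second, on the monotone pieces where a multiplicative-type comparison is needed, I would verify that the quotient $[\hat F(x)-\hat F(c)]\hat f(x)/\hat g(x)$ is monotonic for the appropriate anchor $c\in\{0,\sqrt{-a_1},\sqrt{-a_2},1\}$ by a direct derivative computation, reducing the question to a polynomial inequality in $(x,a_1,a_2)$ on the specified parameter simplex; the corresponding integral comparisons then follow from the Dumortier--Rousseau style estimate of \cite[Theorem~2.1]{DR} and \cite[Ch.~4, Lemma~4.5]{Zh}. Assembling the signed pieces, one would extract a definite inequality of the form
\begin{equation*}
\oint_{\Gamma_{i+1}}\hat f(x)\,dt - \oint_{\Gamma_i}\hat f(x)\,dt \;=\; \mathcal{I}^{+}_i + \mathcal{I}^{-}_i,
\end{equation*}
where $\mathcal{I}^{+}_i$ carries the contributions from arcs with $\hat f>0$ and $\mathcal{I}^{-}_i$ those with $\hat f<0$. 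Because in the present range $\hat f$ changes sign twice on the right half-plane and $\hat F$ is non-monotone, the number of sign reversals of the consecutive differences is bounded a priori by four, which contradicts the requirement of at least four sign changes forced by five nested cycles.

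The main obstacle is the joint verification that the monotonicity of $[\hat F(x)-\hat F(c)]\hat f(x)/\hat g(x)$ and the signed-area bookkeeping both hold uniformly across the entire window $(a_1-1-\sqrt{-a_1})/3<a_2<\min\{a_1,-1/3\}$, $-1<a_1<0$: the polynomial inequalities required are delicate near the boundaries $a_2=a_1/3$ and $a_2=(a_1-1-\sqrt{-a_1})/3$, where one of the monotonicity factors degenerates, and the bookkeeping on the pinched arc near $x=\sqrt{-a_2}$ (where the Filippov map has a turning point) must be done with care so that the Comparison Theorem applies on each of the two inverse branches of $w=\hat F(x)$ separately. A secondary difficulty is ruling out coincidence of cycles through $w$-values at which $\lambda_1(w)=\lambda_2(w)$, handled by the same argument as in the equality analysis of Lemma~\ref{lac8}; once these are overcome, the four-cycle bound follows by the alternating-stability parity argument outlined above.
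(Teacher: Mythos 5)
Your proposal has a genuine gap, and it sits exactly at the step that forces the paper to abandon the direct comparison method in this parameter window. The arc-by-arc comparison of $\oint\hat f(x)\,dt$ between nested cycles via the Filippov substitution and the Dumortier--Rousseau estimate is precisely what Lemma~\ref{lac7} uses to get \emph{at most two} large limit cycles when $-1<a_2\le(a_1-1-\sqrt{-a_1})/3$; there it works because every piece of the difference $\oint_{\Gamma_{2}}\hat f\,dt-\oint_{\Gamma_{1}}\hat f\,dt$ can be given the \emph{same} sign. In the window $(a_1-1-\sqrt{-a_1})/3<a_2<\min\{a_1,-1/3\}$ that uniformity fails, and your decomposition into $\mathcal{I}^{+}_i+\mathcal{I}^{-}_i$ produces competing terms of opposite sign with no quantitative control over which dominates; no mechanism is offered that actually bounds the number of sign reversals. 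Worse, the concluding step is not a contradiction even if the bookkeeping were granted: ``bounded a priori by four'' is consistent with ``at least four sign changes forced by five nested cycles.'' There is also a computational slip: since $\hat F'(x)=\hat f(x)=\delta(a_2+x^2)$, the function $\hat F$ on $(0,+\infty)$ is decreasing on $(0,\sqrt{-a_2})$ and increasing on $(\sqrt{-a_2},+\infty)$ only; $x=1$ is not a critical point of $\hat F$, so the three-interval branch structure you describe is wrong.

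The paper's actual proof uses two ingredients absent from your proposal. First (Appendix~C), for $\delta>0$ sufficiently small it treats \eqref{initial1} as a perturbation of the Hamiltonian system at $\delta=0$ and bounds the zeros of the Abelian integral $M(e)=a_2I_0(e)+I_2(e)$ by four, via a Picard--Fuchs system, the associated Riccati equation for $w(e)=Z'(e)/I_0'(e)$, and a contact-point count between the Riccati orbit $\Gamma$ and an explicit rational curve $\mathcal{C}$, with separate polynomial inequalities on subregions $G_1,\dots,G_4$ of the $(a_1,a_2)$-window. Second, for $\delta$ not small it runs a rotated-vector-field continuation in $(\beta,\delta)=(\delta a_2,\delta)$: assuming five large limit cycles, it alternately decreases $\delta$ and increases $\beta$, forcing successive coincidences of cycles while staying inside the window (using Lemmas~\ref{lac72}, \ref{lac7} and \ref{lac8} to prevent escape), until the small-$\delta$ bound yields a contradiction. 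The bound ``four'' genuinely comes from this global Abelian-integral geometry, not from local comparison of adjacent cycles, so your route would need to be replaced rather than repaired.
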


\begin{proof}
 Firstly, we consider that $\delta>0$ is sufficiently small. By Appendix C,  system \eqref{initial1}  has at most four large limit cycles.

Secondly, we consider that $\delta>0$ is not small and let $\beta:=\delta a_2$.
	Assume that system \eqref{initial1}  has  at least five large limit cycles
for $(a_1,\beta,\delta)=(\alpha_0,\beta_0,\delta_0)$,
where   $(\alpha_0-1-\sqrt{-\alpha_0})/3<\beta_0< \min\{\alpha_0,-1/3\}$ and $-1<\alpha_0<0$.
Let $\Gamma_1,\ldots,\Gamma_5$ be the outermost five limit cycles in order, where $\Gamma_1$ lies in the interior
regions surrounded by $\Gamma_2$ and $\Gamma_5$ is externally stable.
Notice that
the vector field of system \eqref{initial1}  is rotated with respect to $\beta$ and $\delta$.
Then, we can obtain that stable limit cycles expand and unstable ones contract as one of  $\beta,\delta$ decreases.
Next, we adapt the following steps.
\begin{description}
	\item[(i)] When $a_1:=\alpha_0$ and $\beta:=\beta_0$ are fixed, we lessen $\delta$ and
	claim that 		there exists $\delta_1\in(0,\delta_0)$
	such that either $\Gamma_1$ and $\Gamma_2$ coincide, or $\Gamma_3$ and $\Gamma_4$ coincide, or  $\Gamma_1$ and an interior limit cycle coincide,
	and
	\[
(\alpha_0-1-\sqrt{-\alpha_0})\delta_1/3<\beta_0< \min\{\alpha_0\delta_1,-\delta_1/3\}.
	\]
	Otherwise, this is a contradiction since
	system \eqref{initial1} has at most one large limit cycles   when $\beta_0\geq \min\{\alpha_0\delta_1,-\delta_1/3\}$ by Lemma \ref{lac8}.
	
	\item[(ii)] 	When $a_1:=\alpha_0$ and $\delta=\delta_1$ are fixed, we increase $\beta$ and can find
	a value $\beta=\beta_1\in((\alpha_0-1-\sqrt{-\alpha_0})\delta_1/3, \beta_0)$
	such that either $\Gamma_2$ and $\Gamma_3$ coincide, or  $\Gamma_4$ and $\Gamma_5$ coincide.
	Otherwise, if
	\[
\beta_1\leq (\alpha_0-1-\sqrt{-\alpha_0})\delta_1/3
	\]
	this is a contradiction since
	system  \eqref{initial1} has at most two large limit cycles by Lemma \ref{lac7}.
	
	\item[(iii)] When $a_1:=\alpha_0$ and $\beta:=\beta_1$ are fixed, we lessen $\delta$ and
	claim that there is a value $\delta=\delta_2\in(0,\delta_1)$
	such that either $\Gamma_2$ and $\Gamma_3$ coincide, or  $\Gamma_1$ and an interior limit cycle coincide,
	and
	\[
(\alpha_0-1-\sqrt{-\alpha_0})\delta_2/3<\beta_1< \min\{\alpha_0\delta_2,-\delta_2/3\}.
	\]
	Otherwise, this is a contradiction since
	system  \eqref{initial1} has at most one large limit cycle   when $\beta_1\leq -\alpha_0-\delta_2$ by Lemma \ref{lac72}.
	
	\item[(iv)] Repeating the aforementioned steps, there is an integer $n$ such that $\delta_n$ is small in the $2n+1$-th step.
	Moreover, system  \eqref{initial1} has at least four large limit cycles   in the moment and one of them is semi-stable.
	However, when there is a semi-stable large limit cycle,
system \eqref{initial1} has at most three large limit cycles with sufficiently small $\delta>0$
by Appendix C. It induces a contradiction.
\end{description}
Therefore, system   \eqref{initial1}  cannot  have  five large limit cycles 	for  $(a_1-1-\sqrt{-a_1})/3<a_2< \min\{a_1,-1/3\}$ and $-1<a_1<0$.
\end{proof}

By Lemmas \ref{smc7} and \ref{lac73}, there are at most four large limit cycles and no small limit cycles for   $(a_1-1-\sqrt{-a_1})/3<a_2< \min\{a_1,-1/3\}$ and $-1<a_1<0$.
By the analysis of Appendix C and numerical simulations, we conjecture that there are at most two large limit cycles for   $(a_1-1-\sqrt{-a_1})/3<a_2< \min\{a_1,-1/3\}$ and $-1<a_1<0$.
Based on the conjecture and other results, we can obtain the following proposition.

 \begin{figure}
 	\centering
 	\subfigure[as   $a_2\leq -1$]{
 		\scalebox{0.45}[0.45]{
 			\includegraphics{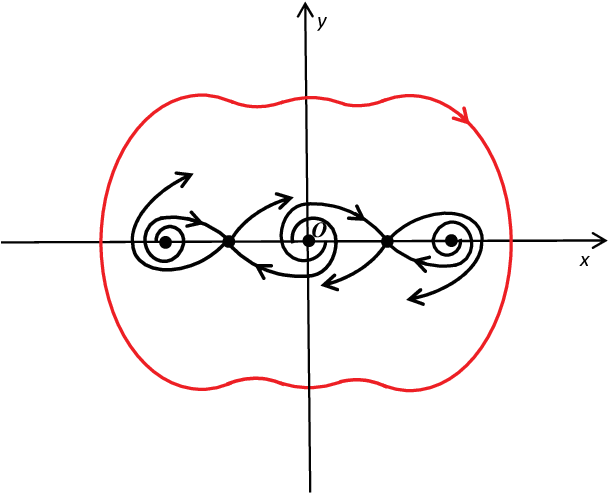}} }
 	\subfigure[as $a_2=\varphi_2(a_1,\delta) $  ]{
 		\scalebox{0.45}[0.45]{
 			\includegraphics{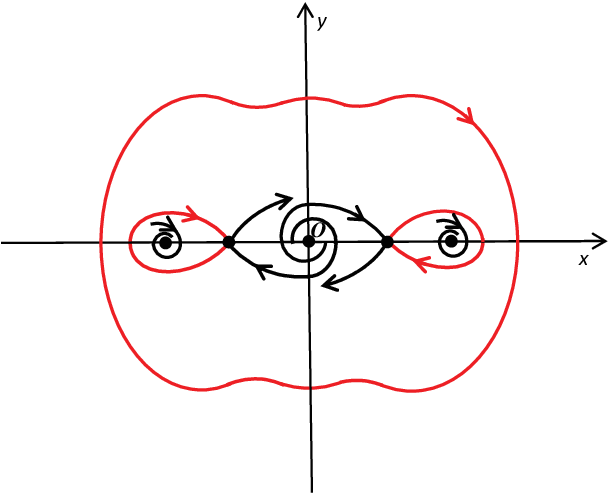}} }
 		 	\subfigure[as $-1<a_2<\varphi_2(a_1,\delta) $ ]{
 			\scalebox{0.45}[0.45]{
 				\includegraphics{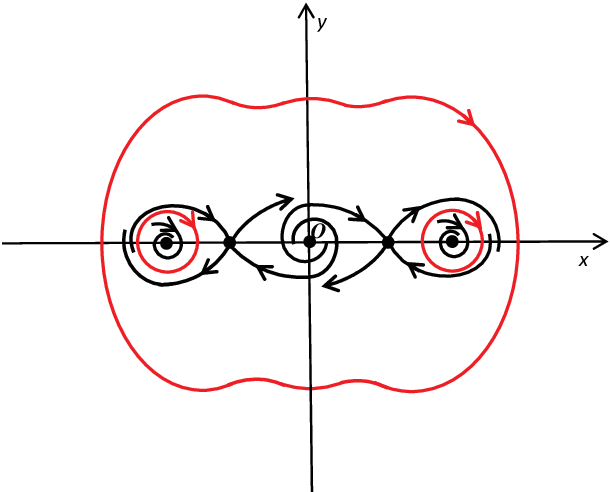}} }
 			 	\subfigure[as   $\varphi_2(a_1,\delta)<a_2< \varphi_3(a_1,\delta)$]{
 				\scalebox{0.45}[0.45]{
 					\includegraphics{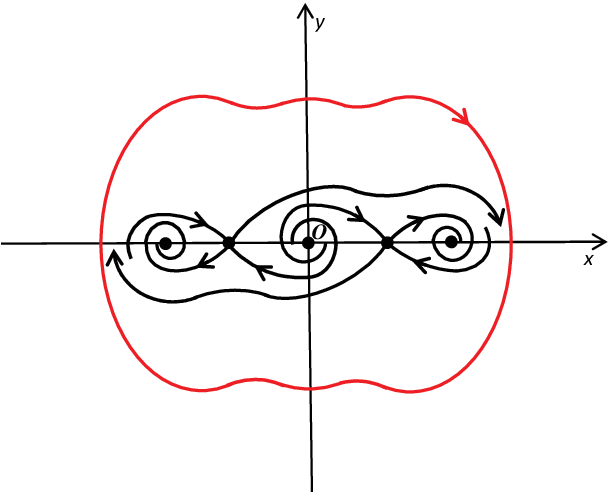}} }
 			\subfigure[as $a_2=\varphi_3(a_1,\delta) $, $-1<a_1\leq a^*$ ]{
 				\scalebox{0.45}[0.45]{
 					\includegraphics{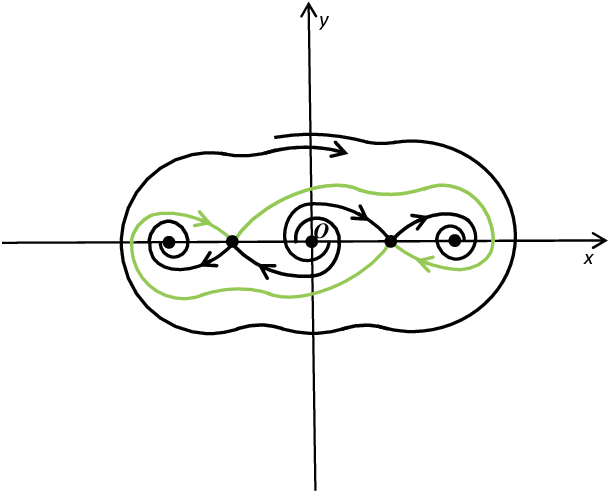}} }
 			\subfigure[as $a_2=\varphi_3(a_1,\delta)$, $a^*<a_1<0$ ]{
 				\scalebox{0.45}[0.45]{
 					\includegraphics{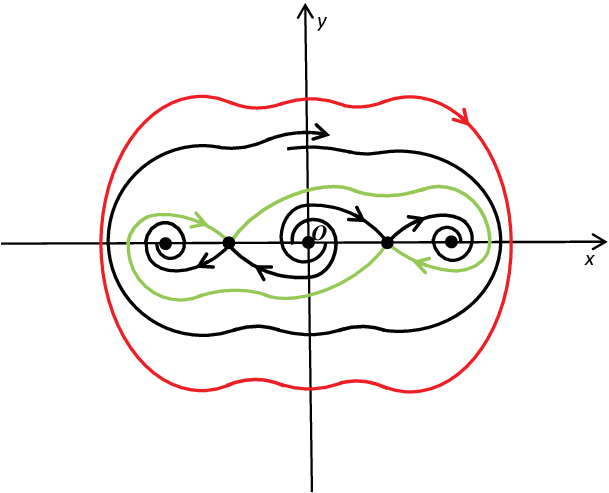}} }
 				 	\subfigure[as   $\varphi_2(a_1,\delta)  <a_2<\varphi_5(a_1,\delta)$, $a^*<a_1<0$]{
 					\scalebox{0.45}[0.45]{
 						\includegraphics{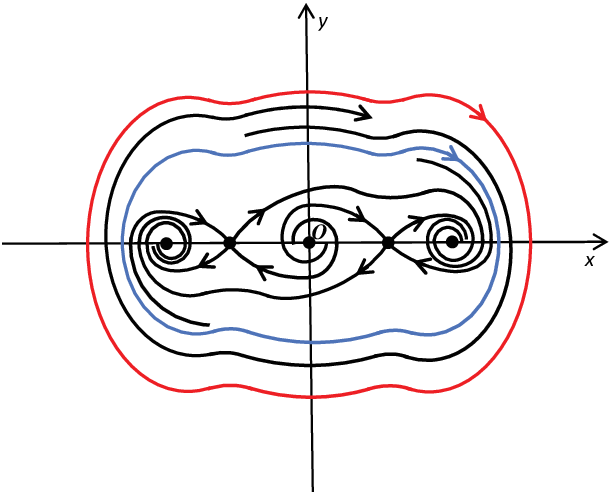}} }
 				\subfigure[as $a_2=\varphi_5(a_1,\delta)$, $a^*<a_1<0$]{
 					\scalebox{0.45}[0.45]{
 						\includegraphics{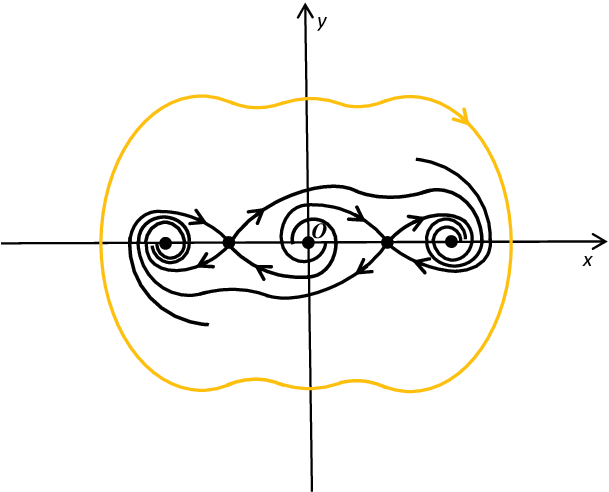}} }
 				\subfigure[as $-1<a_2<\varphi_2(a_1,\delta) $ ]{
 					\scalebox{0.45}[0.45]{
 						\includegraphics{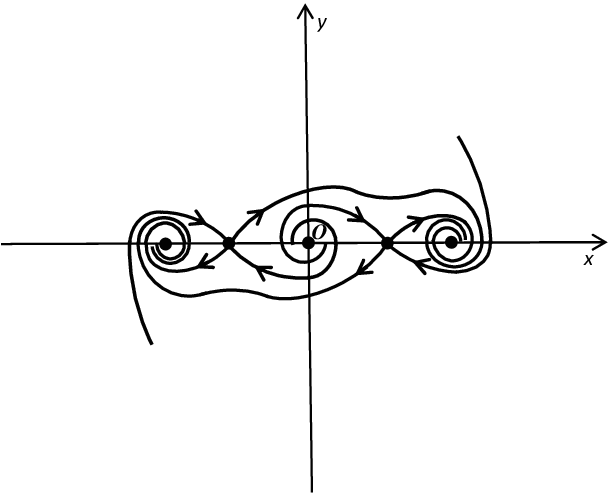}} }
 					 	\subfigure[as   $a_2=\varphi_4(a_1,\delta) $]{
 						\scalebox{0.45}[0.45]{
 							\includegraphics{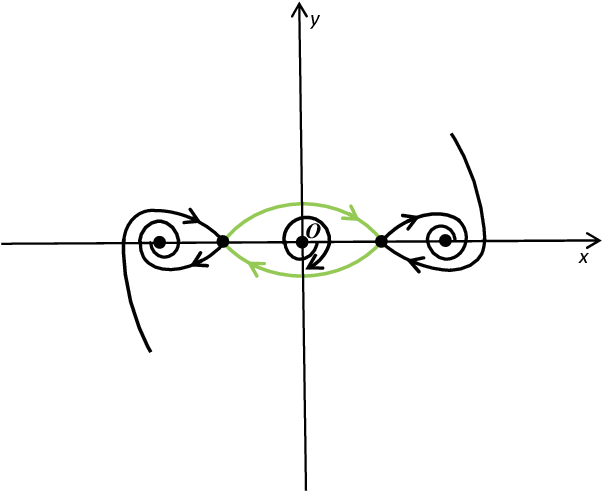}} }
 					\subfigure[as $\varphi_4(a_1,\delta)  <a_2<0$ ]{
 						\scalebox{0.45}[0.45]{
 							\includegraphics{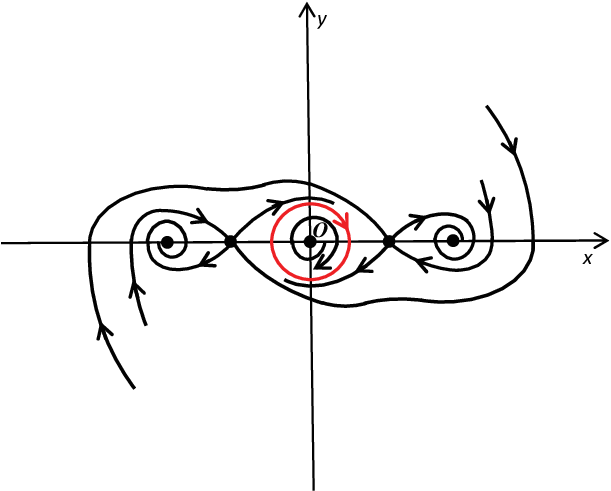}} }
 	\caption{All phase portraits of   \eqref{initial1}  when $-1<a_1<0$ and $a_2<0$.}
 	\label{five}
 \end{figure}
\begin{proposition}
There are four continuous functions $\varphi_2(a_1,\delta), \varphi_3(a_1,\delta), \varphi_4(a_1,\delta),\varphi_5(a_1,\delta)$
such that the following statements hold:
\begin{description}
\item[(i)] system  \eqref{initial1}  has   a unique  limit cycle  when $a_2\leq -1$,
which is stable and large{\rm {;}}
  \item[(ii)] system \eqref{initial1} has two  homoclinic loops    and  one large limit cycle  when
$a_2=\varphi_2(a_1,\delta) $ and $-1<a_1<0$, where the  homoclinic loops   are unstable and   the large limit cycle
is stable{\rm {;}}
 \item[(iii)] system  \eqref{initial1}  has  two small limit cycles and one large limit cycle when
$-1<a_2<\varphi_2(a_1,\delta) $,  where the small ones are unstable and the large one
is stable{\rm {;}}
  \item[(iv)] system \eqref{initial1} has one stable two-saddle  loop surrounding all of ${\hat{E}}_0, \hat E_{l_2}, {\hat{E}}_{r2}$ and no limit cycles  when
$a_2=\varphi_3(a_1,\delta) $ and $-1<a_1\leq a^*${\rm {;}}
  \item[(v)] system \eqref{initial1} has one unstable two-saddle  loop surrounding all of ${\hat{E}}_0, \hat E_{l_2}, {\hat{E}}_{r2}$ and exactly one  stable large  limit cycle when
$a_2=\varphi_3(a_1,\delta) $ and $a^*<a_1<0${\rm {;}}
  \item[(vi)] system \eqref{initial1} has two   large limit cycles  when
$\varphi_2(a_1,\delta)<a_2< \varphi_3(a_1,\delta)$ and $-1<a_1<0$, where the  homoclinic loops   are unstable and   the large limit cycle
is stable{\rm {;}}
  \item[(vii)] system \eqref{initial1} has   two    limit cycle  when
$\varphi_2(a_1,\delta)  <a_2<\varphi_5(a_1,\delta)$   and $a^*<a_1<0$,  where they are large, the inner one is unstable
and the outer one is stable{\rm {;}}
 \item[(viii)] system \eqref{initial1} has  a unique limit cycle   when
$a_2=\varphi_5(a_1,\delta) $  and $a^*<a_1<0$,  which is semi-stable and large{\rm {;}}
 \item[(ix)] system \eqref{initial1} has  no  limit cycles   when
$\varphi_3(a_1,\delta)  <a_2<\varphi_4(a_1,\delta)$   and $-1<a_1\leq a^*$ or $\varphi_5(a_1,\delta)  <a_2<\varphi_4(a_1,\delta)$   and $a^*<a_1<0${\rm {;}}
  \item[(x)] system \eqref{initial1} has  one  two-saddle  loop only surrounding   ${\hat{E}}_0$  if and only if
$a_2=\varphi_4(a_1,\delta) $,  which is  stable {\rm {;}}
  \item[(xi)] system \eqref{initial1} has two limit cycle   when
$\varphi_4(a_1,\delta)  <a_2<0$   and $-1<a_1<0$, where  they are small and   stable{\rm {;}}
\end{description}
where $\varphi_3(a^*,\delta)=a^*$, $-1<\varphi_2(a_1,\delta)  <\varphi_3(a_1,\delta)< \varphi_4(a_1,\delta)<0$ and $\varphi_2(a_1,\delta)  <-1/3< \varphi_4(a_1,\delta)$
for $-1<a_1<0$ and $\varphi_3(a_1,\delta) <\varphi_5(a_1,\delta)  <-1/3$ for $a^*<a_1<0$.
See {\rm Figure \ref{five}}.
\label{p12}
\end{proposition}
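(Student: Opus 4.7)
The plan is to assemble the statement from the preceding lemmas via a rotated-vector-field sweep in the parameter $a_2$ (equivalently $\beta=\delta a_2$), using the fact that the divergence of \eqref{initial1} depends on $a_2$ but not on $x$ at the $a_2$-coefficient, so the family is a rotated family on $\beta$ and stable limit cycles expand (resp.\ unstable ones contract) as $\beta$ decreases. Statement (i) is already Lemma~\ref{lac72}. For the remaining statements I would fix $a_1\in(-1,0)$ and $\delta>0$ and trace, as $a_2$ decreases from $0$ to $-\infty$, the three relevant geometric objects: (a) the small limit cycles born out of $\hat E_{l2},\hat E_{r2}$ at the subcritical Hopf point $a_2=-1$ (cf.\ $H_2$ in Theorem~\ref{mr1}); (b) the unstable/stable separatrices of the saddles $\hat E_{l1},\hat E_{r1}$; (c) the (possibly multiple) large limit cycles surrounding all five equilibria, whose existence is forced by Proposition~\ref{infty} (infinity is a repeller) together with the Poincar\'e--Bendixson theorem.

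First, I would define each bifurcation function by a relative-position condition. Let $x_A(a_2),x_B(a_2)$ be abscissae of the first returns to the $x$-axis of the unstable right-branch of $\hat E_{l1}$ and of the stable branch of $\hat E_{r1}$ respectively (similarly $x_C,x_D$ for the lower separatrices). As in Lemma~3.3 of \cite{CC18}, these are monotone and continuous in $a_2$. By the intermediate-value theorem applied to suitable differences (together with Lemmas~\ref{lac72},~\ref{nlc},~\ref{lac8}) I obtain the unique values $\varphi_2,\varphi_3,\varphi_4$ where, respectively, a pair of symmetric small homoclinic loops of $\hat E_{l1},\hat E_{r1}$ forms, a large two-saddle loop enclosing all of $\hat E_0,\hat E_{l2},\hat E_{r2}$ forms, and a small two-saddle loop enclosing only $\hat E_0$ forms. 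The saddle quantity $\lambda_-+\lambda_+=-\beta$ (as used in the proof of Proposition~\ref{c5c6}(iv)) gives the stability of each loop and hence a canonical direction for the loop to break when $a_2$ is varied.

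Next I would bifurcate from each loop. Since the subcritical Hopf at $a_2=-1$ produces an unstable small limit cycle for $a_2$ slightly less than $-1$, and since Lemma~\ref{smc7} forbids small cycles for $a_2\leq -1$ or $a_2\geq (a_1-1-\sqrt{-a_1})/3$ and allows at most one in between, these small cycles must die at $\varphi_2$ in a homoclinic loop of $\hat E_{r1}$ (and symmetrically $\hat E_{l1}$): this yields (ii)--(iii). Likewise, the stable large cycle coming from Lemma~\ref{lac72} persists as $a_2$ increases past $-1$, must die somewhere, and the first death of a large cycle is either at $\varphi_3$ (two-saddle loop, giving (iv)--(v)) or, when $a^*<a_1<0$, first by colliding with a second, interior unstable large cycle at a double-cycle value $\varphi_5$ (giving (vi)--(viii)); Lemma~\ref{lac73} and the conjectured bound of two large cycles control the count. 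Above $\varphi_3$ (resp.\ $\varphi_5$) one then uses Lemma~\ref{nlc} plus rotated-family monotonicity to get (ix), and finally the supcritical Hopf $H_1$ at $a_2=0$ together with Lemma~\ref{smc7} delivers the unique small pair whose death at $\varphi_4$ produces the small two-saddle loop around $\hat E_0$: this is (x)--(xi). The critical value $a^*$ is characterized by $\varphi_3(a^*,\delta)=a^*$, i.e.\ the two-saddle loop of (iv)/(v) coincides with $\varphi_5$.

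The hardest step, and the only one not immediately yielded by the lemmas already proved, is showing that the bifurcation curves $\varphi_2,\varphi_3,\varphi_4,\varphi_5$ are \emph{single-valued and continuous} in $(a_1,\delta)$, that their relative order holds globally, and that the counts of large cycles in regions where Lemma~\ref{lac73} only gives the bound four do not actually exceed two. For continuity one uses $C^\infty$ dependence of separatrices on parameters together with transversality of the Poincar\'e map at each loop (guaranteed by $\lambda_-+\lambda_+=-\beta\neq 0$ in the relevant ranges). Uniqueness of $\varphi_2,\varphi_3,\varphi_4$ follows from strict monotonicity of $x_A-x_B$ in $a_2$ (rotated family). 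For $\varphi_5$ and the sharp count of at most two large cycles I would invoke the analysis of Appendix~C together with the semistable-cycle contradiction scheme already exploited in Lemma~\ref{lac73}; this is where the argument is most delicate and where the results stated in the proposition rely on the conjectured (but argued in Appendix~C for small $\delta$) upper bound of two large limit cycles on the region $(a_1-1-\sqrt{-a_1})/3<a_2<\min\{a_1,-1/3\}$.
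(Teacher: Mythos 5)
Your proposal follows essentially the same route as the paper's proof: each $\varphi_k$ is defined by a relative-position (separatrix-splitting) condition, with existence and uniqueness from the rotated-family monotonicity of Lemma 3.3 of \cite{CC18} plus the intermediate value theorem, stability of the loops from the saddle quantities $\delta(a_1-a_2)$ via \cite[Theorem 3.3, Chapter 3]{CLW}, and the cycle counts from Proposition \ref{infty}, Poincar\'e--Bendixson, and the upper bounds of Lemmas \ref{smc7}--\ref{lac73}. You also correctly identify, as the paper itself does just before the proposition, that the sharp two-large-cycle count rests on the Appendix C analysis and the stated conjecture rather than on a fully proved bound.
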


\begin{proof}
  Firstly,  by  Lemma \ref{lac72},     the   statement   {\bf (i)} follows, as shown in Figure \ref{five} (a).

Denote   $P := (x_P, 0)$ and $Q:=(x_Q, 0)$ be respectively the first intersection points of
    the  stable  and  unstable   manifold   of the right-hand side of  $\hat{E}_{r1}$  and  the $x$-axis.  By Lemma \ref{fe1} and   the   statement  {\bf (i)},  we know that     ${\hat{E}}_{r2}$ of   system \eqref{initial1}  are  unstable and  system \eqref{initial1} has no small  limit cycles when $ a_2\leq -1$.  It    implies that $x_P<x_Q$  when $a_2\leq-1$ since
     system \eqref{initial1}   has small limit cycles for $x_P>x_Q$ and $a_2\leq-1$ by the annulus region of Poincar\'e-Bendixson Theorem
     and  system \eqref{initial1}   has a homoclinic loop only surrounding ${\hat{E}}_{r2}$ for $x_P=x_Q$.
     By Lemmas   \ref{fe1} and  \ref{smc7},  we know that   $\hat{E}_{r2}$ of   system \eqref{initial1}  are  stable and  system \eqref{initial1}  has   no    small limit cycles surrounding     ${\hat{E}}_{r2}$  when $a_2\geq (a_1-1-\sqrt{-a_1})/3$.  It    implies that $x_P>x_Q$  when $a_2\geq (a_1-1-\sqrt{-a_1})/3$.  Furthermore, as proved in Lemma 3.3 of \cite{CC18}, $x_P$ increases  continuously and $x_B  $ decreases   continuously as $a_2$ increases since the vector field of system \eqref{initial1}  is rotated on $a_2$.
 Therefore,
 there is a unique    function $\varphi_2(a_1,\delta)\in(-1, (a_1-1-\sqrt{-a_1})/3)$  such that      $x_P-x_Q=0$  if  and only if  $a_2=\varphi_2(a_1,\delta)$   
 as shown in Figure \ref{five} (b).
Since the saddle quantities at $\hat{E}_{r1}$ and $\hat{E}_{l1}$ are $\delta(a_1-a_2)>0$ for $a_1>a_2$, the small homoclinic loop only surrounding $\hat{E}_{r1}$
and the small homoclinic loop only surrounding $\hat{E}_{l1}$ are unstable by \cite[Theorem 3.3, Chapter 3]{CLW}.
  In other words,  $x_P<x_Q$  if  and only if  $a_2<\varphi_2(a_1,\delta)$, and  $x_P>x_Q$  if  and only if  $a_2>\varphi_2(a_1,\delta)$.
Then, the   statements   {\bf (ii)} and {\bf (iii)} hold.

Denote   $Q := (0, y_Q)$ and $S:=(0, y_S)$ be respectively the first intersection points of
the  stable  and  unstable   manifold   of the right-hand side of  $\hat{E}_{l1}$  and  the $y$-axis.
On the one hand,
 when $a_2=0$ and $-1<a_1<0$,
since system \eqref{initial1} has no limit cycles and the origin is stable,
we obtain  $y_Q+y_S<0$. Otherwise, system \eqref{initial1} has no small closed orbits surrounding $\hat{E}_0$ for $y_Q+y_S\geq0$.
This is a contradiction.
On the other hand,
when $a_2=a_1/3$ and $-1<a_1<0$,
since system \eqref{initial1} has no limit cycles and the origin is unstable,
we obtain  $y_Q+y_S>0$. Otherwise, system \eqref{initial1} has no small closed orbits surrounding $\hat{E}_0$ for $y_Q+y_S\leq0$.
This is a contradiction.
When $a_1\in(-1,0)$ is fixed, since the vector field of system \eqref{initial1} is rotated on $a_2$,
there is a unique value $a_2=\varphi_4(a_1,\delta)\in(a_1/3,0)$ such that  $y_Q+y_S=0$, i.e.,
 system \eqref{initial1} has a small homoclinic loop surrounding $\hat{E}_0$.
 Since the sum of saddle quantities at $\hat{E}_{r1}$ and $\hat{E}_{l1}$ is $\delta(a_1-a_2)$ and negative for $a_1<a_2$,
 the small homoclinic loop only surrounding $\hat{E}_0$
 is stable by \cite[Theorem 3.3, Chapter 3]{CLW}.
 Then, the   statement   {\bf (x)}  holds.

 Denote   $M := (x_M, 0)$  be   the first intersection points of
the    unstable   manifold   of the right-hand side of  $\hat{E}_{l1}$  and  the $x$-axis.
It is clear that $x_M>x_P$ for $a_2=\varphi_2(a_1,\delta)$.
By Lemmas \ref{fe1} and   \ref{smc7},   $E_{0}$ of   system \eqref{initial}  are  unstable and there is no small  limit cycles surrounding $\hat E_{0}$ when $a_2 \leq -a_1/3$.  It    implies that $x_M<x_N$  when  $a_2 \leq -a_1/3$.
By Lemmas \ref{fe1} and   \ref{con1},  $E_{0}$ of   system \eqref{initial}  are  stable  and  there is no small  limit cycles when $ a_2=0$.  It    implies that $x_M>x_P$  when  $a_2=0$.
Furthermore, as proved in Lemma 3.3 of \cite{CC18}, $x_M$ increases  continuously and $x_P  $ decreases   continuously as $a_2$ increases.
Therefore,
there is a unique    function $\varphi_3(a_1,\delta) $  such that      $x_M-x_P=0$  if  and only if  $a_2=\varphi_3(a_1,\delta)$.
Since the sum of saddle quantities at $\hat{E}_{r1}$ and $\hat{E}_{l1}$ $\delta(a_1-a_2)$ and positive (resp. negative)
for $a_1>a_2$(resp. $a_1<a_2$), a heteroclinic loop is unstable (resp. stable) by \cite[Theorem 3.3, Chapter 3]{CLW}.
In other words,  $x_M<x_P$  if  and only if  $a_2<\varphi_3(a_1,\delta) $, and $x_M>x_P$  if  and only if  $a_2>\varphi_3(a_1,\delta)$.
By Lemma \ref{infty} and the Poincar\'e-Bendixson Theorem,
system \eqref{initial} has at least one large limit cycle for $a_2=\varphi_3(a_1,\delta)$ and $a_1>a_2$.
Associated with that system \eqref{initial} has at most two   large limit cycles,  system \eqref{initial} has exactly one large limit cycle for $a_2=\varphi_3(a_1,\delta) $ and $a_1>a_2$.
Assume that system \eqref{initial} has at least one large limit cycle for $a_2=\varphi_3(a_1,\delta) $ and $a_1\leq a_2$.
 By Lemma \ref{infty} and the stability of heteroclinic loop,  there are at least two large limit cycles
 for $a_2=\varphi_3(a_1,\delta) $ and $a_1\leq a_2$. By the heteroclinic bifurcation,
 there are at least three large limit cycles
 for $a_2=\varphi_3(a_1,\delta) +\epsilon$ and $a_1\leq a_2$, where $0<\epsilon\ll 1$. This is a contradiction.
Then, the   statements   {\bf (iv)} and {\bf (v)} hold.
Furthermore, there is a continuous function $a_2=\varphi_5(a_1,\delta)\in(\varphi_3(a_1,\delta), \min\{a_2, -1/3\})$ such that
 system \eqref{initial} has a semi-stable large limit cycle.
  Then, the   statement   {\bf (viii)}  holds.
Since  the remainder statements can be similarly proved, we omit them.
\end{proof}


\section{Proofs of Theorems \ref{Result1}-\ref{mainresult2} and simulations}

\begin{proof}[Proof of Theorem \ref{Result1}]
By Lemmas \ref{fe1}, \ref{con2}, \ref{con4}, Proposition \ref{infty}, the proof can be obtained directly.
\end{proof}

\begin{proof}[Proof of Theorem \ref{mr1}]
	By  Proposition \ref{localbi}, statements {\bf(i)-(v)} hold.
		By  Proposition \ref{infty}, statement {\bf(vi)} holds.
				By  Proposition \ref{p12}, statements {\bf(vii)-(x)} hold.
	When  		 system \eqref{initial1} has neither large limit cycles nor large singular closed orbits (including large heteroclinic loops
	and figure-eight loops),	the orbit connections between equilibria at finity and equilibria at infinity have many cases
since the vector field of system \eqref{initial1}  is rotated on $a_2$.
Statements  (xi) holds.
Finally, according to those bifurcation sets, we give the complete bifurcation diagram in Figure~\ref{bf1}.
\end{proof}

\begin{proof}[Proof of Theorem \ref{mainresult2}]
 The parameter plane $\delta=\delta_0\in(0,2\sqrt{3})$ (resp. $\delta=\delta_0\in [2\sqrt{3}, +\infty)$)  are divided into the parameter regions $I,II,\ldots,XI$ (resp.,  $R_1, \ldots, R_{11}$) by these bifurcation surfaces.
 Moreover, the corresponding global phase portraits in the Poincar\'e disc can be obtained
 according to lemmas and propositions in Sections 3 and 4.
 By Proposition \ref{p12}, there are at most three limit cycles for  $a_2\in(-\infty, (a_1-1-\sqrt{-a_1})/3]\cup[\min\{a_1,-1/3\},0)$ and $-1<a_1<0$.
By Lemmas \ref{smc7} and  \ref{lac73}, there are at most four large limit cycles and no small limit cycles for   $(a_1-1-\sqrt{-a_1})/3<a_2< \min\{a_1,-1/3\}$ and $-1<a_1<0$.
 Thus we get an upper bound $4$ of the number of limit cycles.
\end{proof}


\begin{figure}[h]
	\centering
	\subfigure[ for $(a_1,a_2)=(0.5,-0.1)$]
	{\scalebox{0.4}[0.4]{
			\includegraphics{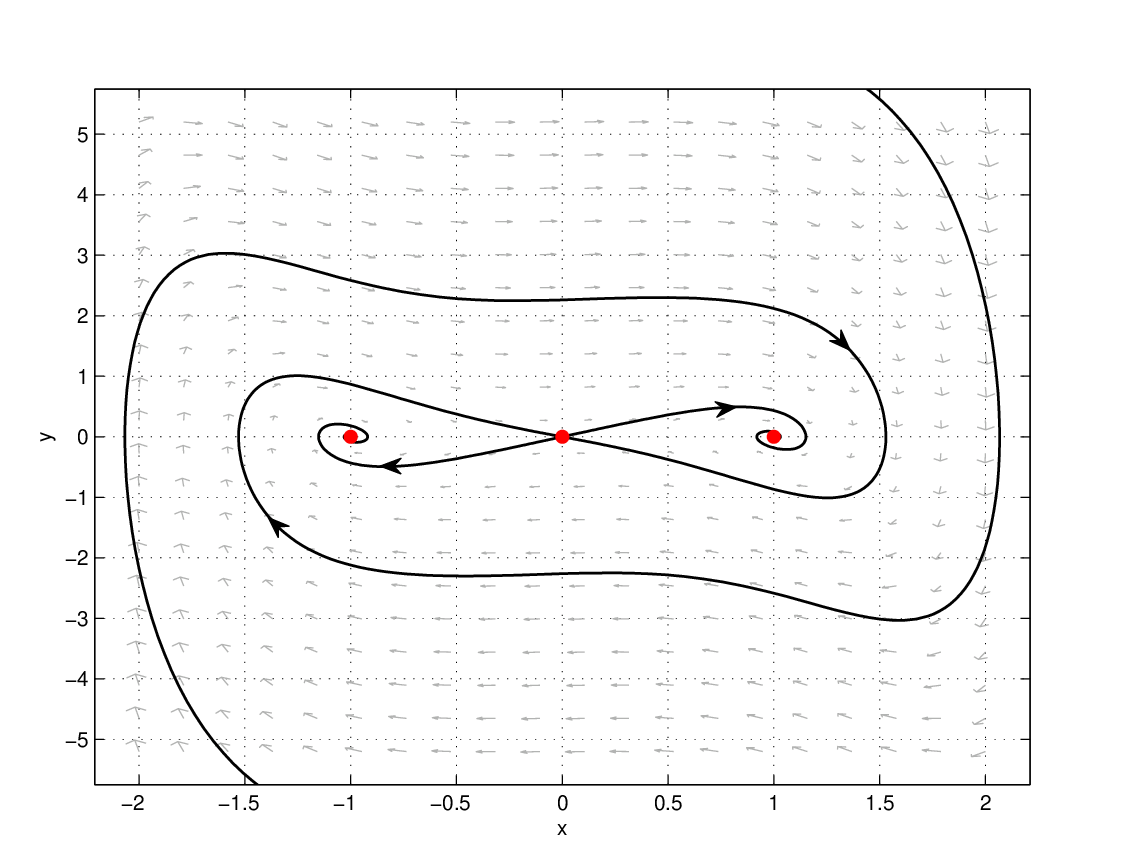}}}
	\subfigure[ for $(a_1,a_2)=(0.5,-0.7)$]
	{\scalebox{0.4}[0.4]{
			\includegraphics{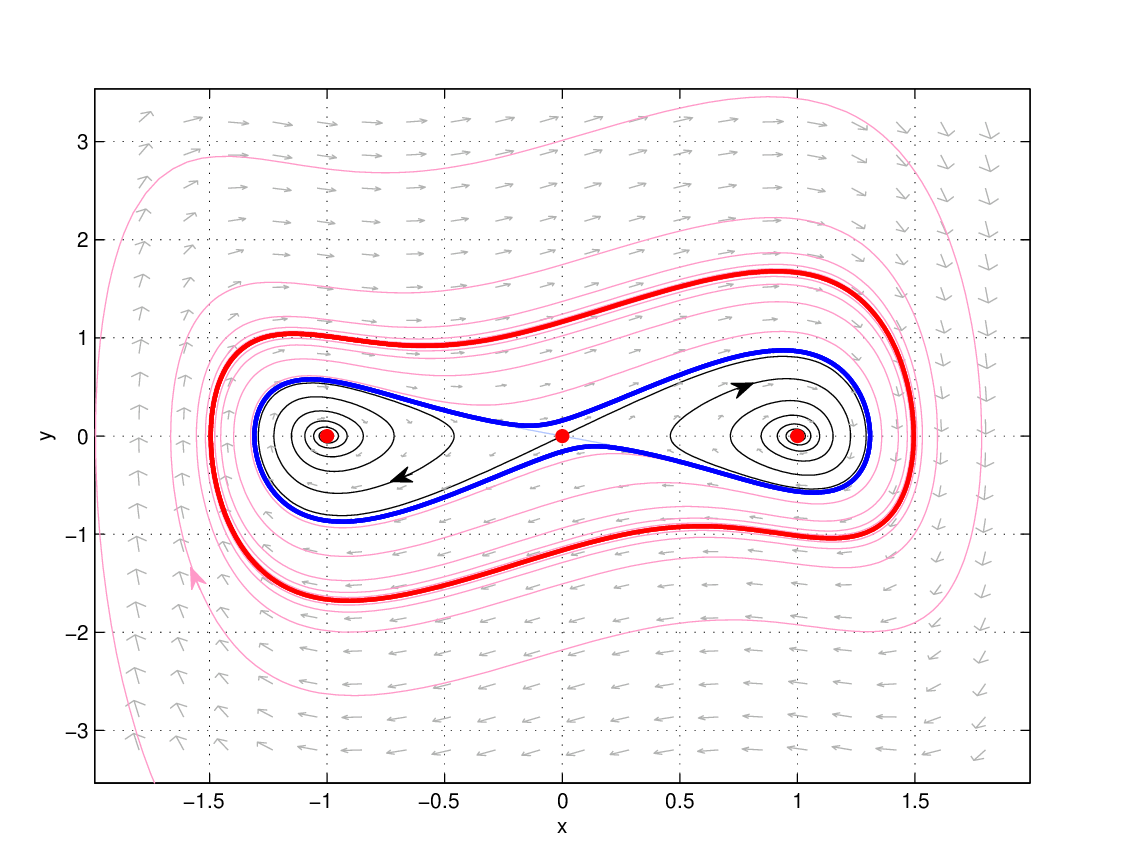}}}
	\subfigure[ for $(a_1,a_2)=(0.5,-0.9)$]
	{\scalebox{0.4}[0.4]{
			\includegraphics{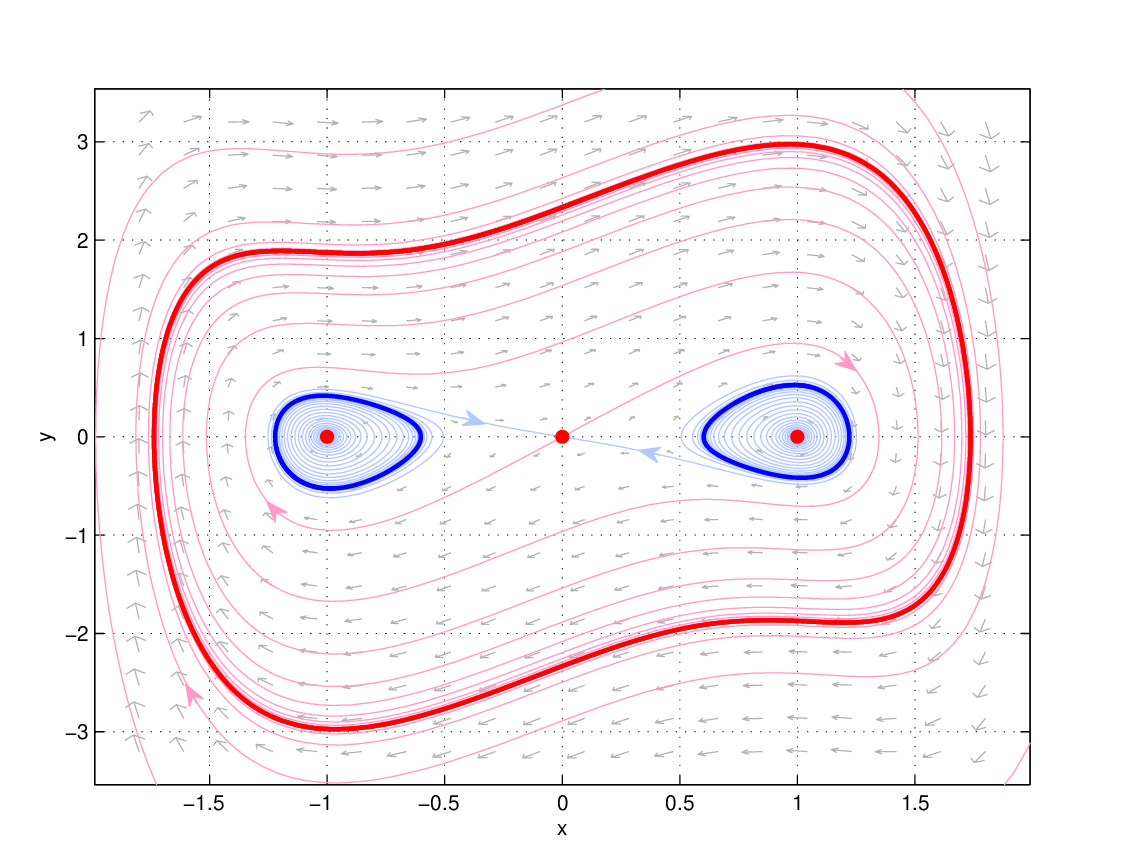}}}
	\subfigure[ for $(a_1,a_2)=(0.5,-1.1)$]
	{\scalebox{0.4}[0.4]{
			\includegraphics{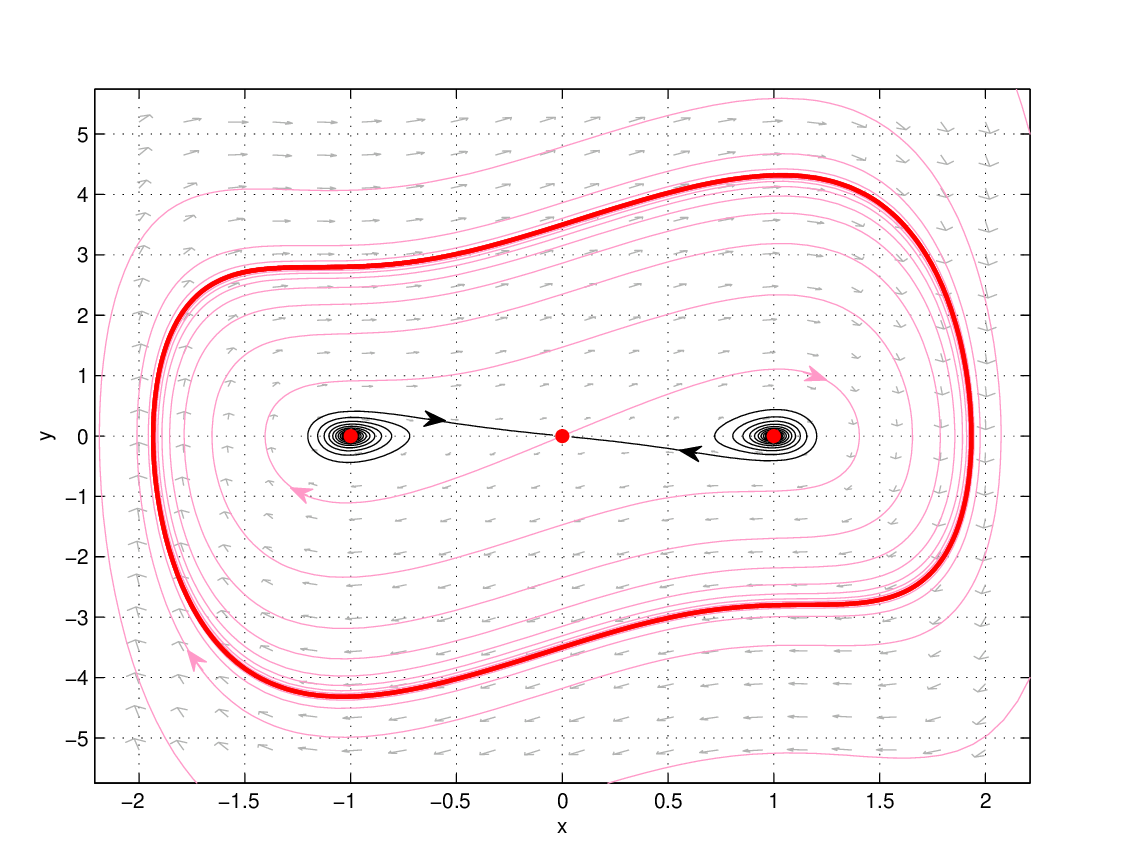}}}
	\subfigure[ for $(a_1,a_2)=(-0.5,-1.1)$]
	{\scalebox{0.4}[0.4]{
			\includegraphics{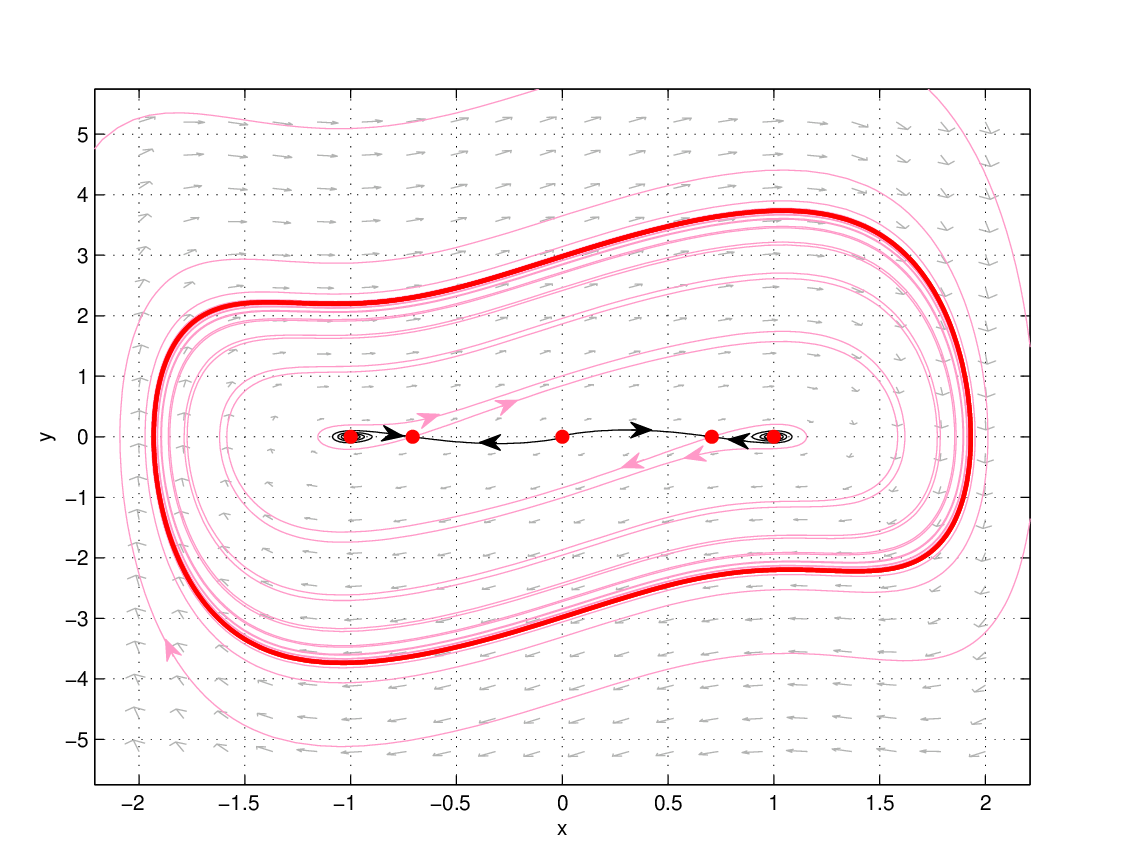}}}
	\subfigure[ for $(a_1,a_2)=(-0.5,-0.95)$]
	{\scalebox{0.4}[0.4]{
			\includegraphics{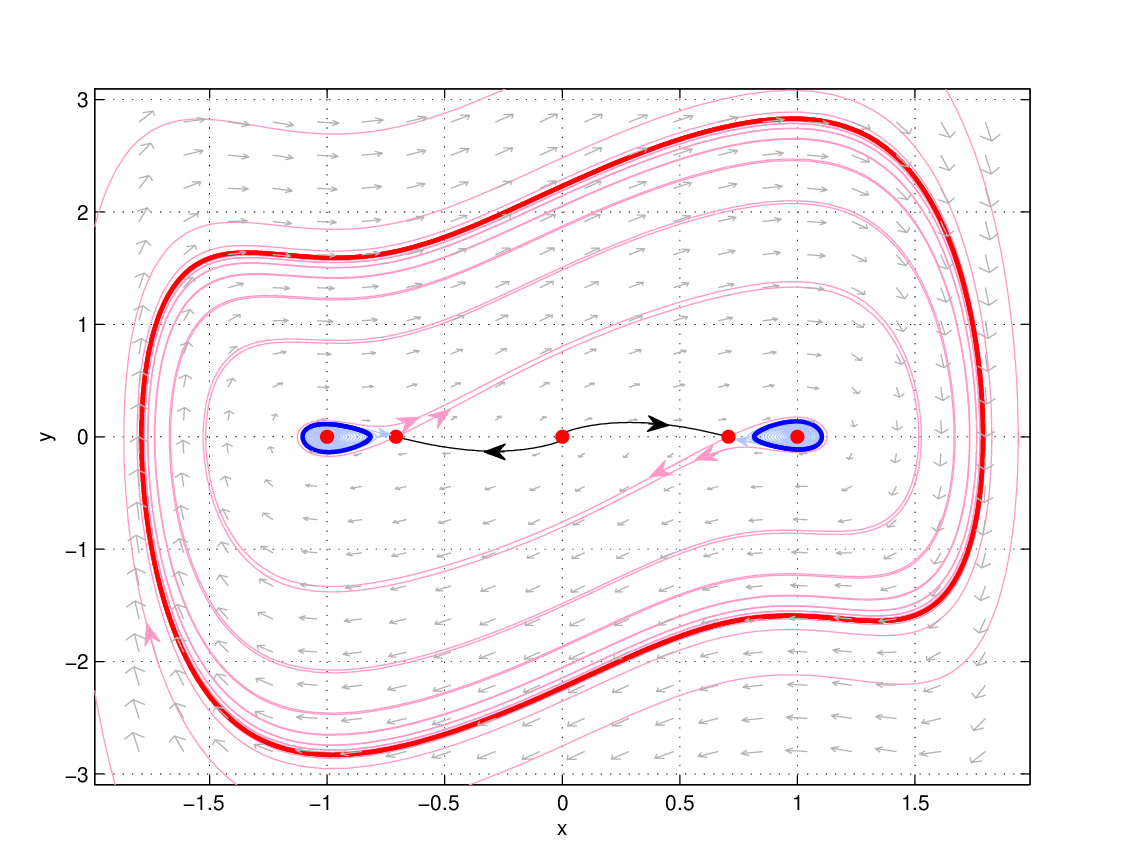}}}
	\caption{Numerical phase portraits  of   \eqref{initial} when $\delta=1$.}
	\label{nm}
\end{figure}

\begin{figure}
	\centering
	\subfigure[ for $(a_1,a_2)=(-0.5,-0.85)$]
	{\scalebox{0.4}[0.4]{
			\includegraphics{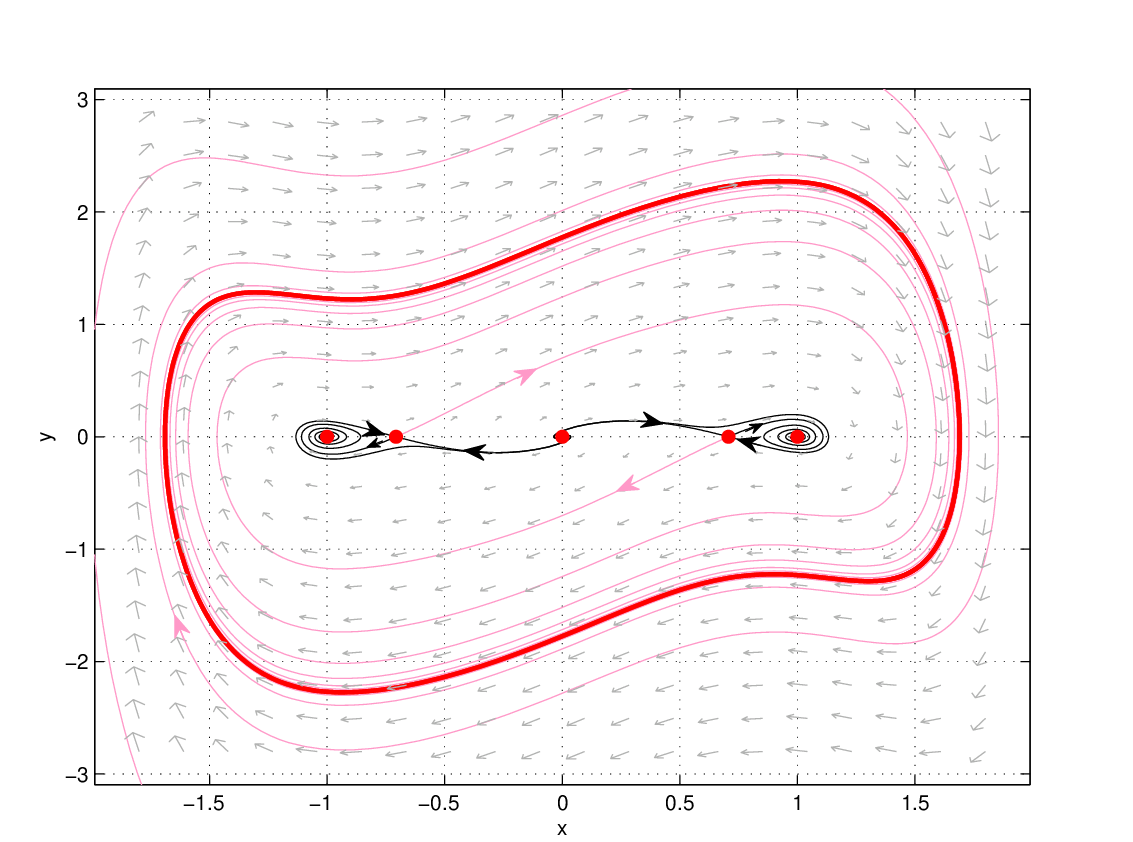}}}
			\subfigure[ for $(a_1,a_2)=(-0.02,-0.63)$]
		{\scalebox{0.4}[0.4]{
				\includegraphics{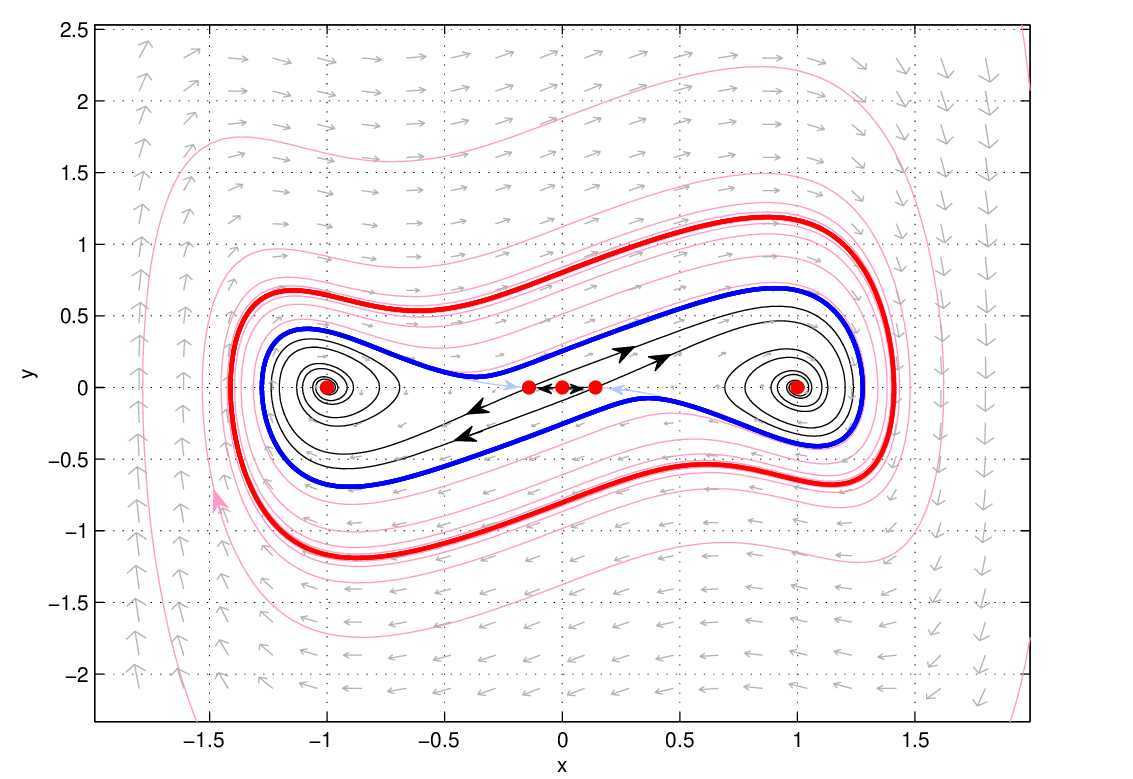}}}
	\subfigure[ for $(a_1,a_2)=(-0.5,-0.1)$]
	{\scalebox{0.4}[0.4]{
			\includegraphics{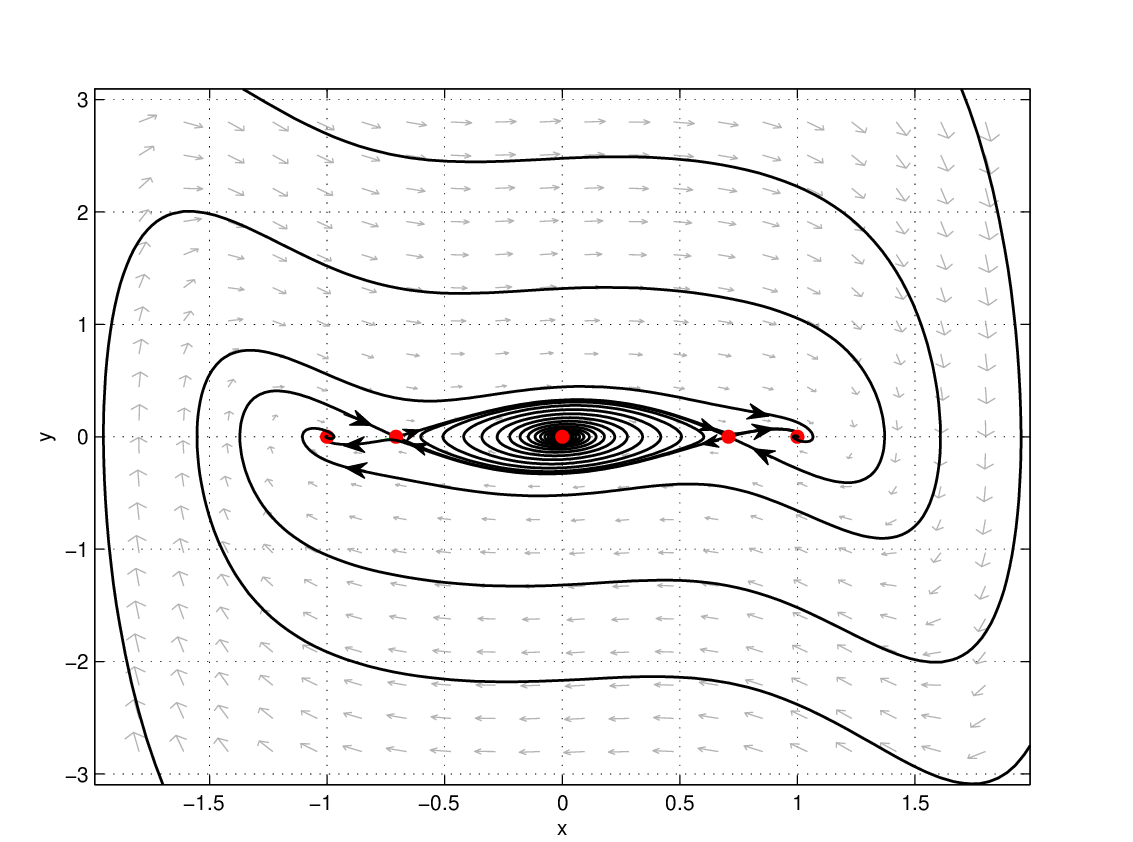}}}
	\subfigure[ for $(a_1,a_2)=(-0.5,-0.05)$]
	{\scalebox{0.4}[0.4]{
			\includegraphics{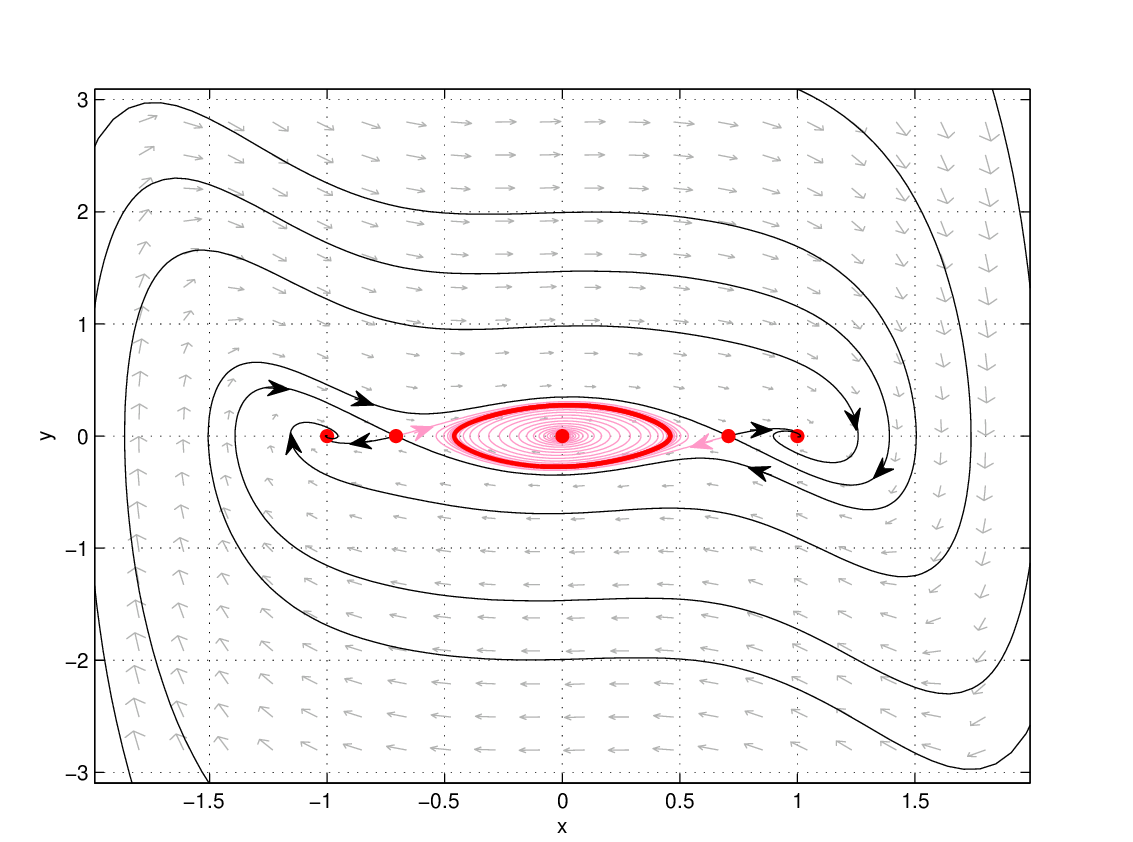}}}
	\subfigure[ for $(a_1,a_2)=(-0.5,0.5)$]
	{\scalebox{0.4}[0.4]{
			\includegraphics{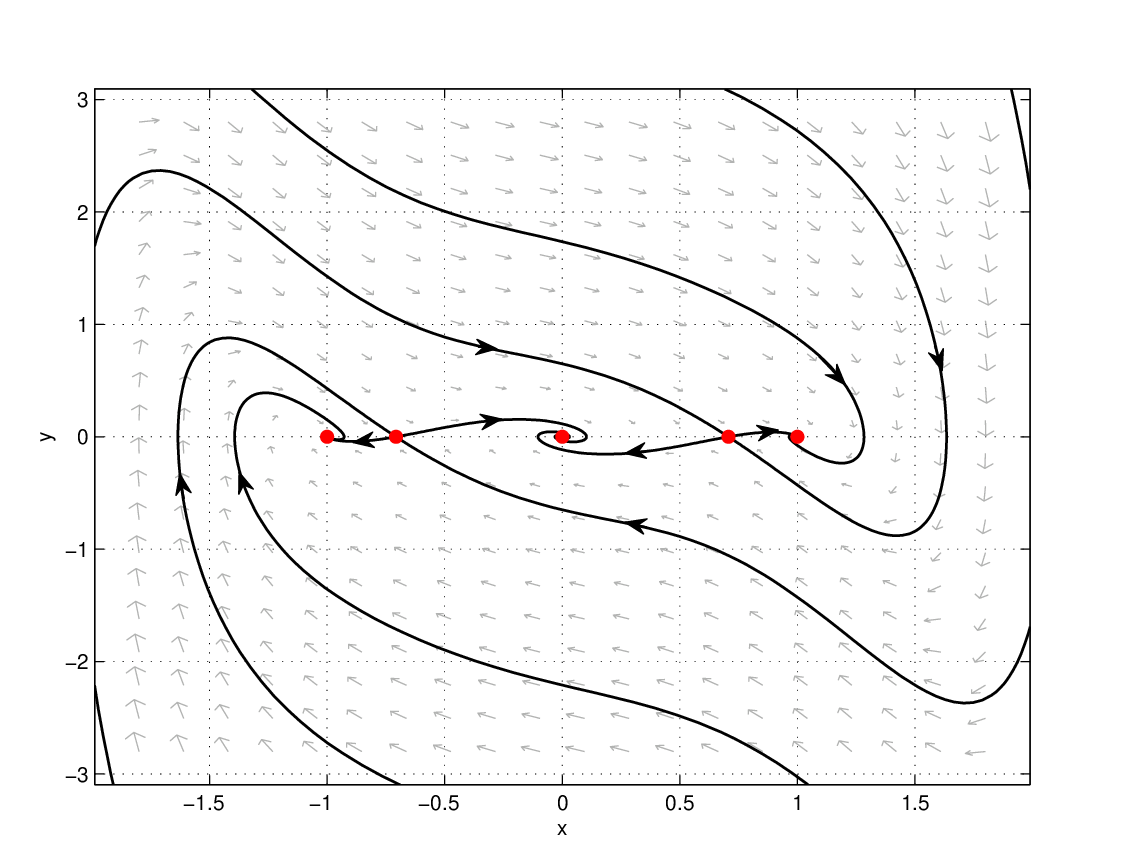}}}
	\caption{Numerical phase portraits  of   \eqref{initial} when $\delta=1$.}
	\label{nm1}
\end{figure}

In the following, we illustrate our theoretical results with some numerical examples.

 {\bf Example 1.} Consider $(a_1,a_2,\delta)=(0.5,-0.1,1)$.
 The numerical phase portrait shows that system \eqref{initial1} has no  limit cycles and three equilibria in the parameter region $I$, as shown in Figure \ref{nm} (a).

  {\bf Example 2.} Consider $(a_1,a_2,\delta)=(0.5,-0.7,1)$.
 The numerical phase portrait shows that system \eqref{initial1} has exactly two large  limit cycles and three equilibria in the parameter region $II$, as shown in Figure \ref{nm} (b).

   {\bf Example 3.} Consider $(a_1,a_2,\delta)=(0.5,-0.9,1)$.
 The numerical phase portrait shows that system \eqref{initial1} has exactly two small  limit cycles, one large  limit cycle and three equilibria in the parameter region $III$, as shown in Figure \ref{nm} (c).

 {\bf Example 4.} Consider $(a_1,a_2,\delta)=(0.5,-1.1,1)$.
 The numerical phase portrait shows that system \eqref{initial1} has exactly  one large  limit cycle and three equilibria in the parameter region $IV$, as shown in Figure \ref{nm} (d).

  {\bf Example 5.} Consider $(a_1,a_2,\delta)=(-0.5,-1.1,1)$.
 The numerical phase portrait shows that system \eqref{initial1} has exactly  one large  limit cycle and five equilibria in the parameter region $V$, as shown in Figure \ref{nm} (e).

    {\bf Example 6.} Consider $(a_1,a_2,\delta)=(-0.5,-0.95,1)$.
 The numerical phase portrait shows that system \eqref{initial1} has exactly two small  limit cycles, one large  limit cycle and five equilibria in the parameter region $VI$, as shown in Figure \ref{nm} (f).

    {\bf Example 7.} Consider $(a_1,a_2,\delta)=(-0.5,-0.85,1)$.
The numerical phase portrait shows that system \eqref{initial1} has exactly   one large  limit cycle and five equilibria in the parameter region $VII$, as shown in Figure \ref{nm1} (a).

    {\bf Example 8.} Consider $(a_1,a_2,\delta)=(-0.02,-0.63,1)$.
The numerical phase portrait shows that system \eqref{initial1} has exactly   two large  limit cycles and five equilibria in the parameter region $VIII$, as shown in Figure \ref{nm1} (b).

    {\bf Example 9.} Consider $(a_1,a_2,\delta)=(-0.5,-0.1 ,1)$.
The numerical phase portrait shows that system \eqref{initial1} has no  limit cycles and five equilibria in the parameter region $IX$, as shown in Figure \ref{nm1} (c).

    {\bf Example 10.} Consider $(a_1,a_2,\delta)=(-0.5,-0.05,1)$.
The numerical phase portrait shows that system \eqref{initial1} has exactly   one small  limit cycle surrounding $\hat{E}_0$ and five equilibria in the parameter region $X$, as shown in Figure \ref{nm1} (d).

    {\bf Example 11.} Consider $(a_1,a_2,\delta)=(-0.5,0.5 ,1)$.
The numerical phase portrait shows that system \eqref{initial1} has no  limit cycles and five equilibria in the parameter region $XI$, as shown in Figure \ref{nm1} (e).
\section*{Acknowledgements}

We sincerely thank Prof. Xiuli Cen for her useful suggestions and valuable comments on Appendix C.

This work is financially supported by the National Key R\&D Program of China (No. 2022YFA 1005900).
The first   author  is  supported by the  National Natural Science Foundation of China (Nos. 12322109, 12171485) and  Science and Technology Innovation Program of Hunan Province (No. 2023RC3040).
The second author is supported by the National Natural Science Foundation of China (No. 12271378) and
Sichuan Science and Technology Program (No. 2024NSFJQ0008).
The third author is   supported  by the China Postdoctoral Science Foundation (No. 2023M743969) and Postdoctoral Fellowship Program of CPSF (No. GZC20233195).
The fourth author is supported by the National Natural Science Foundation of China (Nos. 11931016, 12271355, 12161131001), Science and Technology Innovation Action Plan of Science and Technology Commission of Shanghai Municipality (STCSM, No. 20JC1413200) and Innovation Program of
Shanghai Municipal Education Commission (No. 2021-01-07-00-02-E00087).

{\footnotesize

}

 \section*{Appendix A}
	\begin{eqnarray*}
	I&:=&\{(a_1,a_2,\delta)\in \Omega ~|~ \ a_2> \varphi_5(a_1,\delta), a_1\geq0, 0<\delta<2\sqrt{3}\},
	\\
	II&:=&\{(a_1,a_2,\delta) \in \Omega ~|~ \  \varphi_1(a_1,\delta)<a_2< \varphi_5(a_1,\delta), a_1\geq0, 0<\delta<2\sqrt{3}\},
	\\
	III&:=&\{(a_1,a_2,\delta) \in \Omega ~|~ \ -1<a_2< \varphi_1(a_1,\delta), a_1\geq0, 0<\delta<2\sqrt{3}\},
	\\
	IV&:=&\{(a_1,a_2,\delta) \in \Omega ~|~ \  a_2\leq-1, a_1\geq0, 0<\delta<2\sqrt{3}\},
	\\
	V&:=&\{(a_1,a_2,\delta) \in \Omega ~|~ \  a_2\leq-1, -1<a_1<0,  0<\delta<2\sqrt{3}\},
	\\
	VI&:=&\{(a_1,a_2,\delta) \in \Omega ~|~ \ -1<a_2< \varphi_2(a_1,\delta), -1<a_1<0, 0<\delta<2\sqrt{3}\},
	\\
	VII&:=&\{(a_1,a_2,\delta) \in \Omega ~|~ \  \varphi_2(a_1,\delta)<a_2< \varphi_3(a_1,\delta), -1<a_1<0, 0<\delta<2\sqrt{3}\},
	\\
	VIII&:=&\{(a_1,a_2,\delta) \in \Omega ~|~ \  \varphi_3(a_1,\delta)<a_2< \varphi_5(a_1,\delta), a^*<a_1<0, 0<\delta<2\sqrt{3}\},
	\\
	IX&:=&\{(a_1,a_2,\delta) \in \Omega ~|~ \  \varphi_5(a_1,\delta)<a_2< \varphi_4(a_1,\delta), a^*<a_1<0, 0<\delta<2\sqrt{3}\}\cup
	\\
	&&\{(a_1,a_2,\delta) \in \Omega ~|~ \  \varphi_3(a_1,\delta)<a_2< \varphi_4(a_1,\delta), -1<a_1\leq a^*, 0<\delta<2\sqrt{3}\},	
	\\
	X&:=&\{(a_1,a_2,\delta) \in \Omega ~|~ \  \varphi_4(a_1,\delta)<a_2<0, -1<a_1<0, 0<\delta<2\sqrt{3}\},	
	\\
	XI&:=&\{(a_1,a_2,\delta) \in \Omega ~|~ \ a_2\geq 0, -1<a_1<0, 0<\delta<2\sqrt{3}\},	
	\\
	DL_1&:=&\{(a_1,a_2,\delta) \in \Omega ~|~ \ a_2=\varphi_5(a_1,\delta), a_1\geq0, 0<\delta<2\sqrt{3}\},
	\\
	DL_2&:=&\{(a_1,a_2,\delta) \in \Omega ~|~ \ a_2=\varphi_5(a_1,\delta), a^*<a_1<0, 0<\delta<2\sqrt{3}\},
	\\
	HE_{11}&:=&\{(a_1,a_2,\delta) \in \Omega ~|~ \ a_2=\varphi_3(a_1,\delta), -1<a_1<a^*, 0<\delta<2\sqrt{3}\},
\\
	HE_{12}&:=&\{(a_1,a_2,\delta) \in \Omega ~|~ \ a_2=\varphi_3(a_1,\delta), a^*\leq a_1<0,0<\delta<2\sqrt{3}\},
		\\
SL_{1}&:=&\{(a_1,a_2,\delta) \in \Omega ~|~ \ a_2=\varphi_3(-1,\delta),  a_1=-1,\delta>0\},
		\\
SL_{2}&:=&\{(a_1,a_2,\delta) \in \Omega ~|~ \ a_2=\varphi_4(-1,\delta),  a_1=-1, 0<\delta<2\sqrt{3}\},
	\\
	T_1&:=&\{(a_1,a_2,\delta) \in \Omega ~|~ \ a_2<-1, a_1=-1,  \delta>0\},
	\\
	T_2&:=&\{(a_1,a_2,\delta) \in \Omega ~|~ -1<  a_2<\varphi_3(-1,\delta), a_1=-1,  \delta>0\},
	\\
	T_3&:=&\{(a_1,a_2,\delta) \in \Omega ~|~ \ \varphi_3(-1,\delta)< a_2<\varphi_4(-1,\delta), a_1=-1, 0<\delta<2\sqrt{3}\},
	\\
	T_4&:=&\{(a_1,a_2,\delta) \in \Omega ~|~  \ \varphi_4(-1,\delta)<  a_2<0, a_1=-1, 0<\delta<2\sqrt{3}\},
	\\
	T_5&:=&\{(a_1,a_2,\delta) \in \Omega ~|~  \ a_2\geq0, a_1=-1, 0<\delta<2\sqrt{3}\},
	\\
	R_{11i}&:=&\{(a_1,a_2,\delta)\in \Omega ~|~ \ a_2=\phi_{1,i}(a_1,\delta), a_1\geq0, \delta\geq2\sqrt{3}\},
	\\
	R_{12i}&:=&\{(a_1,a_2,\delta)\in \Omega ~|~ \ a_2=\phi_{2,i}(a_1,\delta), a_1\geq0, \delta\geq2\sqrt{3}\},
	\\
	R_{13i}&:=&\{(a_1,a_2,\delta)\in \Omega ~|~ \ \phi_{2,i}(a_1,\delta)<a_2<\phi_{1,i}(a_1,\delta), a_1\geq0, \delta\geq2\sqrt{3}\},
	\\
	R_{14i}&:=&\{(a_1,a_2,\delta)\in \Omega ~|~ \ \phi_{1,i+1}(a_1,\delta)<a_2<\phi_{2,i}(a_1,\delta), a_1\geq0, \delta\geq2\sqrt{3}\},
	\\
	R_2&:=&\{(a_1,a_2,\delta) \in \Omega ~|~ \  \varphi_1(a_1,\delta)<a_2< \varphi_5(a_1,\delta), a_1\geq0, \delta\geq2\sqrt{3}\},
	\\
	R_3&:=&\{(a_1,a_2,\delta) \in \Omega ~|~ \ -1<a_2< \varphi_1(a_1,\delta), a_1\geq0,  \delta\geq2\sqrt{3}\},
	\\
	R_4&:=&\{(a_1,a_2,\delta) \in \Omega ~|~ \  a_2\leq-1, a_1\geq0,  \delta\geq2\sqrt{3}\},
	\\
	R_5&:=&\{(a_1,a_2,\delta) \in \Omega ~|~ \  a_2\leq-1, -1<a_1<0,  \delta\geq2\sqrt{3}\},
	\\
	R_6&:=&\{(a_1,a_2,\delta) \in \Omega ~|~ \ -1<a_2< \varphi_2(a_1,\delta), -1<a_1<0, \delta\geq2\sqrt{3}\},
	\\
	R_7&:=&\{(a_1,a_2,\delta) \in \Omega ~|~ \  \varphi_2(a_1,\delta)<a_2< \varphi_3(a_1,\delta), -1<a_1<0,  \delta\geq2\sqrt{3}\},
	\\
	R_8&:=&\{(a_1,a_2,\delta) \in \Omega ~|~ \  \varphi_3(a_1,\delta)<a_2< \varphi_5(a_1,\delta), a^*<a_1<0,  \delta\geq2\sqrt{3}\},
	\\
	R_{91i}&:=&\{(a_1,a_2,\delta) \in \Omega ~|~ \  \varphi_5(a_1,\delta)<a_2=\phi_{5,i}(a_1,\delta)< \varphi_4(a_1,\delta), a^*<a_1<0,  \delta\geq2\sqrt{3}\}\cup
	\\
	&&\{(a_1,a_2,\delta) \in \Omega ~|~ \  \varphi_3(a_1,\delta)<a_2=\phi_{5,i}(a_1,\delta)< \varphi_4(a_1,\delta), -1<a_1\leq a^*,  \delta\geq2\sqrt{3}\},
	\\
	R_{92i}&:=&\{(a_1,a_2,\delta) \in \Omega ~|~ \  \varphi_5(a_1,\delta)<a_2=\phi_{6,i}(a_1,\delta)< \varphi_4(a_1,\delta), a^*<a_1<0,  \delta\geq2\sqrt{3}\}\cup
	\\
	&&\{(a_1,a_2,\delta) \in \Omega ~|~ \  \varphi_3(a_1,\delta)<a_2=\phi_{6,i}(a_1,\delta)< \varphi_4(a_1,\delta), -1<a_1\leq a^*,  \delta\geq2\sqrt{3}\},	
	\\
	R_{93i}&:=&\{(a_1,a_2,\delta) \in \Omega ~|~ \   \max\{\varphi_5(a_1,\delta),\phi_{6,i}(a_1,\delta)\}<a_2< \min\{\varphi_4(a_1,\delta),\phi_{5,i}(a_1,\delta)\},
	\\
	&&\,\,\
	a^*<a_1<0,  \delta\geq2\sqrt{3}\}\cup
	\{(a_1,a_2,\delta) \in \Omega ~|~ \  \max\{\varphi_3(a_1,\delta),\phi_{6,i}(a_1,\delta)\}<a_2
	\\
	&&\,\,\ < \min\{\varphi_4(a_1,\delta),\phi_{5,i}(a_1,\delta)\},
	-1<a_1\leq a^*,  \delta\geq2\sqrt{3}\},
	\\
	R_{94i}&:=&\{(a_1,a_2,\delta) \in \Omega ~|~ \   \max\{\varphi_5(a_1,\delta),\phi_{5,i+1}(a_1,\delta)\}<a_2< \min\{\varphi_4(a_1,\delta),\phi_{6,i}(a_1,\delta)\},
	\\
	&&
	\,\,\ a^*<a_1<0,  \delta\geq2\sqrt{3}\}\cup
	\{(a_1,a_2,\delta) \in \Omega ~|~ \  \max\{\varphi_3(a_1,\delta),\phi_{5,i+1}(a_1,\delta)\}<a_2
	\\
	&&\,\,\ < \min\{\varphi_4(a_1,\delta),\phi_{6,i}(a_1,\delta)\},
	-1<a_1\leq a^*,  \delta\geq2\sqrt{3}\},
	\\
	R_{101i}&:=&\{(a_1,a_2,\delta) \in \Omega ~|~ \  \varphi_4(a_1,\delta)<a_2=\phi_{3,i}(a_1,\delta)<0, -1<a_1<0,  \delta\geq2\sqrt{3}\},	
	\\
	R_{102i}&:=&\{(a_1,a_2,\delta) \in \Omega ~|~ \  \varphi_4(a_1,\delta)<a_2=\phi_{4,i}(a_1,\delta)<0, -1<a_1<0,  \delta\geq2\sqrt{3}\},	
	\\
	R_{103i}&:=&\{(a_1,a_2,\delta) \in \Omega ~|~ \  \varphi_4(a_1,\delta)<a_2=\phi_{5,i}(a_1,\delta)<0, -1<a_1<0,  \delta\geq2\sqrt{3}\},	
	\\
	R_{104i}&:=&\{(a_1,a_2,\delta) \in \Omega ~|~ \  \varphi_4(a_1,\delta)<a_2=\phi_{6,i}(a_1,\delta)<0, -1<a_1<0,  \delta\geq2\sqrt{3}\},	
	\\
	R_{105i}&:=&\{(a_1,a_2,\delta) \in \Omega ~|~ \
	\max\{\varphi_4(a_1,\delta),\phi_{4,i}(a_1,\delta)\}<a_2 <\min\{0,\phi_{3,i}(a_1,\delta)\},
	\\
	&&
	\,\,\ -1<a_1<0,  \delta\geq2\sqrt{3}\},
	\\
	R_{106i}&:=&\{(a_1,a_2,\delta) \in \Omega ~|~ \
	\max\{\varphi_4(a_1,\delta),\phi_{5,i}(a_1,\delta)\}<a_2 <\min\{0,\phi_{4,i}(a_1,\delta)\},
	\\
	&&
	\,\,\  -1<a_1<0,  \delta\geq2\sqrt{3}\},
	\\
	R_{107i}&:=&\{(a_1,a_2,\delta) \in \Omega ~|~ \
	\max\{\varphi_4(a_1,\delta),\phi_{6,i}(a_1,\delta)\}<a_2 <\min\{0,\phi_{5,i}(a_1,\delta)\},
	\\
	&&
	\,\,\ -1<a_1<0,  \delta\geq2\sqrt{3}\},
	\\
	R_{108i}&:=&\{(a_1,a_2,\delta) \in \Omega ~|~ \
	\max\{\varphi_4(a_1,\delta),\phi_{3,i+1}(a_1,\delta)\}<a_2<\min\{0,\phi_{6,i}(a_1,\delta)\},
	\\
	&&
	\,\,\ -1<a_1<0,  \delta\geq2\sqrt{3}\},
	\\
	R_{111i}&:=&\{(a_1,a_2,\delta) \in \Omega ~|~ \ a_2=\phi_{3,i}(a_1,\delta)>0, -1<a_1<0,  \delta\geq2\sqrt{3}\},	
	\\
	R_{112i}&:=&\{(a_1,a_2,\delta) \in \Omega ~|~ \ a_2=\phi_{4,i}(a_1,\delta)>0, -1<a_1<0,  \delta\geq2\sqrt{3}\},	
	\\
	R_{113i}&:=&\{(a_1,a_2,\delta) \in \Omega ~|~ \ a_2=\phi_{5,i}(a_1,\delta)>0, -1<a_1<0,  \delta\geq2\sqrt{3}\},	
	\\
	R_{114i}&:=&\{(a_1,a_2,\delta) \in \Omega ~|~ \ a_2=\phi_{6,i}(a_1,\delta)>0, -1<a_1<0,  \delta\geq2\sqrt{3}\},	
	\\
	R_{115i}&:=&\{(a_1,a_2,\delta) \in \Omega ~|~ \ \max\{0,\phi_{4,i}(a_1,\delta)\}<a_2<\phi_{3,i}(a_1,\delta), -1<a_1<0,  \delta\geq2\sqrt{3}\},	
	\\
	R_{116i}&:=&\{(a_1,a_2,\delta) \in \Omega ~|~ \ \max\{0,\phi_{5,i}(a_1,\delta)\}<a_2<\phi_{4,i}(a_1,\delta), -1<a_1<0,  \delta\geq2\sqrt{3}\},
	\\
	R_{117i}&:=&\{(a_1,a_2,\delta) \in \Omega ~|~ \ \max\{0,\phi_{6,i}(a_1,\delta)\}<a_2<\phi_{5,i}(a_1,\delta), -1<a_1<0,  \delta\geq2\sqrt{3}\},
	\\
	R_{118i}&:=&\{(a_1,a_2,\delta) \in \Omega ~|~ \ \max\{0,\phi_{3,i+1}(a_1,\delta)\}<a_2<\phi_{6,i}(a_1,\delta), -1<a_1<0,  \delta\geq2\sqrt{3}\},
	\\
	\widehat{DL_1}&:=&\{(a_1,a_2,\delta) \in \Omega ~|~ \ a_2=\varphi_5(a_1,\delta), a_1\geq0,  \delta\geq2\sqrt{3}\},
	\\
	\widehat{DL_2}&:=&\{(a_1,a_2,\delta) \in \Omega ~|~ \ a_2=\varphi_5(a_1,\delta), a^*<a_1<0,  \delta\geq2\sqrt{3}\},
	\\
\widehat{HE}_{11}&:=&\{(a_1,a_2,\delta) \in \Omega ~|~ \ a_2=\varphi_3(a_1,\delta), -1<a_1<a^*,  \delta\geq2\sqrt{3}\},
	\\
	\widehat{HE}_{12}&:=&\{(a_1,a_2,\delta) \in \Omega ~|~ \ a_2=\varphi_3(a_1,\delta), a^*\leq a_1<0, \delta\geq2\sqrt{3}\},
	\\
	HE_{21i}&:=&\{(a_1,a_2,\delta)\in \Omega ~|~\ a_2=\varphi_4(a_1,\delta)=\phi_{5,i}(a_1,\delta),  -1< a_1<0, \delta\geq2\sqrt{3}\},
	\\
	HE_{22i}&:=&\{(a_1,a_2,\delta)\in \Omega ~|~\ a_2=\varphi_4(a_1,\delta)=\phi_{6,i}(a_1,\delta),  -1< a_1<0, \delta\geq2\sqrt{3}\},
	\\
	HE_{23i}&:=&\{(a_1,a_2,\delta)\in \Omega ~|~\ \phi_{6,i}(a_1,\delta)<a_2=\varphi_4(a_1,\delta)<\phi_{5,i}(a_1,\delta),  -1< a_1<0, \delta\geq2\sqrt{3}\},
	\\
	HE_{24i}&:=&\{(a_1,a_2,\delta)\in \Omega ~|~\ \phi_{5,i+1}(a_1,\delta)<a_2=\varphi_4(a_1,\delta)<\phi_{6,i}(a_1,\delta),  -1< a_1<0, \delta\geq2\sqrt{3}\},
 \\
 SL_{21i}&:=&\{(a_1,a_2,\delta) \in \Omega ~|~ \ a_2=\varphi_4(-1,\delta)=\phi_{5,i}(a_1,\delta),  a_1=-1,  \delta\geq2\sqrt{3}\},
  \\
 SL_{22i}&:=&\{(a_1,a_2,\delta) \in \Omega ~|~ \ a_2=\varphi_4(-1,\delta)=\phi_{6,i}(a_1,\delta),  a_1=-1,  \delta\geq2\sqrt{3}\},
   \\
 SL_{23i}&:=&\{(a_1,a_2,\delta) \in \Omega ~|~ \ \phi_{6,i}(a_1,\delta)<a_2=\varphi_4(-1,\delta)=\phi_{5,i}(a_1,\delta),  a_1=-1,  \delta\geq2\sqrt{3}\},
    \\
 SL_{24i}&:=&\{(a_1,a_2,\delta) \in \Omega ~|~ \ \phi_{5,i+1}(a_1,\delta)<a_2=\varphi_4(-1,\delta)=\phi_{6,i}(a_1,\delta),  a_1=-1,  \delta\geq2\sqrt{3}\},
	\\
T_{31i}&:=&\{(a_1,a_2,\delta) \in \Omega ~|~ \ \varphi_3(-1,\delta)<a_2=\phi_{5,i}(a_1,\delta)<\varphi_4(-1,\delta), a_1=-1,  \delta\geq2\sqrt{3}\},
	\\
T_{32i}&:=&\{(a_1,a_2,\delta) \in \Omega ~|~ \ \varphi_3(-1,\delta)<a_2=\phi_{6,i}(a_1,\delta)<\varphi_4(-1,\delta), a_1=-1,  \delta\geq2\sqrt{3}\},
	\\
T_{33i}&:=&\{(a_1,a_2,\delta) \in \Omega ~|~ \   \max\{\varphi_5(-1,\delta),\phi_{6,i}(-1,\delta)\}<a_2< \min\{\varphi_4(-1,\delta),\phi_{5,i}(-1,\delta)\},
	\\
&&
	\,\,\
 a_1=-1,  \delta\geq2\sqrt{3}\},
 	\\
 T_{34i}&:=&\{(a_1,a_2,\delta) \in \Omega ~|~ \   \max\{\varphi_5(-1,\delta),\phi_{5,i+1}(-1,\delta)\}<a_2< \min\{\varphi_4(-1,\delta),\phi_{6,i}(-1,\delta)\},
 \\
 &&
 	\,\,\
 a_1=-1,  \delta\geq2\sqrt{3}\},
 	\\
 T_{41i}&:=&\{(a_1,a_2,\delta) \in \Omega ~|~ \ \varphi_4(-1,\delta)<a_2=\phi_{3,i}(a_1,\delta)<0, a_1=-1,  \delta\geq2\sqrt{3}\},
 \\
 T_{42i}&:=&\{(a_1,a_2,\delta) \in \Omega ~|~\ \varphi_4(-1,\delta)<a_2=\phi_{4,i}(a_1,\delta)<0, a_1=-1,  \delta\geq2\sqrt{3}\},
	\\
T_{43i}&:=&\{(a_1,a_2,\delta) \in \Omega ~|~ \ \varphi_4(-1,\delta)<a_2=\phi_{5,i}(a_1,\delta)<0, a_1=-1,  \delta\geq2\sqrt{3}\},
\\
T_{44i}&:=&\{(a_1,a_2,\delta) \in \Omega ~|~\ \varphi_4(-1,\delta)<a_2=\phi_{6,i}(a_1,\delta)<0, a_1=-1,  \delta\geq2\sqrt{3}\},
\\
T_{45i}&:=&\{(a_1,a_2,\delta) \in \Omega ~|~ \   \max\{\varphi_4(-1,\delta),\phi_{4,i}(-1,\delta)\}<a_2< \min\{0,\phi_{3,i}(-1,\delta)\},
\\
&&
	\,\,\
a_1=-1,  \delta\geq2\sqrt{3}\},
\\
T_{46i}&:=&\{(a_1,a_2,\delta) \in \Omega ~|~ \   \max\{\varphi_4(-1,\delta),\phi_{5,i}(-1,\delta)\}<a_2< \min\{0,\phi_{4,i}(-1,\delta)\},
\\
&&
	\,\,\
a_1=-1,  \delta\geq2\sqrt{3}\},
\\
T_{47i}&:=&\{(a_1,a_2,\delta) \in \Omega ~|~ \   \max\{\varphi_4(-1,\delta),\phi_{6,i}(-1,\delta)\}<a_2< \min\{0,\phi_{5,i}(-1,\delta)\},
\\
&&
\,\,\
a_1=-1,  \delta\geq2\sqrt{3}\},
\\
T_{48i}&:=&\{(a_1,a_2,\delta) \in \Omega ~|~ \   \max\{\varphi_4(-1,\delta),\phi_{3,i+1}(-1,\delta)\}<a_2< \min\{0,\phi_{6,i}(-1,\delta)\},
\\
&&
\,\,\
a_1=-1,  \delta\geq2\sqrt{3}\},
	\\
T_{51i}&:=&\{(a_1,a_2,\delta) \in \Omega ~|~ \ a_2=\phi_{3,i}(a_1,\delta)\geq0, a_1=-1,  \delta\geq2\sqrt{3}\},
\\
T_{52i}&:=&\{(a_1,a_2,\delta) \in \Omega ~|~\ a_2=\phi_{4,i}(a_1,\delta)\geq0, a_1=-1,  \delta\geq2\sqrt{3}\},
	\\
T_{53i}&:=&\{(a_1,a_2,\delta) \in \Omega ~|~ \ a_2=\phi_{5,i}(a_1,\delta)\geq0, a_1=-1,  \delta\geq2\sqrt{3}\},
\\
T_{54i}&:=&\{(a_1,a_2,\delta) \in \Omega ~|~\ a_2=\phi_{6,i}(a_1,\delta)\geq0, a_1=-1,  \delta\geq2\sqrt{3}\},
\\
T_{55i}&:=&\{(a_1,a_2,\delta) \in \Omega ~|~ \ \phi_{4,i}(-1,\delta)<a_2<\phi_{3,i}(-1,\delta), a_2\geq0,
a_1=-1,  \delta\geq2\sqrt{3}\},
\\
T_{56i}&:=&\{(a_1,a_2,\delta) \in \Omega ~|~\ \phi_{5,i}(-1,\delta)<a_2<\phi_{4,i}(-1,\delta), a_2\geq0,
a_1=-1,  \delta\geq2\sqrt{3}\},
\\
T_{57i}&:=&\{(a_1,a_2,\delta) \in \Omega ~|~ \ \phi_{6,i}(-1,\delta)<a_2<\phi_{5,i}(-1,\delta), a_2\geq0,
a_1=-1,  \delta\geq2\sqrt{3}\},
\\
T_{58i}&:=&\{(a_1,a_2,\delta) \in \Omega ~|~\ \phi_{3,i+1}(-1,\delta)<a_2<\phi_{6,i}(-1,\delta), a_2\geq0,
a_1=-1,  \delta\geq2\sqrt{3}\}.
\end{eqnarray*}

\section*{Appendix B}
\begin{proof}[Proof of   Lemma \ref{fe1}]
	It   is easy to check    that
	the number and abscissas of equilibria of system \eqref{initial}
are  determined by the equation $x(x^4 +\mu_2x^2+\mu_1) =0$.
Notice that  the number of
roots of the equation $x(x^4 +\mu_2x^2+\mu_1) =0$
  is determined by the relationship  between   $\Delta:= \mu_2 ^2-4 \mu_1$   and $0$ and   the relationship  between    $\mu_2 $ and $ \sqrt{\Delta}$.
  Thus, we show
  the  number  of  equilibria of system \eqref{initial}    by   the following nine  cases:

{\bf Case (I)}:
System \eqref{initial} has  five equilibria  $\hat{E}_{l2}$, $\hat{E}_{l1}$, $\hat{E}_0$, $\hat{E}_{r1}$, $\hat{E}_{r2}$  when
$\Delta>0$  and  $  \mu_2<- \sqrt{\Delta}$,  that is, $\mu_1>0$ and $\mu_2<-2\sqrt{\mu_1}$.

{\bf Case (II)}: System \eqref{initial} has
three equilibria  $\hat{E}_{l2}$, $\hat{E}_0$,    $\hat{E}_{r2}$  when   $\Delta>0$ and   $\mu_2=-\sqrt{\Delta}$,  that is, $\mu_1=0$ and   $\mu_2<0$.

{\bf Case (III)}: System \eqref{initial} has
three equilibria  $\hat{E}_{l2}$,  $\hat{E}_0$,  $\hat{E}_{r2}$  when  $\Delta>0$ and   $-\sqrt{\Delta}<\mu_2<\sqrt{\Delta}$,  that is,   $\mu_1<0$ and $\mu_2\in\mathbb{R}$.

{\bf Case (IV)}: System \eqref{initial} has
one   equilibrium  $\hat{E}_0$   when  $\Delta>0$ and   $ \mu_2=\sqrt{\Delta}$,  that is,  $\mu_1=0$ and   $\mu_2>0$.

{\bf Case (V)}: System \eqref{initial} has
  one   equilibrium  $\hat{E}_0$   when  $\Delta>0$ and   $ \mu_2>\sqrt{\Delta}$,  that is,    $\mu_1>0$ and   $\mu_2>2\sqrt{\mu_1}$.

{\bf Case (VI)}: System \eqref{initial} has
three equilibria $\hat{E}_{l2}$, $\hat{E}_0$,    $\hat{E}_{r2}$  when  $\Delta=0$ and $ \mu_2 <0$,  that is,  $\mu_1>0$ and   $\mu_2=-2\sqrt{\mu_1}$.

{\bf Case (VII)}: System \eqref{initial} has
  one   equilibrium   $\hat{E}_0$   when $\Delta=0$ and $ \mu_2  =0$,  that is,     $\mu_1=\mu_2=0$.

{\bf Case (VIII)}: System \eqref{initial} has
  one   equilibrium   $\hat{E}_0$  when $\Delta=0$ and $ \mu_2 >0$,  that is,    $\mu_1>0$ and   $\mu_2=2\sqrt{\mu_1}$.

{\bf Case (IX)}: System \eqref{initial} has
one  equilibrium $\hat{E}_0$  when  $\Delta<0$,  that is,  $\mu_1>0$ and    $-2\sqrt{\mu_1}<\mu_2<2\sqrt{\mu_1}$.

  \noindent
As a consequence, we obtain the   location of equilibria  of   system \eqref{initial},      as illustrated in    {\rm Table~\ref{lmtable1}}.
 We now  study the qualitative properties of   $\hat{E}_{l2}$,  $\hat{E}_{l1}$, $\hat{E}_0$, $\hat{E}_{r1}$, $\hat{E}_{r2}$   of system \eqref{initial} in turn.

The   Jacobian   matrix  at  $\hat{E}_0$  is  the following form
\[
	 J_{E_{0}}:= \left(
	\begin{array}{lll}
	&0  \quad   &1
	\\
	&-\mu_1 \quad   & -\mu_3
	\end{array}
	\right).
	\]
According to  ${\rm det}J_{E_{0}}=\mu_1$ and   ${\rm tr}J_{E_{0}}=-\mu_3$,     it follows that
$\hat{E}_0$ is a saddle for $\mu_1<0$,  a  sink for $\mu_1>0$ and  $\mu_3>0$,   and
a    source for $\mu_1>0$ and  $ \mu_3<0$.
When $\mu_1>0$ and $\mu_3=0$,
 with the scaling  transformation   \[(y,~t)\to(\sqrt{\mu_1}y, ~\frac{t }{\sqrt{\mu_1}}),\]
	system \eqref{initial} becomes
	\begin{eqnarray}
		\begin{array}{ll}
	\dot x=y,
	\\
	\dot y=-x-\frac{\mu_2}{\mu_1}x^3-\frac{1}{\mu_1}x^5 -\frac{b}{\sqrt{\mu_1}}x^2y.
	\end{array}
		\label{initia221027}
	\end{eqnarray}
	By \cite[p.211]{CLW},
	we calculate  the first
	focal value for system \eqref{initia221027} and  get that at  the origin
	\[
	g_3=-\frac{b}{8\sqrt{\mu_1}}<0
	\]
because of $b>0$.
Therefore,  the origin of system \eqref{initia221027}  is a  stable weak focus of order one and  so is $\hat{E}_0$. As    $\mu_1= \mu_3=0$,
	system \eqref{initial}  is simplified as
	\begin{eqnarray}
		\begin{array}{ll}
	\dot x=  y,
	\\[2mm]
	\dot y=  - \mu_2x^3-x^5 -  bx^2y.
	\end{array}
		\label{initia1028}
	\end{eqnarray}
It follows  from  \cite[Theorem 7.2 of Chapter 2]{Zh}    that  $\hat{E}_0$  of 	system  \eqref{initia1028} is
a  degenerate    saddle for $\mu_2<0$,   a  degenerate   center or focus  for $\mu_2>0$,   a  degenerate   center or focus  for $\mu_2=0$ and $b\in (0, 2\sqrt{3})$,  and  a degenerate   node for  $\mu_2=0$ and $b\in [2\sqrt{3}, +\infty)$.
Letting
\[
H(x,y)=\frac{\mu_2x^3}{3}+\frac{x^5}{5}+\frac{y^2}{2},
\]
we have
\[
\frac{dH(x,y)}{dt}|_{\eqref{initia1028}}=-bx^2y^2\leq0,
\]
implying that
  a  stable focus  for $\mu_2>0$,   or $\mu_2=0$ and $b\in (0, 2\sqrt{3})$.
Furthermore, for  $\mu_1=0$ and $\mu_3\neq0$,
 using    the translation transformation
	\[
	(x, ~y)\to(x-\frac{y}{\mu_3}, ~y),
	\]
	system \eqref{initial}  is  changed into
	\begin{eqnarray}
		\begin{array}{ll}
	\dot x=  - \frac{\mu_2}{\mu_3}(x-\frac{y}{\mu_3})^3-\frac{1}{\mu_3}(x-\frac{y}{\mu_3})^5-  \frac{b}{\mu_3}(x-\frac{y}{\mu_3})^2y,
	\\[2mm]
	\dot y= -\mu_3y - \mu_2(x-\frac{y}{\mu_3})^3-(x-\frac{y}{\mu_3})^5-  b(x-\frac{y}{\mu_3})^2y.
	\end{array}
		\label{initia10271}
	\end{eqnarray}
 Solving $\dot y=0$ of  system \eqref{initia10271}, we obtain
	\begin{eqnarray}
	y =
	-\frac{\mu_2}{\mu_3}x^3+ \frac{-\mu_3^2+b\mu_2\mu_3-3\mu_2^2}{\mu_3^3}x^5+O(x^6)
	\label{initial311}
	\end{eqnarray}
		by  the Implicit Function Theorem.
 Substituting  \eqref{initial311} into the first  equation of  system \eqref{initia10271}, we    have
\begin{eqnarray*}
\frac{dx}{dt}=  - \frac{\mu_2}{\mu_3}x^3 +\frac{-\mu_3^2+b\mu_2\mu_3-3\mu_2^2}{\mu_3^3}x^5+O(x^6).
\end{eqnarray*}
	According to   \cite[Theorem 7.1 of Chapter 2]{Zh}, it follows     that the origin of system \eqref{initia10271} is
	a stable       degenerate  node    for $\mu_2\geq0 $ and  $\mu_3>0$,
an    unstable         degenerate node     for $\mu_2\geq0$ and  $\mu_3<0$,
     and   a   degenerate saddle     for $\mu_2<0 $ and  $\mu_3\neq0$.   So is $\hat{E}_0$.

The   Jacobian   matrices       at  both  $\hat{E}_{l2}$ and $\hat{E}_{r2}$   are  of  the following form
\[
	 J_{\hat{E}_{l2}}= J_{\hat{E}_{r2}}:= \left(
	\begin{array}{lll}
	&0  \quad   &1
	\\
	&-  \Delta +\mu_2  \sqrt{\Delta}  \quad   & -\mu_3-\frac{b(-\mu_2+\sqrt{\Delta})}{2}
	\end{array}
	\right).
	\]
By  {\bf Cases (I)},  {\bf  (II)},   {\bf  (III)} and  {\bf  (VI)},    we know that  the necessary conditions   of  the existence of   $\hat{E}_{l2}$ and $\hat{E}_{r2}$    are  $\Delta\geq0$  and    $  \mu_2<  \sqrt{\Delta}$.
Therefore, we obtain     ${\rm det}J_{\hat{E}_{l2}}={\rm det}J_{\hat{E}_{r2}} =  \sqrt{\Delta}(\sqrt{\Delta} -  \mu_2)>0  $ for  $\Delta>0$ and  ${\rm det}J_{\hat{E}_{l2}}={\rm det}J_{\hat{E}_{r2}}=0  $ for  $\Delta=0$.
It is easy to check that   ${\rm tr}J_{\hat{E}_{l2}}={\rm tr}J_{\hat{E}_{r2}}= -\mu_3- {b(-\mu_2+\sqrt{\Delta})}/{2}<0$ for $ \mu_3>- {b(-\mu_2+\sqrt{\Delta})}/{2}$
  and
 ${\rm tr}J_{\hat{E}_{l2}}={\rm tr}J_{\hat{E}_{r2}}>0$ for $ \mu_3<- {b(-\mu_2+\sqrt{\Delta})}/{2}$.
 It follows that  both  $\hat{E}_{l2}$ and $\hat{E}_{r2}$    are   sinks for
 $\Delta>0$ and $ \mu_3>- {b(-\mu_2+\sqrt{\Delta})}/{2}$,   and  sources for   $\Delta>0$ and $ \mu_3<- {b(-\mu_2+\sqrt{\Delta})}/{2}$.
When   $\Delta>0$ and $ \mu_3=- {b(-\mu_2+\sqrt{\Delta})}/{2}$,
 by the following translation transformation
 \[
 (x,~y) \rightarrow \left(x+\sqrt{ \frac{ -\mu_2+\sqrt{\Delta}}  {2}},~ y\right),
 \]
system \eqref{initial} becomes
	\begin{eqnarray}
		\begin{array}{ll}
	\dot x=y,
	\\
	\dot y=(-  \Delta +\mu_2  \sqrt{\Delta})x+\sqrt{\frac{-\mu_2+\sqrt{\Delta}}{2}}(2\mu_2-5\sqrt{\Delta})x^2+(4\mu_2-5\sqrt{\Delta})x^3\\
\quad\quad -5\sqrt{\frac{-\mu_2+\sqrt{\Delta}}{2}}x^4
  -x^5
-bx^2y-2b\sqrt{\frac{-\mu_2+\sqrt{\Delta}}{2}}xy,
	\end{array}
		\label{initia221029}
	\end{eqnarray}
  implying  that   $\hat{E}_{r2}$    becomes the  origin of  system \eqref{initia221029}.
Under  the scaling  transformation   	
\[
(y,t)\to \left(\sqrt{\Delta -\mu_2  \sqrt{\Delta} }y, ~\frac{t}{\sqrt{\Delta -\mu_2  \sqrt{\Delta} }}\right),
\]
	system \eqref{initia221029} can be written as
	\begin{eqnarray}
		\begin{array}{ll}
	\dot x=y,
	\\
	\dot y=-x+ \frac{\sqrt{-\mu_2+\sqrt{\Delta}}}{\sqrt{2}\Delta -\mu_2\sqrt{2\Delta} } (2\mu_2-5\sqrt{\Delta})x^2+\frac{4\mu_2-5\sqrt{\Delta}}{\Delta-\mu_2\sqrt{ \Delta}}x^3 -5  \frac{\sqrt{-\mu_2+\sqrt{\Delta}}}{\sqrt{2}\Delta -\mu_2\sqrt{2\Delta}} x^4
	\\
\quad\quad
-\frac{1}{{\Delta-\mu_2\sqrt{ \Delta}}}x^5-\frac{b}{\sqrt{{\Delta-\mu_2\sqrt{ \Delta}}}}x^2y-\sqrt{2}b  \frac{\sqrt{-\mu_2+\sqrt{\Delta}}}{\sqrt{{\Delta-\mu_2\sqrt{ \Delta}}}}  xy.
	\end{array}
		\label{initia2210291}
	\end{eqnarray}
	From \cite[p.211]{CLW},
	we calculate  the first
	focal value for system \eqref{initia2210291} and  get that at  the origin
	
	\[
	g_3=   \frac{ b( 2\sqrt{ \Delta}-    \mu_2)}{4\sqrt{ \Delta}\sqrt{{\Delta-\mu_2\sqrt{ \Delta}}}}>0
	\]
because of  $  \mu_2<  \sqrt{\Delta}$.
Hence   the origin of system   \eqref{initia2210291}  is  an   unstable weak focus of order one  and   so is $\hat{E}_{r2}$.
By the symmetry of system \eqref{initial} about the origin,
  $\hat{E}_{l2}$   is   an unstable weak focus of order one.

 As    $\Delta=0$ and $ \mu_3={ b \mu_2 }/{2}$,
  by the following translation transformation
  \[
  (x,~y) \rightarrow (x+\sqrt{ \frac{ -\mu_2 }  {2}},~ y),
  \]
system \eqref{initial}  becomes
 \begin{eqnarray}
		\begin{array}{ll}
	\dot x=y,
	
	\\
	\dot y= \mu_2 \sqrt{ {-2\mu_2 } }   x^2+ 4\mu_2  x^3
  -5\sqrt{\frac{-\mu_2 }{2}}x^4
  -x^5
- b\sqrt{ {-2\mu_2 } }xy-bx^2y
	\end{array}
		\label{initia221029001}
	\end{eqnarray}
implying   that   $\hat{E}_{r2}$ becomes the origin of  system \eqref{initia221029001}.
It follows  from  \cite[Theorem 7.3 of Chapter 2]{Zh}    that    the origin of  system   \eqref{initia221029001}      is
a     cusp and so is  $\hat{E}_{r2}$.
By the symmetry of system \eqref{initial} about the origin,        $\hat{E}_{l2}$     is
a     cusp for   $\Delta=0$ and $ \mu_3={ b \mu_2 }/{2}$.

 When  $\Delta=0$ and $ \mu_3 \neq{ b \mu_2 }/{2}$,
 using    the translation transformation
 $
 (x, y) \rightarrow (x+\sqrt{ {-\mu_2 }/{2}}, y),
 $
system \eqref{initial}  can be rewritten as
	 \begin{eqnarray}
		\begin{array}{ll}
	\dot x=y,
	\\
	\dot y= \mu_2 \sqrt{ {-2\mu_2 } }   x^2+ 4\mu_2  x^3
  -5\sqrt{\frac{-\mu_2 }{2}}x^4
  -x^5+( -\mu_3+\frac{b \mu_2 }{2})y- b\sqrt{ {-2\mu_2 } }xy
-bx^2y
	\end{array}
		\label{initia221030}
	\end{eqnarray}
  meaning  that   $\hat{E}_{r2}$   becomes  the origin of  system \eqref{initia221030}.  For simplicity, denote $ k=: \mu_3- {b  \mu_2   }/{2}$.
Then,  using    the translation transformation
	$
	(x, y)\to(x-{y}/{k}, y)$,	
	 system \eqref{initia221030}   is changed into
	 \begin{eqnarray}
		\begin{array}{ll}
	\dot x=\frac  {\mu_2 \sqrt{ {-2\mu_2 } }}  {k}  (x-\frac{y}{k})^2+   \frac  {4\mu_2} {k}  (x-\frac{y}{k})^3
  -  \frac{5}{k}  \sqrt{\frac{-\mu_2 }{2}}(x-\frac{y}{k})^4
  - \frac{1}{k} (x-\frac{y}{k})^5
  \\ \quad\quad
 - \frac{b}{k}\sqrt{ {-2\mu_2 } }(x-\frac{y}{k})y - \frac{b}{k}  (x-\frac{y}{k})^2y,
	\\
	\dot y= \mu_2 \sqrt{ {-2\mu_2 } }   (x-\frac{y}{k})^2+ 4\mu_2  (x-\frac{y}{k})^3
  -5\sqrt{\frac{-\mu_2 }{2}}(x-\frac{y}{k})^4
  - (x-\frac{y}{k})^5
  \\ \quad\quad
 - b\sqrt{ {-2\mu_2 } }(x-\frac{y}{k})y -b(x-\frac{y}{k})^2y-ky.
	\end{array}
		\label{initia2210301}
	\end{eqnarray}
 Solving $\dot y=0$ of  system \eqref{initia2210301}, we get
	\begin{eqnarray}
	y =
	 \frac{\mu_2 \sqrt{ {-2\mu_2 } }  }{k}x^2 +O(x^3)
	\label{ini10302}
	\end{eqnarray}
		by  the Implicit Function Theorem.
 Substituting  \eqref{ini10302} into the first  equation of  \eqref{initia2210301}, we    have
\begin{eqnarray*}
\frac{dx}{dt}=   \frac{\mu_2 \sqrt{ {-2\mu_2 } }  }{k}x^2 +O(x^3).
\end{eqnarray*}
  We know     $\mu_2=-2\sqrt{\mu_1}$  by  {\bf Case (VI)}.
 It can  easily be  checked  that     $k>0$ for $ \mu_3>  {b  \mu_2   }/{2}$
  and
 $k<0$ for $ \mu_3< {b  \mu_2   }/{2}$.
Based on  \cite[Theorem 7.1 of Chapter 2]{Zh}, it follows  that the origin of system \eqref{initia2210301}  is a  saddle-node with
one stable nodal sector
for  $ \mu_3>  {b  \mu_2   }/{2}$  or a  saddle-node with unstable nodal sector
for  $ \mu_3< {b  \mu_2   }/{2}$.   So is   $\hat{E}_{r2}$.
By the symmetry of system \eqref{initial} about the origin,       $\hat{E}_{l2}$     is     a  saddle-node with one stable nodal sector
for  $ \mu_3>  {b  \mu_2   }/{2}$  or a  saddle-node with  unstable nodal sector
for  $ \mu_3< {b  \mu_2   }/{2}$  when  $\Delta=0$.

The   Jacobian   matrices        at  both    $\hat{E}_{l1}$ and $\hat{E}_{r1}$   are  the following form
\[
	 J_{\hat{E}_{l1}}= J_{\hat{E}_{r1}}:= \left(
	\begin{array}{lll}
	&0  \quad   &1
	\\
	&- \Delta -\mu_2  \sqrt{\Delta}  \quad   & -\mu_3-\frac{b(-\mu_2-\sqrt{ \Delta})}{2}
	\end{array}
	\right).
	\]
 By  {\bf Case (I)},  we know that  the necessary conditions   of  the existence of   $\hat{E}_{l1}$ and $\hat{E}_{r1}$    are  $\Delta>0$  and    $  \mu_2<- \sqrt{\Delta}$.   Then, we have     ${\rm det}J_{\hat{E}_{l1}}={\rm det}J_{\hat{E}_{r1}} =  \Delta +\mu_2  \sqrt{\Delta}  <0  $.
Therefore, both  $\hat{E}_{l1}$ and $\hat{E}_{r1}$   are saddles.
The proof  of  Lemma \ref{fe1}  is complete.
\end{proof}

\section*{Appendix C}

 \begin{figure}[h]
	\centering
	\includegraphics[width=3.5in]{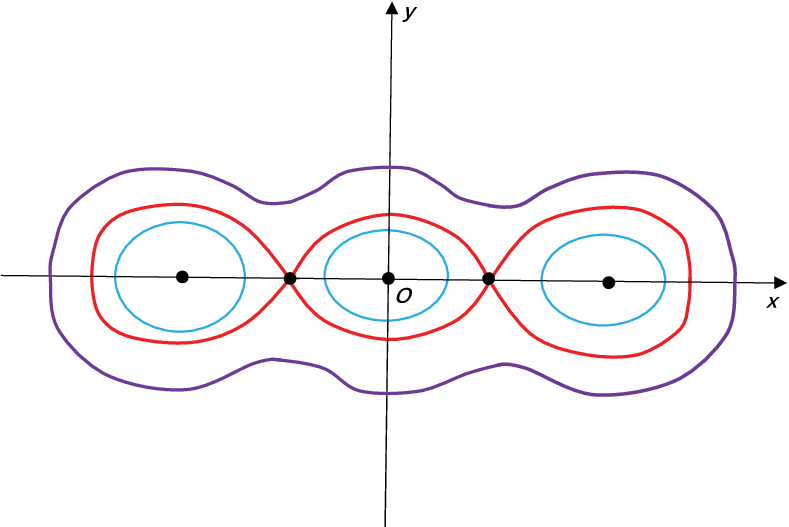}
	\caption{The phase portrait of  \eqref{Hami22}.   }
	\label{Hami1}
\end{figure}

When $\delta=0$,
system \eqref{initial1} is a Hamiltonian system
\begin{eqnarray}
\begin{array}{ll}
\dot x=y,
\\
\dot y=-x(a_1+x^2)(-1+x^2),
\end{array}
\label{Hami22}
\end{eqnarray}
which has the  first integral
\eqref{Exy}.
The level curves
$\{E(x,y)=e,~e\geq e_1\}$
are shown in Figure \ref{Hami1}, where $e_1:=\min\{0, -a_1/4-1/12\}$.
In this proof, we only care these closed orbits surrounding
 five equilibria, i.e., $\{E(x,y)=e,~e> e_2\}$, where $e_2:= {a_1}^2/4+{a_1}^3/12$.
Let $L:=\{(x,y)|x=0,~y>\sqrt{2e_2}\}$.
Then, the large closed level curve intersects $L$ at exactly one point $(0,\alpha(e))$.
Thus, $L$ can be parameterized by $e$.
For every $e\in(e_2,+\infty)$, consider the orbit of system  \eqref{Hami22} passing through $P_e\in L$.
When the orbit goes forward and backward from $P_e$, it has respectively two intersection points $Q_1$ and $Q_2$
with the positive $x$-axis.
Let the piece of orbit from $Q_1$ to $Q_2$ denote by $\gamma(e,\delta,a_2)$.
Clearly, $\gamma(e,\delta,a_2)$ is an orbit if and only if $Q_1=Q_2$.
Moreover, $Q_1=Q_2$ if and only if $E(Q_1)=E(Q_2)$.
It is to notice that
\[
\frac{dE}{dt}\mid_{\eqref{initial}}dt=-\delta(a_2+x^2)y^2dt=-\delta(a_2+x^2)ydx,
\]
implying that
\[
E(Q_2)-E(Q_1)=\int_{t(Q_1)}^{t(Q_2)}\frac{dE}{dt}\mid_{\eqref{initial}}dt
=-\int_{\gamma}\delta(a_2+x^2)ydx.
\]
Thus,  $\gamma(e,\delta,a_2)$ is a closed orbit if and only if $\int_{\gamma}(a_2+x^2)ydx=0$.
Let
\[
F(e,\delta,\zeta)=\int_{\gamma}(a_2+x^2)ydx.
\]
By the same way as Lemma 1.5 of \cite[Chapter 4]{CLW},
we can prove that the function $F(e,\delta,a_2)$ is continuous and $C^{\infty}$ in $\delta$ and $a_2$
on a set
\[
U=\{(e,a_2)| e_2\leq e<+\infty, ~0\leq \delta\leq \delta_0, \zeta_1\leq a_2\leq\zeta_2\},
\]
where $\delta_0$ is positive and $\zeta_1<\zeta_2$ are constants.
Moreover, $F\in C^{\infty}$ in $h$ on the set
\[
V=\{(e,b,\zeta)| e_2< e<+\infty, ~0\leq \delta\leq \delta_0, \zeta_1\leq a_2\leq\zeta_2\}.
\]
By Taylor  expansion, it follows that
\[
F(e,\delta,\zeta)=F(e,0,\zeta)+O(\delta),
\]
where
\[
F(e,0,\zeta)=\oint_{\Gamma_e}(a_2+x^2)ydx=a_2 I_0(e)+I_2(e),
\]
$I_i(e)=\oint_{\Gamma_e}x^iydx$ for $i=0,2$ and $\Gamma_e$ is the large closed orbit of  system  \eqref{Hami22}.
Let
$$P(h)=\frac{I_2(e)}{I_0(e)},$$
where $e\geq e_2$.
By
$$y=\sqrt{2e+a_1x^2 -\frac{(a_1-1)x^4}{2}-\frac{x^6}{3}},$$
it follows that
\begin{equation}\label{dI}
I_i'(e)=\oint_{\Gamma_e}\frac{x^i}{y}\mathrm{d}x,\quad \mbox{for} \ i\in\mathbb{N}.
\end{equation}
On the one hand, it follows from  \eqref{dI} that
\begin{equation}\label{IdI}
I_i(e)=2\int_{-\eta(e)}^{\eta(e)}\frac{x^i y^2}{y}\mathrm{d}x=2eI_i'(e)+a_1I'_{i+2}(e)
-\frac{a_1-1}{2}I'_{i+4}(e)-\frac{1}{3}I'_{i+6}(e).
\end{equation}
On the other hand, by an integration by parts, we have
\begin{equation}\label{Ii}
I_i(e)=2\left(\frac{x^{i+1}y}{i+1}|_{-\eta(e)}^{\eta(e)}
-\frac{1}{i+1}\int_{-\eta(e)}^{\eta(e)}\frac{x^{i+1}(a_1x-(a_1-1)x^3-x^5)}{y}\mathrm{d}x\right).
\end{equation}
By $y(-\eta(e),e)=y(\eta(e),e)=0$ and \eqref{Ii}, it follows that
\begin{equation}\label{IdI2}
I_i(e)=-\frac{a_1I'_{i+2}(e)}{i+1}+\frac{(a_1-1)I'_{i+4}(e)}{i+1}+\frac{I'_{i+6}(e)}{i+1}.
\end{equation}
Taking $i=0,2,4$ in \eqref{IdI2} and solving $I'_6(e), I'_8(e),I'_{10}(e)$, we obtain
\begin{equation}\begin{split}\label{I6810}
I'_6(e)=&I_0(e)+a_1I'_2(e)-(a_1-1)I'_4(e),\\
I'_8(e)=&(1-a_1)I_0(e)+3I_2(e)-(a_1-1)a_1I'_2(e)+(a_1^2-a_1+1)I'_4(e),\\
I'_{10}(e)=&(a_1^2-a_1+1)I_0(e)-3(a_1-1)I_2(e)+5I_4(e)+a_1(a_1^2-a_1+1)I'_2(e)\\
&-(a_1-1)(1+a_1^2)I'_4(e).
\end{split}\end{equation}
Taking $i=0,2,4$ in \eqref{IdI} and using \eqref{I6810}, we obtain that
\begin{equation}\begin{split}\label{I024}
I_0(e)=&\frac{3e}{2}I'_0(e)+\frac{a_1}{2}I'_2(e)+\frac{1-a_1}{8}I'_4(e),\\
I_2(e)=&\frac{(1-a_1)e}{8}I'_0(e)+\frac{a_1-a_1^2+8e}{8}I'_2(e)+\frac{9+14a_1+9a_1^2}{96}I'_4(e),\\
I_{4}(e)=&\frac{3(5+6a_1+5a_1^2)e}{128}I'_0(e)+\frac{3(5a_1+6a_1^2+5a_1^3+8e-8a_1e)}{128}I'_2(e)\\
&+\frac{45+73a_1-73a_1^2-45a_1^3+384e}{512}I'_4(e).
\end{split}\end{equation}
Letting $\mathrm{V}=(I_0(e),I_2(e),I_4(e))^\top$, then by \eqref{I024}, we have
\begin{equation}\label{V}
(12\mathrm{I}e+\mathrm{C})\mathrm{V'}=\mathrm{RV},
\end{equation}
where $\mathrm{I}$ is an unit matrix of order 3, and
\begin{equation}
\mathrm{C}=\left(\begin{array}{ccc}
0 & 4a_1 & 1-a_1 \\
0 & a_1(1-a_1) & (1+a_1)^2 \\
0 & a_1(1+a_1)^2 & 1+2a_1-2a_1^2-a_1^3
\end{array}\right),
\quad
\mathrm{R}=\left(\begin{array}{ccc}
8 & 0 & 0 \\
a_1-1 & 12 & 0 \\
-(1+a_1)^2 & 3(a_1-1) & 16
\end{array}\right).
\end{equation}
Taking
\begin{equation}\label{Z}
Z(e)=\frac{3}{4}(a_1-1)I_2(e)+I_4(e)
\end{equation}
and using \eqref{V}, we have
\begin{equation}\label{D2}
D(e)\left(\begin{array}{c}
I_0''(e) \\
I_2''(e) \\
Z''(e)
\end{array}\right)
=\left(\begin{array}{cc}
a_{11}(e) & a_{12}(e) \\
a_{21}(e) & a_{22}(e) \\
a_{31}(e) & a_{32}(e)
\end{array}
\right)
\left(
\begin{array}{c}
I'_0(e) \\
Z'(e)
\end{array}\right),
\end{equation}
where
\begin{equation}\label{Da}
\begin{split}
D(e)=&3 e (3a_1+12e+1) \left(-a_1^3-3a_1^2+12e\right),\\
a_{11}(e)=&-3e(3+7a_1-7a_1^2-3a_1^3+48e),\\
a_{12}(e)=&10a_1^2+3a_1^3-12e+3a_1(1+4e),\\
a_{21}(e)=&3 e (10 a_1^2 + 3 a_1^3 - 12 e + 3 a_1 (1 + 4 e)),\\
a_{22}(e)=&-12 (1 + a_1)^2 e,\\
a_{31}(e)=&-\frac{9}{4}e (-7 a_1^3 - 3 a_1^4 + 4 e + a_1^2 (7 + 4 e) + a_1 (3 + 56 e)),\\
a_{32}(e)=&-3 (-3 - 7 a_1 + 7 a_1^2 + 3 a_1^3 - 48 e) e.
\end{split}
\end{equation}
Let $M(e)=a_2I_0(e)+I_2(e)$. By \eqref{Z} and \eqref{D2},
\begin{equation}\begin{split}\label{M2e}
M''(e)=&\frac{(a_2a_{11}(e)+a_{21}(e))I'_0(e)+(a_2a_{12}(e)+a_{22}(e))Z'(e)}{D(e)}\\
=&\frac{I'_0(e)}{D(e)}\left(a_2a_{11}(e)+a_{21}(e)+(a_2a_{12}(e)+a_{22}(e))w(e)\right),
\end{split}\end{equation}
where
\begin{equation}
w(e)=\frac{Z'(e)}{I'_0(e)}
\end{equation}
satisfies the differential equation
\begin{equation}\label{ew}
\begin{split}
\dot{e}= & 12e(1+3a_1+12e)(-3a_1^2-a_1^3+12e), \\
\dot{w}=&v_0(e)+v_1(e)w+v_2(e)w^2,
\end{split}
\end{equation}
and
\begin{equation}\begin{split}\label{ve}
v_0(e)=&-9e (3 a_1 + 7 a_1^2 - 7 a_1^3 - 3 a_1^4 + 4 e + 56 a_1 e + 4 a_1^2 e), \\
v_1(e)=&24 e (3 + 7 a_1 - 7 a_1^2 - 3 a_1^3 + 48 e),\\
v_2(e)=&-4(3 a_1 + 10 a_1^2 + 3 a_1^3 - 12 e + 12 a_1 e).
\end{split}\end{equation}
In the following, we will split some cases to study the number of zeros of $M''(e)$ in \eqref{M2e} for $e>e_2$.

First, we consider $a_1=-1/3$. In the case, $e_2=2/81$, and
$$
M''(e)=-\frac{(3a_2+1) (w(e)+9e) I_0'(e)}{e(81e-2)},\quad e>e_2.
$$
Notice that $a_2<\min\{a_1,-1/3\}=-1/3$. Thus, the number of zeros of $M''(e)$ in $(e_2, +\infty)$ equals the number of intersection points of
the curve $\Gamma=\{(e,w)\mid w=w(e), e\in(e_2, \infty)\}$ and the straight line $\mathcal{L}=\{(e,w)\mid w+9e=0, e\in(e_2, \infty)\}$ in the $(e,w)$ plane.

Using \eqref{D2}, we have the asymptotic expansion of $w(e)$ near $e\rightarrow e_2^+$
\begin{equation}\label{we1}
w(e)=-\frac{2}{9}-\frac{2}{3\log(e-e_2)}+o\left(\frac{1}{\log(e-e_2)}\right),
\end{equation}
and the asymptotic expansion of $w(e)$ when $e\rightarrow+\infty$
$$w(e)=c_0e^{\frac{2}{3}}+o\left(e^{\frac{2}{3}}\right),$$
where $c_0$ is a positive constant.
In this case, $w(e)$ satisfies the differential equation
\begin{equation}\label{ew1}
\begin{split}
\dot{e}= & e(81e-2), \\
\dot{w}=& 3(w^2+18ew+2e).
\end{split}
\end{equation}
Notice that the horizontal isocline of this system has two branches
$$
w^+(e)=-9 e+\sqrt{81 e^2-2 e},~~~~
w^-(e)=-9 e-\sqrt{81 e^2-2 e}.
$$
Moreover, $w^+(e)$ has the asymptotic expansion near $e\rightarrow e_2^+$
\begin{equation}\label{W^+}
w^+(e)=-\frac{2}{9}+\sqrt{2} \sqrt{e-e_2}-9 \left(e-e_2\right)+o\left(e-e_2\right).
\end{equation}
Comparing \eqref{we1} with \eqref{W^+}, we have
$$
w(e)>w^+(e), \quad e\rightarrow e_2.
$$
It follows from system \eqref{ew1} that
$$\frac{\mathrm{d}w}{\mathrm{d}e}>0,\quad \mbox{for}\ w>w^+(e),\ e>e_2.$$
Thus, we have $w(e)>w^+(e)$ for all $e\in\left(e_2,+\infty\right)$. Further, we obtain
$$w(e)>w^+(e)>-9e,\quad e\in\left(e_2,+\infty\right).$$
This implies that the curve $\Gamma$ and the straight line $\mathcal{L}$ have no intersection points
in the $(e,w)$ plane.
Hence, the function $M''(e)$ has no zeros in $(e_2,+\infty)$. And we obtain that
$M(e)$ has at most two zeros in $(e_2,+\infty)$.

When $a_1\neq-\frac{1}{3}$, denote
$$
A(e)=a_2a_{11}(e)+a_{21}(e),\quad \mbox{and}\quad B(e)=a_2a_{12}(e)+a_{22}(e),
$$
where ${a_{ij}}^,s$ are given in \eqref{Da}. Obviously, $B(e)$ is a polynomial of $e$ with degree at most $1$. And it is
not identically zero.
Thus, the number of zeros of $M''(e)$ in $(e_2, +\infty)$ equals the number of intersection points of
the curve $\Gamma=\{(e,w)\mid w=w(e), e\in(e_2, \infty)\}$ and the curve $\mathcal{C}=\{(e,w)\mid A(e)+B(e)w=0, e\in(e_2, \infty)\}$ in the $(e,w)$ plane.

Using \eqref{D2}, we have the asymptotic expansion of $w(e)$ near $e\rightarrow e_2^+$
\begin{equation}\label{we12}
w(e)=\frac{1}{4}a_1(3+a_1)+\frac{3a_1(1+a_1)}{\log(e-e_2)}+o\left(\frac{1}{\log(e-e_2)}\right),
\end{equation}
and the asymptotic expansion of $w(e)$ when $e\rightarrow+\infty$
\begin{equation}\label{we22}
w(e)=c_1e^{\frac{2}{3}}+o\left(e^{\frac{2}{3}}\right), \quad c_1>0.
\end{equation}
Note that the horizontal isocline of system \eqref{ew} has two branches
$$
w^+(e)=\frac{-v_1(e)+\sqrt{v_1(e)^2-4 v_0(e)v_2(e)}}{2v_2(e)},~~~~
w^-(e)=\frac{-v_1(e)-\sqrt{v_1(e)^2-4 v_0(e)v_2(e)}}{2v_2(e)},
$$
where $v_i(e), i=0,1,2$ are given in \eqref{ve}, and for $e>e_2$
$$
v_2(e)> v_2(e_2)=-4a_1(1+a_1)^2(3+a_1)>0,$$
$$
v_1^2(e)-4v_0(e)v_2(e)=144 e \left(3a_1^3+7a_1^2-7a_1-64 e-3\right) \left(a_1^3+3a_1^2-12
e\right) (3a_1+12 e+1)>0.
$$
Moreover, $w^+(e)$ has the asymptotic expansion
\begin{equation}\label{W2^+}
w^+(e)=\frac{1}{4} a_1
(a_1+3)+\frac{\sqrt{3} }{2} \sqrt{\frac{7 a_1^2+6
		a_1+3}{a_1+1}} \sqrt{e-e_2}+\frac{3 \left(4 a_1^2+3 a_1+1\right) \left(e-e_2\right)}{a_1 (a_1+1)^2}+o\left(e-e_2\right).
\end{equation}
Comparing \eqref{we12} with \eqref{W2^+}, we have
$
w(e)>w^+(e)$ as $e\rightarrow e_2.
$
It follows from system \eqref{ew} that
$$\frac{\mathrm{d}w}{\mathrm{d}e}>0,\quad \mbox{for}\ w>w^+(e),\ e>e_2.$$
Thus, we have $w(e)>w^+(e)$ for all $e\in\left(e_2,+\infty\right)$.

For the curve $\mathcal{C}$, denote $w_{\mathcal{C}}(e)=-\frac{A(e)}{B(e)}$. Moreover, denote by $l_1$ the curve $a_2=-\frac{(1+a_1)^2}{1-a_1}$. It is easy to verify that $\frac{\mathrm{d}a_2}{\mathrm{d}a_1}=\frac{(-3 + a_1) (1 + a_1)}{(-1 + a_1)^2}<0$ for $-1<a_1<0$,
and $a_2=0$ when $a_1=-1$, $a_2=-\frac{1}{3}$ when $a_1=-\frac{1}{3}$ and $a_2=-1$ when $a_1=0$. That is, the curve $l_1$ passing through $(-1,0), (-\frac{1}{3},-\frac{1}{3})$ and $(0,-1)$ is deceasing on $(-1,0)$. Thus, the curve $l_1$ intersects the line $a_2=a_1$ at $A=(-\frac{1}{3},-\frac{1}{3})$ and is above the line $a_2=a_1$ when $-1<a_1<-\frac{1}{3}$, and the curve $l_1$ intersects the curve $a_2=\frac{a_1-1-\sqrt{-a_1}}{3}$ with $B=(a_1^*,a_2^*)$ with  $a_1^*=\frac{1}{16} (-33-\sqrt{97}+\sqrt{930 + 66 \sqrt{97}})\approx-0.19$,
such that $\frac{a_1-1-\sqrt{-a_1}}{3}<-\frac{(1+a_1)^2}{1-a_1}<-\frac{1}{3}$ when $-\frac{1}{3}<a_1<a_1^*$ and the curve $l_1$ is below the curve $a_2=\frac{a_1-1-\sqrt{-a_1}}{3}$ when $a_1^*<a_1<0$, see Figure \ref{fig1}.
\begin{figure}[!htbp]
	\centering
	\includegraphics[width=0.4\textwidth]{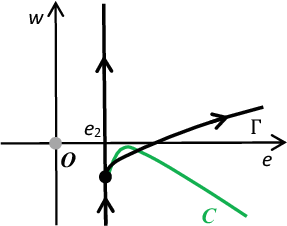}
	\caption{The graphs of the curves of $\mathcal{C}$ and $\Gamma$.}
	\label{fig1}
\end{figure}

When $a_2=-\frac{(1+a_1)^2}{1-a_1}$,
$$w_{\mathcal{C}}(e)=\frac{3e\left(12e-a_1^3-2 a_1^2+2 a_1+1\right)}{a_1 (a_1+1)^2}.$$
Since $w_{\mathcal{C}}(e_2)=\frac{1}{4} a_1(a_1+3)$ and
$$w_{\mathcal{C}}'(e)=\frac{3\left(24e-a_1^3-2 a_1^2+2 a_1+1\right)}{a_1 (a_1+1)^2}
<\frac{3\left(a_1^3+4a_1^2+2 a_1+1\right)}{a_1 (a_1+1)^2}<0,\quad e>e_2.$$
Thus, $w_{\mathcal{C}}(e)<w_{\mathcal{C}}(e_2)<w(e)$, when $e>e_2$. For this case,
the function $M''(e)$ has no zeros in $(e_2,+\infty)$.
When $a_2\neq-\frac{(1+a_1)^2}{1-a_1}$,
$$
w_{\mathcal{C}}(e)=\frac{3(-1+a_1-4a_2)e(e-\bar{e}_1)}{(1+2a_1+a_1^2+ a_2 - a_1 a_2)(e-\bar{e}_2)},~~~
w_{\mathcal{C}}'(e)=\frac{3(-1+a_1-4a_2)(e^2 - 2\bar e_2e + \bar e_1 \bar e_2)}{(1+2a_1+a_1^2+ a_2 - a_1 a_2)(e-\bar{e}_2)^2},
$$
where
$$
\bar{e}_1=  -\frac{(3 + a_1) (1 + 3 a_1) (a_1 - a_2 + a_1 a_2)}{12 (-1 + a_1 - 4 a_2)},~~~~
\bar{e}_2=  \frac{a_1 (3 + a_1) (1 + 3 a_1) a_2}{12 (1 + 2 a_1 + a_1^2 + a_2 - a_1 a_2)}.
$$
Notice that $\frac{a_1-1-\sqrt{-a_1}}{3}<a_2<\min\{a_1,-\frac{1}{3}\}$ with $-1<a_1<0$. Thus, $a_2<a_1$ and $-1+a_1-4a_2>-1-3a_2>0$, implying that
\begin{equation}\begin{split}\label{e1}
\bar{e}_1>&\ 0, \quad \mbox{for}\ -1<a_1<-\frac{1}{3},\\
\bar{e}_1<&\ 0, \quad \mbox{for}\   -\frac{1}{3}<a_1<0,\\
e_2-\bar{e}_1=&\ \frac{(1 + a_1)^2 (3+a_1)(a_1-a_2)}{12(-1+a_1-4a_2)}>0.
\end{split}\end{equation}
Moreover, we have
$$
e_2-\bar{e}_2=\frac{a_1 (1 + a_1)^2 (3 + a_1) (a_1 - a_2)}{12 (1 + 2 a_1 + a_1^2 + a_2 - a_1 a_2)}>0
$$
if $a_2<-\frac{(1+a_1)^2}{1-a_1}$,
and
\begin{equation}\label{e22}
e_2-\bar{e}_2=\frac{a_1 (1 + a_1)^2 (3 + a_1) (a_1 - a_2)}{12 (1 + 2 a_1 + a_1^2 + a_2 - a_1 a_2)}<0
\end{equation}
if $a_2>-\frac{(1+a_1)^2}{1-a_1}$.
It follows that when $a_2<-\frac{(1+a_1)^2}{1-a_1}$, only the right half branch of the curve $w_{\mathcal{C}}(e)$ is located at $e>e_2$ and
$$
w_{\mathcal{C}}(e)=\frac{3(-1+a_1-4a_2)e(e-\bar{e}_1)}{(1+2a_1+a_1^2+ a_2 - a_1 a_2)(e-\bar{e}_2)}<0,
$$
while  when $a_2>-\frac{(1+a_1)^2}{1-a_1}$, the curve $w_{\mathcal{C}}(e)$ has two branches for $e>e_2$ and
$$
w_{\mathcal{C}}(e)=\frac{3(-1+a_1-4a_2)e(e-\bar{e}_1)}{(1+2a_1+a_1^2+ a_2 - a_1 a_2)(e-\bar{e}_2)}<0,\ \mbox{if} \ e_2<e<\bar{e}_2,
$$
and
$$
w_{\mathcal{C}}(e)=\frac{3(-1+a_1-4a_2)e(e-\bar{e}_1)}{(1+2a_1+a_1^2+ a_2 - a_1 a_2)(e-\bar{e}_2)}>0,\ \mbox{if} \ e>\bar{e}_2>e_2.
$$
Moreover, since
$$
\bar e_2^2-\bar e_1 \bar e_2=\frac{
	a_1a_2(1 + a_1)^2 (3 + a_1)^2 (1 + 3 a_1)^2 (a_1 - a_2) (1 + a_2)}{
	144 (-1 + a_1 - 4 a_2) (1 + 2 a_1 + a_1^2 + a_2 - a_1 a_2)^2}>0
$$
when $a_2<-\frac{(1+a_1)^2}{1-a_1}$,
we obtain that the curve $w_{\mathcal{C}}(e)$ is increasing on the interval $(\bar e_2,\bar e_2+\sqrt{\bar e_2^2-\bar e_1 \bar e_2})$ and   decreasing on the interval $(\bar e_2+\sqrt{\bar e_2^2-\bar e_1 \bar e_2}, +\infty)$. When $a_2>-\frac{(1+a_1)^2}{1-a_1}$,
we have that the curve $w_{\mathcal{C}}(e)$ is decreasing on the interval $(\bar e_2-\sqrt{\bar e_2^2-\bar e_1 \bar e_2},\bar e_2)\bigcup(\bar e_2,\bar e_2+\sqrt{\bar e_2^2-\bar e_1 \bar e_2})$ and  increasing on the interval $(\bar e_2+\sqrt{\bar e_2^2-\bar e_1 \bar e_2}, +\infty)$.

When $a_2<-\frac{(1+a_1)^2}{1-a_1}$, compare $e_2$ with $\bar e_2+\sqrt{\bar e_2^2-\bar e_1 \bar e_2}$. A direct computation shows that
\begin{equation}\label{e1e2}
\bar e_2^2-\bar e_1 \bar e_2-(e_2-\bar e_2)^2=\frac{a_1 (1 + a_1)^2 (3 + a_1)^2 (a_1 - a_2) (1 + 3 a_1 +
	4 a_1^2)}{144 (-1 +
	a_1 - 4 a_2) (1 + 2 a_1 + a_1^2 + a_2 - a_1 a_2)}\left(a_2 - \frac{(-1 + a_1) a_1^2}{1 + 3 a_1 + 4 a_1^2}\right).
\end{equation}
Denote by $l_2$ the curve $a_2=\frac{(-1 + a_1) a_1^2}{1 + 3 a_1 + 4 a_1^2}$. It is easy to verify that $\frac{\mathrm{d}a_2}{\mathrm{d}a_1}=\frac{2 a_1 (1 + a_1)^2 (2 a_1-1)}{(1 + 3 a_1 + 4 a_1^2)^2}>0$ for $-1<a_1<0$,
and $a_2=-1$ when $a_1=-1$, $a_2=-\frac{1}{3}$ when $a_1=-\frac{1}{3}$ and $a_2=0$ when $a_1=0$. That is, the curve $l_2$ passing through $(-1,-1), (-\frac{1}{3},-\frac{1}{3})$ and $(0,0)$ is increasing on $(-1,0)$. Owing to
$$\frac{\mathrm{d}a_2}{\mathrm{d}a_1}\Big|_{a_1=-1}=0 \quad\mbox{and}\quad \frac{\mathrm{d}}{\mathrm{d}a_1}\left(\frac{a_1-1-\sqrt{-a_1}}{3}\right)\Big|_{a_1=-1}=\frac{1}{2},$$
we obtain that the curve $l_2$ insects the curve $a_2=\frac{a_1-1-\sqrt{-a_1}}{3}$ at $C=(a_1^{**}, a_2^{**})$ with $a_1^{**}\approx-0.49$ and intersects the line $a_2=a_1$ at $A$, see Figure \ref{fig1}. Two cases are considered.

$(\mathrm{i})$ When $a_2\leq\frac{(-1 + a_1) a_1^2}{1 + 3 a_1 + 4 a_1^2}$,
$\bar e_2^2-\bar e_1 \bar e_2-(e_2-\bar e_2)^2\leq0$ from \eqref{e1e2}, thus we have
$$\bar e_2+\sqrt{\bar e_2^2-\bar e_1 \bar e_2}-e_2=\sqrt{\bar e_2^2-\bar e_1 \bar e_2}-(e_2-\bar e_2)\leq0.$$
Hence, the curve $w_{\mathcal{C}}(e)$ is decreasing on the interval $(e_2, +\infty)$. Combing with
$$w_{\mathcal{C}}(e_2)=\frac{1}{4} a_1(a_1+3),$$
it follows that for $e>e_2$,
$$w_{\mathcal{C}}(e)<w_{\mathcal{C}}(e_2)<w(e),$$
which implies that the function $M''(e)$ has no zeros in $(e_2,+\infty)$ when $(a_1,a_2)\in G_1$. And we obtain that
$M(e)$ has at most two zeros in $(e_2,+\infty)$.

$(\mathrm{ii})$ When $a_2>\frac{(-1 + a_1) a_1^2}{1 + 3 a_1 + 4 a_1^2}$,
$\bar e_2^2-\bar e_1 \bar e_2-(e_2-\bar e_2)^2>0$ from \eqref{e1e2}, thus we have
$$\bar e_2+\sqrt{\bar e_2^2-\bar e_1 \bar e_2}-e_2=\sqrt{\bar e_2^2-\bar e_1 \bar e_2}-(e_2-\bar e_2)>0.$$
Hence, the function $w(e)$ is increasing on the interval $(e_2,\bar e_2+\sqrt{\bar e_2^2-\bar e_1 \bar e_2})$ and is decreasing on the interval $(\bar e_2+\sqrt{\bar e_2^2-\bar e_1 \bar e_2}, +\infty)$.
Notice that near $e\rightarrow e_2^+$
\begin{equation}\label{C1}
w_{\mathcal{C}}(e)=\frac{1}{4} a_1(a_1+3)+\frac{3 ((-1 + a_1) a_1^2 - (1+3 a_1+4 a_1^2) a_2)}{a_1 (1 + a_1)^2 (a_1 - a_2)}(e-e_2)+o(e-e_2),
\end{equation}
and
\begin{equation}\label{C12}
w_{\mathcal{C}}(e)\rightarrow-\infty, \ \mbox{when}\  e\rightarrow+\infty.
\end{equation}
Comparing the results in \eqref{we12}, \eqref{we22}, \eqref{C1} and \eqref{C12}, we have
\begin{equation}\begin{split}\label{wC}
w_{\mathcal{C}}(e)<&\ w(e), \ \mbox{when}\  e\rightarrow e_2^+,\\
w_{\mathcal{C}}(e)<&\ w(e), \ \mbox{when}\  e\rightarrow+\infty.
\end{split}\end{equation}
We claim that the function $M''(e)$ has at most two zeros in $(e_2,+\infty)$ when  $(a_1,a_2)\in G_2$. To prove this,
we show that there exists exactly one point of the curve $\mathcal{C}$ at which the vector field \eqref{ew} is tangent on the curve $\mathcal{C}$. We call it the contact point. A direct computation gives that
\begin{equation}\begin{split}\label{d}
\frac{\mathrm{d}w_{\mathcal{C}}(e)}{\mathrm{d}e}-\frac{\mathrm{d}w(e)}{\mathrm{d}e}\Big{|}_{w=w_{\mathcal{C}}(e)}
=-\frac{3}{4 B^2(e)} \Psi(e),
\end{split}\end{equation}
where
\begin{equation}\begin{split}\label{Pse}
\Psi(e)=&\psi_2e^2+\psi_1e+\psi_0\\
=&-48(a_1-4 a_2-1) \left(5 a_1^2-8 a_1 a_2+6 a_1+8
a_2+5\right) e^2+4 (a_1+3) (3 a_1+1)\\
&\left(6 a_1^3 a_2+3 a_1^3-3
a_1^2 a_2^2+30 a_1^2 a_2+10 a_1^2-74 a_1 a_2^2-30 a_1
a_2+3 a_1-3 a_2^2-6 a_2\right)e\\
&+a_1 (a_1+3)^2 (3 a_1+1)^2 a_2
(3 a_1 a_2+4 a_1-3 a_2).
\end{split}\end{equation}
Denote by $l_3$ the curve $a_2=\frac{5 + 6 a_1 + 5 a_1^2}{8(-1 + a_1)}$. It is easy to verify that $\frac{\mathrm{d}a_2}{\mathrm{d}a_1}=\frac{-11 - 10 a_1 + 5 a_1^2}{8 (-1 + a_1)^2}>0$ for $-1<a_1<\frac{5 - 4 \sqrt5}{5}$, and $\frac{\mathrm{d}a_2}{\mathrm{d}a_1}<0$ for $\frac{5 - 4 \sqrt5}{5}<a_1<0$,
and $a_2=-\frac{1}{4}$ when $a_1=-1$, $a_2=-\frac{1}{3}$ when $a_1=-\frac{1}{3}$ and $a_2=-\frac{5}{8}$ when $a_1=0$. That is, the curve $l_3$ passing through $(-1,-\frac{1}{4}), (-\frac{1}{3},-\frac{1}{3})$ and $(0,-\frac{5}{8})$ is increasing on $(-1,\frac{5 - 4 \sqrt5}{5})$ and is decreasing on $(\frac{5 - 4 \sqrt5}{5},0)$. Thus,
we obtain that the curve $l_3$ intersects the line $a_2=a_1$ at $A$, and insects the curve $a_2=\frac{a_1-1-\sqrt{-a_1}}{3}$ at $D=(a_1^{***}, a_2^{***})$ with $a_1^{***}=\frac{1}{49} \left(-135+36 \sqrt{11}+4 \sqrt{802-240 \sqrt{11}}\right)\approx-0.118$, which locates between the curves $l_1$ and $l_2$ when $-\frac{1}{3}<a_1<0$, see Figure \ref{fig1}. In the region $G_2$,
it follows from $a_2<\frac{5 + 6 a_1 + 5 a_1^2}{8(-1 + a_1)}$ that $\psi_2>0$. Since
\begin{equation}\label{Pe2}
\Psi(e_2)= -\frac{1}{3} a_1 (1 + a_1)^2 (3 + a_1)^2 (a_1 - a_2) (-3 a_1 - 18 a_1^2 + 5 a_1^3 -
9 a_2 - 30 a_1 a_2 - 41 a_1^2 a_2),
\end{equation}
and in the region $G_2$, $-1<a_1<-\frac{1}{3}$ and $a_2>\frac{(-1 + a_1) a_1^2}{1 + 3 a_1 + 4 a_1^2}$,
\begin{equation}\begin{split}\label{Pe21}
-3 a_1 - 18 a_1^2 + 5 a_1^3 -
9 a_2 - 30 a_1 a_2 - 41 a_1^2 a_2
&=  a_1 (-3 - 18 a_1 + 5 a_1^2) + (-9 - 30 a_1 - 41 a_1^2) a_2 \\
&< -\frac{a_1 (1 + a_1) (1 + 3 a_1) (3 + 6 a_1 + 7 a_1^2)}{1 + 3 a_1 + 4 a_1^2}\\
&<0,
\end{split}
\end{equation}
we have $\Psi(e_2)<0$ from \eqref{Pe2} and \eqref{Pe21}. Further, $\Psi(e)$ has a unique zero in $(e_2,+\infty)$ by $\psi_2>0$. This confirms that there are exactly one contact point on the curve $\mathcal{C}$ for $e>e_2$.
It follows from the result in \eqref{wC} that the curve $\mathcal{C}$ and the curve $\Gamma$ has at most two intersection points when $e>e_2$, otherwise, extra contact points will emerge, which results in a contradiction, see Figure \ref{fig2}. Therefore, the function $M(e)$ has at most four zeros on the interval $(e_2,+\infty)$ when $(a_1,a_2)\in G_2$.
\begin{figure}[!htbp]
	\centering
	\includegraphics[width=0.4\textwidth]{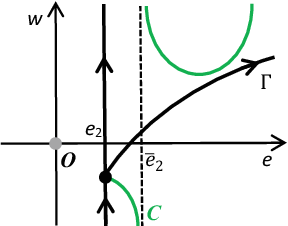}
	\caption{The graphs of the curves of $\mathcal{C}$ and $\Gamma$.}
	\label{fig2}
\end{figure}

Finally, we study the number of zeros of $M(e)$ on the interval $(e_2,+\infty)$ when $(a_1,a_2)$ locates in the regions $G_3$  and $G_4$ and their intersection curve $l_3$. Notice that in these regions $-\frac{1}{3}<a_1<0$ and $a_2>-\frac{(1+a_1)^2}{1-a_1}$. For this case, $e_2<\bar e_2$ and $\bar e_2-\sqrt{\bar e_2^2-\bar e_1 \bar e_2}<0<e_2$ by \eqref{e1}
and \eqref{e22}.
Thus, the function $w_{\mathcal{C}}(e)$ is decreasing on the interval $(e_2,\bar e_2)\bigcup(\bar e_2,\bar e_2+\sqrt{\bar e_2^2-\bar e_1 \bar e_2})$ and is increasing on the interval $(\bar e_2+\sqrt{\bar e_2^2-\bar e_1 \bar e_2}, +\infty)$.

Notice that near $e\rightarrow e_2^+$, $w_{\mathcal{C}}(e)$ has the asymptotic expansion \eqref{C1},
and
\begin{equation}\begin{split}
w_{\mathcal{C}}(e)\rightarrow-\infty,  \ \mbox{when}\  e\rightarrow\bar e_2^-,\\
w_{\mathcal{C}}(e)\rightarrow+\infty \ \mbox{when}\  e\rightarrow\bar e_2^+,
\end{split}\label{C14}
\end{equation}
and when $e\rightarrow+\infty$
\begin{equation}\label{C15}
w_{\mathcal{C}}(e)=\frac{3 (-1 + a_1 - 4 a_2)}{(1 + 2 a_1 + a_1^2 + a_2 - a_1 a_2)}e+o(e).
\end{equation}
Comparing the results in \eqref{we12}, \eqref{we22}, \eqref{C1}, \eqref{C14} and \eqref{C15}, we have
\begin{equation}\begin{split}\label{wC1}
w_{\mathcal{C}}(e)<&\ w(e), \ \mbox{when}\  e\rightarrow e_2^+,\\
w_{\mathcal{C}}(e)<&\ w(e), \ \mbox{when}\  e\rightarrow\bar e_2^-,\\
w_{\mathcal{C}}(e)>&\ w(e), \ \mbox{when}\  e\rightarrow\bar e_2^+,\\
w_{\mathcal{C}}(e)>&\ w(e), \ \mbox{when}\  e\rightarrow+\infty.
\end{split}\end{equation}
By the result in \eqref{wC1} and the monotonicity of the function $w_{\mathcal{C}}(e)$ in $(e_2,\bar e_2)$ and the function $w(e)$, we know that  the curves $\mathcal{C}$ and
$\Gamma$ do not have intersection points when $e\in(e_2,\bar e_2)$. In the following, we just need to consider $e>\bar e_2$.

For $(a_1,a_2)\in l_3=\partial G_3\bigcup\partial G_4$, $-\frac{1}{3}<a_1\leq a_1^{***}$ and $a_2=\frac{5 + 6 a_1 + 5 a_1^2}{8(-1 + a_1)}$. One has $\psi_2=0$ and $\Psi(e)$ has a unique zero
$$\hat e=\frac{(1 - a_1) a_1 (3 + a_1) (1 + 3 a_1) (5 + 6 a_1 + 5 a_1^2)}{
	4 (11 - 76 a_1 - 126 a_1^2 - 76 a_1^3 + 11 a_1^4)}.$$
Since
$$\hat e-e_2=-\frac{(-5 + a_1) a_1 (1 + a_1)^2 (3 + a_1) (3 + 2 a_1 + 11 a_1^2)}{
	12 (11 - 76 a_1 - 126 a_1^2 - 76 a_1^3 + 11 a_1^4)}<0,$$
we have there is no contact point on the curve $\mathcal{C}$ for $e>e_2$ from \eqref{d}. Thus, by the result in \eqref{wC1}, the curve $\mathcal{C}$ and the curve $\Gamma$ has no  intersection points when $e>e_2$, otherwise, an extra contact point will emerge, which results in a contradiction. Therefore, the function $M(e)$ has at most two zeros on the interval $(e_2,+\infty)$ for $(a_1,a_2)\in l_3=\partial G_3\bigcup\partial G_4$.

In the region $G_3$, $-\frac{1}{3}<a_1\leq a_1^{***}$ and $a_2<\frac{5 + 6 a_1 + 5 a_1^2}{8(-1 + a_1)}<-\frac{1}{3}$, one has $\psi_2>0$ and
$$\psi_0=a_1a_2 (3+a_1)^2 (1 + 3 a_1)^2 (4 a_1 - 3 a_2 + 3 a_1 a_2)>a_1 a_2(3+a_1)^2 (1 + 3 a_1)^3>0.$$
A direct computation shows that
\begin{eqnarray*}\begin{split}
\psi_1\Big{|}_{a_2=\frac{5 + 6 a_1 + 5 a_1^2}{8(-1 + a_1)}} =& \frac{5 (3 + a_1)^2 (1 + 3 a_1)^2 (11 - 76 a_1 - 126 a_1^2 - 76 a_1^3 +
	11 a_1^4)}{16 (-1 + a_1)^2} >0\\
\psi_1\Big{|}_{a_2=-\frac{(1+a_1)^2}{1-a_1}}=&\frac{4 (3 + a_1)^2 (1 + 3 a_1)^2 (1 - 19 a_1 - 44 a_1^2 - 19 a_1^3 +
	a_1^4)}{(-1 + a_1)^2}>0\\
\psi_1\Big{|}_{a_2=\frac{a_1-1-\sqrt{-a_1}}{3}}=&\frac{4}{9}(3 + a_1) (1 + 3 a_1) (15 + 12 \sqrt{-a_1} + 34 a_1 -
52 \sqrt{-a_1} a_1 + 126 a_1^2\\
&+ 52 \sqrt{-a_1} a_1^2 + 34 a_1^3 -
12 \sqrt{-a_1} a_1^3 + 15 a_1^4)>0.
\end{split}
\end{eqnarray*}
It is also easy to verify that $\psi_1>0$ by considering the intersection points of the curve $\psi_1=0$ and $l_1, l_3$ and $a_2=\frac{a_1-1-\sqrt{-a_1}}{3}$ for $a_1\in(-\frac{1}{3},0)$. Thus, the function $\Psi(e)$ has no zero in $(\bar e_2,+\infty)$. In other words, there does not exist the contact point on the curve $\mathcal{C}$ for $e>\bar e_2$ from \eqref{d} when $(a_1,a_2)\in G_3$. Similarly, by the result in \eqref{wC1}
the curve $\mathcal{C}$ and the curve $\Gamma$ has no  intersection points when $e>\bar e_2$. Otherwise, an extra contact point will emerge, which results in a contradiction. Therefore, the function $M(e)$ has at most two zeros on the interval $(e_2,+\infty)$ for $(a_1,a_2)\in G_3$.

\begin{figure}[!htbp]
	\centering
	\includegraphics[width=0.4\textwidth]{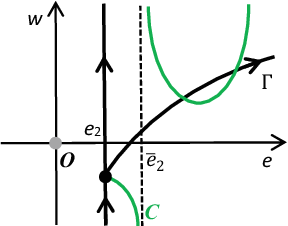}
	\caption{The graphs of the curves of $\mathcal{C}$ and $\Gamma$.}
	\label{fig3}
\end{figure}

In the region $G_4$, $-\frac{1}{3}<a_1<0$ and $\frac{5 + 6 a_1 + 5 a_1^2}{8(-1 + a_1)}<a_2<-\frac{1}{3}$, one has $\psi_2<0$ and
$$\psi_0=a_1a_2 (3+a_1)^2 (1 + 3 a_1)^2 (4 a_1 - 3 a_2 + 3 a_1 a_2)>a_1 a_2(3+a_1)^2 (1 + 3 a_1)^3>0.$$
A direct computation shows that
\begin{equation}\label{Pe3}
\Psi(\bar e_2)= \frac{a_1 (1 + a_1)^2 (3 + a_1)^2 (1 + 3 a_1)^2 (a_1 - a_2) a_2 (1 + a_2) (15 +
	34 a_1 + 15 a_1^2 + 12 a_2 - 12 a_1 a_2)}{3 (1 + 2 a_1 + a_1^2 + a_2 -
	a_1 a_2)^2},
\end{equation}
and
\begin{equation}\label{Pe22}
15 + 34 a_1 + 15 a_1^2 + 12 a_2 - 12 a_1 a_2>\frac{5}{2}(3 + a_1) (1 + 3 a_1)>0.
\end{equation}
Hence, we have $\Psi(\bar e_2)>0$ from \eqref{Pe3} and \eqref{Pe22}. Further, $\Psi(e)$ in \eqref{Pse} has a unique zero in $(\bar e_2,+\infty)$ by $\psi_2<0$. This confirms that there is exactly one contact point on the curve $\mathcal{C}$ for $e>\bar e_2$.
It follows from the result in \eqref{wC1} that the curve $\mathcal{C}$ and the curve $\Gamma$ has at most two intersection points when $e>\bar e_2$. Otherwise, extra contact points will emerge, which results in a contradiction, see Figure \ref{fig3}. Therefore, the function $M(e)$ has at most four zeros on the interval $(e_2,+\infty)$ when $(a_1,a_2)\in G_4$.

\end{document}